\documentclass[10pt]{amsart}
\pagestyle{plain}

\usepackage[utf8]{inputenc}
\usepackage[T1]{fontenc}

\usepackage[pdftex]{graphicx}
\usepackage{latexsym}
\usepackage{amsmath,amssymb,amsthm}
\usepackage{mathtools}
\usepackage{enumitem}
\usepackage{faktor}
\usepackage{enumerate}
\usepackage{hyperref}
\usepackage{tikz}
\usepackage{xcolor}

\newtheorem{theorem}{Theorem}[section]
\newtheorem{corollary}[theorem]{Corollary}
\newtheorem{proposition}[theorem]{Proposition}
\newtheorem{lemma}[theorem]{Lemma}
\theoremstyle{definition}
\newtheorem{definition}[theorem]{Definition}
\newtheorem*{remark}{Remark}
\newtheorem{example}[theorem]{Example}

\newcommand{\F}{{F}}

\renewcommand{\bullet}{{\circ}}
\newcommand{\Q}{\mathcal{Q}}

\newcommand{\Hmm}[1]{\leavevmode{\marginpar{\tiny%
			$\hbox to 0mm{\hspace*{-0.5mm}$\leftarrow$\hss}%
			\vcenter{\vrule depth 0.1mm height 0.1mm width \the\marginparwidth}%
			\hbox to 0mm{\hss$\rightarrow$\hspace*{-0.5mm}}$\\\relax\raggedright #1}}}
	
	\newcommand{\eat}[1]{}

\begin{document}
	\author[P. Bartmann, M. Keller]{Philipp Bartmann, Matthias Keller}
	
	\address{Philipp Bartmann, Matthias Keller: Institut f\"ur Mathematik, Universit\"at Potsdam
		14476  Potsdam, Germany}
	\email{philipp.bartmann@uni-potsdam.de}
	\email{matthias.keller@uni-potsdam.de}

	\title[]{On Hodge Laplacians on General Simplicial Complexes}

\begin{abstract}

  \noindent We study Laplacians on general countable weighted simplicial complexes from a conceptual point of view. These operators will first be introduced formally before showing that those formal operators coincide with self-adjoint realizations of operators arising from quadratic forms. A major conceptual perspective is the correspondence to signed Schrödinger operators unveiling the Forman curvature. The main results are criteria for essential self-adjointness via lower bounded Forman curvature and a Gaffney type result via completeness. Finally, we study spectral relations between these Laplacians.
\vspace{0.95cc}
\end{abstract}
\maketitle
\section{Introduction} \label{form}

The study of simplicial complexes has a long-standing tradition in mathematics. As a tool in topology, they serve as triangulations of manifolds, discrete objects that encode topological information. Moreover, Laplacians on simplicial complexes are discrete analogues of Laplacians on differential forms on manifolds and can be considered higher dimensional generalizations of graph Laplacians.

Traditionally, they were studied on finite complexes, going back to the work of Eckmann \cite{Eckmann}, who established a theory which stands in analogy to the Hodge theory on compact manifolds (see the appendix by Dodziuk in \cite{Chavel}). Since then, they have received substantial attention. For instance, in \cite{Dodziuk} it was shown that discrete Laplacians on triangulations can be used to approximate the Laplacian on a given Riemannian manifold, thus combining the discrete and continuous setting.

Similar to graphs, there is often no canonical choice of weights, which provide a notions of volume. For example, one may
count the number of vertices in a set which arises from the constant weight one on the vertices, or one can count the number of edges in a set which arises from choosing the vertex degree as weight. Whenever one is faced with a non-canonical choice, it is often advisable to study all possible weights at once. This has proven to be successful for graphs, which have seen a vibrant interest with numerous results; see, e.g., the recent monographs \cite{Barlow,JP,KLW,KN} and references therein. 

For finite weighted simplicial complexes, there is the seminal work \cite{HJ} that studies various spectral properties of the corresponding operators. However, there has also been a substantial push to study infinite simplicial complexes. For example, in relation to topology \cite{F}, group theory \cite{ballmann1997l2,oppenheim2015property,oppenheim2015vanishing}, random walks \cite{LLP,Parzanchevski,Ros}, operator theory \cite{C,BH,GP,Masamune} or heat kernel asymptotics \cite{HL}. Although substantial progress has been made in various special cases, a conceptual study of infinite simplicial complexes seems to be missing. This is the purpose of the present paper.

We study Laplacians on general countable weighted simplicial complexes. This includes, in essence, all the settings above and allows for an infinite vertex set, infinite dimension and non-locally finite complexes. Our starting point is axiomatic with an operator $\delta$ for which we ask
\begin{equation*}
	\delta\delta=0
\end{equation*}
and a basic compatibility assumption with the combinatorics of the simplicial complex, see Definition~\ref{coboundary}. We choose a weight function $m$ on the simplicial complex to introduce $\partial$ as a formal adjoint of $\delta$. Although in most relevant cases we also have $\partial\partial =0$, this is no longer guaranteed in full generality (as was already noticed in \cite{Parzanchevski}), which shows that care has to be taken. An important aspect is that we define $\delta$ and $\partial$ on functions rather than forms. Although this seems unnatural at first glance, we show that our approach is indeed equivalent in Section~\ref{altern}. Yet, our approach has some advantages; especially it allows for a more convenient notation. Similar ideas have already been pursued elsewhere, \cite{BSSS,HK,JM}.

For the operators $\delta$ and $\partial$, we establish a Stokes' theorem and Green's formulas on appropriate function spaces, which form the backbone of our analysis in Section~\ref{stokes}. In order to study self-adjoint realizations of these operators, we take the approach via quadratic forms. To this end, we make a mild assumption on the weight function $m$ called \textit{local summability} which means that for all simplices $\tau\neq\varnothing $ in the complex 
\begin{equation*}
	\sum_{\sigma\succ \tau}m(\sigma)<\infty,
\end{equation*}
where the sum runs over all $\sigma$ which have $\tau$ as a face, see also e.g. \cite{straussinfinite}. This guarantees that finitely supported functions are included in the form domain. We also introduce a stronger condition called \textit{strong local summability} in Section~\ref{summable}, which guarantees $\partial\partial=0$.

When restricting the quadratic forms to $\ell^{2}$, we observe that there is, in general, a whole family of closed forms, Section~\ref{Dir&Neum}. This comes as no surprise, as this appears also in the continuum where the Sobolev spaces $W^{1,2}_{0}$ and $W^{1,2}$ in general do not coincide. This is also a well studied phenomenon in the graph case, see e.g. \cite{JP,KL,KLW,HKMW,Wojc}. These closed forms then give rise to a family of Laplacians $\Delta^{+}$, $\Delta^{-}$ and $\Delta^{H}$ that we show to be restrictions of $\partial\delta$, $\delta\partial$ and $(\delta+\partial)^{2}$ respectively, where the latter is equal to $\partial\delta+\delta\partial$ in most (yet not all) relevant cases.
We then view our approach through the intriguing lens of Hilbert complexes in Section~\ref{Hilbert}, which gives rise to two particular instances of $\Delta^{H},$ see e.g.\cite{BL}. These are of particular interest, as their kernels are tightly connected to the reduced $\ell^2$-cohomology groups of the simplicial complex. 

A major conceptual perspective is then developed in Section~\ref{DiscSch} where we interpret our Laplacians as specific \emph{signed Schrödinger operators}, Theorem~\ref{schröd2}. Here, we take full advantage of the fact that our operators are defined on functions rather than forms.
We highlight two conceptual benefits which arise from this perspective: First of all, this representation unveils a quantity which is known as Forman curvature \cite{F,JM}. This is one of a multitude of notions of discrete Ricci curvature that has been  studied abundantly for graphs in recent years \cite{BHLLMY,EM,HLLY,Mu,MW,O}.
Secondly, from this point on, we can draw from the rich and in many aspects well-developed theory of signed Schrödinger operators on graphs. In that section, Section~\ref{DiscSch}, we already characterize and give criteria when compactly supported functions are in the operator domain before we deal with the uniqueness of the operators itself.

We then come to the part that can be considered the main result of the paper, which is concerned with uniqueness of the operators in Section~\ref{FormeqEssSa}. As a first step, we characterize boundedness of the operators for finite dimensional complexes in terms of boundedness of an explicit function resembling the vertex degree for graphs, Section~\ref{boundedness}.
Then we address the unbounded case, where we study form uniqueness, i.e., existence of only one closed form, and essential self-adjointness, i.e., existence of only one self-adjoint extension of the Laplacians restricted to compactly supported functions. 
We give two explicit criteria for this to hold:
\begin{itemize}
	\item [(1)] Lower bounded Forman curvature and uniformly positive weight, Theorem~\ref{esssa1}.
	\item [(2)] Compactness of distance balls for certain distances on the 1-skeleton, Theorem~\ref{metric}. 
\end{itemize}
The second result can be seen as an analogue of a famous theorem often attributed to Gaffney \cite{G} in the case of manifolds, see also  \cite{Chernoff,Roe,Strichartz}. Furthermore, these considerations extend the known results of \cite{AACT,baloudi2019adjacency,BH,C,Masamune} for simplicial complexes and results in \cite{CTT,GKS,Go,HKMW,KL,Milatovic,S} for graphs.

We then turn to the study of spectral relations between the Laplacians in Section~\ref{specrel}. It is well known for finite simplicial complexes or compact manifolds that the spectra of the operators $\Delta^{+}$, $\Delta^{-}$ and $\Delta^{H}$ coincide apart from zero, \cite{HJ}. However, in case of non-uniqueness of the operators, this is not true anymore as it is well known that an operator with Dirichlet- or Neumann boundary condition can have different spectra already in the graph case, see e.g. \cite{duefel2024boundaryconditionsmatterspectrum}. 
Thus, we establish the relations which still hold in the general setting, Theorem~\ref{spectrum0} and then later recover the expected spectral equality in the case of a unique operator, Theorem~\ref{spectrum1}.

We end the paper by looking at two natural choices of weights. The first is the combinatorial weight, which assigns constant weight equal to one to each simplex. Here, many of the quantities of the paper simplify significantly and become much clearer and combinatorial in nature. Moreover, we want to emphasize that the metric in the Gaffney-type result of Theorem~\ref{metric} can be chosen as an intrinsic metric on the underlying 1-skeleton, which was already used for graphs in \cite{HKMW}, see Theorem~\ref{comb}. Finally, we review choices related to normalizing weights, which were already considered in \cite{HJ} and which are useful to recover certain spectral estimates in the infinite case.

Throughout the paper, we fix the following notation. For a function $f:\Sigma\to \mathbb{R}$ on a countable set $\Sigma$, we denote
\begin{equation*}
\sum_{\Sigma}f=	\sum_{\sigma\in \Sigma}f(\sigma)
\end{equation*}
whenever the sum converges absolutely.  We denote by $1_{x}$ the characteristic function of a singleton set $\{x\}$.

For two linear operators $A,B$ with domains $D(A)$, $D(B)$ recall the definition of the sum $A+B$ and the composition $AB$  which are defined pointwise on their respective domains
\begin{equation*}
	D(A+B)=D(A)\cap D(B)\quad \mbox{and }\quad D(AB)=\{f\in D(B)\mid Bf\in D(A)\}.
\end{equation*}
Furthermore, we write $A\subseteq B$ if $D(A) \subseteq D(B)$ and $A=B$ on $D(A)$.  We use the same notation for two quadratic forms $Q\subseteq Q'$ on a Hilbert space if $D(Q)\subseteq D(Q')$ and $Q=Q'$ on $D(Q)$. The adjoint of an operator $A$ is denoted by $A^*.$\\

\textbf{Acknowledgments.} The authors acknowledge the financial support of the DFG with the priority program ``Geometry at Infinity''. Furthermore, PB is grateful for discussions with Christian Rose and Lior Tenenbaum on various topics within the paper. MK also expresses his gratitude to Bobo Hua, Jun Masamune and Florentin Münch for valuable discussions.

\section{Setting} \label{setting}
\subsection{Basic Definitions}
We define simplicial complexes as well as coboundary $\delta$ and boundary operator $\partial$. The characteristic features $\delta\delta=0$ and $\partial\partial =0$ are built into the definition of $\delta$. However, for $\partial$ this might not hold for all functions on non-locally finite complexes, which is discussed at the end of the section. 

\begin{definition}[Simplicial Complex]\label{SimplCompl}
Let $V$ be a countable set and $\hat\Sigma\subseteq\mathcal{P}(V)$ be a collection of finite subsets of $V.$ We call $\hat\Sigma$ a \textit{simplicial complex} if $\sigma\in\hat\Sigma$ and $\tau\subseteq \sigma$ implies $\tau\in\hat\Sigma.$ Further, we denote by $\hat\Sigma_{k}$ the collection of elements of cardinality $k+1$  and call their elements \textit{simplices} or $k$-\emph{simplices}.
\end{definition}

 Whenever $\tau,\sigma\in\hat\Sigma$ with $\tau\subset\sigma$ and $\vert\sigma\backslash\tau\vert = 1$ we write $\tau\prec\sigma$ or $\sigma\succ \tau$ and say that $\tau$ is a \textit{face} of $\sigma$ or that $\sigma$ is a \textit{coface} of $\tau.$ Simplices that have no cofaces are called \textit{maximal}. We will define the \textit{dimension} of a simplex as $\mathrm{dim}(\tau)=\vert\tau\vert-1$ and the \emph{dimension} of the simplicial complex as
 \begin{equation*}
 	\mathrm{dim}(\hat\Sigma) =\sup_{\tau\in \hat\Sigma}\dim(\tau).
 \end{equation*}
 We call a simplicial complex \emph{locally finite} if for all $\tau\in\hat\Sigma$, $\tau\neq\varnothing$
 \begin{equation*}
 	\#\{\sigma\in\hat\Sigma\mid \sigma\succ \tau\}<\infty.
 \end{equation*}
 Whenever convenient, we will identify $\hat\Sigma_0$ with $V.$ We refer to $\bigcup_{l\leq k}\hat{\Sigma}_l$ as the $k$-\textit{skeleton} of $\hat\Sigma.$\\
 Note that $\varnothing$ is the only element of $\hat\Sigma_{-1}$ and that $\varnothing\prec v$ for every $v\in\hat\Sigma_0.$ This (rather pathological) fact creates difficulties when it comes to defining the proper function spaces on infinite simplicial complexes that are consistent with the established theory on finite complexes as in \cite{GundertWagner,HJ}. 
 We deal with this by introducing a weight function $$m:\hat\Sigma\rightarrow(0,\infty)$$ and defining $$\Sigma = \hat\Sigma\quad\mbox{ if }\sum_{v\in \Sigma_{0}}m(v)<\infty\qquad\mbox{ and }\qquad\Sigma = \hat\Sigma\backslash\lbrace\varnothing\rbrace\;\mbox{ otherwise}.$$ For the rest of the paper we will identify $\Sigma$ with $\hat\Sigma$, even though, strictly speaking, it might not be a simplicial complex in the sense of Definition \ref{SimplCompl}. This is justified since we do not lose any information about $\hat\Sigma$ by considering $\Sigma$ instead. Furthermore, we may extend $m$ to subsets $A\subseteq \Sigma$ via
 \begin{equation*}
 	m(A)=\sum_{\sigma\in A}m(\sigma)
 \end{equation*}
 and in this sense we can think of $m$ as a measure of full support.
 
We define $$F = F(\Sigma) = \mathbb{R}^\Sigma$$
and  define $\F(\Sigma_k)$ analogously.
Naturally, $\F(\Sigma)$ can be identified with $\bigoplus_{k}\F(\Sigma_k).$ 

Next, we introduce the coboundary operator.

\begin{definition}[Coboundary Operator]\label{coboundary}
    A linear map $\delta:D(\delta)=\F\rightarrow \F$ is called a \textit{coboundary operator} if for all $\tau,\sigma\in\Sigma$, we have $\delta 1_\tau(\sigma)\in\lbrace -1,0,1\rbrace,$ $\delta 1_\tau(\sigma)\neq 0$ iff $\tau\prec\sigma$ and 
     $$\delta\delta = 0.$$
\end{definition}
Given $\tau,\sigma\in \Sigma$ we will write
\begin{equation*}
	\theta(\tau,\sigma)=\delta1_\tau(\sigma),
\end{equation*}
where $1_{\tau}$ is the characteristic function of $\tau$. For $\omega\in \F$ and $\sigma\in \Sigma$, we obtain
 $$\delta\omega(\sigma) = \sum_{\tau\prec\sigma}\omega(\tau)\theta(\tau,\sigma)$$
since $\sigma$ only has finitely many faces $\tau\prec\sigma$. Note that $\delta$ does not depend on the choice of the weight $m.$

\begin{remark}
    We may identify $\delta$ with $\bigoplus_k\delta_k,$ where $\delta_k:\F(\Sigma_k)\rightarrow \F(\Sigma_{k+1})$ is the restriction of $\delta$ to $\F(\Sigma_k).$ Most results in this paper concerning $\delta$ and operators derived from it have their respective analogues for $\delta_k$ with conditions that only need to be satisfied on $ \F(\Sigma_k)$ rather than all of $\F$, see e.g. the remark after Theorem~\ref{spectrum0} for an exception. However, we will mostly state them only for $\delta$ for the sake of brevity and in order to avoid repetition. 
\end{remark}

\begin{remark}
    We define the coboundary operator in a nonstandard, more axiomatic way compared with what one usually finds in the literature (see \cite{GundertWagner,HJ,HL,Parzanchevski}). One difference is that we consider functions rather than alternating forms. This has several practical reasons. It allows us to study $\delta$ without the need to introduce oriented simplices. Furthermore, one can easily establish a connection to the theory of discrete signed Schrödinger operators (see Section \ref{DiscSch}). We justify our approach in Subsection \ref{altern}, where we show that $\delta$ can be identified with a coboundary operator on alternating forms and vice versa.  
\end{remark}

Next, we define the spaces
\begin{align*}
	\F^1=	\F^1(\Sigma,m)& = \lbrace\omega\in \F\mid\sum_{\sigma\succ \tau}m(\sigma)\vert\omega(\sigma)\vert<\infty~\text{for all}~\tau\in\Sigma\rbrace,\\
	\F^2=	\F^2(\Sigma,m)& = \lbrace\omega\in \F^1\mid\sum_{\sigma\succ\tau\succ \rho}m(\sigma)\vert\omega(\sigma)\vert<\infty~\text{for all}~\rho\in\Sigma\rbrace,\\
	\F^c=	\F^c(\Sigma)&=\lbrace\omega\in \F\mid \omega \mbox{ has finite support}\rbrace.
\end{align*}
On $D(\partial)=\F^1,$ we define the \textit{boundary operator} $\partial:\F^1\rightarrow \F$ as
$$\partial\omega(\rho) = \frac{1}{m(\rho)}\sum_{\tau\succ \rho}m(\tau)\omega(\tau)\theta(\rho,\tau)$$ for all $\rho\in\Sigma.$ This also means that if $\omega\in F(\Sigma_0)$ and $\varnothing\notin\Sigma,$ then $\partial\omega = 0.$

\begin{remark}
    Similar to $\delta$, we identify $\partial$ with $\bigoplus_k\partial_k,$ where $\partial_k:\F^1(\Sigma_{k+1})\rightarrow \F(\Sigma_k)$ is the restriction of $\partial$ to $$F^1(\Sigma_{k+1}) = \F^1(\Sigma_{k+1},m) = \lbrace\omega\in \F(\Sigma_{k+1})\mid\sum_{\sigma\succ \tau}m(\sigma)\vert\omega(\sigma)\vert<\infty~\text{for all}~\tau\in\Sigma_k\rbrace$$
\end{remark}
 The next lemma collects useful relations between the function spaces.

\begin{lemma}\label{sammellemma1} We have $\F^c\subseteq \F^2\subseteq\F^1\subseteq \F$ as well as
      $$\partial \F^c\subseteq \F^c\quad\mbox{ and }\quad\partial \F^2\subseteq \F^1.$$
    \end{lemma}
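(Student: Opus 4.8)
The plan is to treat the inclusion chain and the two mapping properties separately, since the first is essentially definitional while the last carries the only real content.

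First I would establish $\F^c\subseteq\F^2\subseteq\F^1\subseteq\F$. The inclusions $\F^2\subseteq\F^1\subseteq\F$ are immediate from the definitions, as $\F^1$ is defined as a subset of $\F$ and $\F^2$ as a subset of $\F^1$. For $\F^c\subseteq\F^2$ I would observe that if $\omega$ has finite support, then for every $\tau\in\Sigma$ the sum $\sum_{\sigma\succ\tau}m(\sigma)|\omega(\sigma)|$ has only finitely many nonzero summands and hence converges, and likewise $\sum_{\sigma\succ\tau\succ\rho}m(\sigma)|\omega(\sigma)|$ is a finite sum for every $\rho\in\Sigma$; thus $\omega\in\F^2$.

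Next, for $\partial\F^c\subseteq\F^c$, the inclusion $\F^c\subseteq\F^1=D(\partial)$ just proved guarantees that $\partial$ is defined on $\F^c$. The key observation is that $\partial\omega(\rho)=\frac{1}{m(\rho)}\sum_{\tau\succ\rho}m(\tau)\omega(\tau)\theta(\rho,\tau)$ can be nonzero only if some coface $\tau\succ\rho$ lies in the support of $\omega$. Hence the support of $\partial\omega$ is contained in the set of faces of simplices in $\operatorname{supp}(\omega)$. Since each simplex $\tau$ has only finitely many faces and $\operatorname{supp}(\omega)$ is finite, this set is finite, so $\partial\omega\in\F^c$.

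The main content, and the step I expect to be the only genuine obstacle, is $\partial\F^2\subseteq\F^1$. Here too $\F^2\subseteq\F^1=D(\partial)$ ensures $\partial$ is defined. Fixing $\rho\in\Sigma$, I would first estimate $m(\sigma)|\partial\omega(\sigma)|=\bigl|\sum_{\tau\succ\sigma}m(\tau)\omega(\tau)\theta(\sigma,\tau)\bigr|\le\sum_{\tau\succ\sigma}m(\tau)|\omega(\tau)|$, using the triangle inequality together with $|\theta(\sigma,\tau)|\le 1$. Summing over cofaces $\sigma\succ\rho$ and interchanging the two nonnegative (hence Tonelli‑justified) sums gives
\[
\sum_{\sigma\succ\rho}m(\sigma)|\partial\omega(\sigma)|\le\sum_{\tau\succ\sigma\succ\rho}m(\tau)|\omega(\tau)|,
\]
and the right-hand side is finite precisely by the second defining condition of $\F^2$ with base simplex $\rho$. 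This verifies the $\F^1$ condition for $\partial\omega$ at every $\rho$, completing the argument. The point to watch is exactly this reorganization of the double sum and the recognition that the resulting chain sum $\sum_{\tau\succ\sigma\succ\rho}$ matches the condition built into $\F^2$; everything else is bookkeeping.
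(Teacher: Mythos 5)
Your proof is correct and follows essentially the same route as the paper's: the chain $\F^2\subseteq\F^1\subseteq\F$ is definitional, $\partial\F^c\subseteq\F^c$ follows from each simplex having finitely many faces, and $\partial\F^2\subseteq\F^1$ is exactly the triangle-inequality estimate in which the resulting double sum $\sum_{\tau\succ\sigma\succ\rho}m(\tau)\vert\omega(\tau)\vert$ is finite by the defining condition of $\F^2$. The only cosmetic difference is in $\F^c\subseteq\F^2$: the paper checks the condition on the basis functions $1_\sigma$ (using that at most two $\tau$ satisfy $\sigma\succ\tau\succ\rho$), whereas you observe directly that for finitely supported $\omega$ the relevant sums have only finitely many nonzero terms --- the same finiteness observation in slightly different form.
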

\begin{proof}
    Let $\rho,\sigma\in \Sigma$ be fixed and $\omega = 1_{\sigma}.$ Then, 
    $$\sum_{\sigma'\succ\tau\succ \rho}m(\sigma')\vert\omega(\sigma')\vert \leq 2m(\sigma)<\infty,$$ as there are at most two $\tau$
 such that $\sigma\succ \tau\succ\rho.$ This shows $\F^{c}\subseteq \F^{2}$ because the functions $1_{\sigma}$, $\sigma\in \Sigma$, form a basis of $\F^c$. The inclusions $F^2\subseteq F^1\subseteq F$ follow immediately from the definition. The inclusion $\partial F^c\subseteq F^c$ is a simple consequence of the fact that every simplex only has finitely many faces and $\partial F^2\subseteq F^1$ is a direct application of the triangle inequality.
 \end{proof}

\begin{lemma}\label{pp0} We have
$$\partial\partial = 0 \qquad \mbox{on $\F^{2}$}.$$   
\end{lemma}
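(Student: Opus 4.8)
The plan is to reduce the operator identity $\partial\partial = 0$ to the defining property $\delta\delta = 0$ of the coboundary operator by a direct pointwise computation. First I would observe that the statement even makes sense: by Lemma~\ref{sammellemma1} we have $\partial\F^{2}\subseteq\F^{1}=D(\partial)$, so that $\partial\partial\omega$ is well-defined for every $\omega\in\F^{2}$.

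Fixing $\omega\in\F^{2}$ and $\rho\in\Sigma$, I would then unfold the definition of $\partial$ twice. The outer application produces a factor $m(\tau)$ that cancels against the $1/m(\tau)$ appearing in $\partial\omega(\tau)$, leaving the double sum
\begin{equation*}
\partial\partial\omega(\rho)=\frac{1}{m(\rho)}\sum_{\tau\succ\rho}\sum_{\sigma\succ\tau}m(\sigma)\omega(\sigma)\theta(\rho,\tau)\theta(\tau,\sigma).
\end{equation*}
The aim is now to interchange the two sums and regroup by $\sigma$, so that for each fixed $\sigma$ the coefficient of $m(\sigma)\omega(\sigma)$ becomes $\sum_{\rho\prec\tau\prec\sigma}\theta(\rho,\tau)\theta(\tau,\sigma)$. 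I would recognize this inner sum as $\delta\delta 1_{\rho}(\sigma)$: since $\delta 1_{\rho}(\tau)=\theta(\rho,\tau)$ is supported on cofaces $\tau\succ\rho$, applying $\delta$ once more gives exactly the sum over $\rho\prec\tau\prec\sigma$. By the assumption $\delta\delta=0$ this vanishes for every $\sigma$, whence $\partial\partial\omega(\rho)=0$; as $\rho$ is arbitrary this proves the claim.

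The single step requiring genuine care—and the reason the lemma is restricted to $\F^{2}$ rather than all of $\F^{1}$—is the interchange of summation. To justify it by Fubini I would bound the summand in modulus by $m(\sigma)\vert\omega(\sigma)\vert$, using $\vert\theta\vert\leq 1$, and note that the resulting sum over all pairs with $\rho\prec\tau\prec\sigma$ coincides with $\sum_{\sigma\succ\tau\succ\rho}m(\sigma)\vert\omega(\sigma)\vert$, which is finite precisely by the defining condition of $\F^{2}$. Hence the double sum converges absolutely and the rearrangement is legitimate. I expect this convergence bookkeeping to be the only real obstacle; the algebraic core of the argument is simply the identification of the inner sum with $\delta$ applied twice to $1_{\rho}$.
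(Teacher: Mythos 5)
Your proposal is correct and follows essentially the same route as the paper's proof: both reduce the claim to the identity $\delta\delta 1_{\rho}(\sigma)=0$ (the paper spells this out as the cancellation of the two intermediate faces $\tau,\tau'$ with $\rho\prec\tau,\tau'\prec\sigma$), and both justify the interchange of the sums over $\tau$ and $\sigma$ by the absolute convergence built into the definition of $\F^{2}$, after checking via Lemma~\ref{sammellemma1} that $\partial\partial$ is even defined there.
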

\begin{proof}
   We start with an observation about $\theta$ which was introduced alongside the definition of $\delta$. Let $\rho\in\Sigma_{k-1},\sigma\in\Sigma_{k+1}$ and $\rho\subset\sigma.$ Then there are exactly two $\tau,\tau'\in\Sigma_k$ such that $\rho\prec\tau,\tau'\prec\sigma.$ Hence,
    $$0= \delta\delta1_\rho(\sigma) = \sum_{\tau\prec\sigma}\theta(\tau,\sigma)\sum_{\rho'\prec\tau}\theta(\rho',\tau)1_\rho(\rho') = \theta(\rho,\tau)\theta(\tau,\sigma) + \theta(\rho,\tau')\theta(\tau',\sigma).$$
    By  Lemma~\ref{sammellemma1}, we have $\partial \F^{2}\subseteq \F^{1}=D(\partial)$. Hence, $\partial\partial$ is defined on $\F^{2}$. With this at hand, we get for $\rho\in\Sigma_{k-1}$
    \begin{multline*}
        m(\rho)\partial\partial\omega(\rho)  = \sum_{\tau\succ \rho}\theta(\rho,\tau)\sum_{\sigma\succ \tau}\theta(\tau,\sigma)m(\sigma)\omega(\sigma)\\
        = \sum_{\sigma\in\Sigma_{k+1}}m(\sigma)\omega(\sigma)\left[\theta(\rho,\tau)\theta(\tau,\sigma) + \theta(\rho,\tau')\theta(\tau',\sigma)\right]1_{\rho\prec\tau,\tau'\prec\sigma}1_{\tau\neq\tau'} = 0.
    \end{multline*}   
    For the second equality, we changed the order of summation with respect to $\tau$ and $\sigma$, which is justified by absolute convergence, due to $\omega\in F^2.$
\end{proof}
The following example shows that in general $\partial\partial = 0$ cannot be expected, not even when the composition is well defined. In principle, this was already observed in \cite{Parzanchevski}, however, only for $\partial_{-1}\partial_{0}.$ Since we allow for non-locally finite simplicial complexes, this phenomenon may now occur in higher dimensions as well.

\begin{example}[$\partial\partial\neq 0$]\label{example}
    Let $V = \mathbb{N}\cup\lbrace 0\rbrace$ and $\Sigma$ be the finite dimensional simplicial complex whose maximal faces are given by the triangles $\tau_n = \lbrace 0,n,n+1\rbrace$, $n\ge 1$, (which fully determines $\Sigma$). Let $\rho_n = \lbrace 0,n\rbrace,n\geq 1.$
    \begin{figure}[h]
    	\centering

    	\label{fig:enter-label}
    	
    	\begin{tikzpicture}
    		
    		\fill[fill=gray!30] (0,0)--(0,2)--(6,2);
    		\fill (0,0) circle (2pt);
    		\draw (0,0)--(0,2)--(5,2);
    		\filldraw[black] (1,2) circle (2pt) node[anchor=south]{2};
    		\filldraw[black] (2,2) circle (2pt) node[anchor=south]{3};
    		\filldraw[black] (3,2) circle (2pt) node[anchor=south]{4};
    		\filldraw[black] (4,2) circle (2pt) node[anchor=south]{5};
            \filldraw[black] (5,2) circle (2pt) node[anchor=south]{6};
            \filldraw[black] (6,2) circle (2pt) node[anchor=south]{$n$};
    		\filldraw[black] (0,2) circle (2pt) node[anchor=south]{1};
    		\draw (0,0)--(1,2);
    		\draw (0,0)--(2,2);
    		\draw (0,0)--(3,2);
    		\draw (0,0)--(4,2);
            \draw (0,0)--(5,2);
            \draw (0,0)--(6,2);
            
            \node at (6.5,2) {\scalebox{1.5}{$\cdots$}};
            \node at (5.5,2) {\scalebox{1.5}{$\cdots$}};
            \node at (7, 1){\scalebox{1.5}{$\cdots$}};
    		\coordinate[label=left:0] (A0) at (0,0);

    	\end{tikzpicture}
    	\caption{The simplicial complex from Example \ref{example}}
    \end{figure}
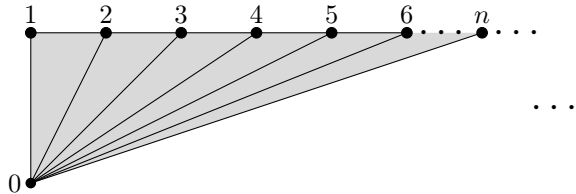
    
     Choose the weight $m$ such that $m(\lbrace n,n+1\rbrace) = n^2$, $ m(\rho_n) = 1\slash n^2$, $m(\tau_n) = n^2,$ for $n\ge 1$ and   $m(\lbrace n\rbrace) = 1$ 
     for $n\in V$. In fact, only $m(\tau_n)$ will matter for the moment. Moreover, for $m,n\in V,m<n$ let $ 1 = \theta (m,\lbrace m,n\rbrace) = -\theta (n,\lbrace m,n\rbrace)$ as well as $ 1 = \theta(\rho_n,\tau_n) = \theta(\lbrace n,n+1\rbrace,\tau_n) = -\theta(\rho_{n+1},\tau_n)$ for $n>0.$ It is then easy to check that $\delta$ acting as $$\delta\omega(\sigma) = \sum_{\tau\prec\sigma}\theta(\tau,\sigma)\omega(\tau)$$ defines a coboundary operator on $\Sigma$ in the sense of Definition \ref{coboundary}. Keep in mind that $\varnothing\notin\Sigma$ here.

     Now consider the function
     $$\omega_0 = 1_{0} + \sum_{n\in\mathbb{N}}\frac{1}{n^2}1_{\tau_n}.$$
     
       Then $\omega_0\in \F^1.$ Actually, $\omega_0$ is even square summable with respect to $m$. Moreover, we show $\partial\omega_0\in \F^1,$ which means that $\partial\partial$ can be applied to $\omega_0$. To see this, we need to check that for $v\in\Sigma_0$
    $$\sum_{\rho\succ v} m(\rho)\vert\partial\omega_0(\rho)\vert<\infty.$$
    However, only for $v_{0}= \lbrace 0\rbrace$ this sum has infinitely many terms so that we only need to check this case. For $n\geq 2$, we see that $\partial\omega_0(\rho_n) = 0$ and so
        $$\sum_{\rho\succ v_{0}}m(\rho)\vert\partial\omega_0(\rho)\vert = \sum_{n\in\mathbb{N}}m(\rho_n)\vert\partial\omega_0(\rho_n)\vert
     = m(\tau_1)\vert\omega_0(\tau_1)\vert = 1<\infty.$$
    One may even show that $\partial\omega_0$ is square summable with respect to $m,$ which is precisely what we will be interested in later on. Despite all this a similar calculation shows $$\partial\partial\omega_0(v_0) = 1.$$
\end{example}

\subsection{Relation to Coboundary Operators on Alternating Forms}\label{altern}

Traditionally, the coboundary operator associated to a simplicial complex is defined as a linear operator acting on alternating forms on oriented simplices. More precisely, given $\tau = \lbrace v_1,\ldots,v_{k+1}\rbrace\in\Sigma,$ one defines oriented simplices arising from $\tau$ as equivalence classes $[(v_{\pi(1)},\ldots,v_{\pi(k+1)})],$ where $\pi$ is a permutation. Two such classes are the same iff the respective permutations have the same parity. In particular, if $k\geq1$ then one always has exactly two oriented simplices associated to $\tau.$ 
With slight abuse of notation, we will denote the two equivalence classes by $[\tau]$ and $\overline{[\tau]}.$ For $v\in \Sigma_{0}$, there is only one equivalence class $[v]$. We denote by $\Sigma^{\mathrm{or}}$ the set of oriented simplices.

An alternating form $\omega$ then is a function $\Sigma^{\mathrm{or}}\to \mathbb{R}$ such that $\omega([\tau]) = -\omega(\overline{[\tau]})$ for all oriented simplices $[\tau].$ The coboundary operator $\Tilde{\delta}$ is then defined as 
\begin{equation}\label{cobform}
    \Tilde{\delta}\omega([v_1,\ldots,v_{k+1}]) = \sum_{i = 1}^{k+1}(-1)^{j+1}\omega([v_1,\ldots,\hat{v}_j,\ldots,v_{k+1}])\tag{$\ast$},
\end{equation}
where $\hat{v}_j$ means that the vertex $v_j$ is omitted. It is easy to show that $\Tilde{\delta}\Tilde{\delta} = 0.$ See \cite{HJ,HL,Parzanchevski} for more details.\\
In contrast to Definition \ref{coboundary} we consider oriented simplices and alternating forms instead of simplices in $\Sigma$ and functions acting on them. We show that both settings are essentially equivalent and differences are merely of a cosmetic nature. 

Let $\Omega(\Sigma^{\mathrm{or}})$ be the space of alternating forms. For each $\tau\in\Sigma$, pick one of the (at most) two associated oriented simplices $[\tau]$ and $\overline{[\tau]}$ and call it $\tau^{\mathrm{or}}.$ This way we can identify alternating forms with functions on $\Sigma.$ More precisely, the map 
$T:\Omega(\Sigma^{\mathrm{or}})\rightarrow \F(\Sigma)$ with
$$T\omega(\tau) = \omega(\tau^{\mathrm{or}})$$ is a bijection with inverse $$T^{-1}f(\tau^{\mathrm{or}}) = f(\tau)=-T^{-1}f(\overline{\tau^{\mathrm{or}}}).$$

\begin{theorem}The operator $T\Tilde{\delta} T^{-1}$ is a coboundary operator satisfying Definition~\ref{coboundary}. In particular, on every simplicial complex there is always such a coboundary operator. 
\end{theorem}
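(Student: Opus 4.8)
The plan is to verify directly that the operator $\widehat\delta := T\widetilde\delta T^{-1}$ satisfies each of the three defining properties of a coboundary operator from Definition~\ref{coboundary}: that $\widehat\delta 1_\tau(\sigma)\in\{-1,0,1\}$, that $\widehat\delta 1_\tau(\sigma)\neq 0$ iff $\tau\prec\sigma$, and that $\widehat\delta\widehat\delta=0$. Since $T$ is a bijection with the explicit inverse given above, all three properties can be read off from the formula \eqref{cobform} for $\widetilde\delta$ applied to the appropriate test functions.

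First I would compute $\widehat\delta 1_\tau$ explicitly. Fix $\tau\in\Sigma_{k-1}$ and $\sigma\in\Sigma$, and unwind the definition: $\widehat\delta 1_\tau(\sigma)=\widetilde\delta\bigl(T^{-1}1_\tau\bigr)(\sigma^{\mathrm{or}})$. Now $T^{-1}1_\tau$ is the alternating form supported on the two oriented simplices over $\tau$, taking values $\pm1$ there and $0$ elsewhere. Feeding this into the alternating-sum formula \eqref{cobform} for $\sigma=\{v_1,\dots,v_{k+1}\}$, every term in the sum vanishes unless the omitted-vertex face $\{v_1,\dots,\hat v_j,\dots,v_{k+1}\}$ equals $\tau$ as an unoriented simplex. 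This happens for at most one index $j$, and precisely when $\tau\prec\sigma$; in that case the single surviving term contributes $(-1)^{j+1}$ times the value $\pm1$ coming from the orientation comparison between the induced orientation and the chosen reference $\tau^{\mathrm{or}}$. This immediately yields both that the value lies in $\{-1,0,1\}$ and that it is nonzero exactly when $\tau\prec\sigma$, establishing the first two axioms. It also identifies $\theta(\tau,\sigma)$ concretely as the sign relating the induced orientation of $\tau$ inside $\sigma^{\mathrm{or}}$ to $\tau^{\mathrm{or}}$.

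For the relation $\widehat\delta\widehat\delta=0$, I would argue by conjugation rather than by recomputing signs. Since $T$ is a linear bijection, $\widehat\delta\widehat\delta = T\widetilde\delta T^{-1}T\widetilde\delta T^{-1}=T\widetilde\delta\widetilde\delta T^{-1}$, and the classical identity $\widetilde\delta\widetilde\delta=0$ (valid on all of $\Omega(\Sigma^{\mathrm{or}})$, as recalled after \eqref{cobform}) gives the result at once. The only subtlety to flag is that $T^{-1}$ indeed maps $\F(\Sigma)$ into $\Omega(\Sigma^{\mathrm{or}})$ and is a genuine two-sided inverse, so that the cancellation of $T^{-1}T$ in the middle is legitimate; this is exactly the bijectivity statement established just before the theorem, so no domain issues arise here (every operator involved is defined on all functions/forms).

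The main obstacle, such as it is, lies entirely in the bookkeeping of the first step: tracking how the arbitrary reference choice $\tau^{\mathrm{or}}$ interacts with the induced orientation of $\tau$ as a face of $\sigma^{\mathrm{or}}$, so as to confirm the value is genuinely in $\{-1,0,1\}$ independent of these choices. I expect this to be routine once one notes that changing the reference orientation $\tau^{\mathrm{or}}$ merely flips the sign of $\theta(\tau,\cdot)$ consistently, which affects neither the support condition nor the final identity $\widehat\delta\widehat\delta=0$. The concluding sentence of the theorem—that such a coboundary operator always exists—then follows simply because the reference choices $\tau\mapsto\tau^{\mathrm{or}}$ can always be made (the axiom of choice on a countable set), so $T$ and hence $\widehat\delta$ are always available.
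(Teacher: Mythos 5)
Your proposal is correct and fills in exactly the verification the paper leaves implicit (its proof is literally ``the proof is straight forward''): a direct check of the three axioms, with the first two read off from the single surviving term of the alternating-sum formula applied to $T^{-1}1_\tau$, and $\widehat\delta\widehat\delta=0$ obtained by conjugation from $\widetilde\delta\widetilde\delta=0$. No gaps; the domain/bijectivity point you flag is indeed the only thing to check, and it holds since $T$, $T^{-1}$, and $\widetilde\delta$ are everywhere defined.
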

\begin{proof}
    The proof is straight forward.
\end{proof}

It may be less obvious that every operator satisfying Definition~\ref{coboundary} gives rise to a coboundary operator as in (\ref{cobform}).\\
Let $[\tau] = [v_1,\ldots,v_{k+1}]$ be any oriented simplex and $\tau = \lbrace v_1,\ldots,v_{k+1}\rbrace$ the associated simplex in $\Sigma.$ Let $\tau_i = \lbrace v_1,\ldots,v_i\rbrace$ be the truncation of $\tau$ to the vertices $v_1,\ldots,v_i$, $1\le i\le k+1$. For the coboundary operator $\delta$ as in Definition \ref{coboundary} and $\theta$ correspondingly, we  define
$$h:\Sigma^{\mathrm{or}}\to \{\pm 1\},\qquad h([\tau]) = \prod_{i = 1}^{k}\theta(\tau_i,\tau_{i+1}).$$
The next lemma shows that $h$ is an alternating form and it is therefore independent of the choice of the representative of $[\tau]=[v_{1},\ldots,v_{k+1}],$ i.e. it is well-defined.

\begin{lemma}\label{h}
   The map $h$ is an alternating form. If $[\sigma] = [v_1,\ldots,v_{k+1}]$, $[\tau] = [v_1,\ldots,\hat{v}_i,\ldots,v_{k+1}],\sigma = \lbrace v_1,\ldots,v_{k+1}\rbrace$ and $\tau = \sigma\backslash\lbrace v_i\rbrace,$ then $$h([\sigma]) = (-1)^{k-i+1}h([\tau])\theta(\tau,\sigma).$$
\end{lemma}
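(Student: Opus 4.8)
The plan is to establish the alternating property first and then obtain the face identity as a short consequence. The central tool throughout is the relation
\[
\theta(\rho,\tau)\theta(\tau,\sigma)+\theta(\rho,\tau')\theta(\tau',\sigma)=0,
\]
valid for $\rho\prec\tau,\tau'\prec\sigma$ with $\tau\neq\tau'$ (so that $\rho\subset\sigma$ with $|\sigma\setminus\rho|=2$), which is precisely the consequence of $\delta\delta=0$ extracted in the proof of Lemma~\ref{pp0}. I read it as saying that along any such ``square'' the two products of $\theta$'s through the two intermediate simplices are negatives of one another.

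First I would show that $h$ changes sign under every adjacent transposition of the vertices of $[\tau]=[v_1,\dots,v_{k+1}]$. Since adjacent transpositions generate the symmetric group and each has sign $-1$, this yields $h(\pi\cdot[\tau])=\mathrm{sgn}(\pi)\,h([\tau])$ for all permutations $\pi$; in particular even permutations fix $h$ (well-definedness on $\Sigma^{\mathrm{or}}$) and odd ones flip it, i.e. $h([\tau])=-h(\overline{[\tau]})$, so $h$ is alternating. For the swap of $v_i$ and $v_{i+1}$ with $i\ge 2$, the only truncation that changes is $\tau_i$, which is replaced by $\tau_i'=\tau_{i-1}\cup\{v_{i+1}\}$, while $\tau_{i-1}$ and $\tau_{i+1}$ are fixed; hence all factors of $h$ other than $\theta(\tau_{i-1},\tau_i)\theta(\tau_i,\tau_{i+1})$ are unchanged, and the square relation with $\rho=\tau_{i-1}$, $\sigma=\tau_{i+1}$ and intermediates $\tau_i,\tau_i'$ turns this product into its negative. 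The remaining swap of $v_1$ and $v_2$ changes only the single factor $\theta(\{v_1\},\{v_1,v_2\})$ into $\theta(\{v_2\},\{v_1,v_2\})$; here I would apply the same relation with $\rho=\varnothing$ and $\sigma=\{v_1,v_2\}$, using that $\theta(\varnothing,\{v\})$ is the same for all $v$, to conclude $\theta(\{v_1\},\{v_1,v_2\})=-\theta(\{v_2\},\{v_1,v_2\})$.

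Granting the alternating property, the face identity is pure bookkeeping. Writing $w_1,\dots,w_k$ for the vertices of $\tau=\sigma\setminus\{v_i\}$ in their induced order, I would move $v_i$ to the last slot and look at $[\sigma'']=[w_1,\dots,w_k,v_i]$. The truncations of $\sigma''$ agree with those of $\tau$ up to level $k$, and $\sigma''_{k+1}=\sigma$, so peeling off the last factor of the defining product gives at once $h([\sigma''])=h([\tau])\,\theta(\tau,\sigma)$, with no appeal to the square relation. On the other hand $[\sigma'']$ arises from $[\sigma]=[v_1,\dots,v_{k+1}]$ by moving $v_i$ past $k+1-i$ vertices, i.e. by $k+1-i$ adjacent transpositions, so the alternating property gives $h([\sigma''])=(-1)^{k+1-i}h([\sigma])$. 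Combining the two identities yields $h([\sigma])=(-1)^{k-i+1}h([\tau])\theta(\tau,\sigma)$, as claimed.

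The one genuinely delicate step is the base case of the alternating property, namely the antisymmetry $\theta(\{v_1\},\{v_1,v_2\})=-\theta(\{v_2\},\{v_1,v_2\})$ on a single edge. Unlike the interior swaps, this is not forced by $\delta\delta=0$ among higher simplices alone — an ``isolated'' edge carries no such constraint — so it must come from the bottom square through $\varnothing$, which is why the normalization of $\theta(\varnothing,\cdot)$ (equivalently, the standard orientation at the $(-1)$-dimensional level) is exactly what pins it down. I expect this to be the main obstacle; everything else reduces to the square relation or to elementary permutation bookkeeping.
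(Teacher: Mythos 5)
Your proposal follows the paper's strategy almost verbatim: the square relation $\theta(\rho,\tau')\theta(\tau',\sigma)+\theta(\rho,\tau'')\theta(\tau'',\sigma)=0$ extracted from $\delta\delta 1_\rho(\sigma)=0$ handles adjacent transpositions, and the face identity is then obtained, exactly as in the paper, by moving $v_i$ to the last slot (sign $(-1)^{k+1-i}$) and peeling off the final factor $\theta(\tau,\sigma)$. For the swaps at positions $i\ge 2$ and for the permutation bookkeeping, your argument is correct and coincides in substance with the paper's proof.

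The problem is the step you yourself flag as delicate: the swap of $v_1$ and $v_2$. Your patch --- the square through $\rho=\varnothing$ together with the claim that $\theta(\varnothing,\lbrace v\rbrace)$ is the same for all $v$ --- is not available from Definition~\ref{coboundary}. First, $\varnothing$ need not belong to $\Sigma$ at all (it is discarded whenever $\sum_{v\in\Sigma_0}m(v)=\infty$), in which case there is no bottom square to invoke. Second, even when $\varnothing\in\Sigma$, no axiom forces $\theta(\varnothing,\cdot)$ to be constant, and the edge antisymmetry $\theta(u,e)=-\theta(v,e)$ you need is genuinely not derivable: on the complex generated by a single triangle $T=\{u,v,w\}$, take $\theta(u,\{u,v\})=\theta(v,\{u,v\})=\theta(u,\{u,w\})=\theta(v,\{v,w\})=\theta(w,\{v,w\})=1$, $\theta(w,\{u,w\})=-1$, $\theta(\{u,v\},T)=1$, $\theta(\{u,w\},T)=\theta(\{v,w\},T)=-1$, and (if $\varnothing\in\Sigma$) $\theta(\varnothing,u)=\theta(\varnothing,w)=1$, $\theta(\varnothing,v)=-1$; one checks that every relation $\delta\delta 1_\rho(\sigma)=0$ holds, yet $h([u,v,w])=h([v,u,w])=1$, so $h$ is not alternating and your claimed normalization fails. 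So, judged strictly from the stated axioms, this step is a genuine gap. To be fair, it is exactly the gap hidden in the paper's own proof: there the choice $\rho=\tau_{i-1}$ degenerates for $i=1$ to $\rho=\varnothing$, and the displayed factorization of $h$ then contains the factor $\theta(\varnothing,\tau_1)$, which is not a factor of $h$ (and requires $\varnothing\in\Sigma$). Your diagnosis of where the difficulty sits is therefore sharper than the paper's treatment, but neither argument closes it; a complete proof needs an additional hypothesis, e.g.\ that $\theta(v,e)=-\theta(v',e)$ for every edge $e=\{v,v'\}$ (which holds for the coboundary operators $T\Tilde{\delta}T^{-1}$ coming from alternating forms, and which, when $\varnothing\in\Sigma$, is equivalent to $\theta(\varnothing,\cdot)$ being constant), under which both your argument and the paper's go through.
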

\begin{proof}
    In order to show that $h$ is alternating, 
     we recall that whenever $\rho\prec\tau',\tau''\prec\sigma$ and $\tau'\neq\tau'',$ then 
    $$0 = \delta\delta 1_\rho(\sigma) = \theta(\rho,\tau')\theta(\tau',\sigma)+\theta(\rho,\tau'')\theta(\tau'',\sigma).$$ Choosing $\rho = \tau_{i-1},\tau' = \tau_i,\tau'' = \tau_{i+1}\backslash\lbrace v_{i}\rbrace,\sigma = \tau_{i+1}$, we observe that
    \begin{multline*}
        h([v_1,\ldots,v_i,v_{i+1},\ldots,v_{k+1}])  = \theta(\rho,\tau')\theta(\tau',\sigma)\cdot\prod_{\substack{j = 1\\j\neq i-1,i}}^{k}\theta(\tau_j,\tau_{j+1})\\
        = -\theta(\rho,\tau'')\theta(\tau'',\sigma)\cdot\prod_{\substack{j = 1\\j\neq i-1,i}}^{k}\theta(\tau_j,\tau_{j+1})
        = -h([v_1,\ldots,v_{i+1},v_i,\ldots,v_{k+1}]).
    \end{multline*}
    This shows that $h$ is alternating. In particular, this shows that $h$ is well-defined, i.e., independent of the choice of the representative of $[\tau]$.     
    With this at hand, the formula follows immediately
    \begin{equation*}
    	h([\sigma]) = (-1)^{k+1-i}h([v_1,\ldots,\hat{v}_i,\ldots,v_{k+1},v_i]) = (-1)^{k-i+1}h([\tau])\theta(\tau,\sigma).\hfill\qedhere
    \end{equation*}
\end{proof}
Given the form $h$, we define a map $U:\Omega(\Sigma^{\mathrm{or}})\rightarrow \F(\Sigma)$ as $$U\omega(\tau) = h([\tau])\omega([\tau]),$$ which is well defined and bijective.  The inverse is given by $$U^{-1}f([\tau]) = h([\tau])f(\tau).$$
\begin{theorem}
    Let $\delta$ be a coboundary operator as in Definition \ref{coboundary}. Then 
    $$\Tilde{\delta}\omega([v_1,\ldots,v_{k+1}]) = (-1)^k U^{-1}\delta U\omega([v_1,\ldots,v_{k+1}])$$  satisfies (\ref{cobform}).
\end{theorem}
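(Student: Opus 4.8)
The plan is to verify the claimed formula directly by unwinding the definitions of $U$, $U^{-1}$, and $\delta$, and then invoking Lemma~\ref{h} to match the result against the explicit expression~(\ref{cobform}). First I would fix an oriented simplex $[v_1,\ldots,v_{k+1}]$ with associated simplex $\sigma=\{v_1,\ldots,v_{k+1}\}\in\Sigma$ and compute $U^{-1}\delta U\omega([v_1,\ldots,v_{k+1}])$ from the inside out. Applying the definition of $U^{-1}$ gives a factor $h([\sigma])$ times $\delta(U\omega)(\sigma)$, and expanding $\delta$ via the formula $\delta g(\sigma)=\sum_{\tau\prec\sigma}\theta(\tau,\sigma)g(\tau)$ with $g=U\omega$ turns this into
\begin{equation*}
    U^{-1}\delta U\omega([\sigma]) = h([\sigma])\sum_{\tau\prec\sigma}\theta(\tau,\sigma)\,h([\tau])\omega([\tau]),
\end{equation*}
where for each face $\tau=\sigma\backslash\{v_i\}$ the orientation $[\tau]$ is understood to be the one induced by deleting $v_i$ from $[v_1,\ldots,v_{k+1}]$, namely $[\tau]=[v_1,\ldots,\hat v_i,\ldots,v_{k+1}]$.

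Next I would substitute the key identity from Lemma~\ref{h}, which reads $h([\sigma])=(-1)^{k-i+1}h([\tau])\theta(\tau,\sigma)$ for this pairing of $\sigma$ and $\tau$. Multiplying through, the product $h([\sigma])h([\tau])\theta(\tau,\sigma)$ collapses: since $\theta(\tau,\sigma)^2=1$ and $h([\tau])^2=1$ (as $h$ takes values in $\{\pm1\}$), the term for the $i$-th face simplifies to $(-1)^{k-i+1}\omega([\tau])$. Summing over the $k+1$ faces indexed by $i$ then yields
\begin{equation*}
    U^{-1}\delta U\omega([v_1,\ldots,v_{k+1}]) = \sum_{i=1}^{k+1}(-1)^{k-i+1}\omega([v_1,\ldots,\hat v_i,\ldots,v_{k+1}]).
\end{equation*}
Multiplying by the prefactor $(-1)^k$ converts the sign $(-1)^{k-i+1}$ into $(-1)^{2k-i+1}=(-1)^{-i+1}=(-1)^{i+1}$, which is exactly the sign appearing in~(\ref{cobform}). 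This matches $\Tilde\delta$ term by term, completing the identification.

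The only genuine subtlety, and the step I would be most careful about, is bookkeeping the orientations: the face $\tau=\sigma\backslash\{v_i\}$ must be assigned the orientation $[v_1,\ldots,\hat v_i,\ldots,v_{k+1}]$ rather than some arbitrary representative, and it is precisely Lemma~\ref{h} in the stated form (with the factor $(-1)^{k-i+1}$ tied to removing $v_i$ from the end after transposing it past $k+1-i$ vertices) that guarantees the signs line up. Because $h$ is already shown to be a well-defined alternating form, the sum is independent of representatives, so no further consistency check is needed; the remaining work is the elementary sign arithmetic indicated above, which is why the authors can legitimately regard this computation as routine.
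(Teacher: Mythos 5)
Your proposal is correct, and it is exactly the argument the paper has in mind: the paper's proof is the one-line remark that the statement ``is a direct application of Lemma~\ref{h},'' and your computation—expanding $U^{-1}\delta U$, choosing the induced orientations $[v_1,\ldots,\hat v_i,\ldots,v_{k+1}]$ for the faces, substituting $h([\sigma])=(-1)^{k-i+1}h([\tau])\theta(\tau,\sigma)$, and using $h^2=\theta^2=1$ to collapse the signs—is precisely that application, carried out in detail. Your observation that the terms $h([\tau])\omega([\tau])$ are independent of the choice of representative (both factors being alternating) correctly disposes of the only consistency issue.
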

\begin{proof}
    The proof is a direct application of Lemma \ref{h}.
\end{proof}

Our final observation is that $U$ is unitary when restricted to the  Hilbert space
$$\Omega^2(\Sigma^{\mathrm{or}}) = \Omega^2(\Sigma^{\mathrm{or}},m^{\mathrm{or}}) = \lbrace\omega\in\Omega(\Sigma^{\mathrm{or}})\mid \sum_{[\tau]\in\Sigma^{\mathrm{or}}}m^{\mathrm{or}}([\tau])\vert\omega([\tau])\vert^2<\infty\rbrace,$$
where the weight is given by $m^{\mathrm{or}}([\tau]) = m(\tau)\slash 2$ for $\tau\in\Sigma\setminus \Sigma_0$ and $m^{\mathrm{or}}([v]) = m(v)$ if $v\in\Sigma_0.$ Indeed, $U$ maps $\Omega^{2}(\Sigma^{\mathrm{or}})$
 onto  $\ell^2(\Sigma,m)= \lbrace\varphi\in \F\mid \sum_{\Sigma}m\varphi^2<\infty\rbrace.$ The fact that the restriction of $U$ to $\Omega^2(\Sigma^{\mathrm{or}})$ is unitary is now obvious, as it is in essence multiplication with a sign.
In consequence it can be shown that suitable restrictions of $\Tilde{\delta}$ and $\delta$ are unitarily equivalent by the theorem above.

\subsection{Stokes's Theorem and Green's Formulas}\label{stokes}

It is well known that for finite simplicial complexes the boundary and coboundary operators are adjoints of each other. In this section, we provide a first treatment of this phenomenon in the infinite setting, which can be understood as a discrete version of Stokes' theorem. 

\begin{theorem}[Stokes' theorem]\label{adj1}
    For $\omega\in \F^c$ and $\eta\in \F$ or $\omega\in \F^1$ and $\eta\in \F^c,$ 
    $$\sum_{\Sigma}m(\partial\omega)\eta =\sum_{\Sigma}m\omega(\delta\eta).$$
\end{theorem}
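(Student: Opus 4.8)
The plan is to expand both sides into iterated sums over incidence pairs $\rho\prec\tau$ and observe that, after relabelling, they are literally the same double sum; the only genuine work is to justify the interchange of the order of summation, i.e.\ absolute convergence, separately in the two cases. First I would record that in both cases $\partial\omega$ is defined: for $\omega\in\F^c\subseteq\F^1$ by Lemma~\ref{sammellemma1}, and for $\omega\in\F^1=D(\partial)$ by hypothesis.

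Next I would unfold the definitions. On the left, the weight $m(\rho)$ in the measure cancels the factor $1/m(\rho)$ in the definition of $\partial$, giving
$$\sum_{\Sigma}m(\partial\omega)\eta=\sum_{\rho\in\Sigma}\sum_{\tau\succ\rho}m(\tau)\omega(\tau)\theta(\rho,\tau)\eta(\rho),$$
while on the right, using $\delta\eta(\sigma)=\sum_{\tau\prec\sigma}\eta(\tau)\theta(\tau,\sigma)$,
$$\sum_{\Sigma}m\omega(\delta\eta)=\sum_{\sigma\in\Sigma}\sum_{\tau\prec\sigma}m(\sigma)\omega(\sigma)\theta(\tau,\sigma)\eta(\tau).$$
Relabelling the summation indices (upper simplex $\tau$ resp.\ $\sigma$, lower simplex $\rho$ resp.\ $\tau$), both are the single double sum
$$S=\sum_{\substack{\rho,\tau\in\Sigma\\ \rho\prec\tau}}m(\tau)\omega(\tau)\theta(\rho,\tau)\eta(\rho)$$
over all incidence pairs $\rho\prec\tau$, the first expression summing the lower simplex outside and the upper simplex inside, the second in the reverse order. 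Thus the theorem reduces to showing that $S$ converges absolutely, for then Fubini's theorem forces the two orders of summation to agree.

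For $\omega\in\F^c$ and $\eta\in\F$, the sum $S$ has only finitely many nonzero terms: $\omega(\tau)\neq 0$ forces $\tau$ into the finite support of $\omega$, and each such $\tau$ has only finitely many faces $\rho\prec\tau$; absolute convergence is then immediate. For $\omega\in\F^1$ and $\eta\in\F^c$, I would instead estimate, using $|\theta|\le 1$,
$$\sum_{\substack{\rho,\tau\in\Sigma\\ \rho\prec\tau}}m(\tau)|\omega(\tau)||\theta(\rho,\tau)||\eta(\rho)|\le\sum_{\rho\in\operatorname{supp}\eta}|\eta(\rho)|\sum_{\tau\succ\rho}m(\tau)|\omega(\tau)|,$$
which is a finite sum (finite support of $\eta$) of finite quantities, since $\omega\in\F^1$ guarantees $\sum_{\tau\succ\rho}m(\tau)|\omega(\tau)|<\infty$ for each fixed $\rho$.

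The main, and essentially only, obstacle is exactly this absolute-convergence bookkeeping in the non-locally finite setting: a fixed lower simplex $\rho$ may possess infinitely many cofaces $\tau\succ\rho$, so the reordering of the two sums is not automatic and must be licensed either by the finiteness of the supports or by the membership $\omega\in\F^1$. Once absolute convergence of $S$ is established, the asserted identity is a one-line application of Fubini, and the cancellation of $1/m(\rho)$ against $m(\rho)$ in the measure is precisely what makes the two iterated sums coincide term by term.
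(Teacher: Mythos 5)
Your proposal is correct and takes essentially the same route as the paper: the paper's own proof is precisely this Fubini argument, stated in one line by asserting absolute convergence of the dominating double sum over incidence pairs, which holds in the first case because $\omega\in\F^c$ has finite support and each simplex has finitely many faces, and in the second because $\eta\in\F^c$ has finite support and $\omega\in\F^1$ controls each inner sum $\sum_{\tau\succ\rho}m(\tau)|\omega(\tau)|$. You have simply made explicit the bookkeeping that the paper leaves implicit.
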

\begin{proof}
    The statement follows from Fubini's theorem as in either case the sum
    $\sum_{\sigma}\sum_{\tau\prec\sigma} m(\tau)\vert\omega(\tau)\vert\vert\eta(\sigma)\vert$ 
    converges.    \end{proof}

\begin{corollary}\label{adj2}
    Let $\omega\in \F^c$ and $\eta\in \F$ or $\omega\in \F^2$ and $\eta\in \F^c.$ Then 
    $$ \sum_{\Sigma}m(\partial \omega)(\delta \eta) =0.$$
\end{corollary}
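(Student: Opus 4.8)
The plan is to reduce the statement to the relation $\partial\partial=0$ established in Lemma~\ref{pp0} by a single application of the discrete Stokes' theorem, Theorem~\ref{adj1}. Concretely, I would apply Theorem~\ref{adj1} not to the pair $(\omega,\eta)$ but to the pair $(\partial\omega,\eta)$, so that the identity $\sum_{\Sigma} m(\partial\alpha)\beta=\sum_{\Sigma} m\alpha(\delta\beta)$ with $\alpha=\partial\omega$ and $\beta=\eta$ reads
$$\sum_{\Sigma} m(\partial\partial\omega)\eta=\sum_{\Sigma} m(\partial\omega)(\delta\eta).$$
Once this identity is in place, the right-hand side is exactly the sum we wish to control, and it therefore suffices to know that $\partial\partial\omega=0$.

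For this to be legitimate I first have to check that the hypotheses of Theorem~\ref{adj1} are met by $(\partial\omega,\eta)$ in each of the two cases, and here Lemma~\ref{sammellemma1} does the bookkeeping. In the first case $\omega\in\F^c$, so $\partial\omega\in\F^c$ by $\partial\F^c\subseteq\F^c$, and together with $\eta\in\F$ the pair $(\partial\omega,\eta)$ falls under the first hypothesis of Theorem~\ref{adj1}. In the second case $\omega\in\F^2$, so $\partial\omega\in\F^1=D(\partial)$ by $\partial\F^2\subseteq\F^1$, and together with $\eta\in\F^c$ the pair falls under the second hypothesis. In both situations $\partial\omega\in\F^1$, so $\partial\partial\omega$ is defined; moreover $\omega\in\F^2$ throughout (using $\F^c\subseteq\F^2$), so Lemma~\ref{pp0} yields $\partial\partial\omega=0$, whence the displayed right-hand side vanishes.

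The only real subtlety, and the reason I route the computation through $\partial\partial\omega$ rather than through $\delta\delta\eta$, is that shifting the $\delta$ off $\eta$ instead would require $\delta\eta\in\F^c$ in order to invoke Theorem~\ref{adj1} in the second case, and this fails on non-locally finite complexes, where $\delta\eta$ can have infinite support even for $\eta\in\F^c$ (a simplex may have infinitely many cofaces). Shifting the $\partial$ avoids this entirely and handles both cases by the same one-line argument, so I do not expect any serious obstacle beyond the careful matching of function spaces described above.
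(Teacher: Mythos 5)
Your proof is correct and follows essentially the same route as the paper: both reduce the claim to Stokes' theorem (Theorem~\ref{adj1}) combined with the vanishing of a double composition, using the inclusions $\partial\F^c\subseteq\F^c$ and $\partial\F^2\subseteq\F^1$ from Lemma~\ref{sammellemma1} to justify the application. The only cosmetic difference is that you route both cases through $\partial\partial\omega=0$ (Lemma~\ref{pp0}), whereas the paper invokes $\delta\delta\eta=0$ for the case $\omega\in\F^c$, $\eta\in\F$ and reserves $\partial\partial\omega=0$ for the case $\omega\in\F^2$, $\eta\in\F^c$; your closing remark about why the second case cannot be handled via $\delta\delta\eta$ (since $\delta\eta$ need not have finite support on non-locally finite complexes) is precisely the reason the paper needs Lemma~\ref{pp0} there.
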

\begin{proof}
     By Lemma~\ref{sammellemma1}, $\partial \F^c\subseteq \F^c$ as well as $\partial \F^2\subseteq\F^1.$
    The statement now follows from Theorem \ref{adj1} and Lemma \ref{pp0} as either $\delta\delta\eta = 0$ or $\partial\partial\omega = 0.$
\end{proof}

The  Green's formulas below will be of major importance in the following.

\begin{theorem}[Green's formulas]\label{green1}
	\begin{itemize}
		\item [(a)] If $\omega\in \F^1$ and $\eta\in \F^c,$ then
		$$\sum_{\Sigma}m(\delta\partial\omega)\eta= \sum_{\Sigma}m(\partial\omega)(\partial\eta)= \sum_{\Sigma}m\omega(\delta\partial\eta).$$   
		\item [(b)] If $\omega\in D(\partial\delta) = \lbrace\varphi\in \F\mid\delta\omega\in \F^1\rbrace$ and $\eta\in \F^c,$ then 
		$$\sum_{\Sigma}m(\partial\delta\omega)\eta= \sum_{\Sigma}m(\delta\omega)(\delta\eta).$$
		\item [(c)] If $\omega\in F^{+}= \lbrace\varphi\in \F\mid\sum\limits_{\sigma\succ\tau}m(\sigma)\sum\limits_{\tau'\prec\sigma}\vert\varphi(\tau')\vert<\infty,~\tau\in\Sigma\rbrace$ and $\eta\in \F^c,$ then
		$$\sum_{\Sigma}m(\partial\delta\omega)\eta= \sum_{\Sigma}m\omega(\partial\delta\eta).$$
	\end{itemize}  
\end{theorem}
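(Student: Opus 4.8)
The plan is to obtain (a) and (b) directly from Stokes' theorem (Theorem~\ref{adj1}), so that all the genuine work is concentrated in (c).

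For (a), first note that $\omega\in\F^{1}=D(\partial)$ makes $\partial\omega\in\F$ and hence $\delta\partial\omega\in\F$ well defined. For the first equality I would apply Theorem~\ref{adj1} in the form ``$a\in\F^{c}$, $b\in\F$'' with $a=\eta$ and $b=\partial\omega$, which reads $\sum_{\Sigma}m(\partial\eta)(\partial\omega)=\sum_{\Sigma}m\,\eta(\delta\partial\omega)$; after commuting the pointwise products this is precisely $\sum_{\Sigma}m(\delta\partial\omega)\eta=\sum_{\Sigma}m(\partial\omega)(\partial\eta)$. For the second equality I would apply Theorem~\ref{adj1} in the form ``$a\in\F^{1}$, $b\in\F^{c}$'' with $a=\omega$ and $b=\partial\eta$, where the admissibility $\partial\eta\in\F^{c}$ is supplied by the closure relation $\partial\F^{c}\subseteq\F^{c}$ from Lemma~\ref{sammellemma1}. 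This yields $\sum_{\Sigma}m(\partial\omega)(\partial\eta)=\sum_{\Sigma}m\,\omega(\delta\partial\eta)$, completing (a). Part (b) is even more immediate: since $\omega\in D(\partial\delta)$ means $\delta\omega\in\F^{1}$, applying Theorem~\ref{adj1} with $a=\delta\omega\in\F^{1}$ and $b=\eta\in\F^{c}$ gives $\sum_{\Sigma}m(\partial\delta\omega)\eta=\sum_{\Sigma}m(\delta\omega)(\delta\eta)$ at once.

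For (c), I would first observe that $F^{+}\subseteq D(\partial\delta)$: for $\omega\in F^{+}$ the pointwise bound $|\delta\omega(\tau)|\le\sum_{\tau'\prec\tau}|\omega(\tau')|$ gives $\sum_{\tau\succ\rho}m(\tau)|\delta\omega(\tau)|\le\sum_{\tau\succ\rho}m(\tau)\sum_{\tau'\prec\tau}|\omega(\tau')|<\infty$ for every $\rho$, so $\delta\omega\in\F^{1}$ and $\partial\delta\omega$ is defined. Hence part (b) already yields $\sum_{\Sigma}m(\partial\delta\omega)\eta=\sum_{\Sigma}m(\delta\omega)(\delta\eta)$, and it remains only to prove the single identity $\sum_{\Sigma}m(\delta\omega)(\delta\eta)=\sum_{\Sigma}m\,\omega(\partial\delta\eta)$.

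The crux, and the main obstacle, is that I cannot simply invoke Stokes a second time to strip the $\delta$ off $\eta$: that would require $\delta\eta\in\F^{c}$ or $\omega\in\F^{c}$, and in a non-locally finite complex $\delta\eta$ typically fails to have finite support while $\omega$ ranges over all of $F^{+}$. So I would argue by absolute convergence. Expanding the left-hand side gives a sum over triples $(\rho,\tau,\tau')$ with $\rho,\tau'\prec\tau$,
$$\sum_{\Sigma}m(\delta\omega)(\delta\eta)=\sum_{\tau}\sum_{\tau'\prec\tau}\sum_{\rho\prec\tau}m(\tau)\,\theta(\tau',\tau)\,\theta(\rho,\tau)\,\omega(\tau')\,\eta(\rho).$$
Since $|\theta|\le 1$ and $\eta$ has finite support, grouping the absolute values by $\rho$ bounds the total by $\sum_{\rho}|\eta(\rho)|\sum_{\tau\succ\rho}m(\tau)\sum_{\tau'\prec\tau}|\omega(\tau')|$, which is finite precisely by the defining condition of $F^{+}$. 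This absolute convergence lets me regroup the same triple sum by $\tau'$, collapsing the inner $\rho$-sum into $\delta\eta(\tau)$ and then the $\tau$-sum into $m(\tau')\partial\delta\eta(\tau')$, which reassembles exactly as $\sum_{\Sigma}m\,\omega(\partial\delta\eta)$. Thus the whole difficulty of (c) is the convergence bookkeeping: recognizing that the $F^{+}$ hypothesis is exactly what makes the governing triple sum absolutely convergent, so that Fubini can replace the unavailable second Stokes step, after which the identity is a pure rearrangement.
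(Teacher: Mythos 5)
Your proposal is correct and follows essentially the same route as the paper: parts (a) and (b) are read off from Stokes' theorem (Theorem~\ref{adj1}) together with $\partial\F^c\subseteq\F^c$ from Lemma~\ref{sammellemma1}, and part (c) is Fubini's theorem, with the triangle inequality giving $F^+\subseteq D(\partial\delta)$. Your detour through (b) before the Fubini rearrangement in (c) is only a cosmetic reorganization; the convergence bound via the $F^+$ condition is exactly the paper's argument.
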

\begin{proof}
Statement $ (a)$ and $(b)$ follow from Theorem \ref{adj1} using $\partial\eta\in \F^c$ whenever $\eta\in \F^c$ by Lemma~\ref{sammellemma1}. Statement $(c)$ follows from Fubini's theorem, where $\omega\in D(\partial\delta)$ is implied by the triangle inequality.
\end{proof}

\section{Quadratic Forms and Laplacians}\label{quadform}

In this section, we introduce Laplacians on simplicial complexes. We pursue this goal via closed and positive quadratic forms arising from $\delta$, $\partial$ and $\delta+\partial$ and obtain Laplacians as their corresponding self-adjoint operators. We then study their action and find out that they are restrictions of $\partial\delta$, $\delta\partial$ and $(\delta+\partial)^{2}.$ The latter coincides with $\partial\delta+\delta\partial$ on a certain subspace. In general there is no unique self-adjoint realization of these operators and we discuss in particular the so-called Dirichlet and Neumann Laplacian as the two most significant cases. Furthermore, we introduce a Laplacian which naturally arises in the context of Hilbert complexes.

\subsection{General quadratic forms and local summability}\label{summable}
For $\omega\in \F$, we consider
$\|\omega\|=\left(\sum_{\Sigma}m\omega^{2}\right)^{1/2}.$
This defines a norm on   $$\ell^{2}=\ell^2(\Sigma,m) = \lbrace\omega\in \F\mid \|\omega\|<\infty\rbrace,$$
which is a Hilbert space with scalar product $\langle\omega,\eta\rangle = \sum_{\Sigma}m\omega\eta$, $\omega,\eta\in \ell^{2}$.

The Laplacians will be introduced by virtue of their quadratic forms, which we first define on their maximal domains of definition beyond $\ell^{2}$.
Let
\begin{align*}
  \Q^+:\F\rightarrow[0,\infty],&&  \Q^+(\omega) &=\|\delta\omega\|^{2}= \sum_{\Sigma}m\vert\delta\omega\vert^2,\\
  \Q^-:\F^{1}\rightarrow[0,\infty],&&  \Q^-(\omega) &=\|\partial\omega\|^{2}= \sum_{\Sigma}m\vert\partial\omega\vert^2,\\
 \Q^H:\F^{1}\rightarrow[0,\infty],&&   \Q^H(\omega) &=\|\delta\omega+\partial\omega\|^{2}= \sum_{\Sigma}m\vert\delta\omega+\partial\omega\vert^2.
\end{align*}
Throughout the rest of this paper, $\bullet$  will stand for either $+,-$ or $H$, i.e.,
\begin{equation*}
	\bullet\in \{+,-,H\}.
\end{equation*} We define the spaces of functions of finite energy as $$\mathcal{D}^\bullet =\mathcal{D}^\bullet (\Sigma,m)= \lbrace \omega\in \F\mid\Q^\bullet(\omega)<\infty\rbrace,$$
which is a vector space due to the Cauchy-Schwarz inequality.

From this point on, we will always assume that the weight $m$ is \textit{locally summable}, i.e., for all $\tau\in\Sigma$, we have
$$\sum_{\sigma\succ\tau}m(\sigma)<\infty.$$

This has several important consequences that we summarize as follows.

\begin{lemma}[Locally summable]\label{sammellemma2}
    The weight $m$ is locally summable iff $\F^c\subseteq\mathcal{D}^+.$
    Moreover, if $m$ is locally summable, then the following inclusions hold:
    \begin{itemize}
        \item[(a)] $\F^c\subseteq\mathcal{D}^\bullet$.
        \item[(b)] $\ell^2\subseteq \F^1$.
        \item[(c)] $\F^c\subseteq D(\partial\delta)$.
        \item[(d)] $\delta \F^c\subseteq \F^2$.
    \end{itemize} 
\end{lemma}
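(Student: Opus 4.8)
The plan is to reduce every claim to the characteristic functions $1_{\tau}$, $\tau\in\Sigma$, which span $\F^c$. Since $\F^1$ and $\F^2$ are vector spaces (closed under addition by the triangle inequality applied to their defining sums), $\mathcal{D}^\bullet$ is a vector space by Cauchy--Schwarz as already noted, and $D(\partial\delta)=\{f\in\F\mid \delta f\in\F^1\}$ is a vector space because $\delta$ is linear and $\F^1$ is a vector space, it suffices in each case to check the asserted membership for $\omega=1_\tau$ and then extend by linearity. The two explicit computations that drive everything are $\delta1_\tau(\sigma)=\theta(\tau,\sigma)$, which is supported on the cofaces $\sigma\succ\tau$, and $\partial1_\tau(\rho)=\tfrac{m(\tau)}{m(\rho)}\theta(\rho,\tau)$ for $\rho\prec\tau$ and $0$ otherwise (legitimate because $1_\tau\in\F^1$ by Lemma~\ref{sammellemma1}).

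For the equivalence and for the $\bullet=+$ case of (a), I would note that $|\theta(\tau,\sigma)|=1$ exactly when $\sigma\succ\tau$, so $\Q^+(1_\tau)=\sum_{\sigma\succ\tau}m(\sigma)$. This single identity yields both directions at once: finiteness of these sums for all $\tau$ is precisely local summability, and, via linearity, precisely $\F^c\subseteq\mathcal{D}^+$. For $\bullet=-$ one computes $\Q^-(1_\tau)=m(\tau)^2\sum_{\rho\prec\tau}1/m(\rho)$, a \emph{finite} sum because a simplex has only finitely many faces; hence $\F^c\subseteq\mathcal{D}^-$ (this part does not even require local summability). The case $\bullet=H$ then follows from Minkowski's inequality, since under local summability $\delta1_\tau,\partial1_\tau\in\ell^2$ gives $\|\delta1_\tau+\partial1_\tau\|\le\|\delta1_\tau\|+\|\partial1_\tau\|<\infty$.

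For (b) I would estimate, for $\omega\in\ell^2$ and fixed $\tau$, by Cauchy--Schwarz
\[
\sum_{\sigma\succ\tau}m(\sigma)|\omega(\sigma)|\le\Big(\sum_{\sigma\succ\tau}m(\sigma)\Big)^{1/2}\Big(\sum_{\sigma\succ\tau}m(\sigma)|\omega(\sigma)|^2\Big)^{1/2}\le\Big(\sum_{\sigma\succ\tau}m(\sigma)\Big)^{1/2}\|\omega\|,
\]
where the first factor is finite by local summability, so $\omega\in\F^1$. For (d) I would again take $\omega=1_\tau$ and test the defining condition of $\F^2$ on $\eta=\delta1_\tau$, which is supported on cofaces of $\tau$. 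The combinatorial input, already isolated in the proof of Lemma~\ref{pp0}, is that for fixed $\rho$ and $\sigma$ there are at most two $\alpha$ with $\rho\prec\alpha\prec\sigma$; after restricting the double sum to the support $\sigma\succ\tau$ this gives
\[
\sum_{\sigma\succ\alpha\succ\rho}m(\sigma)|\eta(\sigma)|\le 2\sum_{\sigma\succ\tau}m(\sigma)<\infty,
\]
and a simpler variant of the same estimate shows $\eta\in\F^1$; hence $\delta1_\tau\in\F^2$. Part (c) is then immediate: since $\delta1_\tau\in\F^2\subseteq\F^1=D(\partial)$ by Lemma~\ref{sammellemma1}, we have $1_\tau\in D(\partial\delta)$.

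The only step that is not a one-line computation is (d): one must organize the sum over chains $\rho\prec\alpha\prec\sigma$ carefully, restrict to the cofaces of $\tau$ on which $\eta=\delta1_\tau$ is supported, and recognize that the number of admissible intermediate faces $\alpha$ is bounded by two, so that local summability closes the estimate. Everything else reduces to the two explicit formulas for $\delta1_\tau$ and $\partial1_\tau$, Cauchy--Schwarz, and the finiteness of the face set of a simplex.
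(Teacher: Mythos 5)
Your proof is correct and takes essentially the same approach as the paper's: the identity $\Q^{+}(1_{\tau})=\sum_{\sigma\succ\tau}m(\sigma)$ plus linearity for the equivalence and for (a), Cauchy--Schwarz for (b), and the ``at most two intermediate faces $\alpha$ with $\rho\prec\alpha\prec\sigma$'' count for (d). The only (harmless) deviation is that you deduce (c) from (d) via $\delta\F^{c}\subseteq\F^{2}\subseteq\F^{1}=D(\partial)$, whereas the paper deduces it from (a) and (b) via $\delta\F^{c}\subseteq\ell^{2}\subseteq\F^{1}$; both are immediate given the rest.
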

\begin{proof}
    Let $\tau\in\Sigma$ be fixed. Then $$\Q^{+}(1_{\tau}) =\sum_{\sigma\in \Sigma}m(\sigma)\vert\delta1_{\tau}(\sigma)\vert^2 = \sum_{\sigma\succ \tau}m(\sigma),$$
    which is finite for all choices of $\tau$ iff $m$ is locally summable. This shows the desired equivalence as well as $(a)$ for $\bullet = +.$ The rest of $(a)$ follows from Lemma \ref{sammellemma1} as $\partial \F^c\subseteq \F^c\subseteq\ell^{2}$ so $\F^{c}\subseteq\mathcal{D}^-$ and, therefore, $\F^{c}\subseteq \mathcal{D}^{+}   \cap\mathcal{D}^{-}\subseteq \mathcal{D}^{H}$ which holds by the triangle inequality.\\
    To see $(b)$, let $\omega\in\ell^2.$ Then, for $\tau\in\Sigma$, we see with the Cauchy-Schwarz inequality
    $$\sum_{\sigma\succ \tau}m(\sigma)\vert\omega(\sigma)\vert\leq \Vert\omega\Vert\left(\sum_{\sigma\succ \tau}m(\sigma)\right)^{1\slash 2},$$
    which is finite by the local summability assumption. Hence, $\omega\in \F^{1}$.\\
  Statement  $(c)$ follows from $(a)$ and $(b)$ because $(a)$ implies that $\delta \F^c\subseteq \ell^2.$\\
    To see $(d)$ we fix $\rho,\tau_0\in\Sigma.$ Then
    \begin{multline*}
   \sum_{\sigma\succ \tau\succ\rho}m(\sigma)\vert\delta 1_{\tau_0}(\sigma)\vert = \sum_{\sigma\succ \tau\succ\rho} m(\sigma)1_{\tau_0\prec\sigma} = \sum_{\sigma\succ\tau_0}m(\sigma)\sum_{\tau}1_{\sigma\succ \tau\succ\rho}\\
    \leq 2\sum_{\sigma\succ\tau_0}m(\sigma)<\infty.
     \end{multline*}
    Here, 
     that for fixed $\rho,\sigma\in\Sigma$ there are at most two
     $\tau\in\Sigma$ such that $\sigma\succ \tau\succ\rho.$
    \end{proof}
  
  We introduce an even stronger condition. We say $m$ is \emph{strongly locally summable} if it is locally summable and for all $\rho\in \Sigma$      $$\sum_{\tau\succ \rho}\sum_{\sigma\succ\tau}m(\sigma)=\sum_{\sigma\succ\tau\succ \rho}m(\sigma)<\infty.$$ 
  \begin{lemma}[Strongly locally summable]\label{sammellemma3} If $m$ is strongly locally summable, then
  	$$\ell^2\subseteq\F^2.$$
  	In particular, $\partial\partial=0$ on $\ell^{2}$.
  \end{lemma}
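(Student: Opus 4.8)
The plan is to verify the two defining conditions of $\F^2$ for an arbitrary $\omega\in\ell^2$, and then invoke Lemma~\ref{pp0}. Since strong local summability implies local summability, Lemma~\ref{sammellemma2}(b) already yields $\ell^2\subseteq\F^1$, so the membership $\omega\in\F^1$ comes for free. It therefore remains to show that for every $\rho\in\Sigma$
$$\sum_{\sigma\succ\tau\succ\rho}m(\sigma)\vert\omega(\sigma)\vert<\infty.$$

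First I would apply the Cauchy--Schwarz inequality to this double sum over the pairs $(\tau,\sigma)$ with $\rho\prec\tau\prec\sigma$, splitting the summand as $m(\sigma)\vert\omega(\sigma)\vert = m(\sigma)^{1/2}\vert\omega(\sigma)\vert\cdot m(\sigma)^{1/2}$. Since all terms are nonnegative, Tonelli's theorem makes the double sum well defined irrespective of the order of summation, and Cauchy--Schwarz bounds it by the product
$$\left(\sum_{\sigma\succ\tau\succ\rho}m(\sigma)\vert\omega(\sigma)\vert^2\right)^{1/2}\left(\sum_{\sigma\succ\tau\succ\rho}m(\sigma)\right)^{1/2}.$$
The second factor is finite precisely by the hypothesis of strong local summability.

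For the first factor I would use the combinatorial fact already exploited in Lemmas~\ref{sammellemma1} and~\ref{pp0}: for fixed $\rho$ and $\sigma$ there are at most two $\tau$ with $\rho\prec\tau\prec\sigma$. Hence each $\sigma$ contributes at most twice to the double sum, so that
$$\sum_{\sigma\succ\tau\succ\rho}m(\sigma)\vert\omega(\sigma)\vert^2\leq 2\sum_{\Sigma}m\vert\omega\vert^2 = 2\|\omega\|^2<\infty,$$
using $\omega\in\ell^2$. This establishes $\omega\in\F^2$ and hence $\ell^2\subseteq\F^2$.

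The ``in particular'' statement then follows immediately: by Lemma~\ref{sammellemma1} we have $\partial\F^2\subseteq\F^1=D(\partial)$, so $\partial\partial$ is defined on $\F^2\supseteq\ell^2$, and by Lemma~\ref{pp0} it vanishes there. There is no real obstacle in this argument; the only point requiring a little care is keeping the first Cauchy--Schwarz factor under control through the ``at most two faces'' count, which is exactly what prevents that factor from degenerating and reduces it to the $\ell^2$-norm of $\omega$.
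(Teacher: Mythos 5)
Your proposal is correct and follows essentially the same route as the paper: Cauchy--Schwarz combined with the fact that at most two $\tau$ satisfy $\rho\prec\tau\prec\sigma$, with strong local summability controlling the factor $\sum_{\sigma\succ\tau\succ\rho}m(\sigma)$, and Lemma~\ref{pp0} giving the ``in particular'' statement. Whether one applies Cauchy--Schwarz over the pairs $(\tau,\sigma)$ as you do, or first collapses the multiplicity into a single sum over $\sigma$ as the paper does, is immaterial — both yield the identical bound $2\Vert\omega\Vert^2\sum_{\sigma\succ\tau\succ\rho}m(\sigma)$.
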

  \begin{proof}
  	Let $\rho\in\Sigma$ and $\omega\in\ell^2.$ By the Cauchy-Schwarz inequality and the fact that for each $\sigma$ there are at most two $\tau$ such that $\rho\prec\tau \prec \sigma$, we obtain $\omega\in F^{2}$ since
  \begin{equation*}
  		\left(\sum_{\sigma\succ \tau\succ\rho}m(\sigma)\vert\omega(\sigma)\vert\right)^2=\left(\sum_{\sigma\in \Sigma}m(\sigma)\vert\omega(\sigma)\vert\sum_{\tau\prec\sigma}1_{\rho\prec \tau}\right)^2\leq   2\Vert\omega\Vert^2\sum_{\sigma\succ \tau\succ\rho} m(\sigma),
  \end{equation*}
  which is finite due to strong local sumability. The ``in particular'' statement follows from Lemma~\ref{pp0}.  \end{proof}

\subsection{Dirichlet and Neumann Laplacians}\label{Dir&Neum}
In this section, we introduce self-adjoint operators associated to two natural 
restrictions of $\mathcal{Q}^{\bullet}$, $\bullet\in\{\pm, H\}$.

Let us first  define 
$$Q^\bullet_N = \Q^\bullet\mid_{D(Q^\bullet_N)}, \qquad D(Q^\bullet_N) = \mathcal{D}^\bullet\cap \ell^2.
	$$
By Fatou's lemma $\Q^{\bullet}$ restricted to $\mathcal{D}^{\circ}\cap \ell^{2}$ is lower semi continuous and, therefore,  closed, see e.g. \cite[Theorem B.9]{KLW}.  By general theory, see e.g. \cite[Theorem 5.37]{W}, the positive closed quadratic form $Q^{\bullet}_{N}$ gives rise to a
 unique positive self-adjoint operator $\Delta^\bullet_N$ such that for all $\omega\in D(Q^\bullet_N)$ and $\eta\in D(\Delta^\bullet_N)$ one has
$$Q^\bullet_N(\omega,\eta) = \langle \omega,\Delta^\bullet_N\eta\rangle.$$ 
We respectively call the operators $\Delta^+_N,\Delta^-_N,\Delta^H_N$ the \textit{discrete up-, down-} and \textit{Hodge-Laplacian} with \textit{Neumann boundary conditions.} The name arises as $D(Q^{\bullet}_{N})$ corresponds  to the Sobolev spaces $W^{1,2}$ in the continuum.
Furthermore, note that by the triangle inequality $$D(Q^+_N)\cap D(Q^-_N)\subseteq D(Q^H_N).$$

Since $Q^{\bullet}_{N}$ is closed, its restriction to $F^{c}$ is closable in $\ell^{2}$ and we define 
 $$Q^{\bullet}_{D} = Q^\bullet_{N}\mid_{D(Q^\bullet_{D})},\quad D(Q^{\bullet}_{D}) = \overline{\F^c}^{\Vert\cdot\Vert_{\Q^{\bullet}}},$$
where $\Vert\cdot\Vert_{\Q^\bullet}^2 = \Q^\bullet(\cdot)+\Vert\cdot\Vert^2_2.$ These are positive, closed restrictions of $\Q^\bullet$ and give rise to unique positive self-adjoint  operators $\Delta^{\bullet}_{D}$ such that for all $\omega\in D(Q^\bullet_{D})$ and $\eta\in D(\Delta^\bullet_{D})$ one has
$$Q^\bullet_{D}(\omega,\eta) = \langle \omega,\Delta^\bullet_{D}\eta\rangle.$$
Note that by Corollary \ref{adj2} we have 
 $$Q^H_{D}  =(Q^+_{D}+Q^-_{D})\mid_{D(Q^H_{D})}\quad\mbox{ and } \quad D(Q^H_{D}) \subseteq D(Q^+_{D})\cap D(Q^-_{D}).$$ We respectively call the operators $\Delta^+_{D},\Delta^-_{D},\Delta^H_{D}$ the \textit{discrete up-, down-} and \textit{Hodge-Laplacian} with \textit{Dirichlet boundary conditions.} The name arises as one may consider the form domains $D(Q^{\bullet}_{D})$ as analogues of the Sobolev spaces $W^{1,2}_{0} $ in the continuum.
 
\begin{definition}[Laplacian]\label{def:Laplacian} A \emph{Laplacian} $\Delta^{\bullet}$, $\bullet\in\{\pm,H\}$, associated to a simplicial complex $(\Sigma,m)$ is  the unique positive self-adjoint operator associated to a closed quadratic form $Q^{\bullet}_{D}\subseteq Q^{\bullet}\subseteq Q^{\bullet}_{N}$.
\end{definition}
 Next, we determine the action of these Laplacians.

\begin{theorem}[Action of  Laplacians]\label{action} Let a Laplacian $\Delta^{\bullet}$  be given,  $\bullet\in\{\pm,H\}$. Then, $D(\Delta^{+})\subseteq D(\partial\delta)$,  $D(\Delta^{-})\subseteq D(\delta\partial)$, and  $D(\Delta^{H})\subseteq D((\delta+\partial)^{2})$, and
	\begin{align*}
		\Delta^+ &=\partial\delta&&\hspace{-3cm}\mbox{ on }D(\Delta^{+}),\\
		\Delta^- &=\delta\partial&&\hspace{-3cm} \mbox{ on }D(\Delta^{-}),\\
		\Delta^H &=(\delta+\partial)^{2}&&\hspace{-3cm} \mbox{ on }D(\Delta^{H}).
	\end{align*}
	If $\Delta^{H}=\Delta^{H}_{D} $ or $m$ is strongly locally summable, then  $D(\Delta^{H})\subseteq D(\partial\delta+\delta\partial)$
	and
	\begin{align*}
			\Delta^H &=\partial\delta+\delta\partial&&\hspace{-3cm} \mbox{ on }D(\Delta^{H}).
	\end{align*}
\end{theorem}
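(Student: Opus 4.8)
The plan is to treat the two sufficient conditions separately, since they call for genuinely different arguments. In both cases the goal is to show that for $\omega\in D(\Delta^{H})$ one has $\Delta^{H}\omega=\partial\delta\omega+\delta\partial\omega$, together with $\omega\in D(\partial\delta+\delta\partial)=\{f\in\F^{1}\mid \delta f\in\F^{1}\}$. Since $\delta\delta=0$ holds on all of $\F$ by Definition~\ref{coboundary}, the only obstruction to passing from the formal expansion $(\delta+\partial)^{2}=\delta\delta+\delta\partial+\partial\delta+\partial\partial$ to $\partial\delta+\delta\partial$ is the term $\partial\partial$; the two hypotheses are precisely two different ways of controlling it.

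For the strongly locally summable case I would argue by direct expansion, using the identity $\Delta^{H}=(\delta+\partial)^{2}$ on $D(\Delta^{H})$ from the first part of the theorem. Fix $\omega\in D(\Delta^{H})\subseteq\ell^{2}$. By Lemma~\ref{sammellemma3} we have $\ell^{2}\subseteq\F^{2}$, so $\omega\in\F^{2}$ and hence $\partial\omega\in\F^{1}$ by Lemma~\ref{sammellemma1}. Since $\omega\in D((\delta+\partial)^{2})$ gives $\delta\omega+\partial\omega\in\F^{1}$, it follows that $\delta\omega=(\delta\omega+\partial\omega)-\partial\omega\in\F^{1}$ as well, so both $\partial\delta\omega$ and $\delta\partial\omega$ are defined and $\omega\in D(\partial\delta+\delta\partial)$. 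Expanding $(\delta+\partial)^{2}\omega=\delta(\delta\omega+\partial\omega)+\partial(\delta\omega+\partial\omega)$ and splitting each summand legitimately (the split of the $\partial$-term being allowed precisely because $\delta\omega,\partial\omega\in\F^{1}$ separately), the contributions $\delta\delta\omega=0$ and $\partial\partial\omega=0$ vanish, the latter by Lemma~\ref{pp0} since $\omega\in\F^{2}$. What remains is $\Delta^{H}\omega=(\delta+\partial)^{2}\omega=\partial\delta\omega+\delta\partial\omega$.

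For the Dirichlet case $\Delta^{H}=\Delta^{H}_{D}$ the naive expansion is unavailable, since $\omega$ need not lie in $\F^{2}$; instead I would use the form decomposition $Q^{H}_{D}=(Q^{+}_{D}+Q^{-}_{D})\mid_{D(Q^{H}_{D})}$ together with the Green's formulas of Theorem~\ref{green1}. Fix $\omega\in D(\Delta^{H}_{D})$. Then $\omega\in D(Q^{H}_{D})\subseteq D(Q^{+}_{D})\cap D(Q^{-}_{D})\subseteq\mathcal{D}^{+}\cap\mathcal{D}^{-}\cap\ell^{2}$, so $\delta\omega,\partial\omega\in\ell^{2}\subseteq\F^{1}$ by Lemma~\ref{sammellemma2}(b); in particular $\omega\in\F^{1}$ and $\delta\omega\in\F^{1}$, giving $\omega\in D(\partial\delta+\delta\partial)$. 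For any test function $\eta\in\F^{c}\subseteq D(Q^{H}_{D})$ the form identity yields $\langle\Delta^{H}_{D}\omega,\eta\rangle=\langle\delta\omega,\delta\eta\rangle+\langle\partial\omega,\partial\eta\rangle$. Now Green's formula~\ref{green1}(b), applicable because $\delta\omega\in\F^{1}$, rewrites $\langle\delta\omega,\delta\eta\rangle=\langle\partial\delta\omega,\eta\rangle$, and Green's formula~\ref{green1}(a), applicable because $\omega\in\F^{1}$, rewrites $\langle\partial\omega,\partial\eta\rangle=\langle\delta\partial\omega,\eta\rangle$. Hence $\langle\Delta^{H}_{D}\omega,\eta\rangle=\langle\partial\delta\omega+\delta\partial\omega,\eta\rangle$ for all $\eta\in\F^{c}$; testing against $\eta=1_{\sigma}$ for each $\sigma\in\Sigma$ and using $m(\sigma)>0$ gives the pointwise identity $\Delta^{H}_{D}\omega=\partial\delta\omega+\delta\partial\omega$.

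The main obstacle is conceptual rather than computational: recognizing that $\partial\partial\omega$ cannot be handled uniformly. In the strongly locally summable case it vanishes outright via $\ell^{2}\subseteq\F^{2}$, but in the Dirichlet case $\omega$ may fail to lie in $\F^{2}$, and one must instead route the argument through the quadratic form and convert it into a pointwise operator statement by Green's formulas — along the way this implicitly forces $\partial\partial\omega=0$. The remaining care is purely domain bookkeeping, namely verifying $\delta\omega,\partial\omega\in\F^{1}$ so that all four compositions are defined and the formal expansion $(\delta+\partial)^{2}=\delta\delta+\delta\partial+\partial\delta+\partial\partial$ may be split term by term.
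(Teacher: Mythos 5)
Your treatment covers only the final assertion of the theorem. The statement also contains the identities $\Delta^{+}=\partial\delta$ on $D(\Delta^{+})$, $\Delta^{-}=\delta\partial$ on $D(\Delta^{-})$ and $\Delta^{H}=(\delta+\partial)^{2}$ on $D(\Delta^{H})$, together with the corresponding domain inclusions; these occupy the bulk of the paper's proof and you never establish them. Moreover, your strongly locally summable branch explicitly invokes ``$\Delta^{H}=(\delta+\partial)^{2}$ on $D(\Delta^{H})$ from the first part of the theorem,'' so as a proof of the full statement the argument is circular: the first part is an input you assume, not a consequence you derive. The missing content is not mere bookkeeping. One must show, e.g., for $\omega\in D(\Delta^{+})$ that $\delta\omega\in\ell^{2}\subseteq\F^{1}$ (Lemma~\ref{sammellemma2}), so that $\omega\in D(\partial\delta)$, and then test the defining identity $Q^{+}(\varphi,\omega)=\langle\varphi,\Delta^{+}\omega\rangle$ against $\varphi=1_{\tau}/m$ and apply Green's formula, Theorem~\ref{green1}~(b), to obtain $\Delta^{+}\omega=\partial\delta\omega$ pointwise; the cases $\Delta^{-}$ and $\Delta^{H}$ (the latter requiring a double application of Stokes' theorem, Theorem~\ref{adj1}, to pass from $Q^{H}(\varphi,\omega)$ to $(\delta+\partial)^{2}\omega(\tau)$) are analogous but must actually be carried out.

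The two branches you do write out are correct, and they differ instructively from the paper. In the strongly locally summable case the paper kills the cross terms $\langle\partial\omega,\delta\varphi\rangle$ and $\langle\delta\omega,\partial\varphi\rangle$ via Corollary~\ref{adj2} and then converts with Green's formulas, whereas you expand $(\delta+\partial)^{2}\omega$ directly, justify the term-by-term split by checking $\partial\omega\in\F^{1}$ (from $\ell^{2}\subseteq\F^{2}$, Lemma~\ref{sammellemma3}, and Lemma~\ref{sammellemma1}) and $\delta\omega=(\delta+\partial)\omega-\partial\omega\in\F^{1}$, and invoke Lemma~\ref{pp0} for $\partial\partial\omega=0$; this is a clean, purely pointwise alternative, modulo its dependence on the unproved first part. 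In the Dirichlet case your route through the polarized identity $Q^{H}_{D}=(Q^{+}_{D}+Q^{-}_{D})\mid_{D(Q^{H}_{D})}$ is essentially the paper's argument in different packaging — that identity is itself the consequence of Corollary~\ref{adj2} (together with Lemma~\ref{pp01}) which the paper uses to cancel the cross terms — and your observation that $D(Q^{H}_{D})\subseteq D(Q^{+}_{D})\cap D(Q^{-}_{D})$ forces $\delta\omega,\partial\omega\in\ell^{2}$ separately is exactly the domain argument that makes both Green's formulas applicable. So the repair needed is to supply proofs of the three primary identities; what you have written can then stand as the second half of the proof.
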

We start the proof with a lemma which is interesting in its own right. 
Observe that in general, we only have $\Delta^{H}=(\delta+\partial)^{2}$ but not necessarily equality with $\partial\delta+\delta\partial$. Indeed, in Example~\ref{example}, we discussed that $\partial\partial \omega_{0}\ne0$ for a certain $\omega_{0}\in D(\partial\partial)$ while $\partial\partial \omega =0$ for $\omega \in F^{2}$ by Lemma~\ref{pp0}.  We extend this observation to obtain the formula $\Delta^{H}_{D}=\partial\delta+\delta\partial.$
\begin{lemma}\label{pp01}
	Let $\omega\in D(Q^-_{D})$ and $\eta\in \mathcal{D}^+.$ Then $$\partial\partial\omega = 0\qquad \mbox{ and }\qquad \langle\partial\omega,\delta\eta\rangle = 0.$$
\end{lemma}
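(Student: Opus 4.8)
The plan is to prove the inner product identity $\langle \partial\omega, \delta\eta\rangle = 0$ first and then deduce $\partial\partial\omega = 0$ from it via Stokes' theorem, so that both assertions rest on one approximation argument. Since $\omega\in D(Q^-_D)=\overline{\F^c}^{\|\cdot\|_{\Q^-}}$, I would pick a sequence $\omega_n\in\F^c$ with $\|\omega_n-\omega\|_{\Q^-}\to 0$. Because $\Q^-(\cdot)=\|\partial\,\cdot\|^2$, the squared form norm splits as $\|\omega_n-\omega\|_{\Q^-}^2=\|\partial\omega_n-\partial\omega\|^2+\|\omega_n-\omega\|^2$, so form-norm convergence is exactly $\omega_n\to\omega$ together with $\partial\omega_n\to\partial\omega$, both in $\ell^2$. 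Here I use that $\omega,\omega_n\in\F^1=D(\partial)$, which holds since $\F^c\subseteq\F^1$ and $\ell^2\subseteq\F^1$ by Lemma~\ref{sammellemma2}(b), so that $\partial$ acts linearly and $\partial(\omega_n-\omega)=\partial\omega_n-\partial\omega$.

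For the inner product statement, note that $\eta\in\mathcal{D}^+$ means $\Q^+(\eta)=\|\delta\eta\|^2<\infty$, so $\delta\eta$ is a fixed element of $\ell^2$. By continuity of the inner product together with $\partial\omega_n\to\partial\omega$ in $\ell^2$, I obtain $\langle\partial\omega,\delta\eta\rangle=\lim_n\langle\partial\omega_n,\delta\eta\rangle$. For each fixed $n$, since $\omega_n\in\F^c$ and $\eta\in\F$, Corollary~\ref{adj2} yields $\langle\partial\omega_n,\delta\eta\rangle=\sum_{\Sigma}m(\partial\omega_n)(\delta\eta)=0$. Hence the limit is $0$ and $\langle\partial\omega,\delta\eta\rangle=0$.

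For $\partial\partial\omega=0$, I would first record that the composition is well defined: $\omega\in D(Q^-_D)\subseteq\ell^2$ with $\partial\omega\in\ell^2\subseteq\F^1=D(\partial)$, so $\partial\partial\omega\in\F$ makes sense. It then suffices to check $\langle\partial\partial\omega,\psi\rangle=0$ for every $\psi\in\F^c$, since the choice $\psi=1_\rho$ gives $\partial\partial\omega(\rho)=0$ for all $\rho\in\Sigma$. Applying Stokes' theorem (Theorem~\ref{adj1}) with $\partial\omega\in\F^1$ and $\psi\in\F^c$ gives $\langle\partial\partial\omega,\psi\rangle=\langle\partial\omega,\delta\psi\rangle$. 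Because $\psi\in\F^c\subseteq\mathcal{D}^+$ by Lemma~\ref{sammellemma2}(a), the identity just proved applies with $\eta=\psi$, so $\langle\partial\omega,\delta\psi\rangle=0$, and therefore $\partial\partial\omega=0$.

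I do not expect a genuine obstacle; the content lies entirely in justifying the limit and the well-definedness of the composite operators. The two points needing care are: (i) that form-norm convergence from $\F^c$ delivers $\ell^2$-convergence of the boundaries $\partial\omega_n$, which uses the closedness of $Q^-_D$ and the explicit shape of $\Q^-$; and (ii) that each term $\langle\partial\omega_n,\delta\eta\rangle$ is legitimately covered by Corollary~\ref{adj2}, which requires only $\omega_n\in\F^c$ and $\eta\in\F$. Everything else is a direct application of the Stokes/Green machinery of Section~\ref{stokes}.
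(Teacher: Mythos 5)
Your proof is correct and follows essentially the same route as the paper's: approximate $\omega$ by a sequence in $\F^c$ in the form norm so that $\partial\omega_n\to\partial\omega$ in $\ell^2$, apply Corollary~\ref{adj2} to each approximant, and use Stokes' theorem (Theorem~\ref{adj1}) to identify $m(\rho)\,\partial\partial\omega(\rho)$ with $\langle\partial\omega,\delta 1_\rho\rangle$. The only difference is organizational: you prove the inner-product identity first and specialize to $\eta=1_\rho$, whereas the paper derives both statements in parallel from the same limit argument.
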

\begin{proof}
	Observe that $\partial\omega\in\ell^2\subseteq F^1 = D(\partial)$ and that there is a sequence $(\omega_n)_n$ in $F^c$ such that $\partial\omega_n\rightarrow\partial\omega$ in $\ell^2.$ Moreover, for $\rho\in\Sigma$ and according to Stokes' theorem, Theorem \ref{adj1}, we have $$m(\rho)\partial\partial\omega(\rho) = \langle \partial\omega,\delta1_\rho\rangle.$$
	Since $\partial\omega$ is the limit of $(\partial\omega_n)$ in $\ell^2$,  Corollary \ref{adj2}  gives both statements.
\end{proof}

\begin{proof}[Proof of Theorem~\ref{action}]
First, let $\omega\in D(\Delta^+)\subseteq \mathcal{D}^{+}$. Hence, $\delta\omega\in \ell^{2}\subseteq F^{1}=D(\partial)$ by Lemma~\ref{sammellemma2}~$(b)$, so we get $\omega\in D(\partial\delta).$ Now, the statement follows from Green's formula, Theorem~\ref{green1}~$(b)$,  with $\varphi = 1_\tau/m$  
$$\Delta^+\omega(\tau) = \langle\varphi,\Delta^+\omega\rangle = Q^+(\varphi,\omega) = \sum_{\Sigma}m \varphi \partial\delta\omega  = \partial\delta\omega(\tau).$$
To see  $\Delta^-\omega= \delta\partial\omega$, recall that $D(\Delta^-)\subseteq\ell^2\subseteq \F^1=D(\delta\partial).$ Then the statement follows from Green's formula, Theorem~\ref{green1}~$(a)$,  similar to $\Delta^{+}$.\\
For $\Delta^H$, let $\omega\in D(\Delta^{H})$. Observe first that $(\delta+\partial)\omega\in\ell^2$ because $\omega\in\mathcal{D}^H.$  Secondly, by local summability, Lemma~\ref{sammellemma2}~$(b)$, we have $\ell^{2}\subseteq F^{1}=D(\partial)$. As $D(\delta)=F$, we therefore have $(\delta+\partial)\omega\in D(\delta+\partial)$, i.e., $\omega\in D((\delta+\partial)^{2})$. Now, let additionally $\varphi = 1_{\tau}/m,$ for $\tau\in \Sigma$. Then, similar as for $\Delta^+,$
$$\Delta^H\omega(\tau) = Q^H(\varphi,\omega) = \sum_\Sigma m(\partial+\delta)\varphi(\partial+\delta)\omega = \sum_\Sigma m\varphi(\partial+\delta)^2\omega = (\partial+\delta)^2\omega(\tau),$$
where the second to last equality follows by applying Stokes' theorem, Theorem~\ref{adj1}, twice (recall that $(\partial+\delta)\omega\in F^1).$\\
Finally, if $\Delta^H = \Delta^H_D$ or if $\Sigma$ is strongly locally summable, then by Corollary \ref{adj2}
$$\sum_\Sigma m\partial\omega\delta\varphi = 0 = \sum_\Sigma m\delta\omega\partial\varphi.$$
Thus, 
employing Green's formula, Theorem \ref{green1} \emph{(a)} and \emph{(b)}, gives
$$\Delta^H\omega(\tau) = \sum_\Sigma m\delta\omega\delta\varphi+\sum_\Sigma m\partial\omega\partial\varphi = \sum_\Sigma m\varphi(\delta\partial+\partial\delta)\omega = (\delta\partial+\partial\delta)\omega(\tau).\hfill \qedhere$$
\end{proof}

Below we discuss that Example~\ref{example} demonstrates that $\Delta^{H}_{D}\neq \Delta^{H}_{N}$ in general. Later in Section~\ref{FormeqEssSa}, we give criteria when the operators indeed coincide.

\begin{example}[Example \ref{example} continued]\label{example_cont}
	Reconsidering Example \ref{example} we find that $\omega_{0}\in\ell^2\cap\mathcal{D}^+\cap\mathcal{D}^-\subseteq D(Q^H_N).$ On the other hand, 
	$$\vert\langle\delta\omega_{0},\partial\omega_{0}\rangle\vert = m(\lbrace 0,1\rbrace) = 1\neq 0.$$ However, this implies that $\omega_{0}\not\in D(Q^H_{D})\subseteq D(Q^-_{D})$ as this would contradict Lemma~\ref{pp01}. As a consequence, $Q^H_{D}\neq Q^H_N$ and in particular $\Delta^H_D\neq\Delta^H_{N}.$
\end{example}

Let us define $$ \delta_D=\delta\mid_{D(Q^+)}, \quad \partial_D=\partial\mid_{D(Q^-)} \quad\mbox{ and }\quad  (\partial+\delta)_D=(\delta+\partial)\mid_{D(Q^H)} $$ 
 $$\delta_N=\delta\mid_{D(Q^+_N)} ,\quad \partial_N=\partial\mid_{D(Q^-_N)} \quad \mbox{ and }\quad(\partial+\delta)_N=(\delta+\partial)\mid_{D(Q^H_N)} .$$\\
By \cite[Theorem 5.1]{W} these are closed operators and by \cite[Theorem~5.39]{W} the compositions with their respective adjoints are self-adjoint operators.

The next lemma shows in which precise sense  $\delta$ and $\partial$ are adjoints of each other.
\begin{lemma}[Adjoints of $\delta$ and $\partial$]\label{d*}
    We have 
   $$\delta^*_D = \partial_N,\qquad\partial^*_D = \delta_N\qquad\mbox{and}\qquad(\delta+\partial)^*_D = (\delta+\partial)_N,$$
    $$\delta_D = \partial^*_N,\qquad\partial_D = \delta^*_N\qquad\mbox{and}\qquad(\delta+\partial)_D = (\delta+\partial)^*_N.$$
\end{lemma}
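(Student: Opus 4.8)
The plan is to prove the first line of identities directly and then obtain the second line for free by taking adjoints. Indeed, $\delta_D$, $\partial_D$ and $(\delta+\partial)_D$ are densely defined (their domains $D(Q^\bullet_D)=\overline{\F^c}^{\Vert\cdot\Vert_{\Q^\bullet}}$ contain $\F^c$) and closed by \cite[Theorem 5.1]{W}, so the general identity $A=A^{**}$ applies. Thus, once we know e.g. $\delta_D^*=\partial_N$, we immediately get $\delta_D=\delta_D^{**}=\partial_N^*$, which is the corresponding second-line statement, and likewise for the other two. It therefore suffices to establish $\delta_D^*=\partial_N$, $\partial_D^*=\delta_N$ and $(\delta+\partial)_D^*=(\delta+\partial)_N$, each by two inclusions. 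Throughout I would keep track of the grading $\ell^2=\bigoplus_k\ell^2(\Sigma_k)$, on which $\delta$ raises and $\partial$ lowers degree, and use $\ell^2\subseteq\F^1=D(\partial)$ from Lemma~\ref{sammellemma2}, so that $\partial\eta$ is defined for every $\eta\in\ell^2$.

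For the inclusion $\partial_N\subseteq\delta_D^*$ I would start from Stokes' theorem, Theorem~\ref{adj1}: for $\omega\in\F^c$ and $\eta\in\F^1$ it gives $\langle\delta\omega,\eta\rangle=\langle\omega,\partial\eta\rangle$. Fixing $\eta\in D(Q^-_N)$ and letting $\omega$ range over $\F^c$, both sides are continuous in $\omega$ with respect to $\Vert\cdot\Vert_{\Q^+}$ (bound the left side by $\Vert\delta\omega\Vert\,\Vert\eta\Vert$ and the right by $\Vert\omega\Vert\,\Vert\partial\eta\Vert$). Since $\F^c$ is dense in $D(Q^+_D)$ in this norm and $\delta_D$ is continuous from $(D(Q^+_D),\Vert\cdot\Vert_{\Q^+})$ to $\ell^2$, the identity extends to all $\omega\in D(\delta_D)$, showing $\eta\in D(\delta_D^*)$ with $\delta_D^*\eta=\partial_N\eta$. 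The arguments for $\delta_N\subseteq\partial_D^*$ and $(\delta+\partial)_N\subseteq(\delta+\partial)_D^*$ are identical, the last obtained by adding the two Stokes identities $\langle\delta\omega,\eta\rangle=\langle\omega,\partial\eta\rangle$ and $\langle\partial\omega,\eta\rangle=\langle\omega,\delta\eta\rangle$.

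The reverse inclusions are where the real work lies, and I expect this to be the main obstacle: one must show the adjoint domain is no larger than the maximal domain $\mathcal{D}^\bullet\cap\ell^2$. Given $\eta\in D(\delta_D^*)$ with $\delta_D^*\eta=\zeta\in\ell^2$, the defining relation $\langle\delta\omega,\eta\rangle=\langle\omega,\zeta\rangle$ holds in particular for the basis functions $\omega=1_\tau\in\F^c$. A direct computation from the definitions of $\delta$ and $\partial$ then gives $\langle\delta1_\tau,\eta\rangle=\sum_{\sigma\succ\tau}m(\sigma)\theta(\tau,\sigma)\eta(\sigma)=m(\tau)\,\partial\eta(\tau)$, while $\langle1_\tau,\zeta\rangle=m(\tau)\zeta(\tau)$, the sum converging absolutely since $\eta\in\ell^2\subseteq\F^1$. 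Hence $\partial\eta(\tau)=\zeta(\tau)$ for every $\tau$, so $\partial\eta=\zeta\in\ell^2$, i.e. $\eta\in\mathcal{D}^-\cap\ell^2=D(Q^-_N)$, giving $\delta_D^*\subseteq\partial_N$. The symmetric computation $\langle\partial1_\tau,\eta\rangle=m(\tau)\,\delta\eta(\tau)$ settles $\partial_D^*\subseteq\delta_N$, and adding both pointwise identities handles $(\delta+\partial)_D^*\subseteq(\delta+\partial)_N$ via $\langle(\delta+\partial)1_\tau,\eta\rangle=m(\tau)\big((\delta+\partial)\eta\big)(\tau)$. The delicate points to watch are that $1_\tau\in\ell^2$ and $\delta1_\tau\in\ell^2$ (this is exactly local summability, ensuring the pairings converge), and the bookkeeping of degrees in the Hodge case, where $\delta$ and $\partial$ act on different graded components of $\eta$.
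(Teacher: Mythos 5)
Your proof is correct and takes essentially the same route as the paper: both inclusions rest on Stokes' theorem (Theorem~\ref{adj1}), with $\delta_D^*\subseteq\partial_N$ obtained by testing against the functions $1_\tau$ (the paper uses $1_\rho/m$, the identical computation), the reverse inclusion $\partial_N\subseteq\delta_D^*$ by density of $\F^c$ in $(D(Q^+_D),\Vert\cdot\Vert_{\Q^+})$, and the second line of identities via $A=A^{**}$ for closed densely defined operators, exactly as in the paper. The only differences are cosmetic: you treat the two inclusions in the opposite order and phrase the density step as a continuity argument rather than via an explicit approximating sequence.
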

\begin{proof}
    First, let $\omega\in D(\delta^*_D)$, i.e., for every $\eta\in  D(\delta_D)$, we have
    $$\langle\omega,\delta\eta\rangle = \langle\delta^*_D\omega,\eta\rangle.$$
    Choosing $\eta = 1_\rho\slash m$ and using Stokes' theorem, Theorem~\ref{adj1}, shows $\delta^*_D\omega = \partial\omega.$ In particular, $\partial\omega\in\ell^2$ and consequently $\omega\in\mathcal{D}^-\cap\ell^2.$ Hence, $\delta^*_D \subseteq \partial_N.$\\
    Now, let $\omega\in\mathcal{D}^-\cap\ell^2=D(Q_{N}^{-})=D(\partial_{N})$ and $\eta\in D(Q^+_D).$ We can choose a sequence $(\eta_n)$ in $\F^c$ such that $\eta_n\rightarrow\eta$ and $\delta\eta_n\rightarrow\delta\eta$  in $\ell^2.$ As $\partial\omega,\omega\in\ell^2$ we deduce from Stokes' theorem, Theorem \ref{adj1}, once again
    $$\langle\omega,\delta\eta\rangle = \lim_{n\to\infty}\langle\omega,\delta\eta_n\rangle = \lim_{n\to\infty}\langle\partial\omega,\eta_n\rangle = \langle\partial\omega,\eta\rangle.$$
    In particular, $\omega\in D(\delta^*_D)$ and, therefore, $  \partial_N\subseteq \delta^*_D$ which gives $\delta^*_D = \partial_N.$ The equality $\delta_D=\delta_{D}^{**} = \partial_N^*$ follows by  \cite[Theorem~5.3~(b)]{W} since $\delta_{D}$ is closed.
    The other two cases follow along the same line of arguments.
\end{proof}

\begin{corollary}\label{LNLD}
    We have
    $$\Delta^+_{D} = \partial_N\delta_D,\quad\Delta^-_{D} = \delta_N\partial_D\quad\text{and}\quad\Delta^H_{D}= (\delta+\partial)_N(\delta+\partial)_D,$$
    $$\Delta^+_N = \partial_D\delta_N,\quad\Delta^-_N = \delta_D\partial_N\quad\text{and}\quad\Delta^H_N = (\delta+\partial)_D(\delta+\partial)_N.$$
\end{corollary}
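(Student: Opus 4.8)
The plan is to realize each operator on the right-hand side as the canonical factorization $T^{*}T$ of a closed, densely defined operator $T$, and to invoke the standard fact that $T^{*}T$ is then self-adjoint and is precisely the operator associated with the closed quadratic form $\omega\mapsto\|T\omega\|^{2}$ on $D(T)$; this is \cite[Theorem~5.39]{W}. Since the self-adjoint operator associated with a given closed form is unique, this forces $\Delta^{\bullet}_{D}$ and $\Delta^{\bullet}_{N}$ to coincide with the stated compositions.

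Concretely, I would begin with the Dirichlet up-Laplacian. The operator $\delta_{D}$ is closed by \cite[Theorem~5.1]{W} and densely defined, since $\F^{c}\subseteq D(Q^{+}_{D})=D(\delta_{D})$ and $\F^{c}$ is dense in $\ell^{2}$. By construction $Q^{+}_{D}(\omega)=\|\delta\omega\|^{2}=\|\delta_{D}\omega\|^{2}$ for all $\omega\in D(\delta_{D})$, so $Q^{+}_{D}$ is exactly the form $\omega\mapsto\|\delta_{D}\omega\|^{2}$ on its natural domain. Hence \cite[Theorem~5.39]{W} together with uniqueness gives $\Delta^{+}_{D}=\delta_{D}^{*}\delta_{D}$, and Lemma~\ref{d*} rewrites $\delta_{D}^{*}=\partial_{N}$, so that $\Delta^{+}_{D}=\partial_{N}\delta_{D}$.

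The other five identities are obtained by the identical argument with $T$ replaced by the appropriate operator and its adjoint read off from Lemma~\ref{d*}: $T=\partial_{D}$ gives $\Delta^{-}_{D}=\partial_{D}^{*}\partial_{D}=\delta_{N}\partial_{D}$; $T=(\delta+\partial)_{D}$ gives $\Delta^{H}_{D}=(\delta+\partial)_{N}(\delta+\partial)_{D}$; and on the Neumann side $T=\delta_{N},\partial_{N},(\delta+\partial)_{N}$ produce $\Delta^{+}_{N}=\partial_{D}\delta_{N}$, $\Delta^{-}_{N}=\delta_{D}\partial_{N}$ and $\Delta^{H}_{N}=(\delta+\partial)_{D}(\delta+\partial)_{N}$, using the relations $\delta_{N}^{*}=\partial_{D}$, $\partial_{N}^{*}=\delta_{D}$ and $(\delta+\partial)_{N}^{*}=(\delta+\partial)_{D}$ from Lemma~\ref{d*}.

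I do not anticipate a genuine obstacle, since both ingredients — the abstract $T^{*}T$ construction and the adjoint relations — are already in hand. The only point needing a little care is the domain bookkeeping: one must verify that the domain of the abstract product $T^{*}T$ coincides with the pointwise composition (for instance $\partial_{N}\delta_{D}$) under the convention $D(AB)=\{f\in D(B)\mid Bf\in D(A)\}$ fixed in the introduction. This is immediate, because with $T=\delta_{D}$ and $T^{*}=\partial_{N}$ one has $D(T^{*}T)=\{f\in D(\delta_{D})\mid\delta_{D}f\in D(\partial_{N})\}$, which is exactly $D(\partial_{N}\delta_{D})$, and likewise in the remaining cases.
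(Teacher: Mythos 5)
Your proposal is correct and follows essentially the same route as the paper: both proofs first identify $\Delta^{\bullet}_{D}=T_{D}^{*}T_{D}$ and $\Delta^{\bullet}_{N}=T_{N}^{*}T_{N}$ for $T\in\{\delta,\partial,\delta+\partial\}$ and then substitute the adjoint relations from Lemma~\ref{d*}. The only cosmetic difference is that the paper justifies the $T^{*}T$ factorization by a direct calculation with the definition of adjoints and Green's formulas (Theorem~\ref{green1}), whereas you invoke the abstract uniqueness of the self-adjoint operator associated to the closed form $\omega\mapsto\|T\omega\|^{2}$ via \cite[Theorem~5.39]{W}; both are valid, and your domain bookkeeping for the composition convention is exactly right.
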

\begin{proof}
A straight forward calculation that uses the definition of adjoint operators and Green's formulas, Theorem \ref{green1}, shows that 
$$\Delta^+_{D} = \delta_D^*\delta_D,\quad\Delta^-_{D} = \partial_D^*\partial_D\quad\text{and}\quad\Delta^H_{D} = (\delta+\partial)^*_D(\delta+\partial)_D$$
as well as 
$$\Delta^+_N = \delta^*_N\delta_N,\quad\Delta^-_N = \partial_N^*\partial_N\quad\text{and}\quad\Delta^H_N = (\delta+\partial)^*_N(\delta+\partial)_N.$$
    Thus, the statements  are a direct consequence of Lemma~\ref{d*}. 
\end{proof}

\subsection{The Hilbert Complex Perspective} \label{Hilbert}

In this section we will see how the operators introduced in Section \ref{quadform} can be described in the language of Hilbert complexes. For a more detailed introduction to Hilbert complexes, we refer to \cite{BL} (see also \cite{HolstStern}) whose setting we will closely follow here. We denote the range of an operator $A$ by $R(A).$

\begin{definition}[Hilbert complex]
Given a sequence of mutually orthogonal Hilbert spaces $H_k$ and closed, densely defined operators $d_{k}$ on $H_{k} $ to $H_{k+1}$ such that $R(d_{k})\subseteq D(d_{k+1})$ and $$d_{k+1}d_k = 0$$ 
for $k\in\mathbb{Z}$,
we obtain a \textit{Hilbert complex} in form of a chain complex (in the sense of homological algebra)
$$\ldots\xrightarrow{d_{k-2}} D(d_{k-1})\xrightarrow{d_{k-1}}D(d_k)\xrightarrow{d_k}D(d_{k+1})\xrightarrow{d_{k+1}}\ldots.$$
\end{definition}

It follows easily from the definition of a Hilbert complex that the adjoints of $d_k$ form a Hilbert complex as well, called the \textit{dual complex}, i.e., $R(d_{k}^{*})\subseteq D(d_{k+1}^{*})$ and $d_{k}^{*}d_{k+1}^{*}=0$,
$$\ldots\xleftarrow{d^*_{k-1}} D(d^*_{k-1})\xleftarrow{d^*_{k}}D(d^*_k)\xleftarrow{d^*_{k+1}}D(d^*_{k+1})\xleftarrow{d^*_{k+2}}\ldots.$$
Given a general Hilbert complex associated to operators $d_k$ and $d = \bigoplus_kd_k$ on $H=\bigoplus_{k}H_{k}$, one defines a Laplacian $\Delta$ as $$\Delta = d^*d+dd^*,$$ which by von Neumann's theorem \cite[Theorem~5.39]{W} is always self-adjoint as $\Delta = (d+d^*)^2$ and $d+d^*$ is self-adjoint. The fact that $d+d^*$ is densely defined is a consequence of $d_{k+1}d_k = 0.$ The operator $d+d^*$ is also known as Gauss-Bonnet-Operator (\cite{ATH,parra2017spectral}). The quadratic form associated to $\Delta$ on $H$ is given by $D(d+d^{*})\to \mathbb{R}$, $f\mapsto \|(d+d^{*})f\|^{2}$. An important tool within this theory is the weak Hodge-decomposition, which is shown in
  \cite[Lemma~2.1. and Lemma~2.2.]{BL}
$$H = \ker\Delta \oplus\overline{R(d)}\oplus\overline{R(d^{*})}.$$ For the convenience of the reader we will provide a proof of this result in context of simplicial complexes below.

First, we show how our operators fit in this framework. To associate a Hilbert complex to a simplicial complex, we set $H_{-1} = \lbrace0\rbrace$ or $H_{-1} = \ell^2(\Sigma_{-1},m)\cong\mathbb{R},$ depending on whether $\Sigma=\hat{\Sigma}\setminus \{\varnothing\}$ or $\hat\Sigma = \Sigma$, $H_k = \ell^2(\Sigma_k,m)$, $k> -1$ and $H_k = \lbrace 0\rbrace$, $k<-1,$ as well as $d_k = \delta_{N,k}$, where $\delta_{N,k}$ is the restriction of $\delta_N$ to $\ell^2(\Sigma_k,m)\cap D(Q^+_N).$ Those are closed and densely defined by the same arguments as $\delta_N$ is closed and densely defined.
The defining property of $\delta$, i.e., $\delta\delta=0$ on $F$ gives $$R(\delta_{N})  \subseteq \mathcal{D}^{+}\cap \ell^{2}= D(\delta_{N})\quad \mbox{and}\quad \delta_{N}\delta_{N}=0, $$ which then transfers to $\delta_{N,k}$. Hence, we obtain a Hilbert complex
$$0\rightarrow D(\delta_{N,-1})\xrightarrow{\delta_{N,-1}}D(\delta_{N,0})\xrightarrow{\delta_{N,0}}D(\delta_{N,1})\xrightarrow{\delta_{N,1}}\ldots.$$ 
Furthermore, in view of Lemma \ref{d*}
we have $\partial_{D}=\delta_{N}^{*}$ which gives rise to the dual complex by defining $\partial_{D,k}$ as the restriction of $\partial_D$ to $D(\partial_{D,k})=\ell^2(\Sigma_k,m)\cap D(Q^-_{D})$ for $k\in\mathbb{N}$
$$0\leftarrow H_{-1}\xleftarrow{\partial_{D,0}}D(\partial_{D,0})\xleftarrow{\partial_{D,1}}D(\partial_{D,1})\xleftarrow{\partial_{D,2}}\ldots.$$
Note, that $H_{-1} = D(\delta_{N,-1}) = R(\partial_{D,0}).$
The self-adjoint Laplacian $\Delta=\Delta_{\Sigma}$ associated to the Hilbert complex is then given by
$$\Delta_{\Sigma} = \delta_N\partial_D+\partial_D\delta_N = (\delta_N+\partial_D)^2 = \Delta^+_N+\Delta^-_{D}.$$
The quadratic form $Q_{\Sigma}$ associated to $\Delta_{\Sigma}$ is given
as 
\begin{align*}
	Q_{\Sigma}(\omega)=&\|(\delta_{N}+\partial_{D})\omega\|^{2}=\|\delta_{N}\omega\|^{2}+\|\partial_{D}\omega\|^{2},\\ D(Q_{\Sigma})=&D(\delta_{N}+\partial_{D})=D(Q_{N}^{+})\cap D(Q_{D}^{-}).
\end{align*}
Hence,
\begin{equation*}
	Q_{D}^{H}\subseteq Q_{\Sigma}\subseteq Q_{N}^{H}.
\end{equation*}

The following result then relates harmonic functions with respect to $\Delta$ to the reduced $\ell^2$-cohomology groups $H^k_{(2)}(\Sigma),$ which are defined as 
$$H^k_{(2)}(\Sigma) = \ker\delta_{N,k}\slash \overline{R(\delta_{N,k-1})}.$$

\begin{theorem}[Weak Hodge-Decomposition]\label{kodaira}
	One has the orthogonal decomposition
	$$\ell^2 = \ker\Delta_\Sigma\oplus\overline{R(\delta_N)}\oplus\overline{R(\partial_D)}.$$
	In particular, for the restriction $\Delta_{\Sigma,k}$ of $\Delta_{\Sigma}$ to $\ell^{2}(\Sigma_{k},m)$, $k\ge0$,
	$$H^k_{(2)}(\Sigma)\cong\ker\Delta_{\Sigma,k}.$$
\end{theorem}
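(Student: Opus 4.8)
The plan is to deduce the statement from the abstract theory of Hilbert complexes applied to the complex $(\delta_{N,k})_k$ constructed above. Writing $d=\delta_N$ and, by Lemma~\ref{d*}, $d^*=\partial_D$, we have already recorded that $dd=0$, that $R(d)\subseteq D(d)$, and that $\Delta_\Sigma=d^*d+dd^*=(d+d^*)^2$ with $d+d^*$ self-adjoint. It therefore suffices to establish the three ingredients behind the weak Hodge decomposition of \cite[Lemma~2.1 and Lemma~2.2]{BL}: the mutual orthogonality of $\overline{R(d)}$ and $\overline{R(d^*)}$, the identification of their orthogonal complements with kernels, and the identity $\ker\Delta_\Sigma=\ker d\cap\ker d^*$.

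First I would show $\overline{R(d)}\perp\overline{R(d^*)}$. For $\alpha\in D(d)$ we have $d\alpha\in R(d)\subseteq D(d)$, so applying the adjoint relation $\langle dx,\beta\rangle=\langle x,d^*\beta\rangle$ with $x=d\alpha$ gives, for any $\beta\in D(d^*)$, the identity $\langle d\alpha,d^*\beta\rangle=\langle dd\alpha,\beta\rangle=0$ by $dd=0$; passing to closures yields the orthogonality. Next, the standard functional-analytic fact that $\overline{R(A)}^\perp=\ker A^*$ for a densely defined closed operator $A$ gives $\overline{R(d)}^\perp=\ker d^*$ and, since $d$ is closed so that $d^{**}=d$, also $\overline{R(d^*)}^\perp=\ker d$. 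Finally, because $\Delta_\Sigma=(d+d^*)^2$ with $d+d^*$ self-adjoint, one has $\ker\Delta_\Sigma=\ker(d+d^*)$; and if $(d+d^*)f=0$ then $df=-d^*f$ lies in $\overline{R(d)}\cap\overline{R(d^*)}=\{0\}$ by the first step, whence $f\in\ker d\cap\ker d^*$, the reverse inclusion being immediate. Combining these, $(\overline{R(d)}\oplus\overline{R(d^*)})^\perp=\ker d^*\cap\ker d=\ker\Delta_\Sigma$, which is exactly the claimed orthogonal decomposition $\ell^2=\ker\Delta_\Sigma\oplus\overline{R(\delta_N)}\oplus\overline{R(\partial_D)}$.

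For the cohomological consequence I would exploit that the whole decomposition is graded: $\delta_N$ raises the degree by one, $\partial_D$ lowers it by one, and $\Delta_\Sigma$ preserves it, so restricting to $H_k=\ell^2(\Sigma_k,m)$ yields $H_k=\ker\Delta_{\Sigma,k}\oplus\overline{R(\delta_{N,k-1})}\oplus\overline{R(\partial_{D,k+1})}$. Since $\delta_{N,k}^*=\partial_{D,k+1}$, the previous paragraph applied in degree $k$ gives $\ker\delta_{N,k}=\overline{R(\partial_{D,k+1})}^\perp=\ker\Delta_{\Sigma,k}\oplus\overline{R(\delta_{N,k-1})}$, where I use $dd=0$ to see $\overline{R(\delta_{N,k-1})}\subseteq\ker\delta_{N,k}$, the kernel being closed. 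Taking the quotient by $\overline{R(\delta_{N,k-1})}$ then identifies $H^k_{(2)}(\Sigma)=\ker\delta_{N,k}/\overline{R(\delta_{N,k-1})}$ with the orthogonal summand $\ker\Delta_{\Sigma,k}$. The main obstacle I anticipate is the careful domain and grading bookkeeping, in particular verifying that the abstract complement identities may legitimately be applied degree by degree and that $\overline{R(\delta_{N,k-1})}$ really sits inside $\ker\delta_{N,k}$, rather than any deep analytic difficulty, since the core decomposition is furnished by the cited Hilbert complex lemmas.
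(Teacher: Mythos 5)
Your proposal is correct and takes essentially the same route as the paper: both arguments rest on $\delta_N^*=\partial_D$ from Lemma~\ref{d*}, the orthogonality $\overline{R(\delta_N)}\perp\overline{R(\partial_D)}$ deduced from $\delta_N\delta_N=0$, the kernel--range duality $\overline{R(A)}^{\perp}=\ker A^*$ for closed densely defined operators, and $\ker\Delta_\Sigma=\ker(\delta_N+\partial_D)=\ker\delta_N\cap\ker\partial_D$. The only differences are cosmetic: you obtain the splitting by taking the orthogonal complement of the closed sum $\overline{R(\delta_N)}\oplus\overline{R(\partial_D)}$ in one step, whereas the paper first writes $\ell^2=\ker\delta_N\oplus\overline{R(\partial_D)}$ and then decomposes $\ker\delta_N$, and your degree-by-degree bookkeeping for $H^k_{(2)}(\Sigma)\cong\ker\Delta_{\Sigma,k}$ spells out what the paper compresses into a single closing sentence.
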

\begin{proof}
	First, we argue that $$\ker\Delta_\Sigma =\ker(\delta_N+\partial_D)= \ker\delta_N\cap\ker\partial_D.$$ The first equality follows since $\Delta_\Sigma = (\delta_N+\partial_D)^2 = (\delta_N+\delta_N^*)^2$ by Lemma~\ref{d*} and
	since $\ker T^{*}T=\ker T$ for a self-adjoint operator $ T$, see e.g. \cite[Theorem~5.39]{W}.  The second equality uses $\delta_N\delta_N = 0,$ which readily gives $\langle \delta_{N}\omega,\partial_{D}\eta\rangle=0$ again by Lemma~\ref{d*}. Thus, $R(\delta_{N})\perp R(\partial_{D}),$ which implies the inclusion $\subseteq $ in the second equality, while the other inclusion is trivial. \\
    As $\delta_N$ is closed and $\delta_{N}^{*}=\partial_{D}$, we have $\ker (\delta_{N})^{\perp}=\overline{R(\delta_{N}^{*})}=\overline{R(\partial_{D})}$ by \cite[Theorem~4.13]{W} and, therefore,    
     $$\ell^2 = \ker\delta_N\oplus(\ker\delta_N)^\perp = \ker\delta_N\oplus\overline{R(\partial_D)}.$$ 
     Moreover, as $\delta_N\delta_N = 0$, we have $R(\delta_{N})\subseteq \ker\delta_{N}$ and since $\ker\delta_{N}$ is closed. Thus,
      $$\ker\delta_N = \overline{R(\delta_N)}\oplus (\ker\delta_N\cap\overline{R(\delta_N)}^\perp) = \overline{R(\delta_N)}\oplus(\ker\delta_N\cap\ker\partial_D),$$
      where we used again $ \overline{R(\delta_{N})}^{\perp}=\ker(\delta_{N}^{*})=\ker{(\partial_{D})}$ by \cite[Theorem~4.13]{W} and Lemma~\ref{d*}.
    All together, this yields the desired decomposition.\\
    Restricting to $\ell^2(\Sigma_k,m)$ and taking the quotient over $\overline{R(\delta_N)}$ yields the result about the $\ell^2$-cohomology.
    \end{proof}

In Section \ref{FormeqEssSa}, we provide criteria for $Q^{H}_{D}=Q^{H}_{N},$ which is equivalent to $\Delta_{D}^{H}=\Delta_{\Sigma}=\Delta_{N}^{H}$ since $Q_{D}^{H}\subseteq Q_{\Sigma}\subseteq Q_{N}^{H}$. In this case,  Theorem~\ref{kodaira} applies to $\Delta_{D}^{H}$, $\Delta_{N}^{H}$ as well.\\

It is natural to ask whether we obtain Hilbert complexes by considering $\delta_D$ or $\partial_N$ instead. 
Again, Lemma~\ref{d*} gives $\delta_{D}^{*}=\partial_{N}$. So by the discussion above,  if one of the two gives rise to a Hilbert complex, then so does the other.
However, a counterexample for $\partial_{N}\partial_{N}=0$
is given in Example~\ref{example} and its continuation in Example~\ref{example_cont}, which shows that for $\partial_N$ this is not the case in general. On the other hand, we always have $\delta_{D}\delta_{D}=0,$ which, however, may not be densely defined. Thus,  $\delta_{D}$ and $\partial_{N}$ yield Hilbert complexes if and only if
\begin{equation*}
	R(\delta_{D})\subseteq D(\delta_{D}).
\end{equation*}
In this case, we obtain a self-adjoint Laplacian 
$$\Delta'_{\Sigma} = \delta_D\partial_N+\partial_N\delta_D = (\delta_D+\partial_N)^2 = \Delta^+_{D}+\Delta^-_N$$
whose quadratic form $Q'_{\Sigma}$ also satisfies $$Q_{D}^{H}\subseteq Q'_{\Sigma}\subseteq Q^{H}_{N}.$$

Obviously, this is trivially satisfied if $\Delta_{D}^{+}=\Delta_{N}^{+}$ which simply means $Q^+ _{D}= Q^+_N$ and  $\delta_D = \delta_N.$ 

Another criterion is given by strong local summability of the weight $m$.
\begin{theorem}\label{kodaira2}
	If the weight $m$ is strongly locally summable, then $\delta_{D}$ and $\partial_{N}$ give rise to Hilbert complexes and 
		$$\ell^2 = \ker\Delta'_{\Sigma}\oplus\overline{R(\delta_D)}\oplus\overline{R(\partial_N)}$$
	and
	$$\ker\delta_{D,k}\slash \overline{R(\delta_{D,k-1})}\cong \ker\Delta_{\Sigma,k}'.$$
\end{theorem}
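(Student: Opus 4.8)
The plan is to reduce Theorem~\ref{kodaira2} almost entirely to the machinery already developed for $\delta_{N}$ and $\partial_{D}$ in Theorem~\ref{kodaira}, since the only genuinely new input is that strong local summability supplies the missing relation $\partial_{N}\partial_{N}=0$. First I would invoke Lemma~\ref{sammellemma3}: strong local summability gives $\ell^{2}\subseteq\F^{2}$, and Lemma~\ref{pp0} yields $\partial\partial=0$ on $\F^{2}$. Since $D(\partial_{N})=\mathcal{D}^{-}\cap\ell^{2}\subseteq\ell^{2}\subseteq\F^{2}$, this forces $\partial_{N}\partial_{N}=0$. By Lemma~\ref{d*} we have $\delta_{D}=\partial_{N}^{*}$, so the dual-complex principle recorded in the excerpt (the adjoints of a Hilbert complex form a Hilbert complex) immediately gives $\delta_{D}\delta_{D}=0$ together with the range condition $R(\delta_{D})\subseteq D(\delta_{D})$ — equivalently, one checks directly that $R(\partial_{N})\subseteq D(\partial_{N})$ and $R(\delta_{D})\subseteq D(\delta_{D})$ hold, which is exactly the criterion the text isolated as the obstruction. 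Thus both $\partial_{N}$ and $\delta_{D}$ genuinely define Hilbert complexes, and $\Delta'_{\Sigma}=(\delta_{D}+\partial_{N})^{2}$ is the associated self-adjoint Laplacian.

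With the two operators now honest Hilbert-complex operators, the decomposition follows by replaying the proof of Theorem~\ref{kodaira} verbatim with the substitution $\delta_{N}\mapsto\delta_{D}$ and $\partial_{D}\mapsto\partial_{N}$. Concretely, I would first argue
\begin{equation*}
	\ker\Delta'_{\Sigma}=\ker(\delta_{D}+\partial_{N})=\ker\delta_{D}\cap\ker\partial_{N},
\end{equation*}
using that $\ker T^{*}T=\ker T$ for the self-adjoint $T=\delta_{D}+\partial_{N}$ (by \cite[Theorem~5.39]{W}, since $\Delta'_{\Sigma}=(\delta_{D}+\delta_{D}^{*})^{2}$ via $\partial_{N}=\delta_{D}^{*}$), and that $\delta_{D}\delta_{D}=0$ forces $R(\delta_{D})\perp R(\partial_{N})$. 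Then, since $\delta_{D}$ is closed with adjoint $\delta_{D}^{*}=\partial_{N}$, the abstract identity $\ker(\delta_{D})^{\perp}=\overline{R(\delta_{D}^{*})}=\overline{R(\partial_{N})}$ from \cite[Theorem~4.13]{W} gives
\begin{equation*}
	\ell^{2}=\ker\delta_{D}\oplus\overline{R(\partial_{N})},
\end{equation*}
and refining $\ker\delta_{D}=\overline{R(\delta_{D})}\oplus(\ker\delta_{D}\cap\ker\partial_{N})$ via $\overline{R(\delta_{D})}^{\perp}=\ker\partial_{N}$ yields the three-fold orthogonal decomposition.

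For the cohomology statement I would restrict $\Delta'_{\Sigma}$ to $\ell^{2}(\Sigma_{k},m)$ and pass to the quotient by $\overline{R(\delta_{D,k-1})}$, exactly as in the final line of the proof of Theorem~\ref{kodaira}; the orthogonal complement of $\overline{R(\delta_{D,k-1})}$ inside $\ker\delta_{D,k}$ is precisely $\ker\Delta'_{\Sigma,k}$, giving the isomorphism $\ker\delta_{D,k}/\overline{R(\delta_{D,k-1})}\cong\ker\Delta'_{\Sigma,k}$. The only step requiring genuine attention is the very first one, establishing $\partial_{N}\partial_{N}=0$ and the range conditions $R(\partial_{N})\subseteq D(\partial_{N})$, $R(\delta_{D})\subseteq D(\delta_{D})$; everything after that is the functional-analytic skeleton of Theorem~\ref{kodaira} transported through the adjoint relation $\delta_{D}=\partial_{N}^{*}$. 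I expect the main obstacle to be bookkeeping about domains — making sure that $R(\partial_{N})$ lands inside $\mathcal{D}^{-}\cap\ell^{2}$ rather than merely in $\ell^{2}$, which is where strong local summability (rather than mere local summability) is truly needed.
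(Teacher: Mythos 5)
Your proposal is correct and follows exactly the paper's own route: Lemma~\ref{sammellemma3} gives $\partial\partial=0$ on $\ell^2$, hence $R(\partial_N)\subseteq D(\partial_N)$ and $\partial_N\partial_N=0$, then $\delta_D^*=\partial_N$ from Lemma~\ref{d*} yields both Hilbert complexes, and the decomposition and cohomology statements carry over verbatim from Theorem~\ref{kodaira}. You merely spell out the ``verbatim'' part that the paper leaves implicit, and you correctly identify the crux as showing $R(\partial_N)$ lands in $\mathcal{D}^-\cap\ell^2$.
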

\begin{proof}
	By Lemma~\ref{sammellemma3}  strong local summability implies  $\partial\partial =0$ on $\ell^{2}$. 
	Consequently, $R(\partial_N)\subseteq D(\partial_N)$ and $\partial_N\partial_N = 0$. By Lemma~\ref{d*}, $\delta_{D}^{*}=\partial_{N}$. Thus, $\partial_{N}$ and $\delta_{D}$ give rise to Hilbert complexes. The rest of the proof carries over verbatim from the proof of Theorem~\ref{kodaira}.
\end{proof}

\subsection{The Signed Schrödinger Operator Perspective}\label{DiscSch}

Our goal in this section is to relate the Laplacians introduced above to discrete signed Schrö\-dinger operators. This will enable us to prove statements about Laplacians by exploiting the theory that has already been developed there.

Given a locally summable weight $m$ on a simplicial complex $\Sigma$ and $\bullet\in\{\pm,H\}$, we define $b^{\bullet}:\Sigma\times\Sigma\to[0,\infty)$ for $\tau,\tau'\in \Sigma$ by
   \begin{align*}
	b^+(\tau,\tau') &= m(\tau\cup\tau')1_{\tau,\tau'\prec\tau\cup\tau'},\\
	b^-(\tau,\tau') &= \frac{m(\tau)m(\tau')}{m(\tau\cap\tau')}1_{\tau\cap\tau'\prec\tau,\tau'},\\
	b^H(\tau,\tau') & = \vert b^-(\tau,\tau')-b^+(\tau,\tau')\vert.
\end{align*}
Then $b^{\bullet}$ is symmetric, zero on the diagonal and satisfies for all $\tau\in\Sigma$
\begin{equation*}
	\sum_{\tau'\in \Sigma}b^{\bullet}(\tau,\tau')<\infty
\end{equation*}
by local summability. Observe that if $b^{\bullet}(\tau,\tau')>0$, then $\dim(\tau)=\dim(\tau')$.
Furthermore, let $o^{\bullet}:\Sigma\times\Sigma\to\{-1,0,1\}$ be  given for $\tau,\tau'\in \Sigma$ by
\begin{align*}
	o^+(\tau,\tau') &= \theta(\tau,\tau\cup\tau')\theta(\tau',\tau\cup\tau') 1_{\tau,\tau'\prec\tau\cup\tau'}\\
	o^-(\tau,\tau') &= \theta(\tau\cap\tau',\tau)\theta(\tau\cap\tau',\tau')1_{\tau\cap\tau'\prec\tau,\tau'},\\
	o^H(\tau,\tau') &= o^-(\tau,\tau')\mathrm{sgn}(b^-(\tau,\tau')-b^+(\tau,\tau')).
\end{align*}
Clearly, $o^{\bullet}$ is symmetric and is zero on the diagonal as well. 
Finally, let $c^{\bullet}:\Sigma\to\mathbb{R}$ be given for $\tau\in \Sigma$ by
\begin{align*}
	c^+(\tau) &= -\mathrm{dim}(\tau)\sum_{\tau\prec\sigma}m(\sigma),\\
	c^-(\tau) &= \sum_{\rho\prec\tau}\frac{m(\tau)}{m(\rho)}\Big(m(\tau)-\sum_{\rho\prec\tau'\neq\tau}m(\tau')\Big),\\
	c^H(\tau) &= \sum_{\rho\prec\tau}\frac{m^2(\tau)}{m(\rho)}+\sum_{\tau\prec\sigma}m(\sigma)-\sum_{\rho\prec\tau}\sum_{\rho\prec\tau'\neq\tau}\Big\vert\frac{m(\tau)m(\tau')}{m(\rho)}-\sum_{\tau,\tau'\prec\sigma}m(\sigma)\Big\vert.
\end{align*}

These quantities give rise to signed Schrödinger operators, cf. \cite{GKS}. We define the \emph{formal signed Schrödinger operator} $\mathcal{H}^\bullet$ 	acting as
$$\mathcal{H}^\bullet\omega(\tau) = \frac{1}{m(\tau)}\sum_{\tau' \in \Sigma}b^\bullet(\tau,\tau')(\omega(\tau)-o^\bullet(\tau,\tau')\omega(\tau'))+\frac{c^\bullet(\tau)}{m(\tau)}\omega(\tau)$$
on  $$\F^\bullet = \F^\bullet(\Sigma) =\lbrace \eta\in F\mid\sum_{\tau'\in \Sigma}b^\bullet(\tau,\tau')\vert\omega(\tau')\vert<\infty~\text{for all}~\tau\in\Sigma\rbrace.$$
Because $b^{H}\leq b^{+}+b^{-}$ we see that $$F^{c}\subseteq F^{\bullet}\quad\mbox{ and }\quad F^{-}\cap F^{+}\subseteq F^{H}.$$ 
It is easy to show that $F^+$ coincides with the respective set in Theorem \ref{green1} (c), hence the same name.
The term $c^H\slash m$ is known in the literature as \textit{Forman curvature} or \textit{Forman-Ricci curvature} (\cite{F,JM}). In a sense, the Schrödinger operator representation for $\Delta^H$ resulting from Theorem \ref{schröd} below can be interpreted as Weitzenböck identity (\cite[Proposition A3.]{Chavel}) in which $c^H$ plays the role of the Ricci curvature, see for example \cite{F} and \cite{JM}. Furthermore, $o^{\circ}$ can be viewed as a magnetic potential.

The aim now is to relate $\mathcal{H}^{+},\mathcal{H}^{-}$ and $\mathcal{H}^{H}$ to $ \partial\delta$, $\delta\partial$ and $ \partial\delta+\delta\partial$.

\begin{theorem}[Action of $\mathcal{H}^{\circ}$]\label{schröd2}
	One has $F^{+}\subseteq  D(\partial\delta)$,  $F^{-} = D(\delta\partial)$, $\F^{+}\cap F^{-}\subseteq  D(\partial\delta+\delta\partial)$ and
	\begin{align*}
		\mathcal{H}^{+} &=\partial\delta&&\hspace{-2cm}\mbox{ on }\F^{+},\\
		\mathcal{H}^{-}&=\delta\partial&&\hspace{-2cm} \mbox{ on }F^{-}= D(\delta\partial),\\
		\mathcal{H}^{H}&= \partial\delta+\delta\partial&&\hspace{-2cm} \mbox{ on }\F^{+}\cap \F^{-}.
	\end{align*}
	If the simplicial complex is locally finite, then equality holds on $\F$. Furthermore, for $\eta\in \F^{c}$
	\begin{align*}
		\sum_{\Sigma}m (\mathcal{H}^{+}\omega)\eta &=	\sum_{\Sigma}m(\delta\omega)(\delta\eta),&&\omega\in \F^{+},\\
		\sum_{\Sigma}m(\mathcal{H}^{-}\omega)\eta&=	\sum_{\Sigma}m(\partial\omega)(\partial\eta),&& \omega\in F^{-}= D(\delta\partial),\\
		\sum_{\Sigma}m	(\mathcal{H}^{H}\omega)\eta&= 	\sum_{\Sigma}m\delta\omega\delta\eta+\sum_\Sigma m\partial\omega\partial\eta,&& \omega\in \F^{+}\cap \F^{-}.
	\end{align*}
\end{theorem}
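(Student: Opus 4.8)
The plan is to prove the three operator identities by a direct term-by-term computation, and then to deduce the domain statements and the form identities from Stokes' theorem and the Green's formulas already established. First I would compute $\partial\delta\omega(\tau)$ by inserting the definitions of $\partial$ and $\delta$ and splitting the double sum over $\sigma\succ\tau$ and $\rho\prec\sigma$ into a diagonal part $\rho=\tau$ and an off-diagonal part $\rho=\tau'\neq\tau$. The crucial combinatorial observation for the off-diagonal part is that two distinct faces $\tau,\tau'$ of a common coface $\sigma$ must satisfy $\sigma=\tau\cup\tau'$, so I may reindex the sum over pairs $(\sigma,\tau')$ as a sum over $\tau'$ alone; the weight $m(\tau\cup\tau')$ and the sign $\theta(\tau,\tau\cup\tau')\theta(\tau',\tau\cup\tau')$ that multiply $\omega(\tau')$ are then exactly $b^{+}(\tau,\tau')$ and the signature $o^{+}(\tau,\tau')$ entering $\mathcal{H}^{+}$. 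For the diagonal part I use $\theta(\tau,\sigma)^{2}=1$ together with the count that $\sum_{\tau'}b^{+}(\tau,\tau')=(\dim\tau+1)\sum_{\sigma\succ\tau}m(\sigma)$, since each coface $\sigma$ has $\dim\tau+1$ faces besides $\tau$; the potential $c^{+}(\tau)=-\dim(\tau)\sum_{\sigma\succ\tau}m(\sigma)$ is precisely what cancels the surplus, leaving the diagonal coefficient $\tfrac1{m(\tau)}\sum_{\sigma\succ\tau}m(\sigma)$. Matching this against $\mathcal{H}^{+}$ gives $\mathcal{H}^{+}=\partial\delta$ on $\F^{+}$. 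The computation for $\delta\partial$ is completely analogous with faces and cofaces exchanged: now $\rho=\tau\cap\tau'$ is the shared face, the off-diagonal data are $b^{-}$ and $o^{-}$, and $c^{-}$ repairs the diagonal in the same way. Here the inclusion upgrades to the equality $\F^{-}=D(\delta\partial)$, because after the reindexing the defining summability condition for $\F^{-}$ coincides with $\omega\in \F^{1}=D(\partial)=D(\delta\partial)$.

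For $\mathcal{H}^{H}=\partial\delta+\delta\partial$ on $\F^{+}\cap\F^{-}$ I would add the two previous expansions, where the heart of the matter is the off-diagonal term. The key identity is
\begin{equation*}
	b^{+}(\tau,\tau')\,o^{+}(\tau,\tau')+b^{-}(\tau,\tau')\,o^{-}(\tau,\tau')=b^{H}(\tau,\tau')\,o^{H}(\tau,\tau'),
\end{equation*}
which I would prove by distinguishing whether $(\tau,\tau')$ is up-adjacent. If $\tau\cup\tau'\notin\Sigma$ then $b^{+}=0$ and both sides equal $b^{-}o^{-}$. If $\tau\cup\tau'\in\Sigma$, then $\tau\cap\tau'$ is a common face and $\tau\cup\tau'$ a common coface, so the cocycle relation underlying $\delta\delta=0$ from Lemma~\ref{pp0}, applied to $\tau\cap\tau'\prec\tau,\tau'\prec\tau\cup\tau'$, forces $o^{+}=-o^{-}$; the left-hand side then becomes $o^{-}(b^{-}-b^{+})=|b^{-}-b^{+}|\,o^{-}\mathrm{sgn}(b^{-}-b^{+})=b^{H}o^{H}$ by the definitions of $b^{H}$ and $o^{H}$. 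The diagonal term is handled by verifying that $c^{H}$ equals the combined diagonal correction once the merged hopping weight $b^{H}$ is accounted for; this is exactly the content of the (lengthy) formula for $c^{H}$, whose absolute value mirrors that in $b^{H}$.

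The remaining assertions are comparatively routine. The inclusions $\F^{+}\subseteq D(\partial\delta)$ and $\F^{+}\cap\F^{-}\subseteq D(\partial\delta+\delta\partial)$ follow from absolute convergence of the rearranged sums, using $|\delta\omega(\sigma)|\le\sum_{\tau'\prec\sigma}|\omega(\tau')|$ and the triangle inequality, which is precisely what the defining conditions of $\F^{+}$ and $\F^{-}$ supply. If the complex is locally finite, every defining sum is finite, so $\F^{\bullet}=\F$ and all composition domains equal $\F$, giving equality on all of $\F$. Finally, the three form identities for $\eta\in\F^{c}$ follow by combining the just-established actions with Stokes' theorem (Theorem~\ref{adj1}) and the Green's formulas (Theorem~\ref{green1}): since $\partial\eta,\delta\eta\in\F^{c}$ by Lemma~\ref{sammellemma1}, all pairings are finite sums, so the passage from $\sum_{\Sigma}m(\mathcal{H}^{\bullet}\omega)\eta$ to the energy pairings $\sum_{\Sigma}m(\delta\omega)(\delta\eta)$ and $\sum_{\Sigma}m(\partial\omega)(\partial\eta)$ is legitimate.

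I expect the main obstacle to be the diagonal identity in the $\mathcal{H}^{H}$ case, that is, matching the explicit formula for $c^{H}$: the interaction of the absolute value in $b^{H}$ with the two separate diagonal corrections $c^{+}$ and $c^{-}$ must be tracked simplex by simplex, and the cancellations are considerably less transparent than in the pure up- and down-cases. The cocycle identity $o^{+}=-o^{-}$ is the tool I would rely on to keep this bookkeeping under control and to split the neighbour set according to the sign of $b^{-}-b^{+}$.
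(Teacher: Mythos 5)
Your proposal is correct in substance, but it takes a genuinely different route from the paper's. The paper never expands $\partial\delta\omega(\tau)$ for a general $\omega$; it computes the matrix elements $Q^{\bullet}(1_{\tau},1_{\tau'})$ once, in Lemma~\ref{schröd}, and then globalizes by duality: the symmetry of $\mathcal{H}^{\bullet}$ on $\F^{\bullet}\times\F^{c}$ (Proposition~\ref{green2}~(a), imported from \cite{GKS}), combined with Proposition~\ref{green2}~(b) and Theorem~\ref{green1}~(c), yields $m(\sigma)\mathcal{H}^{+}\omega(\sigma)=\sum_{\tau}\omega(\tau)Q^{+}(1_{\tau},1_{\sigma})=m(\sigma)\partial\delta\omega(\sigma)$ by testing against characteristic functions. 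Your direct Fubini rearrangement of the double sums replaces this duality step. The combinatorial core is identical in both arguments: two distinct faces of a common coface have union equal to that coface, a coface of $\tau$ has $\dim(\tau)+1$ faces besides $\tau$, and the relation $o^{+}=-o^{-}$ coming from $\delta\delta 1_{\rho}(\sigma)=0$, which the paper also invokes inside Lemma~\ref{schröd}; likewise your observation that $c^{H}$ is built exactly so that its third term cancels $\sum_{\tau'}b^{H}(\tau,\tau')$ is the correct diagonal bookkeeping. What your route buys is self-containedness: no citation of \cite{GKS} and no detour through the quadratic-form representation. What the paper's route buys is that all sign and index bookkeeping is confined to pairs of characteristic functions and then reused, and it simultaneously produces the form representation needed elsewhere.

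Two slips should be repaired. First, your justification of the final form identities asserts $\delta\eta\in\F^{c}$ for $\eta\in\F^{c}$ ``by Lemma~\ref{sammellemma1}''; that lemma only gives $\partial\F^{c}\subseteq\F^{c}$, and the assertion is false on non-locally finite complexes, since $\delta 1_{\tau}$ is supported on \emph{all} cofaces of $\tau$. This does not sink the step: Theorem~\ref{green1}~(a) and (b) apply verbatim, because their hypotheses ($\omega\in\F^{1}$, respectively $\delta\omega\in\F^{1}$) are exactly what $\omega\in\F^{-}=\F^{1}$ and $\omega\in\F^{+}\subseteq D(\partial\delta)$ guarantee; simply delete the erroneous clause. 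Second, watch the off-diagonal sign: in $\mathcal{H}^{\bullet}$ the coefficient of $\omega(\tau')$ is $-b^{\bullet}(\tau,\tau')o^{\bullet}(\tau,\tau')/m(\tau)$, whereas your expansion of $\partial\delta$ produces $+b^{+}(\tau,\tau')\theta(\tau,\tau\cup\tau')\theta(\tau',\tau\cup\tau')/m(\tau)$, so the match requires $o^{+}(\tau,\tau')=-\theta(\tau,\tau\cup\tau')\theta(\tau',\tau\cup\tau')$. This is the convention derived in the proof of Lemma~\ref{schröd} (namely $o^{\bullet}=-\mathrm{sgn}\,Q^{\bullet}(1_{\tau},1_{\tau'})$), but it differs by a sign from the displayed definitions in Section~\ref{DiscSch}; your key identity $b^{+}o^{+}+b^{-}o^{-}=b^{H}o^{H}$ is stated in a way that survives either convention, but the phrase ``the sign \ldots is exactly $o^{+}$'' is not. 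Since the paper itself is internally inconsistent on this point, this is bookkeeping rather than a structural gap, but a clean write-up must fix one convention and carry the minus sign through every off-diagonal comparison.
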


The first step in the proof is achieved by the following representation of the quadratic form $Q^{\bullet}_{D}$.

\begin{lemma}\label{schröd}
  For $\bullet\in\{\pm,H\}$ and $\omega\in D(Q^{\circ}_{D})$,
    $$Q^\bullet_{D}(\omega) = \frac{1}{2}\sum_{\tau,\tau'\in\Sigma}b^\bullet(\tau,\tau')(\omega(\tau)-o^\bullet(\tau,\tau')\omega(\tau'))^2+\sum_{\tau\in\Sigma}c^\bullet(\tau)\omega^2(\tau).$$
\end{lemma}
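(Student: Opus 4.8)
The plan is to verify the claimed identity for $Q^{\bullet}_{D}$ by a direct computation on $\F^{c}$ for each of the three cases $\bullet\in\{+,-,H\}$, and then extend to all of $D(Q^{\bullet}_{D})$ by the density of $\F^{c}$ in $D(Q^{\bullet}_{D})$ with respect to $\|\cdot\|_{\Q^{\bullet}}$. Since the right-hand side is a quadratic form that is continuous with respect to $\|\cdot\|_{\Q^{\bullet}}$ (once we know it agrees with $\Q^{\bullet}$ on $\F^{c}$), this reduction is legitimate, so the whole content lies in the finitely supported case. For $\omega\in\F^{c}$, I would start from the definitions $\Q^{+}(\omega)=\sum_{\sigma}m(\sigma)|\delta\omega(\sigma)|^{2}$ and $\Q^{-}(\omega)=\sum_{\rho}m(\rho)|\partial\omega(\rho)|^{2}$ and expand the squares.

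First I would treat $\bullet=+$. Writing $\delta\omega(\sigma)=\sum_{\tau\prec\sigma}\theta(\tau,\sigma)\omega(\tau)$ and squaring gives a diagonal part $\sum_{\sigma}m(\sigma)\sum_{\tau\prec\sigma}\omega(\tau)^{2}$ and a cross part $\sum_{\sigma}m(\sigma)\sum_{\tau\neq\tau'\prec\sigma}\theta(\tau,\sigma)\theta(\tau',\sigma)\omega(\tau)\omega(\tau')$. The key combinatorial observation is that for $\tau\neq\tau'$ both faces of the same $\sigma$, one has $\sigma=\tau\cup\tau'$ and $\tau\cap\tau'\prec\tau,\tau'$, which is exactly how $b^{+}$ and $o^{+}$ are built; re-indexing the cross sum over pairs $(\tau,\tau')$ turns it into $\sum_{\tau,\tau'}b^{+}(\tau,\tau')o^{+}(\tau,\tau')\omega(\tau)\omega(\tau')$. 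The diagonal part, after counting for each $\tau$ the number of cofaces, must be matched against $\tfrac12\sum b^{+}(\tau,\tau')\bigl(\omega(\tau)^{2}+\omega(\tau')^{2}\bigr)+\sum_{\tau}c^{+}(\tau)\omega(\tau)^{2}$; the discrepancy between the naive diagonal count $\sum_{\sigma}m(\sigma)\sum_{\tau\prec\sigma}\omega(\tau)^{2}$ and the symmetrized $b^{+}$ contribution is precisely what $c^{+}(\tau)=-\dim(\tau)\sum_{\tau\prec\sigma}m(\sigma)$ absorbs, reflecting that each coface $\sigma$ of a $k$-simplex $\tau$ contributes to $k+1=\dim(\sigma)=\dim(\tau)+1$ faces but only $\dim(\tau)+1-1$ of the off-diagonal pairings. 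The case $\bullet=-$ is analogous using $\partial\omega(\rho)=\tfrac{1}{m(\rho)}\sum_{\tau\succ\rho}m(\tau)\theta(\rho,\tau)\omega(\tau)$, where the factor $1/m(\rho)$ produces the $m(\tau)m(\tau')/m(\tau\cap\tau')$ shape of $b^{-}$ and the sign product gives $o^{-}$, with $c^{-}$ again collecting the diagonal remainder.

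For $\bullet=H$, I would use the identity $\Q^{H}_{D}=(\Q^{+}_{D}+\Q^{-}_{D})$ on $D(Q^{H}_{D})$ from Corollary~\ref{adj2}, so that the Hodge form is literally the sum of the up- and down-forms; adding the two expressions already established, the coefficient of the off-diagonal term $\omega(\tau)\omega(\tau')$ becomes $b^{-}o^{-}-b^{+}o^{+}$, which one checks equals $-b^{H}o^{H}$ by the definitions of $b^{H}$ and $o^{H}$, while the diagonal coefficients combine to give $c^{H}$. This last bookkeeping step—verifying that $o^{H}(\tau,\tau')=o^{-}(\tau,\tau')\,\mathrm{sgn}(b^{-}-b^{+})$ correctly reproduces the signed cross term and that $c^{H}$ is exactly the leftover—is where I expect the real friction, since it requires carefully distinguishing the case $o^{+}=o^{-}$ from $o^{+}=-o^{-}$ for a shared coface/face configuration and handling the absolute values in the definition of $c^{H}$. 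The main obstacle overall is thus not conceptual but the sign- and cardinality-accounting in the $H$-case: one must confirm that the combination of the $+$ and $-$ data reassembles into the single $b^{H},o^{H},c^{H}$ triple rather than leaving spurious terms, and this is best done by fixing $\tau,\tau'$ sharing either a common coface or a common face and checking the four sign possibilities explicitly.
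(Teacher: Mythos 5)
Your proposal is correct and follows essentially the same route as the paper's proof: expand $Q^\bullet$ on $\F^c$, identify the coefficients $b^\bullet$, $o^\bullet$, $c^\bullet$ using that two distinct simplices share at most one common coface $\tau\cup\tau'$ and at most one common face $\tau\cap\tau'$, obtain the $H$-case from $Q^H_{D}=Q^+_{D}+Q^-_{D}$ on $\F^c$ together with the sign relation $o^+=-o^-$ (a reformulation of $\delta\delta 1_\rho(\sigma)=0$), and extend by density of $\F^c$ in $D(Q^\bullet_{D})$. The only step the paper makes explicit that you pass over is the a priori bound $\sum_{\tau'\neq\tau}\vert Q^\bullet(1_\tau,1_{\tau'})\vert<\infty$, which follows from local summability and is precisely what licenses your reindexing of the cross sums on a possibly non-locally-finite complex.
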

\begin{proof}We denote $ Q^{\bullet}=Q^{\bullet}_{D}$ for the proof.
First, notice that for any fixed $\tau\in\Sigma$, one has with $C_{\tau}=(\mathrm{dim}(\tau)+1)$ $$\sum_{\tau'\neq\tau}\vert Q^+(1_\tau,1_{\tau'})\vert \leq \sum_{\tau'\neq\tau}\sum_{\tau,\tau'\prec\sigma}m(\sigma) = \sum_{\tau\prec\sigma}m(\sigma)\hspace{-.2cm}\sum_{\tau'\prec\sigma,\tau'\neq\tau}\hspace{-.2cm}1 = C_{\tau}\sum_{\tau\prec\sigma}m(\sigma)<\infty$$ by local summability. A similar calculation shows that $\sum_{\tau'\neq\tau}\vert Q^\bullet(1_\tau,1_{\tau'})\vert<\infty$ for all $\tau\in \Sigma$ and $\bullet \in\{-,H\}.$
Thus, by writing $\omega\in \F^c$ as $\omega = \sum_{\tau}\omega(\tau)1_\tau$ and rearranging terms, we may write $Q^\bullet(\omega)$ as  
\begin{multline*}
    Q^\bullet(\omega) = \frac{1}{2}\sum_{\tau\neq\tau'}\vert Q^\bullet(1_\tau,1_{\tau'})\vert\left(\omega(\tau)+\mathrm{sgn}(Q^\bullet(1_\tau,1_{\tau'}))\omega(\tau')\right)^2\\
    +\sum_{\tau\in\Sigma}\Big(Q^\bullet(1_\tau)-\sum_{\tau\neq\tau'}\vert Q^\bullet(1_\tau,1_{\tau'})\vert\Big)\omega^2(\tau).
\end{multline*}
    Therefore, we obtain the asserted representation with
    \begin{align*}
        b^\bullet(\tau,\tau') &= \vert Q^\bullet(1_\tau,1_{\tau'})\vert,\\
        o^\bullet(\tau,\tau') &= -\mathrm{sgn}(Q^\bullet(1_\tau,1_{\tau'})),\\
        c^\bullet(\tau) &= Q^\bullet(1_\tau)-\sum_{\tau\neq\tau'}\vert Q^\bullet(1_\tau,1_{\tau'})\vert
    \end{align*}
     for $\tau\neq\tau'$ and $b^{\bullet}=o^{\bullet}=0$ on the diagonal.
    To finish the proof, we  compute $b^{\bullet}$, $o^{\bullet}$ and $c^{\bullet}$ explicitly. The result for $\omega\in D(Q^{\bullet})$ follows then by density of $\F^{c}$ in $D(Q^{\bullet})$.
    
    We see that for $\tau\neq\tau'$,  
    $$Q^+(1_\tau,1_{\tau'}) = \sum_{\sigma\in \Sigma} m(\sigma)\delta 1_\tau(\sigma)\delta 1_{\tau'}(\sigma) = \sum_{\sigma\succ\tau,\tau'}  m(\sigma)\theta(\tau,\sigma)\theta(\tau',\sigma).$$
   Note that there is at most one $\sigma\succ\tau,\tau'$ which is then $\tau\cup\tau'$. This immediately yields the representations of $b^+$ and $o^+.$ The summability of $b^{+}$ follows directly from the local summability of $m$.    
   Likewise, we get $Q^+(1_\tau) = \sum_\sigma m(\sigma)1_{\tau\prec\sigma}.$ Consequently, as there are exactly $\dim (\tau )+2$ faces in every $\sigma\succ\tau$
    $$c^+(\tau) = \sum_{\tau\prec\sigma}m(\sigma)-\sum_{\tau'\neq\tau}\sum_{\tau,\tau'\prec\sigma}m(\sigma)
     = -\mathrm{dim}(\tau)\sum_{\tau\prec\sigma}m(\sigma).$$
    Analogously we see that for $\tau\neq\tau'$
    $$Q^-(1_\tau,1_{\tau'}) =\sum_{\rho\in \Sigma} m(\rho)\partial 1_\tau(\rho) \partial 1_{\tau'}(\rho) = \sum_{\rho\prec \tau,\tau'} \frac{1}{m(\rho)}m(\tau)\theta(\rho,\tau)m(\tau')\theta(\rho,\tau').$$
    There is at most one $\rho\prec\tau,\tau'$ which is then $\tau\cap{\tau}'$.
    This yields the representations for $b^-$ and $o^-.$  The summability of $b^{-}$ follows directly from the local summability of $m$.    
      Likewise, $Q^-(1_\tau) = \sum_{\rho\prec \tau}{m(\tau)^2}/{m(\rho)}$ and the representation of $c^-$ follows immediately.\\
    Finally, as $Q^H = Q^++Q^-$ on $F^c,$ we find that  
    $$Q^H(1_\tau,1_{\tau'}) =  o^-(\tau,\tau')b^-(\tau,\tau')+o^+(\tau,\tau')b^+(\tau,\tau').$$
    The representations of $b^H$ and $o^H$ follow by noting that $o^+(\tau,\tau') = -o^-(\tau,\tau')$ whenever $\tau$ and $\tau'$ share a common coface $\sigma,$ which is  a reformulation of $\delta\delta 1_\rho(\sigma) = 0$ (cf. the proof of Lemma~\ref{h}). The representation of $c^H$ also follows immediately.
\end{proof}

To relate the representation for the quadratic form to the operator $\mathcal{H}^{\bullet}$ we need a Green's formula.

\begin{proposition}[Green's formula for $\mathcal{H}^{\bullet}$]\label{green2}
\begin{itemize}
	\item [(a)]	Let $\omega\in F^\bullet,\eta\in \F^c.$ Then,
	\begin{align*}
		 		\sum_{\Sigma}m (\mathcal{H}^\bullet\omega)\eta = \sum_\Sigma m\omega(\mathcal{H^\bullet\eta}). 
		 	\end{align*}
	\item[(b)] Let $\omega,\eta\in \F^c.$ Then, 
	\begin{equation*}
			\sum_{\Sigma}m (\mathcal{H}^\bullet\omega)\eta =Q^{\bullet}(\omega,\eta).
	\end{equation*}
	\end{itemize}\end{proposition}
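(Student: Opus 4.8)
The plan is to prove Proposition~\ref{green2} by reducing everything to the defining formula for $\mathcal{H}^\bullet$ together with the symmetry properties of $b^\bullet$ and $o^\bullet$ and Lemma~\ref{schröd}. For part $(a)$, I would start by writing out $\sum_\Sigma m(\mathcal{H}^\bullet\omega)\eta$ using the action of $\mathcal{H}^\bullet$, obtaining a double sum
\begin{equation*}
	\sum_{\tau\in\Sigma}\sum_{\tau'\in\Sigma}b^\bullet(\tau,\tau')\bigl(\omega(\tau)-o^\bullet(\tau,\tau')\omega(\tau')\bigr)\eta(\tau)+\sum_{\tau\in\Sigma}c^\bullet(\tau)\omega(\tau)\eta(\tau).
\end{equation*}
The key point is that this double sum converges absolutely: since $\eta\in\F^c$ has finite support, only finitely many $\tau$ contribute, and for each such $\tau$ the inner sum over $\tau'$ is finite because $\omega\in\F^\bullet$ (the defining condition $\sum_{\tau'}b^\bullet(\tau,\tau')|\omega(\tau')|<\infty$) and $\sum_{\tau'}b^\bullet(\tau,\tau')<\infty$ by local summability. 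This absolute convergence is what licenses all the rearrangements below.

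With absolute convergence in hand, I would exploit the symmetry of $b^\bullet$ and $o^\bullet$ (both symmetric, zero on the diagonal) to swap the roles of $\tau$ and $\tau'$ in the off-diagonal part. Splitting the bilinear expression symmetrically, one writes $b^\bullet(\tau,\tau')(\omega(\tau)-o^\bullet(\tau,\tau')\omega(\tau'))\eta(\tau)$ and relabels to produce the mirrored term with $\omega$ and $\eta$ interchanged; the diagonal term $c^\bullet(\tau)\omega(\tau)\eta(\tau)$ is already symmetric in $\omega,\eta$. Collecting, the whole expression equals $\sum_\Sigma m\omega(\mathcal{H}^\bullet\eta)$, which is exactly part $(a)$. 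The role-swap is the only genuinely substantive manipulation, and it goes through precisely because $o^\bullet(\tau,\tau')=o^\bullet(\tau',\tau)$ so that the cross term is unchanged under the relabeling.

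For part $(b)$, I would specialize to $\omega,\eta\in\F^c$, so that all convergence issues vanish trivially and $\omega\in\F^c\subseteq\F^\bullet$. The cleanest route is to recognize that the symmetric double-sum-plus-diagonal expression for $\sum_\Sigma m(\mathcal{H}^\bullet\omega)\eta$ is, after the same symmetrization as above, exactly the polarization of the quadratic form expression established in Lemma~\ref{schröd}. Concretely, Lemma~\ref{schröd} gives $Q^\bullet(\omega)=\frac{1}{2}\sum_{\tau,\tau'}b^\bullet(\tau,\tau')(\omega(\tau)-o^\bullet(\tau,\tau')\omega(\tau'))^2+\sum_\tau c^\bullet(\tau)\omega^2(\tau)$, and its associated bilinear form $Q^\bullet(\omega,\eta)$ obtained by polarization matches term by term the symmetrized expression for $\sum_\Sigma m(\mathcal{H}^\bullet\omega)\eta$. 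I would therefore compute $Q^\bullet(\omega,\eta)$ via polarization and compare.

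The main obstacle is purely bookkeeping around absolute convergence and the symmetrization in part $(a)$: one must verify that the Fubini-type interchange and the $\tau\leftrightarrow\tau'$ relabeling are legitimate for $\omega\in\F^\bullet$ (not merely $\F^c$), which is where the defining summability of $\F^\bullet$ and the local summability of $m$ are both essential. Once that is secured, both identities are formal consequences of the symmetry of $b^\bullet$, $o^\bullet$ and of Lemma~\ref{schröd}; no further structural input is needed.
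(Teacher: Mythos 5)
Your proof is correct and takes essentially the same route as the paper: the symmetry-plus-Fubini computation you carry out for part (a) is exactly what the paper imports by citing \cite[Lemma~2.1.]{GKS}, and your polarization of Lemma~\ref{schröd} for part (b) is precisely the paper's ``combine (a) with Lemma~\ref{schröd}''. The only difference is that your argument is self-contained where the paper defers to the graph literature, a presentational rather than structural distinction.
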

\begin{proof} The statement \emph{(a)} follows directly from  \cite[Lemma~2.1.]{GKS} and for \emph{(b)} we combine this with  Lemma~\ref{schröd} above.
\end{proof}

After we took care of how the action of (the quadratic forms associated to)  $\mathcal{H}^{\bullet}$ relates to the one of our Laplacians, we next come to the issue of their domains.

\begin{lemma}
	We have the following inclusions:
	\begin{itemize}
		\item [(a)] $F^{-}=F^{1}=D(\delta\partial)$.
		\item [(b)] $F^{+}\subseteq D(\partial \delta)$.		
		\item [(c)] $F^{-}\cap F^{+}\subseteq D(\partial\delta+\delta\partial)$.
	\end{itemize}
	If the simplicial complex is locally finite, then also $$\F^{+}=\F^{H}= D(\partial \delta)= D(\partial\delta+\delta\partial)=\F.$$
\end{lemma}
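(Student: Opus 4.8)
The plan is to unravel the composition convention first and then match, case by case, the summability conditions that define the relevant function spaces. Since $D(\delta)=\F$, the requirement $\partial f\in D(\delta)$ is automatic, so
$$D(\delta\partial)=\{f\in D(\partial)\mid \partial f\in D(\delta)\}=D(\partial)=\F^1,$$
while $D(\partial\delta)=\{f\in\F\mid \delta f\in\F^1\}$ and $D(\partial\delta+\delta\partial)=D(\partial\delta)\cap D(\delta\partial)$. Consequently (a) amounts to the set equality $\F^-=\F^1$, (b) amounts to $\delta\omega\in\F^1$ for every $\omega\in\F^+$, and (c) will be a formal consequence of (a) and (b).

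For (a), I would fix $\tau\in\Sigma$ and reorganize the series defining $\F^-$ by grouping the neighbours $\tau'$ according to their intersection $\rho=\tau\cap\tau'$. Since $b^-(\tau,\tau')\ne 0$ forces $\rho\prec\tau,\tau'$ and each $\tau$ has only finitely many faces, a short check that the groups indexed by $\rho\prec\tau$ are disjoint yields the exact identity
$$\sum_{\tau'\in\Sigma}b^-(\tau,\tau')|\omega(\tau')|=m(\tau)\sum_{\rho\prec\tau}\frac{1}{m(\rho)}\sum_{\substack{\tau'\succ\rho\\ \tau'\neq\tau}}m(\tau')|\omega(\tau')|.$$
The inclusion $\F^1\subseteq\F^-$ is then immediate, since for $\omega\in\F^1$ each inner sum is finite and the outer sum ranges over finitely many $\rho$. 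The reverse inclusion $\F^-\subseteq\F^1$ is the only genuinely delicate point and the step I expect to be the main obstacle: given $\rho$ possessing a coface $\tau$ (if $\rho$ has none, the $\F^1$-condition at $\rho$ is an empty sum), I would isolate in the identity above the single term indexed by that $\rho$, bounding $\sum_{\tau'\succ\rho,\,\tau'\neq\tau}m(\tau')|\omega(\tau')|$, and then re-add the $\tau'=\tau$ term to obtain $\sum_{\tau'\succ\rho}m(\tau')|\omega(\tau')|<\infty$. Together with $\F^1=D(\delta\partial)$ this gives (a).

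For (b) I would use the description $\F^+=\{\varphi\mid \sum_{\sigma\succ\tau}m(\sigma)\sum_{\tau'\prec\sigma}|\varphi(\tau')|<\infty,\ \tau\in\Sigma\}$, which the text recorded to coincide with the defining condition via $b^+$, and simply estimate $|\delta\omega(\sigma)|\le\sum_{\tau'\prec\sigma}|\omega(\tau')|$; summing $m(\sigma)$ over $\sigma\succ\tau$ shows $\delta\omega\in\F^1$, i.e.\ $\omega\in D(\partial\delta)$. Part (c) is then purely formal, as $\F^-\cap\F^+\subseteq D(\delta\partial)\cap D(\partial\delta)=D(\partial\delta+\delta\partial)$ by (a) and (b). Finally, under local finiteness every sum appearing above runs over finitely many terms, so an arbitrary $\omega\in\F$ satisfies all the defining conditions; hence $\F^+=\F^-=\F^1=\F$, which forces $D(\partial\delta)=D(\partial\delta+\delta\partial)=\F$ by the previous parts, and $\F^H=\F$ follows by sandwiching $\F=\F^+\cap\F^-\subseteq\F^H\subseteq\F$ using $b^H\le b^++b^-$.
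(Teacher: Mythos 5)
Your proposal is correct and follows essentially the same route as the paper's proof: the same regrouping identity $\sum_{\tau'}b^-(\tau,\tau')|\omega(\tau')|=m(\tau)\sum_{\rho\prec\tau}\frac{1}{m(\rho)}\sum_{\tau'\succ\rho,\tau'\neq\tau}m(\tau')|\omega(\tau')|$ for (a), the triangle inequality combined with the description of $\F^+$ from Theorem (Green's formulas) (c) for (b), the formal intersection argument for (c), and finiteness of all sums in the locally finite case. If anything, your handling of $\F^-\subseteq\F^1$ (isolating the term for a fixed $\rho$, re-adding the $\tau'=\tau$ summand, and treating $\rho$ without cofaces separately) is spelled out more carefully than the paper's terse ``if and only if'' step.
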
 
\begin{proof}
	$(a)$ By the definition of $b^{-}$ we obtain
	\begin{align*}
		F^{-}&=\lbrace \eta\in F\mid\sum_{\tau',\tau\cap\tau'\prec\tau}\frac{m(\tau)m(\tau')}{m(\tau\cap\tau')}\vert\omega(\tau')\vert<\infty~\text{for all}~\tau\in\Sigma\rbrace.
	\end{align*}
	We observe for $\omega\in F$
	\begin{align*}
		\sum_{\tau',\tau\cap\tau'\prec\tau}\frac{m(\tau)m(\tau')}{m(\tau\cap\tau')}\vert\omega(\tau')\vert&=	m(\tau)\sum_{\rho\prec \tau} \frac{1}{m(\rho)} \sum_{\tau'\succ\rho,\tau'\neq\tau} m(\tau')\vert\omega(\tau')\vert.
	\end{align*}
	There are at most finitely many $\rho\prec\tau$ for any $\tau\in\Sigma$, so we have $\inf_{\rho\prec \tau}m(\rho)>0$ and, therefore, the right hand side is finite if and only if $\sum_{\tau'\succ\rho} m(\tau')\vert\omega(\tau')\vert<\infty$ for all $\rho\in \Sigma$. However, by definition this means $\omega\in F^{1}$. Furthermore, since $\partial$ maps $F^{1}=D(\partial)$ to $F=D(\delta)$, we infer $F^{1}=D(\partial)=D(\delta\partial)$. 
	
	$(b) $ By the triangle inequality, we have for $\omega\in F^{
		+}$
	\begin{equation*}
		\sum_{\sigma\succ\tau}  m(\sigma)|\delta \omega(\sigma)|\leq 	\sum_{\sigma\succ\tau}  m(\sigma)\sum_{\tau'\prec \sigma}|\omega(\tau')|=\sum_{\tau'\in \Sigma} b^{+}(\tau,\tau')|\omega(\tau')|+\vert\omega(\tau)\vert\sum_{\sigma\succ\tau}m(\sigma). 
	\end{equation*}
	The right hand side is finite as $\omega\in F^+$ and due to local summability. Hence, $\delta\omega\in F^{1}$ and, therefore, $\omega \in D(\partial\delta)$.
	
$(c)$ This follows now directly from $(a)$ and $(b).$\\
In the locally finite case, we have $ \F^{+}=\F^{H}=\F,$ which yields the result.
\end{proof}

With these preparations, we come to the proof relating Schrödinger operators to Laplacians.

\begin{proof}[Proof of Theorem~\ref{schröd2}] Let $\omega\in \F^{+}$. By the lemma above we have $\omega\in D(\partial\delta)$. We write $\omega=\sum_{\tau}\omega(\tau)1_{\tau}$ and use Green's formulas, Proposition~\ref{green2}~\emph{(a)},~\emph{(b)}, and Theorem~\ref{green1} (c),
	\begin{multline*}
		m(\sigma)\mathcal{H}^{+}\omega (\sigma)= \sum_{\Sigma}m(\mathcal{H}^+\omega )1_\sigma = \sum_\Sigma m\omega(\mathcal{H}^+ 1_\sigma) =\sum_{\tau\in\Sigma}\omega(\tau)Q^{+}(1_{\tau},1_{\sigma})\\=\sum_{\tau\in\Sigma}\omega(\tau)\sum_{\Sigma}m(\partial\delta 1_{\tau})1_{\sigma}=m(\sigma)\partial\delta \omega(\sigma).
	\end{multline*}
The statements for $\mathcal{H}^{-}$ and $\mathcal{H}^{H}$ follow analogously. In the locally finite case, the lemma above gives $\F^{\circ}=\F$.
The formulas follow now directly from Green's formulas, Theorem~\ref{green1}, (a) and (b).
\end{proof}

The theorem, together with Theorem~\ref{action}, readily gives for any Laplacian $$\Delta^{-}=\mathcal{H}^- \qquad \mbox{on }D(\Delta^{-}).$$

Next, we give an example which shows that the inclusion $F^{+}\subseteq D(\partial\delta)$ is strict in general and, therefore, shows that the situation for $\Delta^{+}$ and $\Delta^{H}$
is more complicated.

\begin{example}[$\F^{+}\neq D(\partial\delta)$]
	Let $V=\mathbb{N}\cup\{0,\infty\}$. We consider the finite dimensional simplicial complex whose maximal faces are the triangles $\tau_{n}=\{0,n,\infty\}$, $n\in \mathbb{N}$ and a locally summable weight $m$.
	Let
	\begin{equation*}
		\omega=\sum_{n\in \mathbb{N}}\left(\theta(\{0,n\},\tau_{n}) 1_{\{0,n\}} - \theta (\{n,\infty\},\tau_{n}) 1_{\{n,\infty\}}\right)\frac{1}{m(\tau_{n})}.
	\end{equation*}
	Then, $\delta\omega=0$ and, hence, $\omega\in D(\partial\delta)$. However, 
	\begin{equation*}
		\sum_{n\in \mathbb{N}}b^{+}(\{0,\infty\}, \{n,\infty\})|\omega(\{n,\infty\})|=\sum_{n\in \mathbb{N}}m(\tau_{n})\frac{1}{m(\tau_{n})}=\infty,
	\end{equation*}
	which gives $\omega\not\in \F^{+}$. Indeed, a similar construction of $\omega$ can be carried out in any simplicial complex, where one has  non-locally finiteness on the level of triangles.
\end{example}

In order to study when $\Delta^{+}$ and $\Delta^{H}$ are restrictions of $\mathcal{H}^{+}$ and $\mathcal{H}^{H}$ we employ the theory of magnetic Schrödinger operators developed in the literature. 

We first study when $\F^{c}\subseteq D(\Delta^{\circ})$. We call $m$ \emph{balanced}  if 
\begin{equation*}
	\sup_{\sigma,\tau\in \Sigma,\tau\prec \sigma} \frac{m(\sigma)}{m(\tau)}<\infty.
\end{equation*}

\begin{theorem}\label{Fc} Let $\Delta^{\bullet}$ be a Laplacian for $\bullet\in\{\pm,H\}$. 
	\begin{itemize}
		\item [(a)]  $\F^{c}\subseteq D(\Delta^{-})$.
		\item [(b)] If $\Sigma$ is locally finite or $m$ is balanced, then $\F^{c}\subseteq D(\Delta^{+})\cap D(\Delta^{H})$.
	\end{itemize}
	In this case
	\begin{equation*}
		\Delta^{\bullet}=\mathcal{H}^{\bullet}\qquad \mbox{ on }F^{c}.
	\end{equation*}
\end{theorem}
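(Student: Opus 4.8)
The plan is to prove Theorem~\ref{Fc} by reducing each claim to known facts about the self-adjoint operators $\Delta^{\bullet}$ combined with the Schrödinger-operator action of $\mathcal{H}^{\bullet}$ on finitely supported functions. The key observation is that, by Lemma~\ref{sammellemma2}~(a), every finitely supported $\omega$ lies in $D(Q^{\bullet})$, and by Proposition~\ref{green2}~(b) the form acts on $F^{c}$ as $Q^{\bullet}(\omega,\eta)=\sum_{\Sigma}m(\mathcal{H}^{\bullet}\omega)\eta$. To show $\omega\in D(\Delta^{\bullet})$, I would verify the characterizing property of the form domain, namely that the functional $\eta\mapsto Q^{\bullet}(\omega,\eta)$ is represented by an $\ell^{2}$-function; this amounts to checking $\mathcal{H}^{\bullet}\omega\in\ell^{2}$, in which case $\Delta^{\bullet}\omega=\mathcal{H}^{\bullet}\omega$ follows immediately. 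Since $\mathcal{H}^{\bullet}\omega$ is a finite linear combination of the functions $\mathcal{H}^{\bullet}1_{\tau}$, the whole matter reduces to showing $\mathcal{H}^{\bullet}1_{\tau}\in\ell^{2}$ for each simplex $\tau$.

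For part (a), concerning $\Delta^{-}$, I would use that $F^{-}=F^{1}=D(\delta\partial)$ by the preceding lemma, so $\mathcal{H}^{-}=\delta\partial$ on all of $F^{1}\supseteq F^{c}$. Here the point is to show $\delta\partial 1_{\tau}\in\ell^{2}$. Since $\partial 1_{\tau}\in F^{c}$ by Lemma~\ref{sammellemma1} (every simplex has finitely many faces), and $\delta$ applied to a finitely supported function yields a function supported on finitely many cofaces-levels, I would estimate $\|\delta\partial 1_{\tau}\|^{2}$ directly; local summability of $m$ guarantees finiteness because the relevant sums $\sum_{\sigma\succ\rho}m(\sigma)$ converge. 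This gives $\mathcal{H}^{-}1_{\tau}\in\ell^{2}$ unconditionally, hence $F^{c}\subseteq D(\Delta^{-})$.

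For part (b), concerning $\Delta^{+}$ and $\Delta^{H}$, the obstruction is genuinely present: $\mathcal{H}^{+}1_{\tau}$ need not be square-summable in general, precisely because summability over cofaces of cofaces (the quantity controlled by $c^{+}$ and $b^{+}$) can fail without further hypotheses. I would therefore invoke exactly one of the two extra assumptions. In the locally finite case, $\mathcal{H}^{+}1_{\tau}$ has finite support, so it is trivially in $\ell^{2}$. In the balanced case, I would bound $\|\mathcal{H}^{+}1_{\tau}\|^{2}$ using the explicit formulas for $b^{+}$, $o^{+}$ and $c^{+}$ from Lemma~\ref{schröd}, controlling the ratios $m(\sigma)/m(\tau')$ that appear after expanding $\partial\delta 1_{\tau}$ by the balancing constant, which converts the potentially divergent double sum into a convergent one governed by local summability. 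The case $\Delta^{H}$ then follows from $b^{H}\le b^{+}+b^{-}$ and $F^{+}\cap F^{-}\subseteq F^{H}$, so that $\mathcal{H}^{H}1_{\tau}\in\ell^{2}$ whenever both $\mathcal{H}^{+}1_{\tau}$ and $\mathcal{H}^{-}1_{\tau}$ are.

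The main obstacle I anticipate is the balanced case in part (b): one must argue carefully that the balancing condition $\sup_{\tau\prec\sigma}m(\sigma)/m(\tau)<\infty$ is exactly what tames the square-summability of $\mathcal{H}^{+}1_{\tau}$, since the naive triangle-inequality bound used to place $\omega$ in $D(\partial\delta)$ does not by itself yield membership in $\ell^{2}$. Once $\mathcal{H}^{\bullet}1_{\tau}\in\ell^{2}$ is established, the identity $\Delta^{\bullet}=\mathcal{H}^{\bullet}$ on $F^{c}$ follows from Proposition~\ref{green2}~(b) together with the defining relation $Q^{\bullet}(\omega,\eta)=\langle\Delta^{\bullet}\omega,\eta\rangle$ and the density of $F^{c}$ in the form domain, so the remaining steps are routine.
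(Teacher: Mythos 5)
Your reduction of the theorem to the verification that $\mathcal{H}^{\bullet}1_{\tau}\in\ell^{2}$ for every simplex $\tau$, together with your concrete estimates (part (a) unconditionally, part (b) via local finiteness or balancedness), matches the computational core of the paper's proof: the paper checks condition (iii) of Proposition~\ref{prop:Fc}, namely $\varphi^{\bullet}_{\tau}=b^{\bullet}(\tau,\cdot)/m(\cdot)\in\ell^{2}$, and off the diagonal $\vert\mathcal{H}^{\bullet}1_{\tau}\vert$ agrees with $\varphi^{\bullet}_{\tau}$, so your condition (ii in the paper's numbering) and the paper's condition are the same check in slightly different clothing. The genuine gap lies in the functional-analytic step by which you pass from $\mathcal{H}^{\bullet}\omega\in\ell^{2}$, $\omega\in\F^{c}$, to $\omega\in D(\Delta^{\bullet})$.

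You justify this step by Proposition~\ref{green2}~(b) ``together with \dots the density of $\F^{c}$ in the form domain.'' But Proposition~\ref{green2}~(b) gives the identity $Q^{\bullet}(\omega,\eta)=\langle\mathcal{H}^{\bullet}\omega,\eta\rangle$ only for test functions $\eta\in\F^{c}$, and $\F^{c}$ is form-dense in $D(Q^{\bullet}_{D})$ by construction but \emph{not} in $D(Q^{\bullet})$ for a general Laplacian in the sense of Definition~\ref{def:Laplacian} --- in particular not in $D(Q^{\bullet}_{N})$. If it were, one would have $Q^{\bullet}_{D}=Q^{\bullet}_{N}$, i.e.\ form uniqueness, which Example~\ref{example_cont} shows can fail. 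Hence your argument proves $\F^{c}\subseteq D(\Delta^{\bullet}_{D})$ and $\Delta^{\bullet}_{D}=\mathcal{H}^{\bullet}$ on $\F^{c}$, whereas the theorem asserts this for \emph{every} Laplacian, the Neumann one included. The paper closes precisely this gap through Proposition~\ref{prop:Fc}: the condition $\mathcal{H}^{\bullet}\F^{c}\subseteq\ell^{2}$ is equivalent (via \cite[Lemma~2.11]{GKS} and \cite[Theorem~1.29]{KLW}) to $\ell^{2}\subseteq\F^{\bullet}$, and it is this last condition that licenses applying the Green's formulas (Theorem~\ref{green1}, resp.\ the formulas in Theorem~\ref{schröd2}) to pairs $(\omega,\eta)$ with $\eta\in\F^{c}$ but $\omega$ an \emph{arbitrary} element of $D(Q^{\bullet}_{N})\subseteq\ell^{2}\subseteq\F^{\bullet}$ --- a Fubini-type summability argument that replaces the unavailable density argument (and, for $\bullet=H$, additionally requires relating $Q^{H}$ to $Q^{+}_{N}+Q^{-}_{N}$). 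Without this step, or some substitute for it, your proof establishes the statement only for the Dirichlet Laplacians; the equality $\Delta^{\bullet}=\mathcal{H}^{\bullet}$ on $\F^{c}$ for the remaining Laplacians is then best obtained not by density but from Theorem~\ref{action} combined with Theorem~\ref{schröd2}, since both operators restrict $\partial\delta$, $\delta\partial$, resp.\ $\partial\delta+\delta\partial$ there.
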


We first give an abstract characterization which stems from  operators on graphs.

\eat{by the virtue of the functions $\varphi^{\bullet}_{\tau}:  \Sigma\rightarrow [0,\infty)$, $\tau'\mapsto{b^\bullet(\tau,\tau')}/{m(\tau')}$.which are given for $\tau'\neq\tau$
\begin{align*}
	\varphi^{+}_{\tau}(\tau')=\frac{m(\tau\cup\tau')}{m(\tau')}1_{\tau,\tau'\prec\tau\cup\tau'},\qquad
	\varphi^{-}_{\tau}(\tau') = \frac{m(\tau)}{m(\tau\cap\tau')}1_{\tau\cap\tau'\prec\tau,\tau'},\end{align*}
\begin{equation*}
	\varphi^{H}_{\tau}(\tau') = \left\vert	\varphi^{+}_{\tau}(\tau')-	\varphi^{-}_{\tau}(\tau')\right\vert.
\end{equation*}}

\begin{proposition}[$\F^{c}\subseteq D(\Delta^{\circ})$]\label{prop:Fc} 	
	Let $\bullet\in\{\pm,H\}$ and $\Delta^\bullet$ be a Laplacian. If $\bullet = H,$ assume that $Q^H\subseteq Q^{+}_N+Q^{-}_N$.
	The following statements are equivalent:
	\begin{itemize}
		\item [(i)] $\F^{c}\subseteq D(\Delta^{\circ})$ for some (all) Laplacians $\Delta^{\bullet}$.
		\item [(ii)]  $\mathcal{H}^\bullet\F^c\subseteq\ell^2$.
		\item [(iii)] $\varphi^{\bullet}_{\tau}\in \ell^{2}$ for all $\tau\in \Sigma$, where $\varphi^\bullet_\tau(\tau')={b^\bullet(\tau,\tau')}/{m(\tau')}$, $\tau'\in \Sigma$.
		\item [(iv)]  $\ell^{2}\subseteq \F^{\bullet}$.
	\end{itemize}
	This is in particular satisfied if $\Sigma$ is locally finite.
\end{proposition}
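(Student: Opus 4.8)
The plan is to prove the cycle by first disposing of the three ``static'' equivalences (ii)$\Leftrightarrow$(iii)$\Leftrightarrow$(iv), which are pointwise and functional-analytic in nature, and then to close the loop through (i) by means of a Green-type identity. For (iii)$\Leftrightarrow$(iv) I would use $\ell^2$-duality: rewriting $\sum_{\tau'}b^\bullet(\tau,\tau')|\eta(\tau')| = \sum_{\tau'}m(\tau')\varphi^\bullet_\tau(\tau')|\eta(\tau')|$, the condition $\ell^2\subseteq \F^\bullet$ says precisely that $\eta\mapsto \sum_{\tau'}m(\tau')\varphi^\bullet_\tau(\tau')\eta(\tau')$ is absolutely finite on all of $\ell^2$ for each fixed $\tau$, and by the standard characterisation of $\ell^2$ as its own dual this holds if and only if $\varphi^\bullet_\tau\in\ell^2$.

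For (ii)$\Leftrightarrow$(iii) I would compute $\mathcal{H}^\bullet 1_\tau$ directly. Off the diagonal one finds $|\mathcal{H}^\bullet 1_\tau(\sigma)| = (b^\bullet(\sigma,\tau)/m(\sigma))\,|o^\bullet(\sigma,\tau)| = \varphi^\bullet_\tau(\sigma)$, the last step using that $|o^\bullet(\sigma,\tau)| = 1$ whenever $b^\bullet(\sigma,\tau)>0$. The only delicate point is the case $\bullet = H$, where one must exclude $o^H = 0$ while $b^H>0$; this cannot happen because a common coface of $\sigma$ and $\tau$ forces $\sigma\cap\tau$ to be a common codimension-one face, so $b^+(\sigma,\tau)>0$ implies $b^-(\sigma,\tau)>0$. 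The diagonal value $\mathcal{H}^\bullet 1_\tau(\tau)$ is finite by local summability and finiteness of $c^\bullet(\tau)$, whence $\|\mathcal{H}^\bullet 1_\tau\|^2 = m(\tau)|\mathcal{H}^\bullet 1_\tau(\tau)|^2 + \|\varphi^\bullet_\tau\|^2$. Since $\mathcal{H}^\bullet$ is linear and $\F^c$ is spanned by the $1_\tau$, this yields (ii)$\Leftrightarrow$(iii).

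To connect with (i), the direction ``(i) for some Laplacian $\Rightarrow$ (ii)'' is short: for $\omega\in\F^c\subseteq D(\Delta^\bullet)$ and any $\eta\in\F^c$, Green's formula, Proposition~\ref{green2}~(b), gives $\langle\Delta^\bullet\omega,\eta\rangle = Q^\bullet(\omega,\eta) = \langle\mathcal{H}^\bullet\omega,\eta\rangle$, and testing against $\eta = 1_\tau$ forces $\mathcal{H}^\bullet\omega = \Delta^\bullet\omega\in\ell^2$. The substantial direction is ``(ii) $\Rightarrow$ (i) for all Laplacians''. Fixing a Laplacian $\Delta^\bullet$ and $\omega\in\F^c$, the candidate for $\Delta^\bullet\omega$ is $\mathcal{H}^\bullet\omega$, which lies in $\ell^2$ by (ii); since $D(Q^\bullet)\subseteq D(Q^\bullet_N)$, it suffices to verify $Q^\bullet_N(\omega,\eta) = \langle\mathcal{H}^\bullet\omega,\eta\rangle$ for every $\eta\in\mathcal{D}^\bullet\cap\ell^2$. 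For $\bullet = -$ this is immediate from Stokes' theorem, Theorem~\ref{adj1}, as $\langle\partial\omega,\partial\eta\rangle = \langle\delta\partial\omega,\eta\rangle$ using $\partial\omega\in\F^c$ and $\eta\in\ell^2\subseteq\F^1$, consistent with $\F^c\subseteq D(\Delta^-)$ always. For $\bullet = +$ one writes $\langle\delta\omega,\delta\eta\rangle = \langle\omega,\partial\delta\eta\rangle$ via Green's formula, Theorem~\ref{green1}~(b), and then transfers the operator onto $\omega$ via Theorem~\ref{green1}~(c), which is applicable precisely because (iv) gives $\eta\in\ell^2\subseteq\F^+$.

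The main obstacle sits in this last move: the identity must hold for $\eta$ ranging over the entire Neumann form domain and not merely over $\F^c$, so a density argument is unavailable (indeed $\overline{\F^c}$ can be a proper subspace), and the hypothesis $\ell^2\subseteq\F^\bullet$ is exactly what supplies the absolute summability needed to interchange the order of summation in Green's formula~(c). For $\bullet = H$ I would use the standing assumption $Q^H\subseteq Q^+_N + Q^-_N$ to split $Q^H(\omega,\eta) = \langle\delta\omega,\delta\eta\rangle + \langle\partial\omega,\partial\eta\rangle$, the vanishing of the cross term $\langle\delta\omega,\partial\eta\rangle$ being the very phenomenon that separates $\Delta^H_D$ from $\Delta^H_N$ in Example~\ref{example_cont}; this reduces matters to the $+$ and $-$ computations. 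The one subtlety is that (iv) for $H$ only gives $\ell^2\subseteq\F^H$, which I would upgrade to $\ell^2\subseteq\F^+$ through the pointwise inequality $b^+\le b^H + b^-$ together with the always-valid $\ell^2\subseteq\F^- = \F^1$. Assembling the pieces gives $\omega\in D(\Delta^\bullet)$ with $\Delta^\bullet\omega=\mathcal{H}^\bullet\omega$ for every Laplacian. Finally, if $\Sigma$ is locally finite then for each $\tau$ only finitely many $\tau'$ satisfy $b^\bullet(\tau,\tau')>0$, so $\varphi^\bullet_\tau\in\F^c\subseteq\ell^2$ and (iii) holds automatically.
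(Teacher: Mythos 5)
Your proof is correct, and its overall architecture (static equivalences $(ii)\Leftrightarrow(iii)\Leftrightarrow(iv)$, then a Green's-formula bridge to $(i)$) mirrors the paper's, but several ingredients are genuinely different. Where the paper simply cites \cite[Lemma~2.11]{GKS} for $(ii)\Leftrightarrow(iii)$ and \cite[Theorem~1.29]{KLW} for $(iii)\Leftrightarrow(iv)$ and the locally finite case, you prove these from scratch: the $\ell^2$-duality argument (which tacitly uses the standard fact that an everywhere-defined positive functional given by a kernel is bounded, e.g.\ by the closed graph theorem --- exactly the content of the cited result) and the explicit computation of $\mathcal{H}^\bullet 1_\tau$, where you correctly isolate and resolve the one delicate point, namely that $b^H(\sigma,\tau)>0$ forces $\vert o^H(\sigma,\tau)\vert=1$ because $b^+>0$ implies $\sigma\cap\tau\prec\sigma,\tau$ and hence $b^->0$. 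Your $(i)\Rightarrow(ii)$ goes through the form identity of Proposition~\ref{green2}~(b) and testing against $1_\tau$, whereas the paper argues on the operator level via Theorems~\ref{action} and \ref{schröd2}; both are sound. The most substantive divergence is in the Hodge case: the paper obtains $\ell^2\subseteq\F^+\cap\F^-$ by subtracting $\mathcal{H}^-=\delta\partial$ from $\mathcal{H}^H=\partial\delta+\delta\partial$ on $\F^c$ to get $(ii)$ for $\bullet=+$, and then invoking $(ii)\Rightarrow(iv)$ for $\pm$; you instead upgrade $(iv)$ for $H$ directly via the pointwise inequality $b^+\leq b^H+b^-$ together with $\ell^2\subseteq\F^-=\F^1$, which is more economical (note that when you later split $\langle\mathcal{H}^H\omega,\eta\rangle$ you still implicitly need $\partial\delta\omega\in\ell^2$ separately, which your chain $(iv)_+\Rightarrow(ii)_+$ supplies). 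Finally, the paper's closing computation puts the operator on the $\F^c$ element $\eta$ and concludes via symmetry of the form, while you put it on $\omega\in\F^c$ and verify the identity over the full Neumann (respectively $D(Q^H)$) domain --- the same mechanism, correctly recognizing that density of $\F^c$ is unavailable and that $\ell^2\subseteq\F^+$ is exactly what makes Theorem~\ref{green1}~(c) applicable.
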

\begin{proof}
The equivalence $(ii)\Longleftrightarrow(iii)$ is shown in \cite[Lemma~2.11]{GKS} and the equivalence $(iii)\Longleftrightarrow(iv)$ is shown in \cite[Theorem~1.29]{KLW} as well as the ``in particular'' statement. 

The implication $(i)\Longrightarrow(ii)$ is a consequence of Theorem \ref{action} and Theorem \ref{schröd2}. For $\bullet = \pm$, we get the implication $(iv)\Longrightarrow(i)$ from Theorem~\ref{action} and Green's formula, Theorem \ref{green1}. 

If $\bullet = H$ and we assume $(ii),$ then we get from Theorem~\ref{schröd2} since $F^{c}\subseteq F^{+}\cap F^{-}$ $$(\delta\partial+\partial\delta)F^c=\mathcal{H}^{H}F^{c}\subseteq\ell^2.$$ However, according to Lemma \ref{sammellemma1} and Lemma \ref{sammellemma2} we always have $$\mathcal{H}^-F^c = \delta\partial F^c\subseteq\ell^2.$$ Therefore, we conclude 
\begin{equation*}
	\mathcal{H}^+F^c = \partial \delta F^c\subseteq\ell^2.
\end{equation*}
Thus, $(ii) \Longrightarrow (iv)$ applied to $\bullet=\pm$ implies
that $\ell^2\subseteq F^+\cap F^-.$
Using the assumption $Q^{H}\subseteq Q^{+}_N+Q^{-}_N$, we deduce from  Theorem~\ref{schröd2}, for $\eta\in F^{c}$ and $\omega\in D(Q^{H})\subseteq \ell^{2}\subseteq F^{+}\cap F^{-}$,
\begin{equation*}
	Q^{H}(\omega,\eta)=	Q^{+}_N(\omega,\eta)+	Q^{-}_N(\omega,\eta)=\sum_{\Sigma}\delta\omega \delta\eta+  \sum_{\Sigma}\partial\omega\partial\eta=\sum_{\Sigma} \omega \mathcal{H}^{H}\eta.
\end{equation*}
 $ (ii)$ for $\bullet=H$, gives $\mathcal{H}^{H} \eta\in\ell^{2}$, so  $\eta\in D(\Delta^{H})$ and, therefore, $F^{c}\subseteq D(\Delta^{H})$.
\end{proof}

\begin{proof}[Proof of Theorem~\ref{Fc}]
\emph{(a)} Observe that by the definition of $b^{-}$
\begin{equation*}
	\|\varphi_{\tau}^{-}\|^{2}\leq m(\tau)^{2}\sum_{\rho\prec\tau}\frac{1}{m(\rho)^{2}}\sum_{\tau'\succ \rho}m(\tau')<\infty
\end{equation*}
which follows since there are only finitely many $\rho\prec \tau$ and $\sum_{\tau'\succ \rho}m(\tau')<\infty$ by local summability of $m$. Hence, the statement follows from the proposition above.

\emph{(b)} By the definition of $b^{H}$ we have $\varphi_{\tau}^{H}=|\varphi_{\tau}^{+}-\varphi_{\tau}^{-}|$. Thus, $\varphi_{\tau}^{H}\in \ell^{2}$ if and only if $\varphi_{\tau}^{+}\in\ell^{2}$ since $\varphi_{\tau}^{-}\in\ell^{2}$ by \emph{(a)}. If $\Sigma$ is locally finite, then the statement now follows from the proposition above. Assuming $m$ is balanced  and by the definition of $b^{+}$, we obtain by local summability of $m$
\begin{equation*}
	\|\varphi_{\tau}^{+}\|^{2}\leq\sum_{\sigma\succ\tau}m(\sigma)^{2}
	\sum_{\tau'\prec \sigma}\frac{1}{m(\tau')}\leq (\dim(\tau)+2)\sup_{\tau'\prec \sigma'} \frac{m(\sigma')}{m(\tau')} \sum_{\sigma\succ\tau}m(\sigma) <\infty.
\end{equation*}
Thus, the statement follows again from the proposition above.
\end{proof}

Finally, to obtain equality on the full domain we restrict ourselves to $\Delta^\bullet_{D}$ and employ once more the theory of magnetic Schrödinger operators. 

\begin{theorem}\label{action3}
Assume one of the following two conditions holds:
\begin{itemize}
    \item [(a)] There is $C>0$ such that for all $\varphi\in \F^c$ $$\langle \varphi,-((c^\bullet\slash m)\wedge 0)\varphi\rangle\leq C(\mathcal{Q}^\bullet(\varphi)+\Vert\varphi\Vert^2).$$
    \item[(b)]  $\F^{c}\subseteq D(\Delta^{\circ}_{D})$,  (see proposition above for a characterization).
\end{itemize}
Then $\Delta^\bullet_{D}=\mathcal{H}^\bullet$ on $D(\Delta_{D}^{\bullet}).$
\end{theorem}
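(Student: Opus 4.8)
The plan is to separate the statement into the inclusion $D(\Delta^\bullet_D)\subseteq \F^\bullet$ and the pointwise identity $\Delta^\bullet_D\omega=\mathcal{H}^\bullet\omega$ on that domain, and to treat the two hypotheses by genuinely different mechanisms: assumption (b) will be handled by a self-contained duality argument, while assumption (a) will be reduced to the restriction theory for signed Schrödinger operators on graphs.

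Under (b) I would argue as follows. Condition (b) is exactly statement $(i)$ of Proposition~\ref{prop:Fc} for the Dirichlet Laplacian, and the forward chain $(i)\Rightarrow(ii)\Rightarrow(iii)\Rightarrow(iv)$ there needs no extra hypothesis (the assumption $Q^H\subseteq Q^+_N+Q^-_N$ only enters the reverse implication in the Hodge case). Hence $\ell^2\subseteq \F^\bullet$, and since $D(\Delta^\bullet_D)\subseteq\ell^2$ this already gives $D(\Delta^\bullet_D)\subseteq\F^\bullet$. Moreover $(i)$ gives $\mathcal{H}^\bullet\eta=\Delta^\bullet_D\eta\in\ell^2$ for $\eta\in\F^c$ (combining Theorem~\ref{action} and Theorem~\ref{schröd2}). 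Now for $\omega\in D(\Delta^\bullet_D)\subseteq\F^\bullet$ and $\eta\in\F^c$, Green's formula for $\mathcal{H}^\bullet$ (Proposition~\ref{green2}~(a)) and self-adjointness yield
$$\langle\mathcal{H}^\bullet\omega,\eta\rangle=\sum_\Sigma m(\mathcal{H}^\bullet\omega)\eta=\sum_\Sigma m\,\omega(\mathcal{H}^\bullet\eta)=\langle\omega,\Delta^\bullet_D\eta\rangle=\langle\Delta^\bullet_D\omega,\eta\rangle.$$
Testing against $\eta=1_\tau$, $\tau\in\Sigma$, forces $\mathcal{H}^\bullet\omega=\Delta^\bullet_D\omega$ pointwise, which finishes this case.

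The main work, and the expected obstacle, lies in assumption (a), where neither $\ell^2\subseteq\F^\bullet$ nor $\F^c\subseteq D(\Delta^\bullet_D)$ need hold, so the duality of the previous paragraph is unavailable. Here I would use that, by Lemma~\ref{schröd} and Proposition~\ref{green2}~(b), the form $Q^\bullet_D$ is the closure from $\F^c$ of the magnetic Schrödinger form over $(\Sigma,m)$ with jump weights $b^\bullet$, signs $o^\bullet$ and potential $c^\bullet\slash m$, whose formal operator is $\mathcal{H}^\bullet$. Writing $c^\bullet\slash m=(c^\bullet\slash m)_+-w$ with $w=-((c^\bullet\slash m)\wedge 0)\ge 0$ and $\tilde Q=\mathcal{Q}^\bullet+\langle\,\cdot\,,w\,\cdot\,\rangle$ the associated form with nonnegative potential, I would first observe that (a) is equivalent to a relative form bound strictly below one: substituting $\mathcal{Q}^\bullet=\tilde Q-\langle\,\cdot\,,w\,\cdot\,\rangle$ into (a) and rearranging gives $\langle\varphi,w\varphi\rangle\le\tfrac{C}{1+C}\bigl(\tilde Q(\varphi)+\|\varphi\|^2\bigr)$ with $\tfrac{C}{1+C}<1$. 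For the nonnegative-potential form $\tilde Q$, the associated self-adjoint operator is a restriction of its formal operator by the standard theory of (magnetic) Schrödinger forms on graphs, see \cite{KLW,GKS}; the KLMN theorem then shows that $Q^\bullet_D=\tilde Q-\langle\,\cdot\,,w\,\cdot\,\rangle$ is closed and bounded below with the same form domain, and that its operator is the corresponding form sum. Since $w$ is a multiplication operator, this form sum acts as $\mathcal{H}^\bullet$ and its domain is contained in $\F^\bullet$, which is precisely $D(\Delta^\bullet_D)\subseteq\F^\bullet$ together with $\Delta^\bullet_D=\mathcal{H}^\bullet$.

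The crux is thus the passage for (a): extracting the hidden relative form bound $<1$ from the self-referential inequality and importing the corresponding restriction theorem for sign-indefinite potentials from \cite{GKS}. Controlling the form domain of a potential with a nontrivial negative part is exactly what cannot be done by the elementary testing argument used for (b); once the form-perturbation step is in place, verifying that the resulting operator acts pointwise as $\mathcal{H}^\bullet$ is again a routine Green's-formula computation.
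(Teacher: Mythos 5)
Your proposal is correct, but it takes a genuinely different route from the paper: the paper's entire proof of Theorem~\ref{action3} is the one-line citation of \cite[Theorem~2.12]{GKS}, i.e., after the translation into the magnetic Schrödinger framework already set up in Section~\ref{DiscSch} (Lemma~\ref{schröd}, Proposition~\ref{green2}), both hypotheses (a) and (b) are disposed of at once by the external theorem. You instead give separate, essentially self-contained arguments. For (b), your chain --- reading the hypothesis as item (i) of Proposition~\ref{prop:Fc}, observing (correctly) that the forward implications there do not use the extra Hodge assumption $Q^H\subseteq Q^+_N+Q^-_N$, concluding $\ell^2\subseteq\F^\bullet$ and $\mathcal{H}^\bullet\eta=\Delta^\bullet_D\eta\in\ell^2$ for $\eta\in\F^c$, then combining Proposition~\ref{green2}~(a) with self-adjointness and testing against $1_\tau$ --- is complete and uses only results proved in the paper, which is more transparent than the bare citation. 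For (a), extracting the hidden relative form bound $C/(1+C)<1$ for $w=-((c^\bullet/m)\wedge 0)$ against the nonnegative-potential form $\tilde Q$ and invoking KLMN is the right strategy (it is in effect how theorems of the type \cite[Theorem~2.12]{GKS} are proved), and your algebra producing the bound is correct. What the two approaches buy: the paper gets brevity at the price of asking the reader to match its hypotheses against those of \cite{GKS}; your version exposes the mechanism and shows that (b) needs nothing beyond the paper's own toolkit, while (a) is where genuine perturbation theory enters.

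One step in (a) should be reformulated. You cannot obtain ``the form sum acts as $\mathcal{H}^\bullet$'' by combining the operator-level fact that the operator of $\tilde Q$ restricts its formal operator with the KLMN theorem: KLMN preserves form domains, not operator domains, so the domain of the form-sum operator has no a priori relation to the operator domain associated with $\tilde Q$. What does transfer is form-level information. Since the closure of a restriction of a closed form is again a restriction of that form, every $\omega\in D(Q^\bullet_D)=D(\tilde Q_D)$ has finite jump energy $\frac{1}{2}\sum_{\tau,\tau'}b^\bullet(\tau,\tau')(\omega(\tau)-o^\bullet(\tau,\tau')\omega(\tau'))^2<\infty$, hence lies in $\F^\bullet$ by Cauchy--Schwarz and local summability ($\sum_{\tau'}b^\bullet(\tau,\tau')<\infty$); then, for $\omega\in D(\Delta^\bullet_D)$, one evaluates $Q^\bullet_D(\omega,1_\tau/m)$ through the polarized representation of Lemma~\ref{schröd}, splitting the sum --- legitimate precisely because $\omega\in\F^\bullet$ --- to get $\Delta^\bullet_D\omega(\tau)=\mathcal{H}^\bullet\omega(\tau)$. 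Since your closing sentence already identifies this Green's-formula verification as the remaining step, this is a repair of wording rather than of the idea; with it, your proof goes through.
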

\begin{proof}
    This can easily be seen from \cite[Theorem 2.12]{GKS}.
\end{proof}
Note that (a) in the Theorem above is satisfied if $c^\bullet\slash m$ is bounded from below.
\section{Form Uniqueness and Essential self-adjointness}\label{FormeqEssSa}

From the previous section it can be seen that, in general, one cannot speak of \emph{the} Laplacian on an infinite simplicial complex. There is rather a family of Laplacians $\Delta^{\bullet}$ whose quadratic forms $Q^{ \bullet}$ satisfy $Q_{D}^{\bullet}\subseteq Q^{\bullet}\subseteq Q_{N}^{\bullet}$ for each $\bullet\in\{\pm,H\}$. The goal of this section is to study criteria for uniqueness, i.e., when one can speak of \emph{the} Laplacian.

From an operator theoretic point of view, we answer this question in two ways. The first is to give criteria which yield $\Delta_{D}^{\bullet}=\Delta_{N}^{\bullet}$, so there is a unique Laplacian in the sense of Definition~\ref{def:Laplacian}. 
The second is to show essential self-adjointness of the restrictions $\Delta^\bullet_{c}$ of $\Delta^\bullet$ to $\F^c$. However, this requires $\Delta^\bullet_{c}$ to be a symmetric operator on $\ell^{2},$ for which we gave criteria in Theorem~\ref{Fc} and Proposition~\ref{prop:Fc}. This also excludes the existence of more self-adjoint operators which are restrictions of $\delta\partial$, $\partial\delta$ and $\delta\partial+\partial\delta$ but may not have quadratic forms that are restrictions of $Q^{\bullet}_{N}$. To study these questions, we heavily draw from the theory of magnetic Schrödinger operators, to which we bridged the gap in Section~\ref{DiscSch}.

We start by characterizing boundedness in the next section, which trivially gives the desired uniqueness. In the section that follows, we provide various abstract criteria which are the foundation of the concrete results. Such are criteria on the simplicial complex as a measure space involving the curvature and as metric space.

\subsection{Boundedness}\label{boundedness}
In this section, we characterize boundedness of the operators. For graphs, it is well known that boundedness of the graph Laplacian is equivalent to boundedness of the vertex degree. Here, we identify quantities which play a similar role. For $\bullet\in\{\pm,H\}$, we let $\gamma^{\bullet}(\tau)=\Q^{\bullet}(1_{\tau})$ which is given for $\bullet=+$ by
\begin{align*}
	\gamma^+(\tau) = \sum_{\sigma\succ \tau}m(\sigma).
\end{align*}

We obtain a characterization for finite dimensional complexes and a sufficient criterion in general for boundedness of the operators.
\begin{theorem}[Boundedness]\label{bounddim} 
	Let $\Delta^{\bullet}$ be a Laplacian.
	\begin{itemize}
		\item [(a)]	If $\sup_\Sigma(\dim+1)\gamma^+\slash m<\infty$, then $\Delta^\bullet$ is bounded for all $\bullet\in\{\pm,H\}$. 
		\item [(b)]If  $\Sigma$ is finite dimensional, then the following are equivalent:
		\begin{itemize}
			\item [(i)]  $\Delta^{\bullet}$ is bounded for some (all) $\bullet\in\{\pm,H\}$.
			\item[(ii)] $\gamma^{+}\slash m$ is bounded.
		\end{itemize} 
	\end{itemize}
	
\end{theorem}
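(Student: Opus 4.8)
The plan is to reduce the boundedness of each Laplacian $\Delta^{\bullet}$ to a bound on its quadratic form. Recall that a positive self-adjoint operator is bounded precisely when its form is bounded, and that in this case the form domain is all of $\ell^2$: if $Q^{\bullet}(\omega)\le C\|\omega\|^2$ on $D(Q^{\bullet})$, then the form norm $\|\cdot\|_{Q^{\bullet}}$ is equivalent to $\|\cdot\|$, so the (closed) form domain is closed in $\ell^2$, and since it contains the dense subspace $\F^c$ it equals $\ell^2$, whence $\|\Delta^{\bullet}\|\le C$. Two elementary observations drive everything. First, testing the form against $1_\tau$ gives $\gamma^{\bullet}(\tau)=\Q^{\bullet}(1_\tau)\le \|\Delta^{\bullet}\|\,m(\tau)$ whenever $\Delta^{\bullet}$ is bounded, since $1_\tau\in\F^c\subseteq D(Q^{\bullet})$. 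Second, $\delta 1_\tau$ and $\partial 1_\tau$ are supported in complementary dimensions and hence orthogonal, so $\gamma^{H}=\gamma^{+}+\gamma^{-}$.

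For part \emph{(a)} I would bound the formal forms directly on $\ell^2$ via Cauchy--Schwarz, writing $\kappa=\sup_\Sigma(\dim+1)\gamma^+/m$. For $\Q^+$, using that a coface $\sigma\succ\tau$ has $\dim(\sigma)+1=\dim(\tau)+2$ faces and reorganizing the double sum yields $\Q^+(\omega)\le \sum_\tau (\dim(\tau)+2)\gamma^+(\tau)|\omega(\tau)|^2\le 2\kappa\|\omega\|^2$. For $\Q^-$, a weighted Cauchy--Schwarz applied to $\partial\omega(\rho)=m(\rho)^{-1}\sum_{\tau\succ\rho}m(\tau)\theta(\rho,\tau)\omega(\tau)$ produces $\Q^-(\omega)\le \sum_\tau m(\tau)|\omega(\tau)|^2\sum_{\rho\prec\tau}\gamma^+(\rho)/m(\rho)$; here the factor $(\dim+1)$ in the hypothesis is exactly what is needed, since each of the $\dim(\tau)+1$ faces $\rho\prec\tau$ contributes $\gamma^+(\rho)/m(\rho)\le \kappa/\dim(\tau)$, giving a bound $\le 2\kappa$ (the solitary face $\varnothing$, if present, contributes only the fixed constant $\gamma^+(\varnothing)/m(\varnothing)$, independent of $\tau$). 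Finally $\Q^H\le 2(\Q^++\Q^-)$ by the triangle inequality, so all three forms are bounded and hence all $\Delta^{\bullet}$ are bounded.

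For part \emph{(b)}, the implication (ii)$\Rightarrow$(i) is immediate from \emph{(a)}: in finite dimension $\dim\le N:=\dim(\Sigma)$, so boundedness of $\gamma^+/m$ upgrades to boundedness of $(\dim+1)\gamma^+/m$. For (i)$\Rightarrow$(ii) I treat the three cases by testing with $1_\tau$. If $\Delta^+$ is bounded, then $\gamma^+(\tau)/m(\tau)=\Q^+(1_\tau)/m(\tau)\le\|\Delta^+\|$ directly. If $\Delta^H$ is bounded, then $\gamma^+\le \gamma^++\gamma^-=\gamma^{H}$ together with $\gamma^H(\tau)=\Q^H(1_\tau)\le\|\Delta^H\|\,m(\tau)$ gives the same conclusion. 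The case of $\Delta^-$ requires a different idea: boundedness only yields $\gamma^-/m\le\|\Delta^-\|$, a ``down-degree'' that does not control the ``up-degree'' $\gamma^+/m$. Here I would use Stokes' theorem: since $\Delta^-$ bounded forces $\|\partial\omega\|^2=\Q^-(\omega)\le\|\Delta^-\|\,\|\omega\|^2$ on all of $\ell^2$, the operator $\partial$ is bounded on $\ell^2$, and Theorem~\ref{adj1} identifies its Hilbert-space adjoint on $\F^c$ as $\delta$; hence $\|\delta 1_\tau\|\le\|\partial\|\,\|1_\tau\|$, which is precisely $\gamma^+(\tau)/m(\tau)\le\|\partial\|^2\le\|\Delta^-\|$. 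Closing the cycle (ii)$\Rightarrow$all$\Rightarrow$some$\Rightarrow$(ii) then gives the equivalence.

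The main obstacle is exactly this last point: the down-Laplacian ``sees'' the down-degree rather than $\gamma^+$, so one cannot conclude (ii) by the naive form test, and the self-duality of $\delta$ and $\partial$ furnished by Stokes' theorem must be invoked to transfer the bound. A secondary point requiring care is the role of finite-dimensionality, which enters \emph{(a)} through the gap between $\gamma^+/m$ and $(\dim+1)\gamma^+/m$ and is genuinely used only for (ii)$\Rightarrow$(i); the implication (i)$\Rightarrow$(ii) in fact needs no dimension restriction. Throughout, the empty-set face must be handled separately, but it contributes only bounded constant terms and does not affect the suprema.
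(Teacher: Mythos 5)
Your proof is correct, and it takes a genuinely different route from the paper's. The paper deduces both parts from Proposition~\ref{bound}: there, the simultaneous boundedness of all Laplacians (over $\bullet\in\{\pm,H\}$ and over all forms between $Q^{\bullet}_{D}$ and $Q^{\bullet}_{N}$) is proved by the adjoint duality $\delta_{D}^{*}=\partial_{N}$ of Lemma~\ref{d*}, and the actual boundedness criterion is imported from magnetic Schr\"odinger operator theory (\cite[Theorem~4.1]{S}) via the representation of Lemma~\ref{schröd}; part (a) then amounts to checking lower semiboundedness of the auxiliary form $q^{+}(\omega)=2\sum\gamma^{+}\omega^{2}-\Q^{+}(\omega)$, which is essentially the same double-sum estimate you carry out. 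You bypass this machinery entirely: direct (weighted) Cauchy--Schwarz bounds on $\Q^{+}$ and $\Q^{-}$ plus the triangle inequality for $\Q^{H}$ give (a) with explicit constants, and in (b) you handle the one genuinely delicate implication --- $\Delta^{-}$ bounded $\Rightarrow$ $\gamma^{+}/m$ bounded --- by observing that $\partial|_{\ell^{2}}$ is then everywhere defined and bounded and that Stokes' theorem, Theorem~\ref{adj1}, identifies its Hilbert space adjoint with $\delta$ on $\F^{c}$, so $\gamma^{+}(\tau)=\Vert\delta 1_{\tau}\Vert^{2}\le\Vert\Delta^{-}\Vert\,m(\tau)$. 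This is the elementary shadow of the paper's duality argument, invoked only where it is needed. What each approach buys: the paper's route yields the stronger Proposition~\ref{bound}, a characterization valid without any dimension restriction that stays inside the Schr\"odinger-operator framework used throughout the rest of the paper, whereas your argument is self-contained, avoids the external citation and the representation lemma, produces explicit constants, and makes transparent that (i)$\Rightarrow$(ii) in (b) needs no finite-dimensionality. One small caveat, which you flag yourself: at $\tau=\varnothing$ the hypothesis of (a) gives no information (since $\dim(\varnothing)+1=0$), so your constant $2\kappa$ must be enlarged by the fixed finite quantity $\gamma^{+}(\varnothing)/m(\varnothing)$ (finite precisely because $\varnothing\in\Sigma$ forces $\sum_{v}m(v)<\infty$); this changes the constant but not the conclusion of boundedness.
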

It is remarkable that one can characterize the  boundedness of several Laplacians $\Delta^{\bullet}$ arising from forms $Q^{\bullet}_{D}\subseteq Q^{\bullet}\subseteq Q^{\bullet}_{N}$ simultaneously. 

Theorem~\ref{bounddim}  will be deduced from the following proposition, which provides a characterization even if $\Sigma$ is infinite dimensional. On the downside, this characterization includes an assumption on the lower semiboundedness of the quadratic form $q^{\bullet}$ on $\F^c$ given by
$$q^\bullet(\omega) = 2\sum \gamma^\bullet\omega^2-\Q^\bullet(\omega),$$
which is far less explicit than boundedness of the function $\gamma^{+}/m$. The next proposition is essentially a direct application of \cite[Theorem~4.1]{S} for magnetic Schrödinger operators.

\begin{proposition}\label{bound} Let $Q^{\bullet}_{D}\subseteq Q^{\bullet}\subseteq Q^{\bullet}_{N}$ be closed forms given with associated Laplacian $\Delta^{\bullet}$, $\bullet\in\{\pm,H\}$. 
    Then, the following statements are equivalent:
    \begin{itemize}
        \item[(i)] $Q^\bullet$ (respectively $\Delta^\bullet$) is bounded on $\ell^2$ for some (all) $\bullet\in\{\pm,H\}$.
        \item[(ii)]  $\gamma^\bullet\slash m$ is bounded and $q^\bullet$ is lower semibounded for some (all) $\bullet\in\lbrace \pm,H\rbrace$.
    \end{itemize}
\end{proposition}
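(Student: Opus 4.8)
The plan is to reduce Proposition~\ref{bound} to the cited result \cite[Theorem~4.1]{S} on magnetic Schrödinger operators, exploiting the Schrödinger representation of $Q^{\bullet}_{D}$ established in Lemma~\ref{schröd}. Recall that on $\F^{c}$ we have
\begin{equation*}
	Q^{\bullet}(\omega)=\frac{1}{2}\sum_{\tau,\tau'}b^{\bullet}(\tau,\tau')(\omega(\tau)-o^{\bullet}(\tau,\tau')\omega(\tau'))^{2}+\sum_{\tau}c^{\bullet}(\tau)\omega^{2}(\tau),
\end{equation*}
so $Q^{\bullet}$ is exactly the form of a magnetic Schrödinger operator with weights $b^{\bullet}$, signs $o^{\bullet}$ and potential $c^{\bullet}$. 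The first step is to observe that boundedness of the self-adjoint operator $\Delta^{\bullet}$ is equivalent to boundedness of the form $Q^{\bullet}$ on $\ell^{2}$, and that since $Q^{\bullet}_{D}\subseteq Q^{\bullet}\subseteq Q^{\bullet}_{N}$ all three share the same action on $\F^{c}$; boundedness of any one of them forces $Q^{\bullet}_{D}=Q^{\bullet}=Q^{\bullet}_{N}$ because a bounded form is automatically closed with full domain $\ell^{2}$, so $\F^{c}$ being a form core makes the choice of realization irrelevant. This is what lets us phrase the statement purely in terms of the form on $\F^{c}$ and then invoke the graph result.

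Next I would identify the two quantities appearing in \cite[Theorem~4.1]{S} with $\gamma^{\bullet}/m$ and $q^{\bullet}$. The diagonal term of the Schrödinger operator is $\bigl(\sum_{\tau'}b^{\bullet}(\tau,\tau')+c^{\bullet}(\tau)\bigr)/m(\tau)$, and a direct computation from the definitions of $b^{\bullet}$, $c^{\bullet}$ shows this equals $\gamma^{\bullet}(\tau)/m(\tau)=Q^{\bullet}(1_{\tau})/m(\tau)$, matching the weighted vertex-degree quantity that governs boundedness in the graph setting. The cited theorem characterizes boundedness of a magnetic Schrödinger operator by two conditions: boundedness of this diagonal quantity, i.e. of $\gamma^{\bullet}/m$, together with lower semiboundedness of the auxiliary form obtained by flipping the sign of the off-diagonal part. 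One checks that the latter form is precisely
\begin{equation*}
	q^{\bullet}(\omega)=2\sum_{\tau}\gamma^{\bullet}(\tau)\omega^{2}(\tau)-Q^{\bullet}(\omega),
\end{equation*}
so the two hypotheses of the graph theorem translate verbatim into condition $(ii)$. The ``for some (all)'' equivalence across $\bullet\in\{\pm,H\}$ comes for free once we know boundedness of $\Delta^{\bullet}$ is equivalent to the pair of explicit conditions on $b^{\bullet},c^{\bullet}$, since these are decoupled per operator.

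The main obstacle, and the only genuinely non-formal step, is verifying that our forms $Q^{\bullet}$ on $\F^{c}$ satisfy the precise hypotheses under which \cite[Theorem~4.1]{S} is stated, namely that $b^{\bullet}$ is a summable symmetric weight vanishing on the diagonal with the magnetic sign structure $o^{\bullet}$, and that $c^{\bullet}$ is an admissible potential. Summability of $b^{\bullet}$ and the diagonal-vanishing and symmetry were already recorded after the definition of $b^{\bullet},o^{\bullet},c^{\bullet}$ in Section~\ref{DiscSch}, so the substantive point is merely to confirm the algebraic identity relating the diagonal of the Schrödinger operator to $\gamma^{\bullet}/m$ and the identity relating the sign-flipped form to $q^{\bullet}$; both reduce to rearranging the sum in Lemma~\ref{schröd}. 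Once these identifications are in place the proposition is immediate, which is why a one-line proof citing \cite{S} suffices. I would therefore write the proof as: record the Schrödinger form from Lemma~\ref{schröd}, note that $\gamma^{\bullet}/m$ is the resulting diagonal and $q^{\bullet}$ the sign-flipped form, and apply \cite[Theorem~4.1]{S}, remarking that boundedness renders the choice among $Q^{\bullet}_{D},Q^{\bullet},Q^{\bullet}_{N}$ immaterial.
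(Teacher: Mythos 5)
Your argument within a fixed $\bullet$ is essentially the paper's proof and is correct: boundedness of the self-adjoint operator is equivalent to boundedness of its form; a bounded closed form whose domain contains the dense subspace $\F^c$ has domain all of $\ell^2$, so $Q^{\bullet}_{D}=Q^{\bullet}=Q^{\bullet}_{N}$ and one may work with the Dirichlet restriction on $\F^c$; and your identifications match the paper's translation into the setting of \cite{S} -- the diagonal of the Schr\"odinger form is $\gamma^{\bullet}/m$ (the quantity $B$, resp.\ $\deg/\mu$, of \cite[Theorem~4.1]{S}), and the form with reversed magnetic sign is exactly $q^{\bullet}(\omega)=2\sum\gamma^{\bullet}\omega^{2}-\Q^{\bullet}(\omega)$, while the remaining hypothesis of that theorem (lower boundedness of the unflipped form) holds trivially since $Q^{\bullet}\ge 0$.

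The genuine gap is the ``for some (all)'' clause across $\bullet\in\{\pm,H\}$. The proposition asserts in particular that boundedness for \emph{one} value of $\bullet$ implies boundedness for \emph{all three}, and likewise for condition (ii). Your justification -- that this ``comes for free \ldots since these are decoupled per operator'' -- is backwards: precisely because the conditions in (ii) involve different data $(b^{+},c^{+})$, $(b^{-},c^{-})$, $(b^{H},c^{H})$, the per-$\bullet$ equivalences $(i)_{\bullet}\Leftrightarrow(ii)_{\bullet}$ yield only ``(i) for some $\bullet$ iff (ii) for some $\bullet$'' and the same with ``all''; they give no passage from ``some'' to ``all''. There is no evident direct implication from, say, ($\gamma^{+}/m$ bounded, $q^{+}$ lower semibounded) to the corresponding statement for $\bullet=-$. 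The paper devotes the first half of its proof to exactly this point: if $Q^{+}$ is bounded, then $\delta_{D}=\delta_{N}=\delta\vert_{\ell^{2}}$ is a bounded everywhere-defined operator, hence by Lemma~\ref{d*} its adjoint $\partial_{D}=\delta_{N}^{*}$ is bounded and everywhere defined, so $\partial_{D}=\partial_{N}=\partial\vert_{\ell^{2}}$ and $Q^{-}$ is bounded (and conversely); and $Q^{H}$ is bounded iff both $Q^{+}$ and $Q^{-}$ are, using $Q^{H}_{D}=(Q^{+}_{D}+Q^{-}_{D})\vert_{D(Q^{H}_{D})}$ from Corollary~\ref{adj2} together with positivity of all three forms. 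The adjoint relation of Lemma~\ref{d*} is the key input here, and nothing in your proposal replaces it; as written, your proof establishes only the weaker per-$\bullet$ statement, not the proposition as stated.
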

\begin{proof}
	The equivalence of the form and the operator follow immediately from \cite[Theorem 5.35.]{W} and the Cauchy-Schwarz inequality together with density of the operator domain in the form domain.

    We start by showing that all $Q^{\bullet}$ are simultaneously bounded. By definition of $Q^+$ this precisely means that $\delta_{D}=\delta_{N}=\delta\vert_{\ell^{2}}$. Hence, in view of Lemma~\ref{d*}, $\delta_{N}^{*}=\partial_{D}$ is bounded and $\partial_{D}=\partial_{N}=\partial\vert_{\ell^{2}}$.
    This precisely means that $Q^-$ is bounded. On the other hand, if $Q^-$ is bounded, then the boundedness of $Q^+$ follows along  the same line of reasoning.\\
    If $Q^H$ is bounded, then $Q^H = Q^++Q^-.$ As all forms are positive, this implies the boundedness of $Q^+$ and $Q^-.$ Likewise, the boundedness of $Q^+$ and $Q^-$ implies that $Q^H$ is bounded.\\
    Furthermore,  the argument above shows that all forms $Q^{\bullet}_{D}\subseteq Q^{\bullet}\subseteq Q^{\bullet}_{N}$ are bounded simultaneously, so we can assume $Q^{\bullet}=Q^{\bullet}_{D}$.
    
     The equivalence is now a direct application of \cite[Theorem~4.1]{S} by invoking Lemma~\ref{schröd}: For the convenience of the reader, we discuss how to translate their notation to the one of this paper. By Lemma~\ref{schröd} our form $\Q^{\bullet}_{c}=Q^{\bullet}\vert_{F^{c}}$ is a form of type $\Q^{c}_{W,\Phi,E}$ from \cite{S}     with $b=b^{\bullet}$, $W=c^{\bullet}/m$, $\mu=m$, $E=(E_{\tau})_{\tau\in\Sigma}$ being chosen as $E_{\tau}=\mathbb{R}$ and $\Phi_{\tau,\tau'}z=o^{\bullet}(\tau,\tau')z$ for $\tau,\tau'\in \Sigma$ and $z\in \mathbb{R}$.
    A direct calculation shows that
    \begin{equation*}
    	\gamma^{\bullet}(\tau)= \sum_{\tau'\in \Sigma}{b^\bullet(\tau,\tau')}+{c^\bullet(\tau)} 
    \end{equation*}
    so $\gamma^{\bullet}$ coincides with $\deg$ and $\gamma^{\bullet}/m$ with $B$ from \cite[Theorem~4.1]{S}.  Now, \cite[Theorem~4.1~(i)]{S} gives that $\Q^{\bullet}_{c}=\Q^{c}_{W,\Phi,E}$ is bounded if and only if $\gamma^{\bullet}/m=B$ is bounded and  $c^{\bullet}/m=W\in \mathcal{A}_{\mu,\Phi,E}\cap\mathcal{A}_{\mu,-\Phi,E} $ where $W\in \mathcal{A}_{\mu,\Phi,E}$ means that the corresponding form $\mathcal{Q}^{c}_{W,\Phi,E}$ is bounded from below. Clearly, our forms $\Q_{c}^{\bullet}$ are bounded from below. To see that $W\in \mathcal{A}_{\mu,-\Phi,E}$ is equivalent to $q^{\bullet}$ being bounded from below, we invoke Equation~($\heartsuit$) from \cite{S} which finishes the proof.
\end{proof}

With this at hand, we come to the proof of the boundedness result.

\begin{proof}[Proof of Theorem~\ref{bounddim}] \emph{(a)}
    If $(\dim+1)\gamma^{+}\slash m$ is bounded, then so is $\gamma^{+}\slash m.$ By Proposition~\ref{bound} we only need to show semiboundedness of $q^+.$ This follows from Lemma~\ref{schröd}, positivity of $Q^+_{D},$ the Cauchy-Schwarz inequality and Fubini's theorem
    \begin{multline*}
        q^+(\omega)  = Q^+(\omega) +2\sum_{\tau,\tau'\Sigma}b^+(\tau,\tau')o^+(\tau,\tau')\omega(\tau)\omega(\tau')
        \geq-2\sum_{\tau,\tau'\in \Sigma}b^+(\tau,\tau')\omega^2(\tau)\\= -2\sum_{ \tau\in\Sigma}\omega^2(\tau)\sum_{\sigma\succ\tau}m(\sigma)(\dim(\tau)+1)=-2\sum_\Sigma (\dim(\tau)+1)\gamma^{+}\omega^2.
    \end{multline*}
  Now \emph{(b)} follows from the same arguments given above.
\end{proof}

\subsection{Some Abstract Criteria}

In this section we collect various abstract criteria to show form equality $Q^{\bullet}_{D}=Q^{\bullet}_{N}$ which is equivalent to $\Delta^{\bullet}_{D}=\Delta^{\bullet}_{N}$. 

The first equivalence relates form equality of $Q^{H}$ to the essential self-adjointness of the Gauss-Bonnet operator $\delta+\partial$ on $F^{c}$. This shows that the results later in this paper also generalize the ones in \cite{ATH}.

\begin{proposition}[Gauss-Bonnet Operator]\label{d+del}
	The following are equivalent:
	\begin{itemize}
		\item[(i)] $Q^H_{D} = Q^H_N$
		\item[(ii)] $(\delta+\partial)_D = (\delta+\partial)_N$
		\item[(ii)] The Gauss-Bonnet operator $(\delta+\partial)\mid_{\F^c}$ is essentially self-adjoint.
	\end{itemize}
	In that case, we have $$(\delta+\partial)_D = \delta_D+\partial_D = \delta_D+\partial_N =\delta_N+\partial_D =\delta_N+\partial_N =(\delta+\partial)_N.$$
	
\end{proposition}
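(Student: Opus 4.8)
The plan is to prove the equivalences by establishing the cycle $(i)\Leftrightarrow(ii)$ and $(ii)\Leftrightarrow(iii)$, and then to derive the chain of equalities of operators as a consequence. The key tool throughout will be Lemma~\ref{d*}, which identifies the adjoints $\delta^*_D=\partial_N$, $\partial^*_D=\delta_N$, and $(\delta+\partial)^*_D=(\delta+\partial)_N$.

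For $(i)\Leftrightarrow(ii)$, recall that $Q^H_D$ and $Q^H_N$ are the closed forms associated to $(\delta+\partial)_D$ and $(\delta+\partial)_N$ respectively, in the sense that $Q^H_\circ(\omega)=\|(\delta+\partial)\omega\|^2$ on the appropriate domain. Since $Q^H_D\subseteq Q^H_N$ always holds, the two forms coincide exactly when their domains agree, i.e.\ when $D(Q^H_D)=D(Q^H_N)$, which by definition means $D((\delta+\partial)_D)=D((\delta+\partial)_N)$; on this common domain both operators act as $\delta+\partial$, so they are equal. Conversely, equality of the operators forces equality of their form domains, hence of the forms. For $(ii)\Leftrightarrow(iii)$, I would invoke the standard fact that a closed symmetric operator $T$ is essentially self-adjoint on a core $\F^c$ iff its minimal closed extension $T_D=\overline{T|_{\F^c}}$ satisfies $T_D=T_D^*$. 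Here $(\delta+\partial)|_{\F^c}$ is symmetric by Stokes' theorem (Theorem~\ref{adj1}), its closure is $(\delta+\partial)_D$ by definition of $Q^H_D$ as the $\|\cdot\|_{Q^H}$-closure of $\F^c$, and Lemma~\ref{d*} gives $(\delta+\partial)_D^*=(\delta+\partial)_N$. Thus essential self-adjointness is precisely the statement $(\delta+\partial)_D=(\delta+\partial)_N$, which is $(ii)$.

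For the final chain of equalities, assume the equivalent conditions hold. The inclusions $D(Q^+_N)\cap D(Q^-_N)\subseteq D(Q^H_N)$ and $D(Q^H_D)\subseteq D(Q^+_D)\cap D(Q^-_D)$ from Section~\ref{Dir&Neum}, combined with $Q^H_D=Q^H_N$, pin down the common Hodge form domain; I would use this to show that the four mixed sums $\delta_D+\partial_N$, $\delta_N+\partial_D$, etc., all share the domain $D(Q^H_D)=D(Q^H_N)$ and act as $\delta+\partial$ there. Concretely, the middle operator $\delta_D+\partial_D$ is contained in $(\delta+\partial)_D$, while $\delta_N+\partial_N\supseteq(\delta+\partial)_N$; since the two ends coincide under our hypothesis, all the intermediate sums are squeezed to equality.

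The main obstacle I anticipate is the careful bookkeeping of the mixed sums in the last line. While $(\delta+\partial)_D=(\delta+\partial)_N$ is clean, the individual pieces $\delta_D,\delta_N,\partial_D,\partial_N$ need not coincide separately, yet all their cross-combinations must collapse to the same operator. The subtle point is verifying that, e.g., $\delta_N+\partial_D$ really has domain $D(Q^H_D)$ rather than something larger: one must check that $\omega\in D(\delta_N)\cap D(\partial_D)$ with $(\delta+\partial)\omega\in\ell^2$ already forces $\omega\in D(Q^H_N)=D(Q^H_D)$, which uses the orthogonality relation $\langle\delta\omega,\partial\eta\rangle=0$ from Corollary~\ref{adj2} together with form equality to rule out any discrepancy between the Dirichlet and Neumann domains once the Hodge forms agree.
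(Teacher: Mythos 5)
Your treatment of the three equivalences is correct and is essentially the paper's own argument: $(i)\Leftrightarrow(ii)$ because both forms are $\omega\mapsto\|(\delta+\partial)\omega\|^2$ on the respective operator domains, and $(ii)\Leftrightarrow(iii)$ because the graph norm of $\delta+\partial$ equals $\|\cdot\|_{Q^H}$, so the operator closure of $(\delta+\partial)\mid_{\F^c}$ is $(\delta+\partial)_D$, whose adjoint is $(\delta+\partial)_N$ by Lemma~\ref{d*}.

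The chain of equalities is where your argument has a genuine flaw: the two ``concrete'' inclusions you state are backwards. You claim $\delta_D+\partial_D\subseteq(\delta+\partial)_D$ and $(\delta+\partial)_N\subseteq\delta_N+\partial_N$ as unconditional facts, but the unconditional inclusions go the other way, namely
\[
(\delta+\partial)_D \subseteq \delta_D+\partial_D \subseteq \delta_D+\partial_N \subseteq \delta_N+\partial_N \subseteq(\delta+\partial)_N,
\qquad
(\delta+\partial)_D\subseteq \delta_N+\partial_D\subseteq(\delta+\partial)_N,
\]
where the leftmost inclusion comes from $D(Q^H_{D})\subseteq D(Q^+_{D})\cap D(Q^-_{D})$ (a consequence of Corollary~\ref{adj2}) and the rightmost from $D(Q^+_N)\cap D(Q^-_N)\subseteq D(Q^H_N)$ (triangle inequality). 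Your reversed versions are not just unproven but false in general: $D(Q^+_{D})\cap D(Q^-_{D})\subseteq D(Q^H_{D})$ would require merging an $\F^c$-sequence Cauchy in $Q^+$ with a possibly different one Cauchy in $Q^-$ into a single sequence Cauchy in $Q^H$, which is exactly the kind of statement that fails when Dirichlet and Neumann domains differ; and $D(Q^H_N)\subseteq D(Q^+_N)\cap D(Q^-_N)$ fails because for $\omega$ spread over several degrees the components $\delta\omega_{k-1}$ and $\partial\omega_{k+1}$ can cancel on $\Sigma_k$, so finiteness of $\|(\delta+\partial)\omega\|$ does not force finiteness of $\|\delta\omega\|$ and $\|\partial\omega\|$ separately.

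More importantly, with your directions the squeeze does not close: from $\delta_D+\partial_D\subseteq(\delta+\partial)_D=(\delta+\partial)_N\subseteq\delta_N+\partial_N$ you only recover $\delta_D+\partial_D\subseteq\delta_N+\partial_N$, which holds trivially anyway, and nothing pins down the mixed sums. With the correct directions, $(\delta+\partial)_D$ and $(\delta+\partial)_N$ are the smallest and largest operators in both chains, so their equality forces equality throughout --- this is the paper's one-line argument, and it is also what your earlier sentence (citing the two form-domain inclusions from Section~\ref{Dir&Neum}) was implicitly set up to do. Your inclusions do become true a posteriori, once all six operators are known to coincide, but they cannot serve as the starting point of the proof.
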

\begin{proof}
	$(i) \Leftrightarrow (ii)$: This is clear.\\
	$(ii) \Rightarrow (iii)$:
	If $(\delta+\partial)_D = (\delta+\partial)_N$ then $(\delta+\partial)_D$ is self-adjoint by Lemma \ref{d*}. Therefore, $(\delta+\partial)\mid_{\F^c}$ is essentially self-adjoint as it is the closure of $(\delta+\partial)_D$.

	$(iii) \Rightarrow (ii)$: Essential self-adjointness of $(\delta+\partial)\vert_{\F^c}$ implies that its closure $(\delta+\partial)_D$ is self-adjoint. Thus, by Lemma \ref{d*} $(\delta+\partial)_D = (\delta+\partial)_N.$ 
	
	The chain of equalities follows because we always have the operator inclusions
	$$(\delta+\partial)_D \subseteq \delta_D+\partial_D \subseteq \delta_D+\partial_N \subseteq \delta_N+\partial_N \subseteq(\delta+\partial)_N$$ as well as 
	\begin{equation*}
		(\delta+\partial)_D\subseteq \delta_N+\partial_D\subseteq(\delta+\partial)_N.\hfill\qedhere
	\end{equation*}
\end{proof}

We now relate form equality for $+$, $-$ and $H$.
\begin{proposition}\label{Q+Q-}The following statements are equivalent:
	\begin{itemize}	\item[(i)] $Q^+_{D} = Q^+_N$.
			\item[(ii)] $Q^-_{D} = Q^-_N$.
	\end{itemize}
	In this case, $Q_{\Sigma} = Q_\Sigma'$ from Section~\ref{Hilbert}.
\end{proposition}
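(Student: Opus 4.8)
The plan is to translate the two form equalities into equalities of the closed operators $\delta$ and $\partial$ and then read off the equivalence from the adjoint relations of Lemma~\ref{d*}. First I would note that, since $D(Q^+_D)=D(\delta_D)$ and $D(Q^+_N)=D(\delta_N)$ with the Dirichlet domain always contained in the Neumann one, and since on either domain the form is just $\|\delta\cdot\|^2$, the equality $Q^+_D=Q^+_N$ holds exactly when $\delta_D=\delta_N$. The identical argument gives $Q^-_D=Q^-_N \Leftrightarrow \partial_D=\partial_N$. Thus it suffices to prove $\delta_D=\delta_N \Leftrightarrow \partial_D=\partial_N$.

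This I would obtain directly from Lemma~\ref{d*}, which states $\delta_D^*=\partial_N$ and $\partial_D^*=\delta_N$ for these closed operators. If $\delta_D=\delta_N$, taking adjoints and applying the lemma gives $\partial_N=\delta_D^*=\delta_N^*=\partial_D$; conversely, if $\partial_D=\partial_N$, then $\delta_N=\partial_D^*=\partial_N^*=\delta_D$. This settles $(i)\Leftrightarrow(ii)$. For the final assertion, I would recall from Section~\ref{Hilbert} that $Q_\Sigma(\omega)=\|(\delta_N+\partial_D)\omega\|^2$ on $D(Q^+_N)\cap D(Q^-_D)$ and $Q'_\Sigma(\omega)=\|(\delta_D+\partial_N)\omega\|^2$ on $D(Q^+_D)\cap D(Q^-_N)$. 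Under the equivalent conditions we have $\delta_D=\delta_N$ and $\partial_D=\partial_N$, so the two Gauss--Bonnet operators coincide and both form domains collapse to $D(Q^+)\cap D(Q^-)$; hence $Q_\Sigma=Q'_\Sigma$.

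The whole argument is bookkeeping once Lemma~\ref{d*} is available, so I expect no serious obstacle. The one point that genuinely needs care is the initial reduction: one must confirm that equality of the closed forms is the same as equality of the restricted operators, which rests on the forms being exactly the squared graph norms of $\delta$ and $\partial$ together with the monotonicity $D(Q^\bullet_D)\subseteq D(Q^\bullet_N)$. Everything else is an immediate consequence of adjunction being involutive on closed densely defined operators.
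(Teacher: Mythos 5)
Your proposal is correct and follows essentially the same route as the paper: both reduce the form equalities to the operator equalities $\delta_D=\delta_N$ and $\partial_D=\partial_N$, pass between them via the adjoint identities of Lemma~\ref{d*}, and then note that $Q_\Sigma$ and $Q'_\Sigma$ are the forms of $\delta_N+\partial_D$ and $\delta_D+\partial_N$, which coincide once the operators do. Your version merely spells out two points the paper leaves implicit — the justification that closed-form equality amounts to equality of the restricted closed operators, and the converse implication via $\partial_D^*=\delta_N$, $\partial_N^*=\delta_D$ — both of which are accurate.
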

\begin{proof}
	$(i)$  is equivalent to $\delta_{D}=\delta_{N}$ which is, by Lemma~\ref{d*}, equivalent to $\partial_{N}=(\delta_{D})^{*}=(\delta_{N})^{*} = \partial_D$ which is equivalent to $(ii)$.
	
	The quadratic forms for $Q_{\Sigma}$ and $Q_\Sigma'$ from Section~\ref{Hilbert} are given by $\delta_{N}+\partial_{D}$ and $\delta_{D}+\partial_{N}$. Hence, the statement follows directly from the first part of the proof.
\end{proof}

\begin{remark}The proposition raises the question whether one can deduce anything for $Q^{H}$. This however, seems not possible by examining the first chain of inclusions in the proof of Proposition~\ref{d+del}. Here $\delta_{D}=\delta_{N}$ and $\partial_{D}=\partial_{N}$ gives equality of all operators in the middle, however, not on the outer two fringes of the inclusion.
\end{remark}

Next, we come to a criterion which relates form uniqueness to validity of a certain Green's formula and existence of $\alpha$-harmonic forms. This will be heavily used in the next subsection. To streamline notation, we denote formal Laplacians as follows
\begin{align*}
	\mathcal{L}^+ &= \partial\delta\quad&&\mbox{on }D(\mathcal{L}^+) = D(\partial\delta)=\lbrace\omega\in \F\mid\delta\omega\in \F^1\rbrace,\\
	\mathcal{L}^- &= \delta\partial\quad&&\mbox{on }D(\mathcal{L}^-) = D(\delta\partial)=\F^1,\\
	\mathcal{L}^H &= (\partial+\delta)^2\quad&&\mbox{on }D(\mathcal{L}^H) =  D((\delta+\partial)^{2})=\lbrace \omega\in \F^1\mid (\partial+\delta)\omega\in \F^1\rbrace.\\
\end{align*}
Since $\ell^{2}\subseteq F^{1}$ by Lemma~\ref{sammellemma2}, it is clear that $$D(Q^{\bullet}_{N})\subseteq D(\mathcal{L}^{\bullet}).$$ 
The criterion below is a variation of \cite[Theorem 3.2]{KLW}. However, we cannot  apply the results of Section~\ref{DiscSch} directly, since the setting of \cite{KLW} does not include magnetic potentials nor negative electric potentials.

\begin{proposition}[Form uniqueness]\label{formeq}
    The following are equivalent:
    \begin{itemize}
        \item[(i)] $Q^\bullet_{D} = Q^\bullet_N.$
        \item[(ii)] $D(\Delta^\bullet_{D}) = \lbrace \omega\in D(Q^\bullet_N)\mid\mathcal{L}^\bullet\omega\in\ell^2\rbrace.$
        \item[(iii)] For all $\omega,\eta\in D(Q^\bullet_N)$ and $\mathcal{L}^\bullet\omega\in\ell^2$ one has $Q^\bullet_N(\omega,\eta) = \langle\mathcal{L}^\bullet\omega,\eta\rangle.$
        \item[(iv)] For every $\omega\in D(Q^\bullet_N),\alpha>0$ with $(\mathcal{L}^\bullet+\alpha)\omega = 0$ one has $\omega\equiv 0.$
       \end{itemize}
\end{proposition}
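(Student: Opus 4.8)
The plan is to establish the cycle of implications $(i)\Rightarrow(iii)\Rightarrow(iv)\Rightarrow(i)$ together with the equivalence $(ii)\Leftrightarrow(iii)$, using the abstract characterization of closed form restrictions in terms of their associated self-adjoint operators. The key structural fact is that $D(Q^\bullet_N)\subseteq D(\mathcal{L}^\bullet)$, noted just before the statement, so that $\mathcal{L}^\bullet$ is always defined on elements of the Neumann form domain, and by Theorem~\ref{action} the operator $\Delta^\bullet_N$ acts as $\mathcal{L}^\bullet$ on its domain. First I would prove $(i)\Rightarrow(iii)$: assuming $Q^\bullet_D=Q^\bullet_N$ there is a unique self-adjoint operator $\Delta^\bullet=\Delta^\bullet_N=\Delta^\bullet_D$, and for $\omega\in D(Q^\bullet_N)$ with $\mathcal{L}^\bullet\omega\in\ell^2$ one checks using Green's formula (Theorem~\ref{green1}) that $\omega$ lies in the domain of the adjoint form representation, so $Q^\bullet_N(\omega,\eta)=\langle\mathcal{L}^\bullet\omega,\eta\rangle$ holds first for $\eta\in\F^c$ and then for all $\eta\in D(Q^\bullet_N)$ by density and closedness of the form.

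Next I would handle $(iii)\Leftrightarrow(ii)$, which is essentially the abstract description of the self-adjoint operator associated to a closed form: $\omega\in D(\Delta^\bullet_N)$ precisely when there exists $g\in\ell^2$ with $Q^\bullet_N(\omega,\eta)=\langle g,\eta\rangle$ for all $\eta\in D(Q^\bullet_N)$, and then $\Delta^\bullet_N\omega=g$. Statement $(iii)$ says exactly that whenever $\mathcal{L}^\bullet\omega\in\ell^2$ this representation holds with $g=\mathcal{L}^\bullet\omega$, identifying $\lbrace\omega\in D(Q^\bullet_N)\mid\mathcal{L}^\bullet\omega\in\ell^2\rbrace$ with $D(\Delta^\bullet_N)$; since under $(i)$ we have $\Delta^\bullet_N=\Delta^\bullet_D$, this yields $(ii)$. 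Conversely $(ii)$ forces this identification and hence $(iii)$. The implication $(iii)\Rightarrow(iv)$ is direct: if $(\mathcal{L}^\bullet+\alpha)\omega=0$ with $\omega\in D(Q^\bullet_N)$ and $\alpha>0$, then $\mathcal{L}^\bullet\omega=-\alpha\omega\in\ell^2$, so by $(iii)$,
\begin{equation*}
	Q^\bullet_N(\omega)=\langle\mathcal{L}^\bullet\omega,\omega\rangle=-\alpha\|\omega\|^2.
\end{equation*}
Since $Q^\bullet_N\geq 0$ and $\alpha>0$, this forces $\|\omega\|=0$, i.e. $\omega\equiv 0$.

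The heart of the argument, and the step I expect to be the main obstacle, is $(iv)\Rightarrow(i)$. The strategy is contrapositive: if $Q^\bullet_D\subsetneq Q^\bullet_N$, then the Dirichlet and Neumann operators differ, and one produces a nonzero $\alpha$-harmonic element in $D(Q^\bullet_N)$. Concretely, for $\alpha>0$ the range $(\Delta^\bullet_D+\alpha)D(\Delta^\bullet_D)=\ell^2$, but if the forms differ strictly one shows the orthogonal complement of $(\Delta^\bullet_D+\alpha)^{1/2}$-type considerations is nontrivial; more cleanly, one uses that $D(Q^\bullet_N)\ominus D(Q^\bullet_D)$ is nonzero in the form-norm and that any $\omega\in D(Q^\bullet_N)$ orthogonal to $D(Q^\bullet_D)$ with respect to $Q^\bullet_N(\cdot,\cdot)+\alpha\langle\cdot,\cdot\rangle$ satisfies $Q^\bullet_N(\omega,\eta)+\alpha\langle\omega,\eta\rangle=0$ for all $\eta\in\F^c$, which via Green's formula (Theorem~\ref{green1}) translates into $(\mathcal{L}^\bullet+\alpha)\omega=0$ in the formal sense, contradicting $(iv)$. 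The delicate point is justifying the passage from the weak form identity on $\F^c$ to the pointwise equation $(\mathcal{L}^\bullet+\alpha)\omega=0$; this requires that testing against $\eta=1_\tau/m$ is legitimate, which is exactly what Green's formula (Theorem~\ref{green1}) and the inclusion $\F^c\subseteq D(Q^\bullet_N)$ from Lemma~\ref{sammellemma2} provide, the main care being the $\bullet=H$ case where $\mathcal{L}^H=(\delta+\partial)^2$ rather than $\partial\delta+\delta\partial$ and one must invoke the appropriate Green's formula and the action of $\Delta^H$ established in Theorem~\ref{action}.
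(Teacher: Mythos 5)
Your proof is correct, but it departs from the paper's argument in two of the four steps, so a comparison is in order. The paper proves the single cycle $(i)\Rightarrow(ii)\Rightarrow(iii)\Rightarrow(iv)\Rightarrow(i)$: for $(i)\Rightarrow(ii)$ it uses the operator factorizations of Corollary~\ref{LNLD} (under $(i)$ one has $\delta_D=\delta_N$, hence $\Delta^+_D=\partial_N\delta_N$, and membership in $\lbrace\omega\in D(Q^\bullet_N)\mid\mathcal{L}^\bullet\omega\in\ell^2\rbrace$ is literally the definition of $D(\partial_N\delta_N)$), and for $(iv)\Rightarrow(i)$ it uses the resolvent construction $\eta=(\Delta^\bullet_D+\alpha)^{-1}(\Delta^\bullet_N+\alpha)\omega$, which produces the nonzero $\alpha$-harmonic element $\eta-\omega$ whenever the two operators differ. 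You instead run $(i)\Rightarrow(iii)\Rightarrow(iv)\Rightarrow(i)$ with $(ii)\Leftrightarrow(iii)$ on the side: your $(i)\Rightarrow(iii)$ replaces the factorization argument by Green's formula on $\F^c$ followed by density of $\F^c$ in $(D(Q^\bullet_N),\Vert\cdot\Vert_{Q^\bullet})$ --- which is exactly what $(i)$ supplies --- and closedness of the form; your $(iv)\Rightarrow(i)$ replaces the resolvent trick by taking a nonzero element of the orthogonal complement of the closed subspace $D(Q^\bullet_D)$ in the form Hilbert space $\bigl(D(Q^\bullet_N),\,Q^\bullet_N(\cdot,\cdot)+\alpha\langle\cdot,\cdot\rangle\bigr)$ and testing against $1_\tau/m$ to get the pointwise equation $(\mathcal{L}^\bullet+\alpha)\omega=0$. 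Both mechanisms are valid (the delicate point you flag for $\bullet=H$, namely that $\mathcal{L}^H=(\delta+\partial)^2$ requires Stokes' theorem applied twice rather than Theorem~\ref{green1} directly, is handled exactly as in the proof of Theorem~\ref{action}). The paper's route buys shorter, more operator-theoretic steps and never needs the weak-to-pointwise passage outside of Theorem~\ref{action}; your route is the classical form-uniqueness argument (in the spirit of \cite[Theorem~3.2]{KLW}) and makes the role of $\F^c$-density more transparent, at the cost of the Green's-formula bookkeeping.

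One presentational caveat: in your $(iii)\Rightarrow(ii)$ you invoke $(i)$ to convert the identification of $\lbrace\omega\in D(Q^\bullet_N)\mid\mathcal{L}^\bullet\omega\in\ell^2\rbrace$ with $D(\Delta^\bullet_N)$ into the statement about $D(\Delta^\bullet_D)$. This is legitimate only because your cycle $(i)\Rightarrow(iii)\Rightarrow(iv)\Rightarrow(i)$ is established independently of $(ii)$, so that $(iii)$ does imply $(i)$; you should say this explicitly, otherwise the argument looks circular. Alternatively, you could avoid $(i)$ altogether there by the paper's mechanism: $(iii)$ gives $D(\Delta^\bullet_N)\supseteq\lbrace\omega\in D(Q^\bullet_N)\mid\mathcal{L}^\bullet\omega\in\ell^2\rbrace\supseteq D(\Delta^\bullet_D)$ is not what you want directly, but the reverse inclusions $D(\Delta^\bullet_N)\subseteq\lbrace\cdots\rbrace$ always hold, and two self-adjoint operators, one a restriction of the other, coincide --- which is exactly how the paper (and your $(ii)\Rightarrow(iii)$) closes this gap.
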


\begin{proof}
	$(i)\Rightarrow (ii)$: Clearly, $D(\Delta^\bullet_D)\subseteq\lbrace \omega\in D(Q^\bullet_N)\mid\mathcal{L}^\bullet\omega\in\ell^2\rbrace=:W^{\bullet}$ since $\Delta^\bullet_D$ is a restriction of $\mathcal{L}^\bullet$ according to Theorem \ref{action}. 
	We show $\omega\in D(\Delta^{+}_D)$ for $\omega \in W^{+}.$ The other cases $\bullet=-,H$ follow analogously.  By $(i)$ we have $\delta_{D}=\delta_{N}$, so by Corollary~\ref{LNLD}, we have $\partial_N\delta_D = \Delta^+_D= \partial_N\delta_N.$
	So, $\omega \in W^{+}$ precisely means that $\delta\omega\in\ell^2$ and $\partial\delta\omega\in\ell^2.$
	Thus, $\omega\in D(\partial_N\delta_N) = D(\Delta^+).$\\
	$(ii)\Rightarrow (iii)$: Since $D(\Delta^\bullet_N)\subseteq W^{\bullet} = D(\Delta^\bullet_D)$, we see that $\Delta^\bullet_N\subseteq\Delta^\bullet_D$ and as both are self-adjoint operators this already means $\Delta^\bullet_D = \Delta^\bullet_N.$ According to Theorem \ref{action} both are restrictions of $\mathcal{L}^\bullet$ and so for $\omega,\eta\in D(Q^\bullet_N)$ with $\mathcal{L}^\bullet\omega\in\ell^2$, we conclude from $(ii)$ that $\omega\in D(\Delta^\bullet_N)$ and $$Q^\bullet_N(\omega,\eta) = \langle\Delta^\bullet_N\omega,\eta\rangle = \langle\mathcal{L}^\bullet\omega,\eta\rangle,$$ thus proving $(iii)$.\\
    	$(iii)\Rightarrow (iv)$: Let $\alpha>0$, $\omega\in D(Q^\bullet_N)$ and $(\mathcal{L}^\bullet+\alpha)\omega = 0.$ Clearly, $\mathcal{L}^\bullet\omega = -\alpha\omega\in\ell^2$ and so $(iii)$ implies
    $$0\leq Q^\bullet_N(\omega) = \langle\mathcal{L}^\bullet\omega,\omega\rangle = -\alpha\Vert\omega\Vert^2\leq 0.$$ This can only be true if $\omega\equiv 0,$ i.e., $(iv)$ holds.\\
    $(iv)\Rightarrow (i)$: For $\omega\in D(\Delta^\bullet_N)$, $\alpha>0$  let  $\eta = (\Delta^\bullet_{D}+\alpha)^{-1}(\Delta_{N}^\bullet+\alpha)\omega\in D(\Delta^{\bullet}_{D})\subseteq D(Q^{\bullet}_{N}).$ Since $\Delta^{\bullet}_{D}$ and $\Delta^{\bullet}_{N}$  are restrictions of $\mathcal{L}^{\bullet}$,  $$(\mathcal{L}^\bullet+\alpha)(\eta-\omega) = 0.$$
    If $D(Q^\bullet_{D})\neq D(Q^\bullet_N)$ in which case $\Delta^\bullet_{D}\neq\Delta^\bullet_N$, then there is $\omega\neq \eta$.
\end{proof}

Next, we discuss  how form uniqueness of $Q^{H}$ implies form uniqueness of $Q^{\pm}$.

\begin{corollary}\label{QH->Q+Q-}
	If  $ Q^H_{D} = Q^H_N,$ then
	\begin{equation*}
		Q^+_{D} = Q^+_N\qquad\mbox{and}\qquad Q^-_{D} = Q^-_N.
	\end{equation*}		
\end{corollary}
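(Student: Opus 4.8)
The plan is to reduce to a single form via Proposition~\ref{Q+Q-} and then verify the harmonicity criterion of Proposition~\ref{formeq}. Since $Q^H_D=Q^H_N$, Proposition~\ref{Q+Q-} shows it suffices to establish $Q^+_D=Q^+_N$, because this is equivalent to $Q^-_D=Q^-_N$. To obtain $Q^+_D=Q^+_N$ I would use the equivalence $(i)\Leftrightarrow(iv)$ of Proposition~\ref{formeq}: it is enough to show that any $v\in D(Q^+_N)$ with $(\mathcal{L}^+ +\alpha)v=\partial\delta v+\alpha v=0$ for some $\alpha>0$ must vanish. Note that $v\in D(Q^+_N)$ gives $\delta v\in\ell^2\subseteq F^1=D(\partial)$, so $\mathcal{L}^+v=\partial\delta v$ is well defined.

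The key construction is to pass to $\eta:=\delta v$. First I would record that $\eta\in\ell^2$ and $\delta\eta=\delta\delta v=0$, so $\eta\in D(Q^+_N)$; moreover $\partial\eta=\partial\delta v=-\alpha v\in\ell^2$, so $\eta\in D(Q^-_N)$ as well. Hence $\eta\in D(Q^+_N)\cap D(Q^-_N)\subseteq D(Q^H_N)$. Now the hypothesis $Q^H_D=Q^H_N$ enters: it yields $\eta\in D(Q^H_D)$, and since $D(Q^H_D)\subseteq D(Q^-_D)=D(\partial_D)$ (the inclusion recorded in Section~\ref{Dir&Neum} as a consequence of Corollary~\ref{adj2}), we obtain $\eta\in D(\partial_D)=D(\delta_N^*)$, the last identification being Lemma~\ref{d*}.

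With this membership in hand the conclusion is a one-line computation. Using $\alpha v=-\partial\eta=-\partial_D\eta=-\delta_N^*\eta$ together with $v\in D(\delta_N)$ and $\delta_N v=\eta$, the adjoint relation gives $\alpha\|v\|^2=\langle v,\alpha v\rangle=-\langle v,\delta_N^*\eta\rangle=-\langle\delta_N v,\eta\rangle=-\|\eta\|^2$, so that $\alpha\|v\|^2+\|\eta\|^2=0$. As $\alpha>0$ this forces $v=0$ (and $\eta=0$), which is precisely condition $(iv)$ of Proposition~\ref{formeq}; hence $Q^+_D=Q^+_N$, and then $Q^-_D=Q^-_N$ follows by Proposition~\ref{Q+Q-}.

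The main obstacle to flag is that the naive strategy — turning the up-harmonic $v$ directly into an $H$-harmonic element — does \emph{not} work: one computes $(\mathcal{L}^H+\alpha)\eta=\partial\partial\eta=-\alpha\,\partial v$, which need not vanish, precisely because $\partial\partial\neq0$ in general, as in Example~\ref{example}. The point of the argument above is that one never needs $\eta$ to be $H$-harmonic; it is enough to know $\eta\in D(Q^H_D)$, and the real work is carried by the inclusion $D(Q^H_D)\subseteq D(\partial_D)$ together with the self-adjointness bookkeeping $\partial_D=\delta_N^*$ of Lemma~\ref{d*}, which converts the sign-indefinite cross term into the sign-definite identity $\alpha\|v\|^2=-\|\eta\|^2$.
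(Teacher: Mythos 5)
Your proof is correct, but it follows a genuinely different route from the paper's. The paper argues on the minus side: it takes $\omega\in D(Q^-_N)$ with $\delta\partial\omega=-\alpha\omega$, notes $\delta\omega=0$, uses Proposition~\ref{d+del} to place $\omega$ in $D(Q^H_N)=D(Q^H_D)\subseteq D(Q^-_D)$, invokes Lemma~\ref{pp01} to get $\partial\partial\omega=0$ and hence $\mathcal{L}^H\omega=-\alpha\omega$, and then kills $\omega$ by applying the hypothesis through Proposition~\ref{formeq} $(i)\Rightarrow(iv)$ for $\bullet=H$; this yields $Q^-_D=Q^-_N$ first and $Q^+_D=Q^+_N$ via Proposition~\ref{Q+Q-}. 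You instead work on the plus side, pass to $\eta=\delta v$, and never form an $H$-eigenfunction at all: the hypothesis enters only as the domain inclusion $\eta\in D(Q^H_N)=D(Q^H_D)\subseteq D(Q^-_D)=D(\partial_D)$, after which the single identity $\partial_D=\delta_N^*$ from Lemma~\ref{d*} gives $\|\eta\|^2=\langle\delta_N v,\eta\rangle=\langle v,\partial_D\eta\rangle=-\alpha\|v\|^2$, forcing $v=0$. What your route buys: it bypasses Proposition~\ref{d+del}, Lemma~\ref{pp01}, and the $H$-instance of Proposition~\ref{formeq} entirely, it only needs the unconditional inclusion $D(Q^+_N)\cap D(Q^-_N)\subseteq D(Q^H_N)$ (triangle inequality) rather than the equality supplied by Proposition~\ref{d+del}, and the decisive step is nothing more than the definition of an adjoint. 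What the paper's route buys: it exhibits the transfer of harmonic functions between the three Laplacians, which is the recurring mechanism in Section~\ref{specrel}, and it reuses the uniqueness machinery rather than redoing an integration-by-parts by hand. All your intermediate memberships check out: $\delta v\in\ell^2\subseteq F^1$ by Lemma~\ref{sammellemma2}, $\eta\in D(Q^+_N)$ since $\delta\eta=0$, and $\eta\in D(Q^-_N)$ since $\partial\eta=-\alpha v\in\ell^2$.

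Two small remarks. First, your closing claim that the ``naive'' promotion of $\eta$ to an $\mathcal{L}^H$-eigenfunction cannot work is overstated: once you know $\eta\in D(Q^H_D)\subseteq D(Q^-_D)$ --- which you establish anyway --- Lemma~\ref{pp01} gives $\partial\partial\eta=0$, so $(\mathcal{L}^H+\alpha)\eta=0$ after all, and one may finish via Proposition~\ref{formeq} for $\bullet=H$; that is essentially the paper's argument transplanted to your $\eta$. Your adjoint shortcut is a clean alternative, not a necessity forced by an obstruction. Second, Proposition~\ref{Q+Q-} is unconditional, so the phrase ``Since $Q^H_D=Q^H_N$, Proposition~\ref{Q+Q-} shows\dots'' is slightly misleading; the reduction between $+$ and $-$ needs no hypothesis.
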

\begin{proof}Let $\omega\in D(Q^-_N),\alpha>0$ be such that $\mathcal{L}^-\omega =-\alpha\omega.$ Then applying $\delta$ yields $$\delta\omega = -\alpha^{-1}\delta\delta\partial\omega = 0,$$
which implies $\omega\in D(Q^{+}_{N})$.	Thus, if $Q^H_{D} = Q^H_N$, then Proposition~\ref{d+del} implies 
$$\omega\in D(Q^+_N)\cap D(Q^-_N) = D(\delta_N+\partial_N) = D((\delta+\partial)_N) =D(Q^H_N).$$  As a consequence  and since  $ Q^H_{D} = Q^H_N$ implies  $\partial\partial =0$ on $D(Q^{H}_{N})=D(Q^{H}_{D})\subseteq D(Q^{-}_{D})$ by Lemma~\ref{pp01}, we get
	$$\mathcal{L}^H\omega = (\delta+\partial)^2\omega = \delta\partial\omega =\mathcal{L}^{-}\omega= -\alpha\omega.$$
	By Theorem \ref{formeq} we have $\omega = 0$ and so $Q^-_{D} = Q^-_N$ again by Theorem \ref{formeq}. Proposition~\ref{Q+Q-} implies that also $Q^+_{D} = Q^+_N.$	
\end{proof}

After discussing form uniqueness, we now address essential self-adjointness of $\Delta^\bullet_{c}=\mathcal{L}^{\circ}\vert_{F^{c}}$. Recall that  $F^c\subseteq D(\Delta^\bullet_{D})$ implies (and is indeed equivalent to) $$\ell^{2}\subseteq D(\mathcal{H}^{\bullet})$$
by Green's formula, Theorem~\ref{green1}, and the definition of $D(\Delta^\bullet_{D})$, cf. the proof of
 Proposition~\ref{prop:Fc}.  Furthermore, recall that Theorem~\ref{Fc} provides criteria for $F^c\subseteq D(\Delta^\bullet_{D})$ which always hold for $\bullet=-$ and  for $\bullet=+,H$ if $\Sigma $ is locally finite or balanced. The next proposition is a variant of \cite[Theorem~3.6]{KLW}.
 
\begin{proposition}[Essential self-adjointness]\label{essSA}
Assume $F^c\subseteq D(\Delta^\bullet_{D}).$ Then the following are equivalent:
    \begin{itemize}
        \item[(i)] $\Delta^\bullet_{c}$ is essentially self-adjoint.
        \item[(ii)] $D(\Delta^\bullet_{D}) = \lbrace \omega\in\ell^2\mid\mathcal{H}^\bullet\omega\in\ell^2\rbrace.$
        \item[(iii)] For some (all) $\alpha>0$ and $\omega\in\ell^2$ such that $\mathcal{H}^\bullet \omega= -\alpha \omega$ we have $\omega\equiv 0.$
    \end{itemize}
\end{proposition}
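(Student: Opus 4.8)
The plan is to run the standard abstract criterion for essential self-adjointness of a semibounded symmetric operator, where the only genuine work is to identify the adjoint $(\Delta^\bullet_c)^*$ as the maximal operator associated with $\mathcal{H}^\bullet$. First I would record that the standing hypothesis $\F^c\subseteq D(\Delta^\bullet_{D})$ is equivalent to $\ell^2\subseteq \F^\bullet=D(\mathcal{H}^\bullet)$ --- this is exactly the equivalence recalled just before the proposition (via Green's formula, Theorem~\ref{green1}, and Proposition~\ref{prop:Fc}) --- so that $\mathcal{H}^\bullet\omega\in\F$ is well defined for \emph{every} $\omega\in\ell^2$. Since $\F^c\subseteq D(\Delta^\bullet_{D})$, Theorem~\ref{action3}~\emph{(b)} gives $\Delta^\bullet_{D}=\mathcal{H}^\bullet$ on $D(\Delta^\bullet_{D})$; in particular $\Delta^\bullet_c=\mathcal{H}^\bullet\mid_{\F^c}$ is symmetric and non-negative, and $\Delta^\bullet_c\subseteq\Delta^\bullet_{D}$.

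The key step is to prove that $(\Delta^\bullet_c)^*=\mathcal{H}^\bullet$ with domain $W^\bullet:=\lbrace\omega\in\ell^2\mid \mathcal{H}^\bullet\omega\in\ell^2\rbrace$, using the Green's formula of Proposition~\ref{green2}~\emph{(a)}, namely $\sum_{\Sigma}m(\mathcal{H}^\bullet\omega)\eta=\sum_{\Sigma}m\omega(\mathcal{H}^\bullet\eta)$ valid for $\omega\in\F^\bullet\supseteq\ell^2$ and $\eta\in\F^c$. For the inclusion $W^\bullet\subseteq D((\Delta^\bullet_c)^*)$: if $\omega\in W^\bullet$ then for all $\eta\in\F^c$ one has $\langle\mathcal{H}^\bullet\omega,\eta\rangle=\langle\omega,\mathcal{H}^\bullet\eta\rangle=\langle\omega,\Delta^\bullet_c\eta\rangle$, so $\omega\in D((\Delta^\bullet_c)^*)$ with $(\Delta^\bullet_c)^*\omega=\mathcal{H}^\bullet\omega$. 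For the reverse inclusion: if $\omega\in D((\Delta^\bullet_c)^*)$ with $\psi:=(\Delta^\bullet_c)^*\omega\in\ell^2$, then $\langle\psi,\eta\rangle=\langle\omega,\mathcal{H}^\bullet\eta\rangle$ for all $\eta\in\F^c$; since $\omega\in\ell^2\subseteq\F^\bullet$, the same Green's formula rewrites the right-hand side as $\langle\mathcal{H}^\bullet\omega,\eta\rangle$, and testing against $\eta=1_\tau$ for each $\tau\in\Sigma$ yields $\mathcal{H}^\bullet\omega=\psi\in\ell^2$, i.e.\ $\omega\in W^\bullet$.

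With the adjoint identified, $(i)\Leftrightarrow(ii)$ is pure operator theory. Essential self-adjointness of $\Delta^\bullet_c$ means that $(\Delta^\bullet_c)^*$ is self-adjoint. As $\Delta^\bullet_{D}$ is self-adjoint and $\Delta^\bullet_c\subseteq\Delta^\bullet_{D}$, taking adjoints gives $\Delta^\bullet_{D}=(\Delta^\bullet_{D})^*\subseteq(\Delta^\bullet_c)^*$. If $(\Delta^\bullet_c)^*$ is self-adjoint, then since it extends the self-adjoint $\Delta^\bullet_{D}$ and self-adjoint operators are maximal symmetric, we get $(\Delta^\bullet_c)^*=\Delta^\bullet_{D}$; conversely $(\Delta^\bullet_c)^*=\Delta^\bullet_{D}$ makes $(\Delta^\bullet_c)^*$ self-adjoint. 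Because both operators act as $\mathcal{H}^\bullet$, the equality $(\Delta^\bullet_c)^*=\Delta^\bullet_{D}$ collapses to the domain equality $D(\Delta^\bullet_{D})=W^\bullet$, which is precisely $(ii)$.

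Finally, $(i)\Leftrightarrow(iii)$ follows from the deficiency-space criterion for the non-negative symmetric operator $\Delta^\bullet_c$ (this is where we follow the pattern of \cite[Theorem~3.6]{KLW}): $\Delta^\bullet_c$ is essentially self-adjoint if and only if $R(\Delta^\bullet_c+\alpha)$ is dense, equivalently $\ker((\Delta^\bullet_c)^*+\alpha)=\lbrace0\rbrace$, for some (all) $\alpha>0$. By the adjoint computation, $\ker((\Delta^\bullet_c)^*+\alpha)=\lbrace\omega\in\ell^2\mid \mathcal{H}^\bullet\omega=-\alpha\omega\rbrace$, where the condition $\mathcal{H}^\bullet\omega\in\ell^2$ is automatic, and this is exactly the solution space appearing in $(iii)$. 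I expect the main obstacle to be the second inclusion in the adjoint computation: extracting the pointwise identity $\mathcal{H}^\bullet\omega=\psi$ from the tested identity requires that $\mathcal{H}^\bullet\omega$ be defined in the first place, which is guaranteed only by $\ell^2\subseteq\F^\bullet$ --- precisely the reformulation of the standing hypothesis $\F^c\subseteq D(\Delta^\bullet_{D})$ and the reason this assumption is indispensable.
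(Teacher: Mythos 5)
Your proof is correct, and its backbone coincides with the paper's: both identify $(\Delta^\bullet_c)^*$ as $\mathcal{H}^\bullet$ on the maximal domain $W^\bullet=\lbrace \omega\in\ell^2\mid\mathcal{H}^\bullet\omega\in\ell^2\rbrace$ via Green's formula (Proposition~\ref{green2}~(a)), and both obtain $(i)\Leftrightarrow(ii)$ from the inclusion $\Delta^\bullet_D\subseteq(\Delta^\bullet_c)^*$ together with maximality of the self-adjoint operator $\Delta^\bullet_D$. The one genuine difference is how $(iii)$ enters. The paper runs a cycle $(i)\Rightarrow(ii)\Rightarrow(iii)\Rightarrow(i)$: the step $(ii)\Rightarrow(iii)$ uses positivity of the form ($0\leq Q^\bullet_D(\omega)=\langle\Delta^\bullet_D\omega,\omega\rangle=-\alpha\Vert\omega\Vert^2$ forces $\omega=0$), and the step $(iii)\Rightarrow(i)$ is an explicit resolvent construction: given $\omega\in D((\Delta^\bullet_c)^*)$ one sets $\eta=(\Delta^\bullet_D+\alpha)^{-1}((\Delta^\bullet_c)^*+\alpha)\omega$ and observes $(\mathcal{H}^\bullet+\alpha)(\omega-\eta)=0$, so $(iii)$ forces $\omega=\eta$ and hence $(\Delta^\bullet_c)^*=\Delta^\bullet_D$. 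You instead establish $(i)\Leftrightarrow(iii)$ in one stroke by invoking the standard criterion that a non-negative symmetric operator is essentially self-adjoint iff $\ker(A^*+\alpha)=\lbrace 0\rbrace$ for some (all) $\alpha>0$, combined with your adjoint computation. The two are morally the same mathematics: the paper's resolvent step is precisely an inline proof of that abstract criterion in this setting. What your route buys is brevity and a cleaner logical structure (two independent equivalences rather than a cycle); what the paper's route buys is self-containedness --- no appeal to the semibounded deficiency criterion beyond what is cited from \cite{W} --- and the form-positivity step makes explicit why negative eigenvalues of $\mathcal{H}^\bullet$ in $\ell^2$ are incompatible with membership in $D(\Delta^\bullet_D)$, which is reused elsewhere (e.g.\ in Proposition~\ref{formeq}). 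Your handling of the delicate point --- that $\mathcal{H}^\bullet\omega$ is even defined for $\omega\in\ell^2$, via $\ell^2\subseteq\F^\bullet$ --- is exactly the reformulation of the standing hypothesis that the paper records before the proposition, so no gap there.
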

\begin{proof}
In the proof we will subsequently use the fact that $\Delta^\bullet_c = \mathcal{H^\bullet}$ on $F^c$ and so by Green's formula, Proposition~\ref{green2}~\emph{(a)},
  $$D\big((\Delta^\bullet_{c})^*\big) =  \lbrace \omega\in\ell^2\mid\mathcal{H}^\bullet\omega\in\ell^2\rbrace \quad \mbox{and}\quad (\Delta^\bullet_{c})^*=\mathcal{H}^{\bullet}\;\mbox{on }D\big((\Delta^\bullet_{c})^*\big).$$
Further, $\Delta^\bullet_{c}$ is essentially self-adjoint if and only if $(\Delta^\bullet_{c})^*$ is, \cite[Theorem~5.20]{W}. This in turn 
is equivalent to $ \Delta^\bullet_{D}= (\Delta^\bullet_{c})^*$ since 
$  \Delta^\bullet_{D}\subseteq (\Delta^\bullet_{c})^*$ and $ \Delta^\bullet_{D}$ is self-adjoint.

   $(i)\Rightarrow(ii)$: If 
   $  \Delta^\bullet_{D}= (\Delta^\bullet_{c})^*$, then $(ii)$ follows from the discussion above.

    $(ii)\Rightarrow(iii)$: Let $\alpha>0$ and $\omega\in\ell^2$ such that $\mathcal{H}^\bullet \omega = -\alpha \omega$. Then, $\mathcal{H}^\bullet \omega\in\ell^2$ and by $(ii)$ we conclude  $\omega\in D(\Delta^\bullet_{D}).$ But then
    $$0\leq Q^\bullet_{D}(\omega) = \langle\Delta^\bullet_D \omega,\omega\rangle = -\alpha\Vert \omega\Vert^2$$
    which can only be true if $\omega\equiv 0.$
    
      $(iii)\Rightarrow(i)$:  
      Let $\omega\in D((\Delta^\bullet_{c})^*)\subseteq F^{\bullet}$, $\alpha>0$ and
      $$\eta = (\Delta^\bullet_{D}+\alpha)^{-1}((\Delta^\bullet_{c})^*+\lambda)\omega= (\Delta^\bullet_{D}+\alpha)^{-1}(\mathcal{H}^\bullet+\lambda)\omega\in D(\Delta^\bullet_{D}).$$ 
      If $ \Delta^\bullet_{D}\neq (\Delta^\bullet_{c})^*$, then there is  $\omega\neq \eta$. On the other hand,  since $\Delta_{D}^{\bullet}$ is a restriction of $\mathcal{H}^{\bullet}$ by Theorem~\ref{action3},  we have
      $(\mathcal{H}^\bullet+\lambda)(\omega-\eta) = 0$ which contradicts $(iii)$.
\end{proof}

 We finish the section by showing that essential self-adjointness of $\Delta^\pm_c$ can be deduced from the essential self-adjointness of $\Delta^H_c.$

\begin{proposition}\label{esssa2}
   If $F^{c}\subseteq D(\Delta_{D}^{H})$ and $\Delta^H_{c}$ is essentially self-adjoint, then $\Delta^\pm_{c}$ is essentially self-adjoint. 
\end{proposition}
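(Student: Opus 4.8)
The plan is to verify the eigenvalue criterion of Proposition~\ref{essSA}~(iii) for $\bullet=+$ and $\bullet=-$, reducing each to the assumed essential self-adjointness of $\Delta^{H}_{c}$ through the splitting $\mathcal{H}^{H}=\mathcal{H}^{+}+\mathcal{H}^{-}$ on $\ell^{2}$. First I would secure the hypotheses needed to even invoke Proposition~\ref{essSA}. By Theorem~\ref{Fc}~(a) we always have $F^{c}\subseteq D(\Delta^{-}_{D})$. For the up-Laplacian, recall that $F^{c}\subseteq D(\Delta^{H}_{D})$ is equivalent to $\ell^{2}\subseteq F^{H}$, that $\ell^{2}\subseteq F^{-}=F^{1}$ always holds by Lemma~\ref{sammellemma2}~(b), and that $b^{+}\le b^{-}+b^{H}$ since $b^{H}=|b^{-}-b^{+}|$. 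Hence $\ell^{2}\subseteq F^{-}\cap F^{H}\subseteq F^{+}$, which gives $F^{c}\subseteq D(\Delta^{+}_{D})$ by the equivalence recorded before Proposition~\ref{essSA}. Thus Proposition~\ref{essSA} is available for all of $+,-,H$, and for every $\omega\in\ell^{2}$ one has $\mathcal{H}^{H}\omega=\mathcal{H}^{+}\omega+\mathcal{H}^{-}\omega=\partial\delta\omega+\delta\partial\omega$ by Theorem~\ref{schröd2}.

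The case $\bullet=-$ is short. Let $\alpha>0$ and $\omega\in\ell^{2}$ with $\mathcal{H}^{-}\omega=\delta\partial\omega=-\alpha\omega$. Applying $\delta$ and using $\delta\delta=0$ on $\F$ yields $-\alpha\delta\omega=\delta\delta\partial\omega=0$, so $\delta\omega=0$ and therefore $\partial\delta\omega=0$. Consequently $\mathcal{H}^{H}\omega=\partial\delta\omega+\delta\partial\omega=-\alpha\omega$, and the essential self-adjointness of $\Delta^{H}_{c}$ forces $\omega=0$ via Proposition~\ref{essSA}~(iii). Hence $\Delta^{-}_{c}$ is essentially self-adjoint. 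As a by-product this gives the form equalities $Q^{-}_{D}=Q^{-}_{N}$ and, by Proposition~\ref{Q+Q-}, $Q^{+}_{D}=Q^{+}_{N}$, so that $\delta|_{\ell^{2}}$ and $\partial|_{\ell^{2}}$ are mutually adjoint closed operators; from $\delta\delta=0$ one even obtains $\partial\partial=0$ at the $\ell^{2}$-level, a tool I would keep at hand for the up-case.

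For $\bullet=+$ I would attempt to mirror the above. Let $\omega\in\ell^{2}$ with $\mathcal{H}^{+}\omega=\partial\delta\omega=-\alpha\omega$; then $\mathcal{H}^{H}\omega=-\alpha\omega+\delta\partial\omega$, so to conclude through $\Delta^{H}_{c}$ I must show the cross term $\delta\partial\omega$ vanishes, i.e. that $\omega$ is a $(-\alpha)$-eigenfunction of $\mathcal{H}^{H}$. The natural device is to pass one degree up: $\xi:=\delta\omega$ satisfies $\delta\xi=0$ and $\delta\partial\xi=\delta(\partial\delta\omega)=-\alpha\xi$, so $\xi$ is a formal $(-\alpha)$-eigenfunction of $\mathcal{H}^{-}$; once $\xi\in\ell^{2}$, the essential self-adjointness of $\Delta^{-}_{c}$ just established gives $\xi=\delta\omega=0$, whence $-\alpha\omega=\partial\delta\omega=0$ and $\omega=0$. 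Equivalently, securing $\delta\omega\in\ell^{2}$ means $\omega\in D(Q^{+}_{N})$, and then combining the identity $\langle\delta\omega,\delta\varphi\rangle=-\alpha\langle\omega,\varphi\rangle$ for $\varphi\in\F^{c}$ (Green's formula, Theorem~\ref{green1}~(b)) with the form equality $Q^{+}_{D}=Q^{+}_{N}$ and a Fatou/positivity argument would promote this to $\|\delta\omega\|^{2}=-\alpha\|\omega\|^{2}\le0$, forcing $\omega=0$.

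The step I expect to require the real work is precisely the claim $\delta\omega\in\ell^{2}$ (equivalently $\delta\partial\omega\in\ell^{2}$, so that $\omega\in D(\Delta^{H}_{D})$). This is genuinely nonformal: whereas $\delta\delta=0$ let the cross term die in the down-case, the identity $\partial\partial=0$ fails for general functions as shown in Example~\ref{example}, so one cannot simply apply $\partial$ to $\partial\delta\omega=-\alpha\omega$ and conclude $\partial\omega=0$. The up-Laplacian is therefore not symmetric to the down-Laplacian, and my plan to bridge the gap is to show that the deficiency element $\omega$ already lies in the form domain $D(Q^{+}_{N})=D(Q^{+}_{D})$, using the full strength of $F^{c}\subseteq D(\Delta^{H}_{D})$ together with essential self-adjointness of $\Delta^{H}_{c}$ (beyond what they yielded for $\Delta^{-}_{c}$) to rule out a deficiency lying outside $D(Q^{+}_{N})$; justifying the passage to the $\ell^{2}$-norm without assuming $\delta\omega\in\ell^{2}$ a priori is the crux of the argument.
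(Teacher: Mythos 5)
Your preliminary domain work and the down-case are sound: $F^{c}\subseteq D(\Delta^{-}_{D})$ always holds by Theorem~\ref{Fc}, your derivation of $F^{c}\subseteq D(\Delta^{+}_{D})$ from $b^{+}\le b^{-}+b^{H}$ and Proposition~\ref{prop:Fc} is valid, and for $\bullet=-$ your argument is complete: applying $\delta$ to $\delta\partial\omega=-\alpha\omega$ and using $\delta\delta=0$ on all of $F$ gives $\delta\omega=0$, so $\omega$ is an $\ell^{2}$-eigenfunction of $\mathcal{H}^{H}$ and Proposition~\ref{essSA}~(iii) for $H$ forces $\omega=0$. But the proposition also asserts essential self-adjointness of $\Delta^{+}_{c}$, and there your proof stops at an acknowledged hole: you need $\delta\omega\in\ell^{2}$ (equivalently, that the cross term $\delta\partial\omega$ can be controlled) for a hypothetical solution of $\partial\delta\omega=-\alpha\omega$, and you give no argument for this, only the statement that it is ``the crux''. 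This is a genuine gap, not a routine verification. Since $\partial\partial\neq0$ in general (Example~\ref{example}), no formal identity annihilates the cross term; the pairing $\langle\partial\delta\omega,\omega\rangle=\|\delta\omega\|^{2}$ cannot be justified by Stokes' theorem (Theorem~\ref{adj1}) because neither function has compact support; and the form equality $Q^{+}_{D}=Q^{+}_{N}$ you obtain as a by-product is strictly weaker than essential self-adjointness --- by Proposition~\ref{formeq} it only excludes eigenfunctions lying in $D(Q^{+}_{N})$, i.e.\ those with $\delta\omega\in\ell^{2}$, which is exactly the class you cannot restrict to.

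The paper's proof avoids eigenfunctions entirely, and this is the idea your plan is missing. For $\omega\in F^{c}$ one has
\begin{equation*}
\langle\delta\partial\omega,\partial\delta\omega\rangle=\langle\partial\omega,\partial\partial\delta\omega\rangle=0,
\end{equation*}
where Stokes' theorem applies because $\partial\omega\in F^{c}$, and $\partial\partial\delta\omega=0$ by Lemma~\ref{pp0} since $\delta\omega\in F^{2}$ by Lemma~\ref{sammellemma2}~(d); so $\partial\partial=0$ does hold where it is needed, namely on images of compactly supported functions. Expanding the square then gives $\|\Delta^{\pm}_{c}\omega\|^{2}\le\|\delta\partial\omega\|^{2}+\|\partial\delta\omega\|^{2}=\|\Delta^{H}_{c}\omega\|^{2}$ on $F^{c}$, i.e., the symmetric operators $\Delta^{\pm}_{c}$ are $\Delta^{H}_{c}$-bounded with relative bound $1$, and a Kato--Wüst perturbation theorem transfers essential self-adjointness from $\Delta^{H}_{c}$ to $\Delta^{+}_{c}$ and $\Delta^{-}_{c}$ simultaneously. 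Without this relative-boundedness argument (or an additional hypothesis such as local finiteness, under which $\partial\partial\delta\omega=0$ can be pushed through by Stokes' theorem for the eigenfunction itself), your up-case cannot be completed, so the proposal does not prove the proposition.
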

\begin{proof}We have that $F^{c}\subseteq D(\Delta_{D}^{H})$ implies $F^{c}\subseteq D(\Delta_{D}^{+})$ (cf. Proposition~\ref{prop:Fc} or the proof of Theorem~\ref{Fc}) and we always have $F^{c}\subseteq D(\Delta_{D}^{-})$ by Theorem~\ref{Fc}. Thus, $\ell^2\subseteq F^+\cap F^-$ and $\mathcal{H}^H = \delta\partial+\partial\delta $ on $\ell^2$ by Proposition \ref{prop:Fc} and Theorem~\ref{schröd2}. Now, observe for $\omega\in F^{c}$
\begin{equation*}
	\|(\delta\partial+\partial\delta)\omega\|^{2}-	\|\delta\partial\omega\|^{2}-
	\|\partial\delta\omega\|^{2}= 2\langle \delta\partial \omega,\partial\delta \omega\rangle=2\langle \partial \omega,\partial\partial\delta \omega\rangle=0,
\end{equation*}	
where the second equality holds by Stokes' theorem, Theorem~\ref{adj1} as  $\partial\omega\in F^{c}$ and the last equality holds by Lemma~\ref{pp0} as $\delta\omega\in F^{2}$ by Lemma~\ref{sammellemma2}~(d).
Thus, we have
	\begin{equation*}
		\|\Delta_{c}^{\pm}\omega\|^{2} \leq  \|\delta\partial\omega\|^{2}+
		\|\partial\delta\omega\|^{2}=	\|(\delta\partial+\partial\delta)\omega\|^{2}=\|\Delta_{c}^{H}\omega\|^{2},
	\end{equation*}
	which means that the operators $ \Delta_{c}^{\pm}$ both are $\Delta_{c}^{H}$-bounded with bound $1$ and  symmetric by Green's formula, Proposition~\ref{green1}. Thus, the statement follows from \cite[Theorem~4.6]{K} and the essential self-adjointness of $\Delta_{c}^{H}$.
\eat{Furthermore, essential self-adjointness of $\Delta^{H}_{c}$ implies by the virtue of Green's formula, Theorem~\ref{green1}, and the action of $\Delta^{H}_{D}$, Theorem~\ref{action},
\begin{equation*}
	\{\omega\in\ell^{2}\mid(\delta\partial+\partial\delta)\omega\in \ell^{2}\}=D\big((\Delta^H_{c})^*\big) = D\big(\Delta^H_{D}\big). 
\end{equation*}\Hmm{Is the first equality true?}
In particular, $\partial\partial\omega=0$ whenever $\omega\in\ell^2$ and $(\partial\delta+\delta\partial)\omega\in\ell^2$.
 
Now let $\Sigma$ be locally finite, $\omega\in\ell^2,\alpha>0$ and $\mathcal{H}^+\omega = -\alpha\omega.$ We have $\partial\omega = -\alpha^{-1}\partial\partial\delta\omega = 0.$ The last equality follows from Stokes' theorem, Theorem \ref{adj1}, because for $\rho\in\Sigma$
$$m(\rho)\partial\partial\delta\omega(\rho) = \langle\partial\partial\delta\omega,1_\rho\rangle = \langle\omega,\partial\delta\delta1_\rho\rangle = 0,$$
where we need local finiteness to be able to use Stokes' theorem sufficiently often.
Hence, $\mathcal{H}^H\omega = (\partial\delta+\delta\partial)\omega=\partial\delta\omega=\mathcal{H}^+\omega = -\alpha\omega\in\ell^{2} $ and, therefore, $\omega=0$ as $\Delta^{H}_{c}$ is essentially self-adjoint by Proposition~\ref{essSA}. This implies essential self-adjointness of $\Delta^{+}_{c}$
by Proposition~\ref{essSA}.
Essential self-adjointness of $\Delta^{-}_{c}$ follows by similar arguments, where we can drop the assumption of local finiteness because $\delta\delta\partial\omega = 0$ follows automatically.}
\end{proof}

\subsection{A Weight and Curvature Criterion}
In this section we give a criterion arising from the weight and curvature. Recall to this end that the quantity $c^{H}/m$ introduced in Section~\ref{DiscSch} is referred to as Forman curvature.

\begin{theorem}\label{esssa1}
	If $m$ is uniformly positive and the Forman curvature $c^{H}/m$ is bounded from below,     then $\Delta^\bullet_{c}$ is essentially self-adjoint for all $\bullet\in\{\pm,H\}$.
\end{theorem}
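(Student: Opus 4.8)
The plan is to prove the case $\bullet=H$ and deduce the cases $\bullet=\pm$ from it. The reduction is immediate from Proposition~\ref{esssa2}: once we know that $F^c\subseteq D(\Delta^H_D)$ and that $\Delta^H_c$ is essentially self-adjoint, that proposition yields essential self-adjointness of $\Delta^\pm_c$ as well. So the whole argument reduces to establishing these two facts for the Hodge case, and the two hypotheses will be used as follows: uniform positivity to secure the inclusion $F^c\subseteq D(\Delta^H_D)$ and to run the limiting argument, and the lower bound on the Forman curvature $c^H/m$ to control the potential of the associated Schrödinger operator.

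First I would verify $F^c\subseteq D(\Delta^H_D)$ using uniform positivity. Taking the Laplacian to be $\Delta^H_D$, its form satisfies $Q^H_D\subseteq Q^+_N+Q^-_N$ by the discussion in Section~\ref{Dir&Neum}, so Proposition~\ref{prop:Fc} applies and the inclusion is equivalent to $\varphi^H_\tau\in\ell^2$ for every $\tau$, where $\varphi^H_\tau(\tau')=b^H(\tau,\tau')/m(\tau')$. With $\varepsilon:=\inf_\Sigma m>0$ one has $\|\varphi^H_\tau\|^2=\sum_{\tau'}b^H(\tau,\tau')^2/m(\tau')\le\varepsilon^{-1}\sum_{\tau'}b^H(\tau,\tau')^2$, so it suffices to check $\sum_{\tau'}b^H(\tau,\tau')^2<\infty$. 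Using $b^H\le b^++b^-$ and grouping the neighbours of the fixed simplex $\tau$ by their common coface (for $b^+$) respectively by the common face $\rho\prec\tau$ (for $b^-$), each of $\sum_{\tau'}b^+(\tau,\tau')^2$ and $\sum_{\tau'}b^-(\tau,\tau')^2$ is a finite combination of sums of the form $\sum_{\sigma\succ\rho}m(\sigma)^2$, and these are finite because local summability forces $\sup_{\sigma\succ\rho}m(\sigma)<\infty$, whence $\sum_{\sigma\succ\rho}m(\sigma)^2\le(\sup_{\sigma\succ\rho}m(\sigma))\sum_{\sigma\succ\rho}m(\sigma)<\infty$.

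With $F^c\subseteq D(\Delta^H_D)$ established, Theorem~\ref{action3}(b) (its hypothesis (a) also holds, as $c^H/m$ is bounded below) gives $\Delta^H_c=\mathcal{H}^H\mid_{F^c}$, so $\Delta^H_c$ is a densely defined symmetric magnetic Schrödinger operator on the graph $(\Sigma,b^H,m)$ with potential $c^H/m$; by Green's formula, Proposition~\ref{green2}, $\langle\mathcal{H}^H\omega,\omega\rangle=Q^H(\omega)\ge0$ for $\omega\in F^c$. By Proposition~\ref{essSA} it then remains to show that for some $\alpha>0$ the only $\omega\in\ell^2$ with $\mathcal{H}^H\omega=-\alpha\omega$ is $\omega\equiv0$, which is exactly essential self-adjointness of a magnetic Schrödinger operator with semibounded potential; I would obtain this from the corresponding graph result of \cite{CTT,GKS,Milatovic,S}, to which Section~\ref{DiscSch} built the bridge. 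The mechanism I expect behind that result is an IMS/cutoff computation: for finitely supported $\chi$ the (magnetic) localization identity yields $(\alpha-K)\|\chi\omega\|^2\le\frac12\sum_{\tau,\tau'}b^H(\tau,\tau')(\chi(\tau)-\chi(\tau'))^2\omega(\tau)^2$, where $c^H/m\ge -K$ is used for the form bound $Q^H(\chi\omega)\ge -K\|\chi\omega\|^2$; choosing $\alpha>K$ and $\chi_n\uparrow1$ with vanishing localization error forces $\omega\equiv0$. Here uniform positivity enters decisively, since $\inf_\Sigma m>0$ gives $\ell^2(\Sigma,m)\subseteq\ell^2(\Sigma)$, so that $\sum_\tau\omega(\tau)^2<\infty$ supplies the summable majorant needed to pass the error term to the limit.

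The main obstacle is precisely this last step: producing cutoffs $\chi_n$ with controlled gradient energy reaching infinity, and justifying the magnetic localization identity in the present generality (infinite-dimensional, possibly non-locally-finite complexes). This is the point at which the Forman curvature lower bound does the substantive work, by restraining the growth of the weights enough that such cutoffs exist, and it is exactly why the translation into magnetic Schrödinger operators was set up in Section~\ref{DiscSch}; I would invoke the relevant graph theorem there rather than reprove the cutoff estimate from scratch.
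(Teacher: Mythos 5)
Your overall skeleton does match the paper's: reduce $\bullet=\pm$ to $\bullet=H$ via Proposition~\ref{esssa2}, secure $\F^c\subseteq D(\Delta^H_D)$ from uniform positivity, and then aim at criterion (iii) of Proposition~\ref{essSA}. (For the domain inclusion the paper is more direct: $\inf_\Sigma m>0$ together with local summability forces local finiteness, so Theorem~\ref{Fc}~(b) applies immediately; your explicit verification of $\varphi^H_\tau\in\ell^2$ via Proposition~\ref{prop:Fc} is correct but unnecessary.)

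The genuine gap is in the core step, and you flag it yourself. You propose to rule out $\ell^2$-solutions of $\mathcal{H}^H\omega=-\alpha\omega$ by an IMS/cutoff localization, claiming that the Forman curvature bound ``restrains the growth of the weights enough that such cutoffs exist.'' This is the mechanism of the wrong theorem. Finitely supported cutoffs $\chi_n\uparrow 1$ with vanishing gradient energy are precisely what an intrinsic (pseudo) metric with finite distance balls provides; that is the hypothesis and the proof of the Gaffney-type result, Theorem~\ref{metric} via Proposition~\ref{intrinsic}, and such completeness neither follows from nor has anything to do with a lower curvature bound (curvature bounded below is compatible with weights, hence intrinsic edge lengths, degenerating fast enough along a ray that balls are infinite). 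Likewise, the ``corresponding graph result'' you want to cite --- essential self-adjointness of a magnetic Schrödinger operator with merely semibounded potential --- does not exist in that form: semiboundedness of the potential alone is insufficient on weighted graphs, which is exactly why uniform positivity of $m$ appears as a separate hypothesis in the theorem. What the paper actually invokes is the weighted-path criterion, Proposition~\ref{measureEss} (i.e.\ \cite[Theorem~2.14]{GKS}): assuming $c^H\ge -Km$ and choosing $\alpha=-K$, one has
\begin{equation*}
	\frac{c^H(\tau_j)-\alpha m(\tau_j)}{\gamma^H(\tau_j)-c^H(\tau_j)}\;\ge\;0,
\end{equation*}
since $\gamma^H-c^H=\sum_{\tau'}b^H(\cdot,\tau')\ge 0$; hence every factor in the product there is at least $1$, and the series dominates $\sum_n m(\tau_n)=\infty$ along every infinite path, by uniform positivity. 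So the curvature bound makes the product factors at least $1$, uniform positivity makes the measure of every infinite path infinite, and no cutoffs, localization identity, or completeness are needed. Replacing your localization step by this application of Proposition~\ref{measureEss} turns your outline into the paper's proof.
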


To prove this, we apply the theory of Schrödinger operators on graphs. By Section~\ref{DiscSch} we have the graphs $b^{\bullet}$ associated to $Q^{\bullet}$. We call a sequence of elements of $\Sigma$ a path if it is a path with respect to $b^{\bullet}$.
Recall the quantities $\gamma^{\bullet}(\tau)=Q^{\bullet}(1_{\tau})$, $\tau\in \Sigma$, introduced to characterize boundedness in Section~\ref{boundedness}. The following criterion is based on \cite{Go,GKS}. To this end observe that for $\tau\in\Sigma$
 $$\gamma^\bullet(\tau)-c^\bullet (\tau)=Q^{\bullet}(1_{\tau})-c^{\bullet}(\tau)=\sum_{\tau'\in\Sigma}b^{\bullet}(\tau,\tau')\ge0.$$

\begin{proposition}\label{measureEss}Let $\bullet\in\{\pm,H\}$.
 If there is $\alpha\in\mathbb{R}$ such that for every infinite path $(\tau_n)$ with respect to $b^{\bullet}$  one has
	$$\sum_{n = 1}^{\infty}m(\tau_n)\prod_{j = 0}^{n-1}\left(1+\frac{c^\bullet(\tau_j)-\alpha m(\tau_j)}{\gamma^\bullet(\tau_j)-c^\bullet(\tau_j)} \right)^2 = \infty,$$ then $Q^{\pm}_{D}=Q^{\pm}_{N}$ whenever $\bullet=\pm$. If, in addition, $\F^c\subseteq D(\Delta^\bullet_D)$, then $\Delta^\bullet_{c}$ is essentially self-adjoint and, in particular, $Q^{\bullet}_{D}=Q^{\bullet}_{N}.$
\end{proposition}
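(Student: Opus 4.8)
The plan is to reduce the assertion to the triviality of $\alpha$-harmonic functions and then to feed this into the path-growth criterion for magnetic Schrödinger operators from \cite{Go,GKS}. Two tools established earlier are decisive: Proposition~\ref{formeq} characterizes form uniqueness $Q^{\bullet}_{D}=Q^{\bullet}_{N}$ by the condition that every $\omega\in D(Q^{\bullet}_{N})$ with $(\mathcal{L}^{\bullet}+\alpha)\omega=0$, $\alpha>0$, vanishes, and Proposition~\ref{essSA} characterizes essential self-adjointness of $\Delta^{\bullet}_{c}$ (once $F^{c}\subseteq D(\Delta^{\bullet}_{D})$) by the triviality of $\ell^{2}$-solutions of $\mathcal{H}^{\bullet}\omega=-\alpha\omega$. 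Thus the entire content of the proposition is funneled into a single analytic statement: a nontrivial solution of the eigenvalue equation cannot be square summable when the path sum diverges.

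For the form-uniqueness claim with $\bullet=\pm$ I would apply Proposition~\ref{formeq}~(iv). The case $\bullet=-$ is transparent because $\ell^{2}\subseteq F^{1}=F^{-}$ by Lemma~\ref{sammellemma2}, so $\mathcal{L}^{-}=\delta\partial=\mathcal{H}^{-}$ holds pointwise on all of $\ell^{2}$ by Theorem~\ref{schröd2}; the equation $(\mathcal{L}^{-}+\alpha)\omega=0$ is then literally the eigenvalue equation for the signed Schrödinger operator on the graph $b^{-}$ with potential $c^{-}$, degree $\gamma^{-}$ and measure $m$ assembled in Section~\ref{DiscSch}. For $\bullet=+$ one works directly with $\mathcal{L}^{+}=\partial\delta$: an $\omega\in D(Q^{+}_{N})$ satisfies $\delta\omega\in\ell^{2}$ and the pointwise recursion $\partial\delta\omega(\tau)=-\alpha\omega(\tau)$, and expanding $\partial\delta$ recovers the $b^{+}$-recursion vertex by vertex along a path even though $\mathcal{H}^{+}\omega$ need not be globally defined when $\ell^{2}\not\subseteq F^{+}$. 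This local (rather than global) use of the Schrödinger form is exactly why the criterion is \emph{based on} \cite{Go,GKS} rather than a verbatim citation, and it is the point requiring the most care in the $+$ case.

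The essential self-adjointness statement, which also covers $\bullet=H$, is cleaner once the extra hypothesis is in force: by Proposition~\ref{prop:Fc} the assumption $F^{c}\subseteq D(\Delta^{\bullet}_{D})$ gives $\ell^{2}\subseteq F^{\bullet}$, so $\mathcal{H}^{\bullet}$ maps $\ell^{2}$ into $\ell^{2}$ and $\Delta^{\bullet}_{c}=\mathcal{H}^{\bullet}\vert_{F^{c}}$ is densely defined and symmetric (Green's formula). Proposition~\ref{essSA}~(iii) then reduces essential self-adjointness to the $\ell^{2}$-triviality of $\mathcal{H}^{\bullet}\omega=-\alpha\omega$, and essential self-adjointness of $\Delta^{\bullet}_{c}$ forces $\Delta^{\bullet}_{D}=\Delta^{\bullet}_{N}$, giving the ``in particular'' form uniqueness.

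Everything therefore rests on the growth estimate, which I would transcribe to the present notation by the same dictionary used in the proof of Proposition~\ref{bound} (namely $b=b^{\bullet}$, $W=c^{\bullet}/m$, $\mu=m$, and magnetic phases given by $o^{\bullet}$). Rewriting the eigenvalue equation at a vertex $\tau_{j}$ as $\sum_{\tau'}b^{\bullet}(\tau_{j},\tau')o^{\bullet}(\tau_{j},\tau')\omega(\tau')=(\gamma^{\bullet}(\tau_{j})+\alpha m(\tau_{j}))\omega(\tau_{j})$, and using the identity $1+\tfrac{c^{\bullet}-\alpha m}{\gamma^{\bullet}-c^{\bullet}}=\tfrac{\gamma^{\bullet}-\alpha m}{\gamma^{\bullet}-c^{\bullet}}$ together with the positivity $\gamma^{\bullet}-c^{\bullet}=\sum_{\tau'}b^{\bullet}(\tau_{j},\tau')>0$ noted before the proposition, a discrete transfer argument produces, along a suitably chosen infinite path $(\tau_{n})$ emanating from any vertex where $\omega\neq0$, a lower bound of the form $\lvert\omega(\tau_{n})\rvert\geq \kappa\prod_{j=0}^{n-1}\bigl(1+\tfrac{c^{\bullet}(\tau_{j})-\alpha m(\tau_{j})}{\gamma^{\bullet}(\tau_{j})-c^{\bullet}(\tau_{j})}\bigr)$ for some $\kappa>0$. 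Since the hypothesis asserts divergence of $\sum_{n}m(\tau_{n})\prod_{j=0}^{n-1}(\cdots)^{2}$ for \emph{every} such path, we obtain $\sum_{n}m(\tau_{n})\lvert\omega(\tau_{n})\rvert^{2}=\infty$, contradicting $\omega\in\ell^{2}$; hence $\omega\equiv0$. I expect this transfer/growth estimate in the signed (magnetic) setting, where the phases $o^{\bullet}$ and the possibly infinite quantity $\gamma^{\bullet}$ must be controlled and the path must be selected so the lower bound propagates, to be the main obstacle; once it is in place, the reductions via Propositions~\ref{formeq},~\ref{essSA} and~\ref{Q+Q-} are routine.
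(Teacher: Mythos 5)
Your overall architecture coincides with the paper's: the divergence hypothesis is fed into the proof of the cited result \cite[Theorem~2.14]{GKS} to exclude non-trivial solutions $\omega\in \F^{\bullet}\cap\ell^{2}$ of $(\mathcal{H}^{\bullet}-\lambda)\omega=0$, and this is then funneled through Proposition~\ref{formeq}~(iv) for form uniqueness and, under $\F^{c}\subseteq D(\Delta^{\bullet}_{D})$, through Proposition~\ref{prop:Fc} (giving $\ell^{2}\subseteq\F^{\bullet}$) and Proposition~\ref{essSA}~(iii) for essential self-adjointness. Your handling of $\bullet=-$ and of the essential self-adjointness claim is exactly the paper's argument. (One small overstatement: Proposition~\ref{prop:Fc} yields $\ell^{2}\subseteq\F^{\bullet}$, i.e.\ that $\mathcal{H}^{\bullet}$ is \emph{defined} on $\ell^{2}$, not that it maps $\ell^{2}$ into $\ell^{2}$; nothing in your argument actually uses the stronger claim.)

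The genuine divergence is the $+$ case of form uniqueness, and there your proposal has a gap. The paper never analyzes $\mathcal{L}^{+}$-eigensolutions directly: it concludes $Q^{-}_{D}=Q^{-}_{N}$ from the $-$ case, where $\mathcal{H}^{-}=\mathcal{L}^{-}$ on $\F^{-}=\F^{1}\supseteq\ell^{2}$ leaves no domain issue, and then obtains $Q^{+}_{D}=Q^{+}_{N}$ for free from Proposition~\ref{Q+Q-}, i.e.\ from the adjoint duality $\delta_{D}^{*}=\partial_{N}$ of Lemma~\ref{d*}. You instead claim that for $\omega\in D(Q^{+}_{N})$ with $\partial\delta\omega=-\alpha\omega$, ``expanding $\partial\delta$ recovers the $b^{+}$-recursion vertex by vertex.'' As stated this is false: passing from $\partial\delta\omega(\tau)$ to $\sum_{\tau'}b^{+}(\tau,\tau')o^{+}(\tau,\tau')\omega(\tau')$ requires rearranging the double sum $\sum_{\sigma\succ\tau}\sum_{\tau'\prec\sigma}$ into a single sum over $\tau'$, and without the $\F^{+}$-condition at $\tau$ that sum over $\tau'$ need not converge at all; only the sum grouped by cofaces $\sigma$ converges (because $\delta\omega\in\ell^{2}\subseteq\F^{1}$), and grouping can create convergence where none exists. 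What does survive is a one-sided estimate: writing $m(\tau)\partial\delta\omega(\tau)=\gamma^{+}(\tau)\omega(\tau)+G$ with $G$ the grouped sum, the eigenvalue equation gives $G=-(\gamma^{+}(\tau)+\alpha m(\tau))\omega(\tau)$, and the triangle inequality through the grouping (with values in $[0,\infty]$) yields $(\gamma^{+}(\tau)+\alpha m(\tau))\vert\omega(\tau)\vert\leq\sum_{\tau'}b^{+}(\tau,\tau')\vert\omega(\tau')\vert$, which is in fact all the transfer argument needs to select a growing neighbour. So your route is repairable, but the repair is precisely the step you flag and leave open, whereas the paper's detour through Proposition~\ref{Q+Q-} makes it unnecessary. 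To your credit, when the hypothesis is posed for $b^{+}$ rather than $b^{-}$, the paper's written reduction through $\mathcal{H}^{-}$ itself requires a comment of exactly this kind, so your attempt engages a difficulty that the paper's two-line proof passes over in silence.
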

\begin{proof}
	The proof of \cite[Theorem 2.14]{GKS} shows that under the present assumption there are no non-trivial solutions $\omega \in F^{\bullet}\cap \ell^{2}$ such that $(\mathcal{H}^{\bullet}-\lambda)\omega=0$ for $\lambda$ smaller than a certain $\lambda_{0}$. Since $\mathcal{H}^{-}=\mathcal{L}^{-}$, we conclude $Q^{-}_{D}=Q^{-}_{N}$ from Proposition~\ref{formeq} and $Q^{+}_{D}=Q^{+}_{N}$ from Proposition~\ref{Q+Q-}. Moreover, we have $\mathcal{H}^{\bullet}=\mathcal{L}^{\bullet}$ on $F^{c}$ if $ \F^c\subseteq D(\Delta^\bullet_D)$ by Proposition~\ref{prop:Fc} and Theorem~\ref{action}. Thus, the essential self-adjointness follows from Proposition~\ref{essSA}.
\end{proof}

\begin{proof}[Proof of Theorem~\ref{esssa1}]
	According to Theorem~\ref{Fc} we have $\F^c\subseteq D(\Delta^\bullet_D).$ We assume $c^\bullet \ge -Km$ for some $K>0$. Choosing $\alpha=-K$, we find that 
	$$\frac{c^\bullet+Km }{(\gamma^\bullet-c^\bullet)}  \ge0$$
	since $\gamma^\bullet-c^\bullet \ge0$. 
	 Thus, the statement for $\bullet=H$ is an easy consequence of Proposition~\ref{measureEss} as uniform positivity of $m$ implies local finiteness and, thus, $F^{c}\subseteq D(\Delta^{H}_{D})$ by Theorem~\ref{Fc}. The statement for $\bullet=\pm$ now follows from Proposition~\ref{esssa2}.
\end{proof}

\begin{remark}
	From the proof of Theorem~\ref{esssa1} it is clear that essential self-adjointness of $\Delta^{\pm}_{c}$ can also be deduced from a lower bound on $c^{\pm}/m$.
\end{remark}

\subsection{A Metric Space Criterion}\label{metricsection}
In this section we show a criterion for form uniqueness and essential self-adjointness in terms of a metric on the 1-skeleton of the complex. This stands in analogy to a theorem of Gaffney \cite{G,Roe,Chernoff,Strichartz} that on a complete Riemannian manifold all Laplacians on forms are essentially self-adjoint. To this end, we define a metric on the 1-skeleton which itself is a graph. Here we have the usual adjacency relation $\sim$ between vertices if they are connected by an edge, i.e., contained in a common $1$-simplex. We will define a path metric on the 1-skeleton of $\Sigma$ as follows.
Let $v,v'\in\Sigma_0.$ We say that $v$ and $v'$ are connected if $b^+(v,v')>0.$ In that case we will write $v\sim v'.$ We then define 
$$d^{\bullet}(v,v') = \inf_{v=v_{0}\sim\ldots\sim v_{n}=v'} \sum_{k=0}^{n-1}w^{\bullet}(v_k,v_{k+1}),$$
where  $w^{\bullet}(x,x) = 0$ and for $x\sim x'$ 
\begin{align*}
	w^{+}(x,x') &= \inf_{\tau\cap \{x,x'\}\neq\varnothing}\Big(\frac{\dim(\tau)+1}{m(\tau)}\sum_{\sigma\succ\tau}m(\sigma)\Big)^{-1/2},\\
w^{-}(x,x') &= \inf_{\tau\cap \{x,x'\}\neq\varnothing}\frac{1}{2}
\Big(\sum_{\tau'\succ\tau'\cap \tau}\frac{m(\tau')}{m(\tau\cap{\tau'})}\Big)^{-1/2},\\
w^H(x,x') &= \frac{1}{\sqrt{2}}\min\lbrace w^+(x,x'),w^-(x,x')\rbrace.
\end{align*}
This yields path pseudo metrics on $\Sigma_0.$ 

\begin{remark}
	The motivation to study this metric stems from the concept of intrinsic metrics, see e.g. \cite[Chapter 11]{KLW} and references therein and has  shown to be vital for a Gaffney  theorem for graphs
	, see \cite{HKMW} or \cite[Chapter~12.3]{KLW}.
\end{remark}

For Riemannian manifolds metric or geodesic completeness is equivalent to compactness of closed metric balls by the Hopf-Rinow theorem. For graphs this remains true in the locally finite case \cite[Theorem~11.16]{KLW}.

\begin{theorem}[Gaffney-type theorem]\label{metric} 

If for $d^{\bullet}$, $\bullet\in \{\pm,H\}$, all metric balls are finite and $\Sigma$ is locally finite or balanced, then $\Delta^\bullet_{c}$ is essentially self-adjoint.
	
\end{theorem}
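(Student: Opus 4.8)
The plan is to reduce the statement to the absence of nontrivial $\ell^{2}$-eigensolutions of $\mathcal{H}^{\bullet}$ at a negative spectral parameter and then to invoke the Gaffney-type theorem for (magnetic) Schrödinger operators on graphs equipped with an intrinsic metric having finite distance balls, \cite{HKMW,GKS} (see also \cite[Chapter~12.3]{KLW}). First, since $\Sigma$ is locally finite or $m$ is balanced, Theorem~\ref{Fc} gives $F^{c}\subseteq D(\Delta^{\bullet}_{D})$, so that $\Delta^{\bullet}_{c}=\mathcal{H}^{\bullet}\vert_{F^{c}}$ is a symmetric operator on $\ell^{2}$ and Proposition~\ref{essSA} is applicable. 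Hence it suffices to show that for some (equivalently, every) $\alpha>0$ the only $\omega\in\ell^{2}$ with $\mathcal{H}^{\bullet}\omega=-\alpha\omega$ is $\omega\equiv 0$. By Section~\ref{DiscSch} we regard $\mathcal{H}^{\bullet}$ as a magnetic Schrödinger operator on the graph $b^{\bullet}$ over $(\Sigma,m)$ with potential $c^{\bullet}$ and magnetic structure $o^{\bullet}$; note that $b^{\bullet}(\tau,\tau')>0$ forces $\dim\tau=\dim\tau'$, so the graph decomposes by dimension.

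The second and central step is to lift the pseudometric $d^{\bullet}$ on $\Sigma_{0}$ to a path pseudometric $\varrho^{\bullet}$ on $\Sigma$, the vertex set of the graph $b^{\bullet}$, that is \emph{intrinsic}, i.e.
$$\sum_{\tau'\in\Sigma}b^{\bullet}(\tau,\tau')\,\varrho^{\bullet}(\tau,\tau')^{2}\le m(\tau)\qquad\text{for all }\tau\in\Sigma.$$
Two $b^{\bullet}$-adjacent simplices differ by swapping a single vertex, $\{x\}=\tau\setminus\tau'$ and $\{x'\}=\tau'\setminus\tau$, and one assigns to this edge a weight built from $w^{\bullet}$. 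For $\bullet=+$ this is transparent and exhibits the calibration of the constants: since $x\in\tau$, the simplex $\tau$ competes in the infimum defining $w^{+}(x,x')$, whence $w^{+}(x,x')^{2}\le m(\tau)/\big((\dim\tau+1)\gamma^{+}(\tau)\big)$; as each coface $\sigma\succ\tau$ contributes its $\dim\tau+1$ further codimension-one faces one has $\sum_{\tau'}b^{+}(\tau,\tau')=(\dim\tau+1)\gamma^{+}(\tau)=\gamma^{+}(\tau)-c^{+}(\tau)$, and substituting gives exactly $m(\tau)$. The cases $\bullet=-,H$ follow the same pattern, with the factors $1/2$ and $1/\sqrt{2}$ in $w^{-}$ and $w^{H}$ absorbing the ``at most two faces/cofaces'' combinatorics already exploited in Lemma~\ref{sammellemma1} and Lemma~\ref{pp0}.

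Third, I would transfer finiteness of balls. Every $\varrho^{\bullet}$-path emanating from a base simplex $\tau_{0}$ changes one vertex at a time along $1$-skeleton edges whose $w^{\bullet}$-lengths are the increments of $\varrho^{\bullet}$; consequently every simplex in a ball $B_{R}^{\varrho^{\bullet}}(\tau_{0})$ has all its vertices within $d^{\bullet}$-distance $R$ (up to a fixed factor in the $b^{-}$-case, see below) of the finitely many vertices of $\tau_{0}$. By hypothesis this is a finite subset of $\Sigma_{0}$, and a finite vertex set carries only finitely many simplices, so $\varrho^{\bullet}$-balls in $\Sigma$ are finite. Thus $b^{\bullet}$ carries an intrinsic metric with finite distance balls, and the graph Gaffney theorem \cite{HKMW,GKS} (see also \cite[Chapter~12.3]{KLW}) rules out nontrivial $\ell^{2}$-eigensolutions of $\mathcal{H}^{\bullet}$ at negative eigenvalues. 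By Proposition~\ref{essSA} this yields essential self-adjointness of $\Delta^{\bullet}_{c}$.

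The main obstacle is the intrinsic-metric verification of the second step for $\bullet=-$ and $\bullet=H$: there $b^{\bullet}$-adjacency means that $\tau,\tau'$ share a codimension-one \emph{face} $\rho=\tau\cap\tau'$ rather than a coface, so the swapped vertices $x,x'$ need not span an edge and must be connected through $\rho$ (e.g.\ via $x\sim y\sim x'$ for any $y\in\rho$, both sub-edges lying inside $\tau$ resp.\ $\tau'$); matching the normalizations in $w^{-},w^{H}$ to the intrinsic inequality and to this routing is the delicate part. A secondary point is to ensure the cited Gaffney theorem applies despite the magnetic structure $o^{\bullet}$ and the possibly negative potential $c^{\bullet}$, which is legitimate because $\mathcal{H}^{\bullet}$ merely represents the genuinely non-negative Laplacian $\Delta^{\bullet}$.
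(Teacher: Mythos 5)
Your proposal follows essentially the same route as the paper's own proof: reduce via Theorem~\ref{Fc} and Proposition~\ref{essSA} to the absence of $\ell^{2}$-eigensolutions of $\mathcal{H}^{\bullet}$ at negative spectral parameter, lift $d^{\bullet}$ from $\Sigma_{0}$ to an intrinsic pseudometric on $\Sigma$ with finite balls, and invoke the graph Gaffney machinery (the paper packages this as Proposition~\ref{intrinsic}, itself a variant of \cite{GKS,HKMW}). The only cosmetic difference is that the paper lifts $d^{\bullet}$ by a Hausdorff-type pseudometric $d_{h}^{\bullet}$ on $\Sigma$ rather than your path pseudometric with edge weights $w^{\bullet}$, and the ``delicate part'' you flag for $\bullet=-,H$ is resolved in the paper exactly as you sketch: route the swapped vertices through any vertex of the common face $\rho=\tau\cap\tau'$, with the factor $1/2$ in $w^{-}$ absorbing the two-edge detour and the factor $1/\sqrt{2}$ in $w^{H}$ absorbing $b^{H}\le b^{+}+b^{-}$.
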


\begin{remark}
	In \cite{ATH} the authors use a different notion of completeness, that is the existence of certain cut-off functions. However, this is equivalent to the existence of an intrinsic metric with finite metric balls, \cite[Theorem~11.31]{KLW}.
\end{remark}

For the proof, we draw again from the theory of operators on graphs. The following proposition is a variation of \cite[Theorem~2.16]{GKS} or \cite[Theorem~2]{HKMW}. 

\begin{proposition}\label{intrinsic}
 Let $\bullet\in\{\pm,H\}$ and let $d$ be an intrinsic metric, i.e. a pseudo metric such that $\sum_{\tau'}b^\bullet(\tau,\tau')d^2(\tau,\tau')\leq m(\tau)$ for all $\tau\in\Sigma.$ Assume that all metric balls are finite. If $\F^c\subseteq D(\Delta^\bullet_D),$ then $\Delta^\bullet_{c}$ is essentially self-adjoint.
\end{proposition}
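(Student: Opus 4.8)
The plan is to reduce essential self-adjointness to a Liouville-type statement and then to import the corresponding result for magnetic Schrödinger operators on graphs. By hypothesis $\F^c\subseteq D(\Delta^\bullet_D)$, so Proposition~\ref{essSA} applies and tells us that $\Delta^\bullet_{c}$ is essentially self-adjoint precisely when, for some (equivalently all) $\alpha>0$, the only $\omega\in\ell^2$ with $\mathcal{H}^\bullet\omega=-\alpha\omega$ is $\omega\equiv0$. Moreover, the same hypothesis gives $\ell^2\subseteq\F^\bullet$ and $\mathcal{H}^\bullet=\mathcal{L}^\bullet$ on $\F^c$ by Proposition~\ref{prop:Fc} and Theorem~\ref{action}, so $\mathcal{H}^\bullet$ is exactly the signed (magnetic) Schrödinger operator attached in Section~\ref{DiscSch} to the weighted graph $(\Sigma,b^\bullet)$ with measure $m$, magnetic signs $o^\bullet$ and electric potential $c^\bullet$. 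The task is therefore purely graph-theoretic: show that $(\mathcal{H}^\bullet+\alpha)\omega=0$ admits no non-trivial $\ell^2$ solution.

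First I would verify that the data match the hypotheses of the Gaffney-type theorem for graphs. The assumption that $d$ be an \emph{intrinsic} metric, $\sum_{\tau'}b^\bullet(\tau,\tau')d^2(\tau,\tau')\le m(\tau)$, together with finiteness of all distance balls, is precisely the completeness condition under which \cite[Theorem~2.16]{GKS} (see also \cite[Theorem~2]{HKMW}) rules out non-trivial $\ell^2$-eigenfunctions of $\mathcal{H}^\bullet$ for eigenvalues below a threshold $\lambda_0$, in particular for $-\alpha$ once $\alpha>0$ is taken large. Feeding this back into Proposition~\ref{essSA} yields essential self-adjointness of $\Delta^\bullet_{c}$. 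Since the translation of notation from Section~\ref{DiscSch} ($b=b^\bullet$, $\mu=m$, magnetic potential $o^\bullet$, electric potential $c^\bullet$) was already set up in the proof of Proposition~\ref{bound}, the application is essentially immediate.

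The substance of the argument—and the step I expect to be the main obstacle—lies inside the cited graph theorem, which I would otherwise reproduce via a Caccioppoli/cutoff estimate. One chooses the Lipschitz cutoffs $\eta_n=(1-\tfrac{1}{n}d(\cdot,o))_+$; because distance balls are finite, each $\eta_n$ has finite support and hence lies in $\F^c$. Testing the eigenvalue equation against $\eta_n^2\omega$ and integrating by parts produces, after rearrangement, a gradient term of the shape $\sum_{\tau,\tau'}b^\bullet(\tau,\tau')(\eta_n(\tau)-\eta_n(\tau'))^2\omega^2(\tau)$, which the intrinsic-metric inequality bounds by a constant times $n^{-2}\Vert\omega\Vert^2$ and therefore kills in the limit $n\to\infty$; what remains forces $\alpha\Vert\omega\Vert^2\le0$ and hence $\omega\equiv0$. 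The genuine difficulty that makes the graph reference indispensable is that the Forman-type potential $c^\bullet$ need not be bounded below, so a naive positivity argument fails; the signed and magnetic formulation of \cite{GKS} is exactly what absorbs the negative part of $c^\bullet$ and the signs $o^\bullet$ into the estimate.
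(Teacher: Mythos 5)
Your proposal is correct and follows essentially the same route as the paper: reduction to a Liouville-type statement via Proposition~\ref{essSA} (using $\F^c\subseteq D(\Delta^\bullet_D)$ together with Proposition~\ref{prop:Fc}), followed by killing $\ell^2$-eigenfunctions of $\mathcal{H}^\bullet$ with finitely supported intrinsic-metric cutoffs. The paper does not invoke \cite[Theorem~2.16]{GKS} as a black box but writes out precisely the Caccioppoli/cutoff argument you sketch as a fallback, using the ground state transform identity $Q^\bullet_N(u\eta)+\lambda\Vert u\eta\Vert^2 = Q^{(u)}(\eta)$ (cf. \cite[Proposition~2.18]{GKS}), which is exactly the mechanism you identify for absorbing the unbounded-below potential $c^\bullet$ and the signs $o^\bullet$.
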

\begin{proof}By restricting our attention to connected components, we assume that the graph $b^\bullet$ is connected. By Proposition \ref{prop:Fc} we know that $\ell^2\subseteq F^\bullet.$
	Let $u\in \ell^2$ and $\lambda>0$ such that $\mathcal{H}^\bullet u+\lambda u = 0.$ For $g\in F^c$ we have $$ Q^\bullet_N(ug)+\lambda\Vert ug\Vert^2 = Q^\bullet_D(ug)+\lambda\Vert ug\Vert^2 = Q^{(u)}(g)$$ with
	$$Q^{(u)}(g) = \frac{1}{2}\sum_{\tau,\tau'}b^\bullet(\tau,\tau')o^\bullet(\tau,\tau')u(\tau)u(\tau')\vert g(\tau)-g(\tau')\vert^2.$$ This follows from a direct computation (see also \cite[Proposition 2.18]{GKS}). Now let $R>0$ and fix $\tau_0\in\Sigma.$ Define $$\eta_R(\tau) = 1\wedge\frac{(2R-d(\tau,\tau_0))_+}{R}.$$ By assumption all $\eta_R$ have finite support and $1_R\leq\eta_R\leq 1$ where $1_R$ is the characteristic function on the metric ball with radius $R.$ We get
	\begin{multline*}
		 Q^\bullet(u\eta_R)+\lambda\Vert u\eta_R\Vert^2 = Q^{(u)}(\eta_R)
		\leq\frac{1}{2}\sum_{\tau,\tau'\in \Sigma}b^\bullet(\tau,\tau')(u(\tau)^2+u(\tau')^2)\vert\eta_R(\tau)-\eta_R(\tau')\vert^2\\
		 \leq\frac{1}{R^2}\sum_{\tau\in \Sigma} u^2(\tau)\sum_{\tau'\in \Sigma}b^\bullet(\tau,\tau')d^2(\tau,\tau')
		\leq\frac{1}{R^2}\Vert u\Vert^2.
	\end{multline*}
	Letting $R$ tend to infinity, we see that $\Vert u\eta_R\Vert$ has to vanish in the limit since the left hand side is greater than zero. The statement now follows from Theorem \ref{essSA}.
\end{proof}

We come to the proof of Theorem~\ref{metric}.

\begin{proof}[Proof of  Theorem~\ref{metric}] We want to apply the proposition above. We have to show that the metric $d^\bullet $ gives rise to an intrinsic metric on the entire simplicial complex.
	
Inspired by the Hausdorff distance between subsets of metric spaces, we define the pseudo metrics $d_{h}^{\circ}:\Sigma\rightarrow[0,\infty)$ as
	$$d_h^{\circ}(\tau,\tau') = \max\lbrace \max_{v\in\tau}\min_{v'\in\tau'}d^{\circ}(v,v'),\max_{v'\in\tau'}\min_{v\in\tau}d^{\circ}(v,v')\rbrace.$$
	
If all distance balls on the  $1$-skeleton are finite, then, as consequence of the fact that every simplex has only finitely many vertices, it follows easily that all distance balls on $\Sigma$ with respect to $d_h^\circ$ are finite as well. Moreover, it turns out that $d_h^\circ$ is intrinsic: First of all, if $b^{\circ}(\tau,\tau')>0$  for $\tau,\tau'\in\Sigma$, then
\begin{align*}
	d_h^{\circ}(\tau,\tau') = \max\{\min_{v'\in \tau'}d^{\circ}(\tau\setminus \tau',v'),\min_{v\in \tau}d^{\circ}(v,\tau'\setminus \tau)\}\leq  d^{\circ}(\tau\backslash\tau',\tau'\backslash\tau)
\end{align*}
\eat{ if $b^{+}(\tau,\tau')>0$  for $\tau,\tau'\in\Sigma$, then
	\begin{align*}
d_h^{+}(\tau,\tau') = d^{+}(\tau\setminus \tau',\tau'\setminus \tau)\leq w(\tau\backslash\tau',\tau'\backslash\tau)
\end{align*}
and if $b^{-}(\tau,\tau')>0$  for $\tau,\tau'\in\Sigma$, then
\begin{align*}
d_h^{-}(\tau,\tau') = \max\{\min_{v'\in \tau'}d(\tau\setminus \tau',v'),\min_{v\in \tau}d(v,\tau'\setminus \tau)\}\leq  w(\tau\backslash\tau',\tau'\backslash\tau)
\end{align*}}
Secondly, we can write for $v,v'\in\Sigma_{0}$
\begin{equation*}
	w^{\pm}(v,v')=\inf_{\tau\cap \{v,v'\}\neq\varnothing}
	\lambda_\pm\Big(\frac{1}{m(\tau)}\sum_{\tau'\in\Sigma}b^{\pm}(\tau,\tau')\Big)^{-1/2},
\end{equation*}
where $\lambda_+ = 1,\lambda_- = 1\slash 2.$ Now observe that whenever $b^+(\tau,\tau')>0,$ then $\tau\backslash\tau'$ and $\tau'\backslash\tau$ are adjacent vertices in the 1-skeleton of $\Sigma.$ Similarly, if $b^-(\tau,\tau')>0,$ then $\tau\backslash\tau'$ and $\tau'\backslash\tau$ are vertices in $\Sigma_0$ which are connected in the 1-skeleton of $\Sigma$ through any vertex in $\tau\cap\tau'.$
With these observations it follows  that 
\begin{multline*}
	\sum_{\tau'\in \Sigma}b^{-}(\tau,\tau')d^{-}_h(\tau,\tau')^{2}\leq
		\sum_{\tau'\in \Sigma}b^{-}(\tau,\tau')d^{-}(\tau\setminus \tau',\tau\setminus\tau')^{2}
	\\
	 \leq	\sum_{\tau'\in \Sigma}b^{-}(\tau,\tau')\left(\inf_{v\in \tau\cap{\tau'}}\left(w^{-}(\tau\setminus \tau',v) +w^{-}(v,\tau\setminus\tau')\right)\right)^{2}\leq  m(\tau)
\end{multline*}
and similarly
$ \sum_{\tau'}b^{+}(\tau,\tau')d^{+}_h(\tau,\tau')^{2}\leq m(\tau)$, so $d^{\pm}_h$ are intrinsic.
Finally, since $b^{H}=|b^{+}-b^{-}|\leq b^++b^-$ and $\sqrt{2}d^H\leq d^\pm$ we see that $d^{H}_{h}$ is intrinsic as well.
 	Thus, the statement follows from Theorem~\ref{intrinsic}.   
\end{proof}

\section{Spectral Relations between Laplacians}\label{specrel}
In this section, we study several spectral relations between different Laplacians that are well known for finite complexes.
While the overall picture remains consistent,  new phenomena  occur due to the non-uniqueness 
of the Laplacian.

The first result is due to an abstract functional analytic fact that operators of the type $T^{*}T$ and $TT^{*}$ share the same spectrum apart from $0$.

\begin{theorem}\label{spectrum0}
    We have 
    \begin{align*}
        \sigma(\Delta^+_{D})\backslash\lbrace 0\rbrace &= \sigma(\Delta^-_N)\backslash\lbrace 0\rbrace,\\
        \sigma(\Delta^-_{D})\backslash\lbrace 0\rbrace &= \sigma(\Delta^+_N)\backslash\lbrace 0\rbrace,\\
        \sigma(\Delta^H_{D})\backslash\lbrace 0\rbrace &= \sigma(\Delta^H_N)\backslash\lbrace 0\rbrace.
    \end{align*}
    The same is true for the essential, pure point, absolutely continuous and singular continuous  spectrum.
\end{theorem}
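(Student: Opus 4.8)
The plan is to recognize each of the three pairs of operators as a pair of the form $(T^{*}T,\,TT^{*})$ for a single closed, densely defined operator $T$, and then to invoke the abstract fact that such a pair shares all of its spectral data away from $0$. This reduces the entire theorem, including the refined spectral statements, to one functional-analytic lemma.

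First I would identify the operators. Combining Corollary~\ref{LNLD} with Lemma~\ref{d*}, set $T=\delta_{D}$, so that $T^{*}=\partial_{N}$ and
$$\Delta^{+}_{D}=\partial_{N}\delta_{D}=T^{*}T,\qquad \Delta^{-}_{N}=\delta_{D}\partial_{N}=TT^{*}.$$
Likewise, with $T=\partial_{D}$ we have $T^{*}=\delta_{N}$ together with $\Delta^{-}_{D}=\delta_{N}\partial_{D}=T^{*}T$ and $\Delta^{+}_{N}=\partial_{D}\delta_{N}=TT^{*}$; and with $T=(\delta+\partial)_{D}$ we have $T^{*}=(\delta+\partial)_{N}$ together with $\Delta^{H}_{D}=T^{*}T$ and $\Delta^{H}_{N}=TT^{*}$. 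In each case $T$ is closed and densely defined by the discussion following the definition of $\delta_{D},\partial_{N},(\delta+\partial)_{D}$.

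Next I would reduce the statement to the abstract claim: if $T$ is closed and densely defined, then $\sigma(T^{*}T)\setminus\{0\}=\sigma(TT^{*})\setminus\{0\}$, with the same equality for the essential, point, absolutely continuous and singular continuous parts. To prove it I would use the polar decomposition $T=U|T|$, where $|T|=(T^{*}T)^{1/2}$ and $U$ is the partial isometry with initial space $(\ker T)^{\perp}=\overline{R(|T|)}$ and final space $\overline{R(T)}=(\ker T^{*})^{\perp}$. Since $|T|$ is self-adjoint we get $T^{*}=|T|U^{*}$, hence $TT^{*}=U|T|^{2}U^{*}=U(T^{*}T)U^{*}$, so that $U$ restricts to a unitary from $(\ker T)^{\perp}$ onto $(\ker T^{*})^{\perp}$ intertwining $A:=T^{*}T|_{(\ker T)^{\perp}}$ and $B:=TT^{*}|_{(\ker T^{*})^{\perp}}$.

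Finally, writing $T^{*}T=0|_{\ker T}\oplus A$ and $TT^{*}=0|_{\ker T^{*}}\oplus B$, I would observe that the zero summands have spectrum contained in $\{0\}$ and contribute nothing to any spectral type away from $0$, while $A$ and $B$ are injective (as $\ker T^{*}T=\ker T$) and unitarily equivalent through $U$. Thus every spectral part of $A$ agrees with that of $B$, and deleting $\{0\}$ yields all three displayed equalities simultaneously, for the full spectrum and for the essential, point, absolutely continuous and singular continuous parts. The one point that I expect to require genuine care, and which I would single out as the main obstacle, is verifying that the refined spectral decomposition is transported correctly: one must check that unitary equivalence preserves each of these spectral types and that the orthogonal kernel summands really only affect the value $0$, so that removing $\{0\}$ leaves the refined spectra matched.
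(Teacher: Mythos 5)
Your proposal is correct and follows essentially the same route as the paper: the paper likewise uses Corollary~\ref{LNLD} (with Lemma~\ref{d*}) to recognize each pair as $T^{*}T$ and $TT^{*}$ for $T=\delta_{D}$, $\partial_{D}$, $(\delta+\partial)_{D}$, and then invokes the abstract fact that such a pair is unitarily equivalent off the kernels, citing \cite[Corollary~5.6]{T} and \cite[Proposition~1.23]{FLW} for it. The only difference is that where the paper cites the literature, you supply the standard polar-decomposition proof of that abstract fact, which is exactly the content of the cited results.
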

\begin{proof}This follows directly 
	from \cite[Corollary~5.6.]{T} or \cite[Proposition~1.23]{FLW} and Corollary~\ref{LNLD}.
\end{proof}
\begin{remark}
	    If we restrict our attention to 
	    $\ell^2(\Sigma_k,m)$ and denote the appropriate restriction of $\Delta^\bullet_{D}$ by $\Delta^\bullet_{D,k}$ then 
	    the first two equalities in Theorem~\ref{spectrum0} become
    $$\sigma(\Delta^+_{D,k})\backslash\lbrace 0\rbrace = \sigma(\Delta^-_{N,k+1})\backslash\lbrace 0\rbrace\quad\text{and}\quad \sigma(\Delta^+_{N,k})\backslash\lbrace 0\rbrace = \sigma(\Delta^-_{D,k+1})\backslash\lbrace 0\rbrace,$$
    which is a well known relation between up- and down-Laplacians on finite simplicial complexes (see \cite{HJ}). The statement about the Hodge-Laplacian is remarkable, as to our knowledge there is no such relation for Hodge-Laplacians defined on $\ell^2(\Sigma_k,m).$ This seems to be a special feature of the operator defined across multiple dimensions. Also note that a Hodge-Laplacian with Neumann boundary condition on $\ell^2(\Sigma_k,m)$ would not be well defined as $\delta+\partial$ maps $\F(\Sigma_k)$ into $\F(\Sigma_{k+1})\oplus \F(\Sigma_{k-1}).$
\end{remark}

\eat{\begin{remark}
    After writing up this paper we got aware of the reference \cite[Corollary~5.6.]{T} that shows a stronger result than the one given in Theorem \ref{TT*} namely that the operators involved are unitarily equivalent when restricted to the orthogonal complement of their kernel. Applying this to the setting of Theorem \ref{spectrum0} yields that also the spectral types of the nonzero spectra, i.e. pure point, absolutely continuous and singular continuous spectra, coincide.
\end{remark}}

The theorem above excludes statements about $0,$ which is only natural. However, if $0$ is in $\sigma(\Delta^{H}_{D})$, then it is in all the other spectra, as the following proposition shows.

\begin{proposition}\label{spectrum2}  If $0\in\sigma(\Delta^H_{D}),$ then $0\in \sigma(\Delta^\pm_{D})\cap\sigma(\Delta^\pm_N)\cap \sigma(\Delta^{H}_{N}).$ The same is true if $0$ is in the essential spectrum of $\Delta^H_{D}.$
\end{proposition}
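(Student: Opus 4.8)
The plan is to reduce everything to the variational characterization of the bottom of the spectrum of a nonnegative self-adjoint operator $T$ with associated closed form $Q$, namely $\inf\sigma(T)=\inf\{Q(\omega)/\|\omega\|^2 : \omega\in D(Q),\ \omega\neq 0\}$. Since all six Laplacians are nonnegative, for each of them $0$ lies in the spectrum precisely when this infimum equals $0$. Hence from $0\in\sigma(\Delta^H_{D})$ I would first extract a sequence $(\omega_n)\subseteq D(Q^H_{D})$ with $\|\omega_n\|=1$ and $Q^H_{D}(\omega_n)\to 0$, and then show that this \emph{single} sequence simultaneously witnesses $0\in\sigma$ for each of the five remaining operators.

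The propagation rests on two facts established earlier. First, by Corollary~\ref{adj2} one has $Q^H_{D}=(Q^+_{D}+Q^-_{D})\mid_{D(Q^H_{D})}$ together with $D(Q^H_{D})\subseteq D(Q^+_{D})\cap D(Q^-_{D})$; since $Q^+_{D},Q^-_{D}\geq 0$, the convergence $Q^H_{D}(\omega_n)\to 0$ forces both $Q^+_{D}(\omega_n)\to 0$ and $Q^-_{D}(\omega_n)\to 0$, giving $0\in\sigma(\Delta^+_{D})\cap\sigma(\Delta^-_{D})$. Second, by construction $Q^\bullet_{D}\subseteq Q^\bullet_N$, i.e. $D(Q^\bullet_{D})\subseteq D(Q^\bullet_N)$ with the two forms agreeing on the smaller domain; thus $\omega_n\in D(Q^\bullet_N)$ and $Q^\bullet_N(\omega_n)=Q^\bullet_{D}(\omega_n)\to 0$ for $\bullet\in\{+,-\}$, while $Q^H_N(\omega_n)=Q^H_{D}(\omega_n)\to 0$ directly. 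Equivalently, this is just form monotonicity, $\inf\sigma(\Delta^\bullet_N)\leq\inf\sigma(\Delta^\bullet_{D})$, the Neumann Rayleigh quotient being infimized over a larger domain. Either way one obtains $0\in\sigma(\Delta^\pm_N)\cap\sigma(\Delta^H_N)$, completing the first assertion.

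For the essential spectrum I would replace the variational characterization by its singular-sequence analogue: for nonnegative self-adjoint $T$ with form $Q$, one has $0\in\sigma_{\mathrm{ess}}(T)$ if and only if there is an \emph{orthonormal} sequence $(\omega_n)\subseteq D(Q)$ with $Q(\omega_n)\to 0$ (produced from the spectral projections $E([0,1/n))$, which have infinite rank precisely when $0\in\sigma_{\mathrm{ess}}(T)$). Starting from such an orthonormal sequence for $\Delta^H_{D}$, the very same estimates as above yield $Q^+_{D}(\omega_n),Q^-_{D}(\omega_n),Q^+_N(\omega_n),Q^-_N(\omega_n),Q^H_N(\omega_n)\to 0$, and since $(\omega_n)$ stays orthonormal in $\ell^2$ and lies in each relevant form domain, it is an admissible singular sequence for each of the five operators, placing $0$ in each essential spectrum. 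I expect the only genuinely delicate point to be the clean justification of this orthonormal form-singular-sequence characterization of $\sigma_{\mathrm{ess}}$ (in particular, selecting the $\omega_n$ orthonormal while keeping $Q(\omega_n)$ small); everything else is a direct consequence of the form additivity from Corollary~\ref{adj2} and the inclusion $Q^\bullet_{D}\subseteq Q^\bullet_N$. It is worth emphasizing that the implication necessarily runs from the ``most Dirichlet'' form $Q^H_{D}$ outward, precisely because its form domain is the smallest, so that Theorem~\ref{spectrum0}, which omits the value $0$, cannot be invoked here.
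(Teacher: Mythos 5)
Your proposal is correct and is essentially the paper's proof: both arguments take a normalized low-energy sequence for $Q^H_{D}$ (the paper extracts it from an operator-level Weyl sequence via $Q^H_{D}(\omega_n)=\langle\Delta^H_{D}\omega_n,\omega_n\rangle$, you directly from the form variational principle), split it using $Q^H_{D}=(Q^+_{D}+Q^-_{D})\mid_{D(Q^H_{D})}$ together with positivity, and finish with the inclusions $Q^{\bullet}_{D}\subseteq Q^{\bullet}_{N}$. For the essential spectrum the paper uses normalized weakly null sequences (its cited form version of Weyl's criterion) where you use orthonormal ones built from spectral projections; since orthonormal sequences are weakly null, this is the same argument in slightly different packaging.
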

\begin{proof}
Assume  $0\in\sigma(\Delta^{H}_{D}) $. Since $Q^{H}_{D}\subseteq Q^{H}_{N}$, we clearly have $0\in\sigma(\Delta^{H}_{N}) $. 
	Furthermore, by Weyl's criterion we find a normalized sequence $\omega_n$ in $D(\Delta^H_{D})$ such that $\lim_n\Vert\Delta^H_{D}\omega_n\Vert = 0.$ Therefore, $Q^H_{D}(\omega_n) = \langle\Delta^H_D\omega_n,\omega_n\rangle$ tends to zero as well. However, because $Q^H_{D}$ is a restriction of $Q^+_{D}+Q^-_{D}$ and all forms are positive, we conclude that $Q^+_{D}(\omega_n)$ and $Q^-_{D}(\omega_n)$ tend to zero individually. 
	Thus, $0\in \sigma(\Delta^\pm_{D})$ and $0\in \sigma(\Delta^\pm_N)$ since $Q^{\pm}_{D}\subseteq Q^{\pm}_{N}$.
	The statement about the essential spectrum follows  the same way by choosing $\omega_n$ weakly converging to zero (\cite[Theorem~E.4]{KLW}).
\end{proof}

The final result shows that, in the case $Q^H_{D} = Q^H_N$, all nonzero spectra agree. Recall that Proposition~\ref{QH->Q+Q-} shows that $Q^H_{D} = Q^H_N$ implies $Q^\pm_{D} = Q^\pm_N$.

\begin{theorem}\label{spectrum1}  If $Q^H_{D} = Q^H_N,$ then for $\Delta^{\bullet}=\Delta^{\bullet}_{D}=\Delta_{N}^{\bullet}$, $\bullet\in\{\pm,H\}$, $$\sigma(\Delta^+)\backslash\lbrace 0\rbrace=\sigma(\Delta^H)\backslash\lbrace 0\rbrace = \sigma(\Delta^-)\backslash\lbrace 0\rbrace.$$
\end{theorem}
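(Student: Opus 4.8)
The plan is to reduce everything to the standard structure of a self-adjoint operator of the form $T^{*}T+TT^{*}$. By Proposition~\ref{QH->Q+Q-}, the hypothesis $Q^{H}_{D}=Q^{H}_{N}$ forces $Q^{\pm}_{D}=Q^{\pm}_{N}$, so all three Laplacians are unique and we may write $\delta=\delta_{D}=\delta_{N}$ and $\partial=\partial_{D}=\partial_{N}$; by Lemma~\ref{d*} these satisfy $\partial=\delta^{*}$, and setting $T=\delta$ we have $\Delta^{+}=T^{*}T$ and $\Delta^{-}=TT^{*}$ by Corollary~\ref{LNLD}. The equality $\sigma(\Delta^{+})\setminus\{0\}=\sigma(\Delta^{-})\setminus\{0\}$ is then immediate from Theorem~\ref{spectrum0}, since $\Delta^{+}=\Delta^{+}_{D}$ and $\Delta^{-}=\Delta^{-}_{N}$. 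It remains to show that $\sigma(\Delta^{H})\setminus\{0\}$ coincides with these. Note that $T^{2}=\delta\delta=0$, and that $(T^{*})^{2}=\partial\partial=0$ holds here because uniqueness gives $D(Q^{H})\subseteq D(Q^{-}_{D})$, on which Lemma~\ref{pp01} yields $\partial\partial=0$; by Theorem~\ref{action} we therefore have $\Delta^{H}=(\delta+\partial)^{2}=\partial\delta+\delta\partial=\Delta^{+}+\Delta^{-}$ on $D(\Delta^{H})$.

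Next I would invoke the weak Hodge decomposition of Theorem~\ref{kodaira}, $\ell^{2}=\ker\Delta^{H}\oplus\overline{R(\delta)}\oplus\overline{R(\partial)}$, and argue that it reduces all three operators. The key algebraic inputs are $\Delta^{+}\Delta^{-}=T^{*}T\,TT^{*}=0$ and $\Delta^{-}\Delta^{+}=0$ (from $T^{2}=(T^{*})^{2}=0$), together with $\overline{R(\delta)}=\overline{R(T)}\subseteq\ker T=\ker\Delta^{+}$ and $\overline{R(\partial)}=\overline{R(T^{*})}\subseteq\ker T^{*}=\ker\Delta^{-}$. These give $\Delta^{+}=0$ on $\overline{R(\delta)}$ and $\Delta^{-}=0$ on $\overline{R(\partial)}$, while $R(\Delta^{-})\subseteq\overline{R(\delta)}$ and $R(\Delta^{+})\subseteq\overline{R(\partial)}$. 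Consequently the spectral projection of $\Delta^{+}$ onto $(0,\infty)$, which is exactly the projection onto $\overline{R(\partial)}=(\ker\Delta^{+})^{\perp}$, commutes with both $\Delta^{+}$ and $\Delta^{-}$, hence with $\Delta^{H}$. On $\overline{R(\partial)}$ one then has $\Delta^{H}=\Delta^{+}$ (as $\Delta^{-}$ vanishes there), on $\overline{R(\delta)}$ one has $\Delta^{H}=\Delta^{-}$, and $\Delta^{H}=0$ on $\ker\Delta^{H}$.

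Putting these together, the nonzero spectrum of $\Delta^{H}$ splits as
\begin{equation*}
	\sigma(\Delta^{H})\setminus\{0\}=\big(\sigma(\Delta^{+}\vert_{\overline{R(\partial)}})\cup\sigma(\Delta^{-}\vert_{\overline{R(\delta)}})\big)\setminus\{0\}=\big(\sigma(\Delta^{+})\cup\sigma(\Delta^{-})\big)\setminus\{0\},
\end{equation*}
where the last equality uses that $\overline{R(\partial)}=(\ker\Delta^{+})^{\perp}$ and $\overline{R(\delta)}=(\ker\Delta^{-})^{\perp}$ carry the entire nonzero spectrum of $\Delta^{+}$ and $\Delta^{-}$, respectively. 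Since $\sigma(\Delta^{+})\setminus\{0\}=\sigma(\Delta^{-})\setminus\{0\}$ from the first step, the union collapses and all three nonzero spectra agree. The main obstacle I anticipate is the careful justification of the reduction for the \emph{unbounded} self-adjoint operator $\Delta^{H}$: one must verify that the Hodge projections map $D(\Delta^{H})$ into itself and commute with $\Delta^{H}$ in the strong (spectral) sense rather than merely on a formal algebraic level. The identities $\Delta^{+}\Delta^{-}=\Delta^{-}\Delta^{+}=0$ and the range inclusions above are precisely what make this rigorous, but the domain bookkeeping—confirming, for instance, that $\ker\partial\subseteq D(\Delta^{-})$ so that the projected vectors remain in the operator domains—is the delicate part that must be handled with care.
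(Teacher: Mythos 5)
Your proposal is correct, but it takes a genuinely different route from the paper's proof. The paper argues with Weyl sequences in both directions: for $\sigma(\Delta^H)\setminus\{0\}\subseteq\sigma(\Delta^\pm)$ it uses the factorization $\Delta^H-\lambda=-\lambda^{-1}(\Delta^+-\lambda)(\Delta^--\lambda)$ of Lemma~\ref{q = qn}, and for $\sigma(\Delta^+)\setminus\{0\}\subseteq\sigma(\Delta^H)$ it pushes a Weyl sequence $(\omega_n)$ for $\Delta^+$ through $\delta$, sets $\mu_n=\delta\omega_n$, and verifies a weak (form) Weyl criterion for $Q^H$ via \cite[Lemma 1.4.4]{StollmannDisorder}. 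You instead exploit the weak Hodge decomposition of Theorem~\ref{kodaira} to reduce all three operators simultaneously: the inclusions $\overline{R(\delta)}\subseteq\ker\delta=\ker\Delta^+$ and $\overline{R(\partial)}\subseteq\ker\partial=\ker\Delta^-$ (the latter resting on Lemma~\ref{pp01}), the range inclusions $R(\Delta^\mp)\subseteq\overline{R(\delta)},\overline{R(\partial)}$ respectively, and the identification of the Hodge projections with the spectral projections $1_{(0,\infty)}(\Delta^\pm)$ together show that the decomposition reduces $\Delta^H=\Delta^++\Delta^-$ and exhibits it as $0\oplus\Delta^-\vert_{\overline{R(\delta)}}\oplus\Delta^+\vert_{\overline{R(\partial)}}$. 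Your domain bookkeeping is in fact sound: the projections map $D(\Delta^H)$ into itself precisely because they map into the kernels $\ker\Delta^\mp$, which lie inside the operator domains, and the restricted domains match because $\overline{R(\partial)}\cap D(\Delta^H)=\overline{R(\partial)}\cap D(\Delta^+)$, and symmetrically for $\overline{R(\delta)}$. What each approach buys: your structural argument immediately yields, beyond set equality, the coincidence of the spectral types away from zero and makes transparent the relation $\sigma(\Delta^H)\setminus\{0\}=\bigl(\sigma(\Delta^+)\cup\sigma(\Delta^-)\bigr)\setminus\{0\}$ recorded in the remark after the theorem; the paper's Weyl-sequence argument is more pedestrian but its first half (Lemma~\ref{q = qn}) is formulated for $\Delta^+_N+\Delta^-_D$ without assuming uniqueness, which is what one needs for partial results when $Q^H_D\neq Q^H_N$.

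One small repair is needed in your setup. You justify the operator identity $\Delta^H=\Delta^++\Delta^-$ by Theorem~\ref{action} together with $\partial\partial=0$; but Theorem~\ref{action} only gives the pointwise formula $\Delta^H\omega=\partial\delta\omega+\delta\partial\omega$ on $D(\Delta^H)$, and this does not by itself show that $\partial\delta\omega$ and $\delta\partial\omega$ lie in $\ell^2$ separately, i.e.\ that $\omega\in D(\Delta^+)\cap D(\Delta^-)$, which your reduction argument requires. The clean justification --- and the one the paper itself invokes --- is the Hilbert complex identity of Section~\ref{Hilbert}: $\Delta_\Sigma=(\delta_N+\partial_D)^2=\Delta^+_N+\Delta^-_D$ is self-adjoint by von Neumann's theorem, its form $Q_\Sigma$ satisfies $Q^H_D\subseteq Q_\Sigma\subseteq Q^H_N$, and the hypothesis $Q^H_D=Q^H_N$ collapses this sandwich, giving $\Delta^H=\Delta_\Sigma=\Delta^++\Delta^-$ including equality of domains. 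With this substitution your argument goes through completely.
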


The following lemma is vital to the proof.
\begin{lemma}\label{q = qn}
    Let $\lambda\neq 0.$ Then 
    $$\Delta^+_N+\Delta^-_D-\lambda = -\lambda^{-1}(\Delta^+_N-\lambda)(\Delta^-_D-\lambda).$$
\end{lemma}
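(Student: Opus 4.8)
The plan is to reduce everything to the single operator $T:=\delta_N$ and its adjoint. By Lemma~\ref{d*} we have $\delta_N^{*}=\partial_D$, and by Corollary~\ref{LNLD} this gives $\Delta^{+}_N=\partial_D\delta_N=T^{*}T$ and $\Delta^{-}_D=\delta_N\partial_D=TT^{*}$. The identity to be proven thus reads $T^{*}T+TT^{*}-\lambda=-\lambda^{-1}(T^{*}T-\lambda)(TT^{*}-\lambda)$, and the only structural input needed is the relation $\delta_N\delta_N=0$ together with $R(\delta_N)\subseteq D(\delta_N)$, both recorded in Section~\ref{Hilbert}.

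The algebraic heart of the argument is the composition identity $\Delta^{+}_N\Delta^{-}_D=0$. First I would observe that for any $\omega\in D(\Delta^{-}_D)=D(TT^{*})$ one has $T^{*}\omega\in D(T)$, so that $TT^{*}\omega=T(T^{*}\omega)\in R(T)\subseteq D(T)$ and $T(TT^{*}\omega)=(TT)(T^{*}\omega)=0\in D(T^{*})$. This single computation yields two facts simultaneously: on the one hand $TT^{*}\omega\in D(T^{*}T)=D(\Delta^{+}_N)$, so that $\Delta^{-}_D$ maps its domain into $D(\Delta^{+}_N)$ (indeed into $\ker\Delta^{+}_N$); on the other hand $\Delta^{+}_N\Delta^{-}_D\omega=T^{*}T(TT^{*}\omega)=T^{*}(0)=0$, so $\Delta^{+}_N\Delta^{-}_D=0$ as operators.

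Next I would match the domains of the two sides. The left-hand side $\Delta^{+}_N+\Delta^{-}_D-\lambda$ has domain $D(\Delta^{+}_N)\cap D(\Delta^{-}_D)$. For the right-hand side, the composition $(\Delta^{+}_N-\lambda)(\Delta^{-}_D-\lambda)$ has domain $\{\omega\in D(\Delta^{-}_D)\mid(\Delta^{-}_D-\lambda)\omega\in D(\Delta^{+}_N)\}$. Since $\Delta^{-}_D\omega\in D(\Delta^{+}_N)$ holds automatically by the previous step, the condition $(\Delta^{-}_D-\lambda)\omega\in D(\Delta^{+}_N)$ collapses, for $\lambda\neq 0$, to $\lambda\omega\in D(\Delta^{+}_N)$, i.e.\ $\omega\in D(\Delta^{+}_N)$. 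Hence the right-hand side also has domain $D(\Delta^{+}_N)\cap D(\Delta^{-}_D)$, matching the left.

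Finally, on this common domain I would expand $(\Delta^{+}_N-\lambda)(\Delta^{-}_D-\lambda)=\Delta^{+}_N\Delta^{-}_D-\lambda\Delta^{+}_N-\lambda\Delta^{-}_D+\lambda^{2}$ and substitute $\Delta^{+}_N\Delta^{-}_D=0$, which gives $-\lambda(\Delta^{+}_N+\Delta^{-}_D-\lambda)$; multiplying by $-\lambda^{-1}$ yields the claim. I expect the genuine obstacle to be precisely the domain bookkeeping of the third paragraph: the crucial point is that $\Delta^{-}_D$ maps its domain \emph{into} $D(\Delta^{+}_N)$, which is exactly what forces the composed operator on the right to have the full intersection domain rather than a strictly smaller one. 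Once that is established the action identity is a one-line computation, and no closedness or self-adjointness of the operators is needed for the statement itself.
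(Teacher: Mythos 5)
Your proof is correct, and while it rests on the same algebraic engine as the paper's --- namely that $\delta_N\delta_N=0$ together with $R(\delta_N)\subseteq D(\delta_N)$ forces $\Delta^-_D$ to map its domain into $\ker\Delta^+_N$, so that $\Delta^+_N\Delta^-_D=0$ and the identity becomes pure algebra --- you handle the delicate half of the lemma, the inclusion of the right-hand side's domain into $D(\Delta^+_N)\cap D(\Delta^-_D)$, by a genuinely different and simpler route. The paper's proof only records $\Delta^-_D\omega\in D(\Delta^+_N)$ for $\omega$ in the intersection domain (which suffices for the easy inclusion), and then, for the reverse inclusion, re-derives $\omega\in D(\Delta^+_N)$ analytically: from $(\Delta^-_D-\lambda)\omega\in D(\Delta^+_N)$ it extracts $-\lambda\delta\omega=\delta(\delta\partial-\lambda)\omega\in\ell^2$, hence $\omega\in D(Q^+_N)$, and then tests $Q^+_N(\omega,\nu)=\langle\delta\omega,\delta\nu\rangle$ against all $\nu\in D(Q^+_N)$, invoking the form characterization of the operator domain to conclude $\omega\in D(\Delta^+_N)$. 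You instead observe that $\Delta^-_D\omega\in D(\Delta^+_N)$ holds for \emph{every} $\omega\in D(\Delta^-_D)$, so that $(\Delta^-_D-\lambda)\omega\in D(\Delta^+_N)$ combined with linearity of $D(\Delta^+_N)$ immediately gives $\lambda\omega\in D(\Delta^+_N)$; this collapses both inclusions into a single domain computation and dispenses with the quadratic-form machinery entirely. Your route is more elementary and exposes the structural point more clearly (the lemma is really a statement about $T^*T$ and $TT^*$ for a closed operator $T$ with $TT=0$ and $R(T)\subseteq D(T)$, of which $\Delta^+_N=\partial_D\delta_N$ and $\Delta^-_D=\delta_N\partial_D$ are the instance via Lemma~\ref{d*} and Corollary~\ref{LNLD}); the paper's weak-formulation argument costs more but uses only tools it deploys repeatedly elsewhere, so nothing substantive is lost or gained in generality --- your version is simply the cleaner proof of this particular lemma.
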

\begin{proof}
    Let $\omega\in D(\Delta^+_N+\Delta^-_{D}) = D(\Delta^+_N)\cap D(\Delta^-_{D}).$
    Then $\delta\Delta^-_D\omega = \delta\delta\partial\omega = 0.$ This shows $\Delta^-_D\omega\in D(\Delta^+_N)$ and $\Delta^+_N\Delta^-_D\omega = 0.$ Therefore, 
    \begin{equation*}\label{x1}
        -\lambda^{-1}(\Delta^+_N-\lambda)(\Delta^-_D-\lambda)\omega = \Delta^+_N\omega+\Delta^-_D\omega-\lambda\omega
    \end{equation*} which shows that $\Delta^+_N+\Delta^-_D-\lambda\subseteq -\lambda^{-1}(\Delta^+_N-\lambda)(\Delta^-_D-\lambda).$

     Assume $\omega\in D((\Delta^+_N-\lambda)(\Delta^-_D-\lambda)).$ This means in particular that $\omega\in D(\Delta^-_D).$ Moreover, $\Delta^-_D\omega-\lambda\omega\in D(\Delta^+_N).$ However, this tells us that $-\lambda\delta\omega = \delta(\delta\partial-\lambda)\omega\in\ell^2$ so  $\omega\in D(Q^+_N).$ Furthermore, for $\nu\in D(Q^+_N)$ we get

    $$\langle -\lambda\delta\omega,\delta\nu\rangle = \langle\delta(\Delta^-_D-\lambda)\omega,\delta\nu\rangle = \langle\Delta^+_N(\Delta^-_D-\lambda)\omega,\nu\rangle.$$ Thus, we conclude that $\omega\in D(\Delta^+_N).$ With this at hand we again have the identity $\Delta^+_N\omega+\Delta^-_D\omega -\lambda\omega= -\lambda^{-1}(\Delta^+_N-\lambda)(\Delta^-_D-\lambda)\omega$ and conclude the statement.
\end{proof}

\begin{proof}[Proof of Theorem~\ref{spectrum1}]
Let $Q^H_{D}=Q^H_N$ and $\lambda\neq 0.$ This implies $\Delta^{\pm}=\Delta^{\pm}_{D}=\Delta_{N}^{\pm}$ by Proposition~\ref{QH->Q+Q-}, as well as $\Delta^H = \Delta^++\Delta^-$ by the discussions in Section \ref{Hilbert}. Furthermore, according to Lemma~\ref{q = qn} we have $\Delta^H -\lambda= -\lambda^{-1}(\Delta^+-\lambda)(\Delta^--\lambda).$ 

If $\lambda\in\sigma(\Delta^H)$, then we find by Weyl's criterion a normalized sequence $(\omega_n)$ in $D(\Delta^H)$ such that $\Vert(\Delta^H-\lambda)\omega_n\Vert\rightarrow 0$ as $n\rightarrow\infty.$ However, this means that either $\Vert(\Delta^--\lambda)\omega_n\Vert\rightarrow 0$ as $n\rightarrow\infty$ or we find a subsequence (also denoted as $(\omega_n)$) such that $\eta_n = (\Delta^--\lambda)\omega_{n}$ is uniformly in $n$ bounded away from $0$ and $\Vert(\Delta^+-\lambda)\eta_n\Vert\rightarrow 0$ as $n\rightarrow\infty.$ We conclude with Weyl's criterion that $\lambda\in\sigma(\Delta^+)\cup\sigma(\Delta^-).$ By Theorem~\ref{spectrum0} we get $\lambda\in\sigma(\Delta^\pm).$ Thus, $\sigma(\Delta^H)\backslash\lbrace 0\rbrace\subseteq\sigma(\Delta^\pm)\backslash\lbrace 0\rbrace.$\\

Now, let $\lambda\in\sigma(\Delta^+).$
    We find a normalized sequence $\omega_n\in D(\Delta^+)$ such that 
    $\lim_n\Vert(\Delta^+-\lambda)\omega_n\Vert = 0.$ Let us define the sequence $$\mu_n = \delta\omega_n.$$ Because $\omega_n\in\mathcal{D}^+$, we find that $\mu_n\in\ell^2$ and $\Vert\mu_n\Vert^2 = \Vert\delta\omega_n\Vert^2 = Q^+(\omega_n).$ This implies that $\Vert\mu_n\Vert$ is bounded away from zero for large enough $n$ since $\lambda>0$ and
    $$Q^+(\omega_n) = \langle (\Delta^+-\lambda)\omega_n,\omega_n\rangle+\lambda.$$
    Next, we observe that $\delta\mu_n = 0$ and $\partial\mu_n = \partial\delta\omega_n = \Delta^+\omega_n.$ Consequently,
    $$\sum_\Sigma m\vert (\delta+\partial)\mu_n\vert^2 = \Vert\Delta^+\omega_n\Vert^2<\infty.$$
    This tells us that $\mu_n\in \mathcal{D}^H$ and all together $\mu_n\in D(Q^H_N).$
    However, by assumption we have that $Q^{H}=Q^H_{D} = Q^H_N.$
    We now get for $\nu\in D(Q^H_{D})$
    \begin{multline*}
    \vert Q^H(\mu_n,\nu)-\lambda\langle\mu_n,\nu\rangle\vert  = \vert\langle\delta\mu_n,\delta\nu\rangle+\langle\partial\mu_n,\partial\nu\rangle-\lambda\langle\mu_n,\nu\rangle\vert\\  
    = \vert\langle\Delta^+\omega_n,\partial\nu\rangle-\lambda\langle\delta\omega_n,\nu\rangle\vert
     = \vert\langle(\Delta^+-\lambda)\omega_n,\partial\nu\rangle\vert
    \leq \Vert(\Delta^+-\lambda)\omega_n\Vert Q^H_D(\nu)^{1/2}\to 0.
    \end{multline*}
    From \cite[Lemma 1.4.4]{StollmannDisorder} (see also \cite[Theorem~E.3]{KLW}) we conclude that $\lambda\in\sigma(\Delta^H).$ Thus $\sigma(\Delta^\pm)\backslash\lbrace 0\rbrace\subseteq\sigma(\Delta^H)\backslash\lbrace 0\rbrace$  by Theorem~\ref{spectrum0}.
\end{proof}

\begin{remark}
    For the restrictions of $\Delta^\bullet$ to $\ell^2(\Sigma_k,m)$ 
    Theorem~\ref{spectrum1} reads as $$\sigma(\Delta^H_k)\backslash\lbrace 0\rbrace =(\sigma(\Delta^+_k)\cup\sigma(\Delta^-_{k}))\backslash\lbrace 0\rbrace,$$ which is another well known relation on finite simplicial complexes. 
\end{remark}


\eat{\begin{remark}
    When dropping the assumption that $D(Q^H_D) = D(Q^H_N),$ one can still show by similar arguments as in the proof of Theorem~\ref{spectrum1} that $$\sigma(\Delta^+_N+\Delta^-_D)\backslash\lbrace 0\rbrace \subseteq\sigma(\Delta^-_{D}) \qquad\mbox{and}\qquad \sigma(\Delta^+_{D}+\Delta^-_N)\backslash\lbrace 0\rbrace \subseteq\sigma(\Delta^+_{D})$$ when $R(\delta_D)\subseteq D(\delta_D)$ for the operators introduced in Section~\ref{Hilbert}. In fact, one can show in a very similar fashion that $$\sigma(\Delta^+_N+\Delta^-_D)\backslash\lbrace 0\rbrace = \sigma(\Delta^-_{D})\backslash\lbrace 0\rbrace$$ and under the assumption that $R(\delta_D)\subseteq D(\delta_D)$ that $$\sigma(\Delta^+_{D}+\Delta^-_N)\backslash\lbrace 0\rbrace = \sigma(\Delta^+_{D})\backslash\lbrace 0\rbrace.$$
\end{remark}

}

\section{Examples  of Specific Weights} 
\subsection{Combinatorial Weight}

The weight $m$ given by $m\equiv 1$ is called the \textit{combinatorial weight}. It is by far the most natural choice, but also quite restrictive. By the local summability assumption, it already requires the complex $\Sigma$ to be locally finite, that is, every simplex is contained in only finitely many cofaces. On the other hand, many of the formulas above simplify and have interesting combinatorial interpretations. Looking at the Schr\"odinger operator representation from above, we see that the Forman curvature $c^H$ simplifies in the following way.
For $\tau\in\Sigma$ with $k = \dim(\tau)$, we have
\begin{align*}
	c^H(\tau) &= \#\lbrace\rho\prec\tau\rbrace+\#\lbrace\sigma\succ\tau\rbrace -\sum_{\tau'}b^H(\tau,\tau')\\
	& = 2(k+1)+(k+2)\#\lbrace\sigma\succ\tau\rbrace-\sum_{\rho\prec\tau}\#\lbrace \tau'\succ \rho\rbrace.
\end{align*}
Since $c^H$ is a lower bound for $Q_D^H$ every lower bound on $c^H$ yields a lower bound for the spectrum $\sigma(\Delta^H_D).$ Of course, this is only useful if $c^H$ is positive to begin with.\\
Another consequence is that $b^{\bullet}$ takes values in $\{0,1\}$
and $b^+$, $b^H$ become subgraphs of $b^-$ where $b^H$ is the complement of $b^+$ in $b^-.$

We summarize the results of Section~\ref{FormeqEssSa} for the combinatorial weight in the following theorem. To this end, we denote the vertex degree $\deg:\Sigma_{0}\to\mathbb{N}$
\begin{equation*}
	\deg(v)=\#\{e\in \Sigma_{1}\mid e\succ v\}.
\end{equation*}
Furthermore, let the degree path metric on $\Sigma_{0}$ be given by
\begin{equation*}
	\rho(v,w)=\inf_{v=v_{0}\sim\ldots\sim v_{n}=w}\sum_{k=1}^{n}\max\{\deg(v_{k}),\deg(v_{k-1})\}^{-1/2}.
\end{equation*}
Here $v\sim w$ means the usual adjacency relation on graphs, i.e., $v$ and $w$ are connected by an edge in the 1-skeleton of $\Sigma.$

\begin{theorem}[Combinatorial weight]\label{comb}
	Let $\Sigma$ be a connected finite dimensional simplicial complex with combinatorial weight $m \equiv 1$ and $\Delta^{\bullet}$ a Laplacian, $\bullet\in\{\pm,H\}$.
	\begin{itemize}
		\item [(a)] $\Delta^{\bullet}$ is bounded if and only if $\deg$ is bounded.
		\item [(b)]  $F^{c}\subseteq D(\Delta^{\bullet})$.
		\item [(c)] If the Forman curvature $c^{H}$ is bounded from below, then $\Delta^{\bullet}_{c}$ is essentially self-adjoint.
		\item [(d)] If $(\Sigma_{0},\rho)$ is a complete metric space, then $\Delta^{\bullet}_{c} $ is essentially self-adjoint.		
	\end{itemize}
\end{theorem}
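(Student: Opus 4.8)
The plan is to obtain all four statements by specializing the general machinery of Section~\ref{FormeqEssSa} to the combinatorial weight $m\equiv 1$, using two features of this weight: it forces $\Sigma$ to be locally finite (local summability then reads $\#\{\sigma\succ\tau\}<\infty$), and it is trivially uniformly positive. Local finiteness already gives $\F^c\subseteq D(\Delta^\bullet_D)$ for every $\bullet$ by Theorem~\ref{Fc}, which is exactly statement (b) and is also the standing hypothesis needed to speak of essential self-adjointness of $\Delta^\bullet_c$.

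For (a) I would apply the finite-dimensional boundedness criterion Theorem~\ref{bounddim}(b): $\Delta^\bullet$ is bounded iff $\gamma^+/m$ is bounded. Since $m\equiv 1$ gives $\gamma^+(\tau)=\#\{\sigma\succ\tau\}$, the point is to identify boundedness of this coface count over all simplices with boundedness of the vertex degree. One inclusion is trivial ($\gamma^+(v)=\deg(v)$ on vertices); for the other, every coface of a $k$-simplex $\tau$ has the form $\tau\cup\{w\}$ with $w$ adjacent to all of $\tau$, so $\#\{\sigma\succ\tau\}\le\deg(u)$ for any $u\in\tau$, and finite dimension then bounds $\gamma^+$ by $(\dim\Sigma+1)\sup\deg$. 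Part (c) is immediate from Theorem~\ref{esssa1}: $m\equiv1$ is uniformly positive and $c^H/m=c^H$, so a lower bound on the Forman curvature is precisely its hypothesis.

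The real content is (d), which I would reduce to the Gaffney-type Theorem~\ref{metric} (via Proposition~\ref{intrinsic}). Local finiteness is automatic, so the only thing to check is that completeness of $(\Sigma_0,\rho)$ forces all $d^\bullet$-balls to be finite. Because the $1$-skeleton is a locally finite graph, Hopf--Rinow (\cite[Theorem~11.16]{KLW}) converts completeness of $\rho$ into finiteness of its balls, and it then suffices to compare edge weights: an estimate $w^\bullet(x,x')\ge c\max\{\deg x,\deg x'\}^{-1/2}$ for $x\sim x'$ yields $\rho\le c^{-1}d^\bullet$, hence $d^\bullet$-balls sit inside $\rho$-balls and are finite. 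For $\bullet=+$ this is clean: the coface bound from (a) gives $(\dim\tau+1)\#\{\sigma\succ\tau\}\le(\dim\Sigma+1)\max\{\deg x,\deg x'\}$ for every $\tau$ meeting $\{x,x'\}$, so $w^+\ge(\dim\Sigma+1)^{-1/2}\max\{\deg x,\deg x'\}^{-1/2}$ and Theorem~\ref{metric} delivers essential self-adjointness of $\Delta^+_c$.

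The hard part will be the cases $\bullet=-$ and $\bullet=H$, and I expect the edge-weight comparison to be the main obstacle. The weight $w^-(x,x')$ is an infimum over all $\tau$ meeting $\{x,x'\}$ of $\tfrac12(\sum_{\tau'}b^-(\tau,\tau'))^{-1/2}$, and a single high-degree neighbour of $x$ can drive this infimum far below $\max\{\deg x,\deg x'\}^{-1/2}$; the same then afflicts $w^H=\tfrac1{\sqrt2}\min\{w^+,w^-\}$, so $d^-$- and $d^H$-balls need not be finite under mere completeness of $\rho$. Rather than forcing the comparison, I would establish essential self-adjointness of $\Delta^H_c$ and then pass to $\Delta^\pm_c$ through the reduction Proposition~\ref{esssa2}; securing the $H$-case itself — reconciling the intrinsic but possibly ball-incomplete metric $d^H$ with the degree metric $\rho$ — is the step I expect to demand the most care.
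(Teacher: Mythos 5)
Your parts (a)--(c) are correct and coincide with the paper's own proof: (a) via the boundedness characterization of Theorem~\ref{bounddim} together with the coface--degree comparison $\gamma^{+}(\tau)=\#\{\sigma\succ\tau\}\le\deg(u)$ for $u\in\tau$ and $\gamma^{+}(v)=\deg(v)$ on vertices; (b) via local finiteness and Theorem~\ref{Fc} (which, as stated in the paper, applies to an arbitrary Laplacian $\Delta^{\bullet}$, so it does give (b) and not only $\F^{c}\subseteq D(\Delta^{\bullet}_{D})$); (c) directly from Theorem~\ref{esssa1}. Your $+$-case of (d) is also sound: the bound $w^{+}(x,x')\ge(\dim(\Sigma)+1)^{-1/2}\max\{\deg(x),\deg(x')\}^{-1/2}$ gives $\rho\le(\dim(\Sigma)+1)^{1/2}d^{+}$, and Hopf--Rinow for locally finite path metrics (\cite[Theorem~11.16]{KLW}) converts completeness of $\rho$ into finite $\rho$-balls, hence finite $d^{+}$-balls, so Theorem~\ref{metric} applies. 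This is a small detour the paper does not even make, since it handles all three cases at once.

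The genuine gap is the case $\bullet=H$ (and with it $\bullet=-$): you correctly diagnose that the weight comparison fails for $w^{-}$ and $w^{H}$, and you correctly plan to obtain $\pm$ from $H$ via Proposition~\ref{esssa2}, but the $H$-case itself is left unproved --- and that is where the entire content of (d) lies, since no result in the paper lets you pass from $\Delta^{+}_{c}$ back to $\Delta^{H}_{c}$ or $\Delta^{-}_{c}$. The missing idea is that Proposition~\ref{intrinsic} does not force you to use the metric $d^{H}$ of Section~\ref{metricsection} at all: it only needs \emph{some} pseudo metric that is intrinsic for $b^{H}$ and has finite balls, so one may design one adapted to the combinatorial weight. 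This is what the paper does: on each $\Sigma_{k}$ it sets $w(\sigma,\sigma')=\bigl((\dim(\Sigma)+1)\min_{v\in\sigma\cap\sigma'}\deg(v)\bigr)^{-1/2}$ whenever $b^{H}(\sigma,\sigma')>0$ and takes the induced path pseudo metric $d$. Intrinsicness follows from the same coface bound you used in (a), since
\begin{equation*}
	\sum_{\sigma'\in\Sigma}b^{H}(\sigma,\sigma')d^{2}(\sigma,\sigma')\le\sum_{\rho\prec\sigma}\frac{\gamma^{+}(\rho)}{(\dim(\Sigma)+1)\min_{v\in\rho}\deg(v)}\le 1,
\end{equation*}
using $\gamma^{+}(\rho)\le\min_{v\in\rho}\deg(v)$ and that $\sigma$ has at most $\dim(\Sigma)+1$ faces. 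Finiteness of $d$-balls is then deduced from finiteness of $\rho$-balls by lifting paths: along any $b^{H}$-path $\sigma=\sigma_{0}\sim\cdots\sim\sigma_{n}=\sigma'$ one picks $v_{k}\in\sigma_{k}\cap\sigma_{k+1}$, consecutive $v_{k}$ are equal or adjacent in the $1$-skeleton, and the $\rho$-length of the lifted path is at most $(\dim(\Sigma)+1)^{1/2}$ times the $d$-length; since simplices have finitely many vertices and $\Sigma$ is locally finite, an infinite $d$-ball would produce an infinite $\rho$-ball. Proposition~\ref{intrinsic} then yields essential self-adjointness of $\Delta^{H}_{c}$, and Proposition~\ref{esssa2} delivers $\Delta^{\pm}_{c}$. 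Without this (or an equivalent) construction, your proposal establishes only the $+$-case of (d).
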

\begin{proof} \emph{(a)} We have  for $\tau\in\Sigma,v\in\tau$
			$$\gamma^{+}(\tau)=\#\{\sigma\succ \tau\} \leq\deg(v)
$$ so boundedness follows from Theorem~\ref{boundedness}.

\emph{(b)} This is clear due to local finiteness and Theorem~\ref{Fc}.

\emph{(c)} This follows directly from Theorem~\ref{esssa1}.

\emph{(d)} We first observe that  finite metric balls are equivalent to completeness for locally finite path metrics, see \cite[Theorem~11.16]{KLW} or \cite[Theorem~2.1.]{KM}.
We define a path pseudo metric on $\Sigma_k,$ $0\leq k\leq\dim(\Sigma)$, as follows. Let $$w(\sigma,\sigma') = ((\dim(\Sigma)+1)\min_{v\in\sigma\cap\sigma'}\deg(v))^{-1/2}$$ whenever $b^H(\sigma,\sigma')>0$ (we  write $\sigma\sim\sigma'$) and $w(\sigma,\sigma') = 0$ otherwise. Then 
$$d(\sigma,\sigma') =\inf_{\sigma=\sigma_{0}\sim\ldots\sim \sigma_{n}=\sigma'}\sum_{k=1}^{n}w(\sigma_k,\sigma_{k-1}) $$
defines a path pseudo metric. Moreover, $d$ is intrinsic for $b^H$ as for $\sigma\in\Sigma_k$
\begin{multline*}
    \sum_{\sigma\in \Sigma} b^H(\sigma,\sigma')d^2(\sigma,\sigma')\leq \sum_{\rho\prec\sigma}\sum_{\sigma\neq\sigma'\succ\rho}((\dim(\Sigma)+1)\min_{v\in\rho}\deg(v))^{-1} \\\leq\sum_{\rho\prec\sigma}\frac{\gamma^+(\rho)}{(\dim(\Sigma)+1)\min_{v\in\rho}\deg(v)}\leq 1.
\end{multline*}
We continue the proof with two claims

\emph{Claim 1.} For all $\sigma,\sigma'\in\Sigma_k$, we find $v\in\sigma,v'\in\sigma'$ such that
$$\rho(v,v')\leq  (\dim(\Sigma)+1)^{1/2} d(\sigma,\sigma').$$ 
\emph{Proof of the claim.}
Take an arbitrary path between $\sigma$ and $\sigma',$ i.e., $\sigma = \sigma_{0}\sim\ldots\sim\sigma_n = \sigma'.$ Now let $v_k$ be any vertex in $\sigma_k\cap\sigma_{k+1},$ which is non empty as $b^H(\sigma_k,\sigma_{k+1})>0.$ For any $k = 0,\ldots,n-1$, we either have $v_k = v_{k+1}$ or $v_k$ and $v_{k+1}$ are connected by an edge in the 1-skeleton of $\Sigma.$ This is because both are contained in $\sigma_{k+1}.$ That way we obtain a path in the 1-skeleton of $\Sigma$ that connects $v_0\in\sigma$ and $v_n\in\sigma'.$ The claim follows as 
$$(\dim(\Sigma)+1)^{1/2}\sum_{k=1}^{n}w(\sigma_k,\sigma_{k-1})\geq\sum_{k=1}^{n-1}\deg(v_{k-1})^{-1/2}\geq \rho(v,v').$$

\emph{Claim 2.} If  $\rho$ has finite distance balls then $d$ has finite distance balls as well.

\emph{Proof of the claim.}  Assume there is an infinite distance ball of radius $r$ centered at some $\sigma_{0}\in \Sigma$ for $d$. By local finiteness there are infinitely many disjoint $\sigma_{n}\in \Sigma$ such that  for $ C=(\dim(\Sigma)+1)^{1/2}$
 $$Cr\ge Cd(\sigma_{0},\sigma_{n})\geq \rho(v_{0},v_{n})$$
with vertices $v_{0}\in \sigma_{0}$ and $v_{n}\in \sigma_{n}$ by the claim above. As $\sigma_{0}$ contains only finitely many vertices, there is an infinite distance ball for $\rho,$ which settles the claim.

By Claim~2 and since $d$ is intrinsic, we get essential self-adjointness of $\Delta^H_c$ from Proposition~\ref{intrinsic}. The statement for $\Delta^\pm_c$ then follows from Theorem~\ref{esssa2}.\end{proof}

\subsection{Normalizing Weight}
In the case of weighted graphs, one typically defines the normalizing weight $m$ on vertices as $m(x) = \sum_{e_x}m(e_x),$ where the sum runs over all edges $e_x$ that contain the vertex $x$. This has two very useful effects. First, it makes the graph Laplacian $\Delta$ bounded such that the spectrum is contained in the interval $[0,2].$ 
The second effect is that $\Delta$ takes the form $\Delta = I-P$ where $P$ is the transition matrix, which defines the standard random walk on the graph.  This means that the kernel $p(x,y)$ of $P$ can be seen as the probability of jumping from a vertex $x$ to a vertex $y.$ 
In \cite[Theorem 3.2.]{HJ} the weight
\begin{equation*}
	 m^{+}(\tau) = \sum_{\sigma\succ\tau}m^{+}(\sigma)
\end{equation*}
for $\tau\in\Sigma$, with $m^{+}$ arbitrary on the maximal faces, was already considered and it was shown that the spectrum on $\Sigma_{k}$ is included in $[0,k+2]$. Here, we discuss that a similar result holds for infinite complexes.

\begin{theorem}For $m^{+}$, all operators $\Delta^{\bullet}$, $\bullet\in\{\pm,H\}$, are bounded if $\Sigma$ is finite dimensional. Furthermore, the restrictions to $\Sigma_{k}$ satisfy  $\sigma(\Delta^\bullet_k)\subseteq [0,k+2].$  Moreover, if every $\tau\in\Sigma_k$ has a coface, then $\Delta^+_k$ acts on $\ell^2(\Sigma_k,m)$ as $$\Delta^+_k\omega(\tau) = \omega(\tau)-\frac{1}{m^+(\tau)}\sum_{\tau'\in\Sigma}b^+(\tau,\tau')\theta^+(\tau,\tau')\omega(\tau').$$
\end{theorem}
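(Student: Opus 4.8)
The plan is to deduce everything from the single identity defining the normalizing weight: for every non-maximal $\tau$ one has $m^{+}(\tau)=\sum_{\sigma\succ\tau}m^{+}(\sigma)=\gamma^{+}(\tau)$, while $\gamma^{+}(\tau)=0\le m^{+}(\tau)$ if $\tau$ is maximal; thus $\gamma^{+}\le m^{+}$ everywhere, with equality exactly on non-maximal simplices. In particular $\gamma^{+}/m^{+}\le 1$, so for finite dimensional $\Sigma$ the quantity $(\dim+1)\gamma^{+}/m^{+}$ is bounded and Theorem~\ref{bounddim}~(a) gives boundedness of every $\Delta^{\bullet}$. Boundedness forces $Q^{\bullet}_{D}=Q^{\bullet}_{N}$ (the $Q^{\bullet}$-norm is then equivalent to $\|\cdot\|$, so $\overline{\F^c}^{\,\Vert\cdot\Vert_{Q^\bullet}}=\ell^2$), hence $\Delta^{H}=\Delta^{H}_{D}=\partial\delta+\delta\partial=\Delta^{+}+\Delta^{-}$ by Theorem~\ref{action}; since $\partial\delta$ and $\delta\partial$ preserve each $\F(\Sigma_{k})$, all three operators split as $\Delta^{\bullet}=\bigoplus_{k}\Delta^{\bullet}_{k}$ and it remains to bound each $\Delta^{\bullet}_{k}$ on $\ell^{2}(\Sigma_{k},m)$ (with $m=m^{+}$).

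For $\bullet=+$ I would estimate the form directly. As each $\sigma\in\Sigma_{k+1}$ has exactly $k+2$ faces, Cauchy--Schwarz gives $|\delta\omega(\sigma)|^{2}\le(k+2)\sum_{\tau\prec\sigma}\omega(\tau)^{2}$, and summing against $m$ together with $\gamma^{+}\le m^{+}$ yields
\[
Q^{+}(\omega)=\|\delta\omega\|^{2}\le(k+2)\sum_{\tau}\gamma^{+}(\tau)\omega(\tau)^{2}\le(k+2)\|\omega\|^{2}.
\]
For $\bullet=-$, each $\tau\in\Sigma_{k}$ has exactly $k+1$ facets $\rho\prec\tau$, and Cauchy--Schwarz in the defining sum of $\partial$, using $\sum_{\tau\succ\rho}m(\tau)=\gamma^{+}(\rho)=m(\rho)$, gives $m(\rho)|\partial\omega(\rho)|^{2}\le\sum_{\tau\succ\rho}m(\tau)\omega(\tau)^{2}$; summing over $\rho\in\Sigma_{k-1}$ produces $Q^{-}(\omega)=\|\partial\omega\|^{2}\le(k+1)\|\omega\|^{2}$. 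Since $\Delta^{\pm}_{k}\ge0$ are self-adjoint, this gives $\sigma(\Delta^{+}_{k})\subseteq[0,k+2]$ and $\sigma(\Delta^{-}_{k})\subseteq[0,k+1]$.

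The main obstacle is the Hodge case, since naively adding the two bounds only yields $[0,2k+3]$. The key observation is that $\Delta^{+}_{k}\Delta^{-}_{k}=\partial\delta\,\delta\partial=0$ because $\delta\delta=0$ on $\F$, and taking adjoints (both operators are bounded and self-adjoint) also gives $\Delta^{-}_{k}\Delta^{+}_{k}=0$. Hence $\Delta^{+}_{k}$ and $\Delta^{-}_{k}$ are commuting positive operators whose product vanishes, so $\overline{R(\Delta^{+}_{k})}\subseteq\ker\Delta^{-}_{k}$ and $\overline{R(\Delta^{-}_{k})}\subseteq\ker\Delta^{+}_{k}$; on the induced orthogonal decomposition $\Delta^{H}_{k}=\Delta^{+}_{k}+\Delta^{-}_{k}$ acts as a single summand at a time, whence $\|\Delta^{H}_{k}\|=\max\{\|\Delta^{+}_{k}\|,\|\Delta^{-}_{k}\|\}\le k+2$ and $\sigma(\Delta^{H}_{k})\subseteq[0,k+2]$. (Equivalently, the joint spectral measure is supported on the coordinate axes, where $s+t=\max\{s,t\}$.)

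For the explicit action I would use that boundedness gives $\F^{c}\subseteq D(\Delta^{+})=\ell^{2}$, so $\Delta^{+}=\mathcal{H}^{+}$ on $\ell^{2}$ by Theorem~\ref{action3}. Writing out $\mathcal{H}^{+}$, the coefficient of $\omega(\tau)$ is $\tfrac{1}{m^{+}(\tau)}\bigl(\sum_{\tau'}b^{+}(\tau,\tau')+c^{+}(\tau)\bigr)$; since each coface $\sigma\succ\tau$ contributes its remaining $k+1$ faces, one computes $\sum_{\tau'}b^{+}(\tau,\tau')=(k+1)\gamma^{+}(\tau)$, while $c^{+}(\tau)=-k\,\gamma^{+}(\tau)$, so this coefficient equals $\gamma^{+}(\tau)/m^{+}(\tau)=1$. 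Here the hypothesis that every $\tau\in\Sigma_{k}$ has a coface is exactly what forces $m^{+}(\tau)=\gamma^{+}(\tau)$ and hence the diagonal value $1$. The remaining off-diagonal part of $\mathcal{H}^{+}$ is $-\tfrac{1}{m^{+}(\tau)}\sum_{\tau'}b^{+}(\tau,\tau')o^{+}(\tau,\tau')\omega(\tau')$, which is precisely the asserted formula with $\theta^{+}=o^{+}$.
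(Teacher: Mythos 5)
Your proof is correct, and for half of the statement it takes a genuinely different route from the paper's. The boundedness step and the bound $\sigma(\Delta^+_k)\subseteq[0,k+2]$ coincide with the paper's argument (Theorem~\ref{bounddim} plus the Cauchy--Schwarz estimate using that every $(k+1)$-simplex has $k+2$ faces); in fact you are slightly more careful than the paper, which asserts $\gamma^+/m^+=1$ outright, whereas you correctly note that $\gamma^+=0<m^+$ on maximal simplices and that only $\gamma^+\le m^+$ holds in general. The divergence is in the treatment of $\Delta^-_k$ and $\Delta^H_k$: the paper simply invokes the spectral relation theorems, namely Theorem~\ref{spectrum0} (giving $\sigma(\Delta^-_{k})\setminus\{0\}=\sigma(\Delta^+_{k-1})\setminus\{0\}$ via the remark following it) and Theorem~\ref{spectrum1} (giving $\sigma(\Delta^H_k)\setminus\{0\}=(\sigma(\Delta^+_k)\cup\sigma(\Delta^-_k))\setminus\{0\}$, applicable because boundedness forces $Q^H_D=Q^H_N$). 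You instead re-derive what is needed by elementary means: a direct Cauchy--Schwarz estimate in the definition of $\partial$, which yields the sharper inclusion $\sigma(\Delta^-_k)\subseteq[0,k+1]$, and the observation that $\Delta^+_k\Delta^-_k=\Delta^-_k\Delta^+_k=0$ (from $\delta\delta=0$ and self-adjointness), so that $\Delta^H_k$ decomposes as $\Delta^+_k\oplus\Delta^-_k\oplus 0$ on $\overline{R(\Delta^+_k)}\oplus\overline{R(\Delta^-_k)}\oplus(\ker\Delta^+_k\cap\ker\Delta^-_k)$ and $\|\Delta^H_k\|=\max\{\|\Delta^+_k\|,\|\Delta^-_k\|\}$. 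This orthogonal-range argument is in effect a bounded-operator re-proof of the content of Theorem~\ref{spectrum1}, avoiding the Weyl-sequence machinery behind it; what you lose is brevity (the paper's citation is one line once Section~\ref{specrel} is available), what you gain is a self-contained argument and the better constant $k+1$ for the down-Laplacian. Finally, you fill in the action computation that the paper dismisses as ``straightforward'': your identities $\sum_{\tau'}b^+(\tau,\tau')=(\dim(\tau)+1)\gamma^+(\tau)$ and $c^+(\tau)=-\dim(\tau)\gamma^+(\tau)$, together with $\gamma^+=m^+$ on simplices possessing a coface, are exactly the intended calculation, and reading the paper's undefined symbol $\theta^+$ as $o^+$ is the correct interpretation.
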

\begin{proof}Boundedness follows as $\gamma^{+}/m^{+}=1$ by Theorem~\ref{bounddim}. Indeed, as $\Sigma^{N}=\bigcup_{k\leq N}\Sigma_{k}$ is a subcomplex of $\Sigma,$ all restrictions are bounded. Let $m^+_{k},\omega_{k}$ be the restrictions of $m^+$ and $\omega $ to $\Sigma_{k}$ for some $\omega\in F$. The statement about the spectrum follows by 
\begin{equation*}
	\|\delta_{k} \omega_k\|^{2}=\sum_{\Sigma_{k+1}}m^{+}_{k+1}|\delta_{k} \omega|^{2}\leq (k+2) \sum_{\Sigma_{k}}m^{+}_k|\omega_k|^{2},
\end{equation*}
where we used the definition of $m^{+}$ and the fact that every $k+1$ simplex has $k+2$ faces. This yields $\sigma(\Delta^+_k)\subseteq [0,k+2]. $ The respective statement for $\Delta^-_k$ and $\Delta^H_k$ follow from Theorem~ \ref{spectrum1}~and Theorem~\ref{spectrum0} and the remarks thereafter.
The proof about the action of $\Delta^+_k$ is just a simple and straight forward calculation. 
\end{proof}
Observe that while we achieve to write $\Delta^{+}=I-\tilde P$, the operator $\tilde P$ is not a stochastic matrix and not even positive anymore, which makes a probabilistic interpretation a challenging task, see e.g. \cite{Parzanchevski}.

Note that the particular choice of weight in the theorem above makes the diagonal of $\Delta^+_k$ constant, provided every element $\tau\in\Sigma_k$ has a coface. For $\Delta^-_k$, the weight
$$m(\tau) =  \Big(\sum_{\rho\prec\tau}\frac{1}{m(\rho)}\Big)^{-1}$$ achieves the same, although in general one does not obtain a bounded operator in this way, cf. Theorem~\ref{boundedness}. Note that the particular choice of $m$ for $\Delta^+_k$ depends on the weights on $\Sigma_{k+1}$ where for $\Delta^-_k$ it depends on the weights on $\Sigma_{k-1}.$ 

Therefore it is not surprising that in order to have constant diagonal of $\Delta^H_k$ one needs to control the weights on both $\Sigma_{k+1}$ and $\Sigma_{k-1}.$ To see this, note that we want 
$$m(\tau) = \sum_{\tau'\in \Sigma}b^H(\tau,\tau')+c^H(\tau) = \sum_{\rho\prec\tau}\frac{m^2(\tau)}{m(\rho)}+\sum_{\tau\prec\sigma}m(\sigma)$$ or equivalently 
$$0 = m^2(\tau)\sum_{\rho\prec\tau}\frac{1}{m(\rho)}-m(\tau)+\sum_{\sigma\succ \tau}m(\sigma).$$ This polynomial in $m(\tau)$ has positive real solutions, if the coefficients $\sum_{\rho\prec\tau}\frac{1}{m(\rho)}$ and $\sum_{\sigma\succ \tau}m(\sigma)$
are small enough. In this case, one obtains an operator $\Delta^{H}_{k}$ that has constant diagonal.


\bibliographystyle{alpha}
\bibliography{literature}

\end{document}